\numberwithin{equation}{section}
\newtheorem{lemma}{Lemma}[section]
\newtheorem{corollary}[lemma]{Corollary}
\newtheorem{theorem}[lemma]{Theorem}
\newtheorem{proposition}[lemma]{Proposition}
\newtheorem{remark}[lemma]{Remark}
\newtheorem{remarks}[lemma]{Remarks}
\newtheorem{definition}[lemma]{Definition}
\newtheorem{definitions}[lemma]{Definitions}
\newtheorem{problem}[lemma]{Problem}
\newtheorem*{remarknonum}{Remark}
\newcommand{\CC}{{\mathbb{C}}}
\newcommand{\MM}{{\mathbb{M}}}
\newcommand{\NN}{{\mathbb{N}}}
\newcommand{\QQ}{{\mathbb{Q}}}
\newcommand{\RR}{{\mathbb{R}}}
\newcommand{\ZZ}{{\mathbb{Z}}}
\newcommand{\C}{\mathcal{C}}
\newcommand{\F}{\mathcal{F}}
\newcommand{\M}{\mathcal{M}}
\renewcommand{\P}{\mathcal{P}}
\renewcommand{\O}{\mathcal{O}}
\newcommand\Zpl{{\ZZ^+}}
\newcommand{\bfz}{{\boldsymbol 0}}
\DeclareMathOperator\Aff{Aff}
 \DeclareMathOperator\LAff{LAff}
\newcommand\AffSu{\Aff(S_u)}
\newcommand\LAffsig{\LAff_\sigma(S_u)^{++}}
\newcommand\Wdsig{W^d_\sigma(S_u)}
\newcommand\extSu{\partial_eS_u}
\newcommand{\abar}{\overline{a}}
\newcommand{\bbar}{\overline{b}}
\newcommand{\cbar}{\overline{c}}
\newcommand{\ebar}{\overline{e}}
\newcommand{\fbar}{\overline{f}}
\newcommand{\pbar}{\overline{p}}
\newcommand{\qbar}{\overline{q}}
\newcommand{\sbar}{\overline{s}}
\newcommand{\ubar}{\overline{u}}
\newcommand{\wbar}{\overline{w}}
\newcommand{\xbar}{\overline{x}}
\newcommand{\ybar}{\overline{y}}
\newcommand{\zbar}{\overline{z}}
\newcommand{\alphabar}{\overline{\alpha}}
\newcommand{\betabar}{\overline{\beta}}
\newcommand{\onebar}{\overline{1}}
\DeclareMathOperator{\soc}{Soc}
 \DeclareMathOperator{\End}{End}
\newcommand{\leftmat}{\left(\begin{matrix}}
\newcommand{\rightmat}{\end{matrix}\right)}
\begin{document}
\author{G. Aranda Pino}
\address{Aranda and Siles: Departamento de \'Algebra, Geometr\'\i a y Topolog\'\i a, Universidad de M\'alaga, 29071 M{\'a}laga, Spain}
\email{gonzalo@agt.cie.uma.es, msiles@uma.es}
\author{K. R. Goodearl}
\address{Goodearl: Department of Mathematics, University of California at Santa Barbara, Santa Barbara, CA 93106, USA}
\email{goodearl@math.ucsb.edu}
\author{F. Perera}
\address{Perera: Departament de Matem\`atiques, Universitat Aut\`onoma de
Barcelona, 08193 Bellaterra, Barcelona, Spain}
\email{perera@mat.uab.cat}
\author{M. Siles Molina}

\thanks{G. A. P. and M. S. M. were supported in part by the Spanish MEC and Fondos FEDER through project MTM2007-60333, and jointly by the Junta de Andaluc\'{\i}a and Fondos FEDER through projects FQM-336 and FQM-2467; F. P. was supported in part by the DGI MEC-FEDER through Project MTM2005-00934 and the Comissionat per Universitats i Recerca de la Generalitat de Catalunya.}
\dedicatory{} \commby{} \keywords{} \subjclass{}

\date{}
\title[Non-simple purely infinite rings]{Non-simple purely infinite rings}

\maketitle

\begin{abstract}
In this paper we introduce the concept of purely infinite rings,
which in the simple case agrees with the already existing notion
of pure infiniteness. We establish various permanence properties
of this notion, with respect to passage to matrix rings, corners,
and behaviour under extensions, so being purely infinite is
preserved under Morita equivalence. We show that a wealth of
examples falls into this class, including important analogues of
constructions commonly found in operator algebras. In particular,
for any (s-)\linebreak unital $K$-algebra having enough nonzero
idempotents (for example, for a von Neumann regular algebra) its
tensor product over $K$ with many nonsimple Leavitt path algebras
is purely infinite.
\end{abstract}


\section*{Introduction}

The notion of pure infiniteness has proved key in the theory of
operator algebras since its conception in the early eighties by J.
Cuntz (see~\cite{Cunz}). This was done for simple algebras and
provided a huge list of examples. One of the milestones of the
theory became the classification of separable, nuclear, unital
purely infinite simple algebras by means of K-theoretic invariants
(due to Kirchberg~\cite{kir} and Phillips~\cite{phi}).

Far from being analytic, in the simple setting the notion of pure infinitess has a
strong algebraic flavour. Indeed, one of the various
characterizations states that a (unital) simple C$^*$-algebra $A$
is purely infinite if and only if $A\neq\mathbb{C}$ and for any
non-zero element $a$ in $A$, one has $xay=1$ for some elements $x$, $y$
in $A$. This led to P. Ara, the second named author and E. Pardo to
introduce a corresponding notion for rings (see~\cite{AGP}), as
follows. A simple ring $R$ is purely infinite if every non-zero
left (right) ideal contains an infinite non-zero idempotent (that
is, an idempotent that contains, isomorphically, a proper copy of
itself). As it happens, all (non-zero) idempotents in such rings are in fact properly infinite. They showed that a simple unital ring $R$ is purely infinite provided that $R$ is not a division ring and for any
non-zero element $a$ in $R$, one has $xay=1$ for suitable $x$ and
$y$ in $R$ (\cite[Theorem 1.6]{AGP}). This notion goes well beyond the
pure formalism to actually encompass a large number of algebras,
notably Leavitt's algebras of type $(1,n)$ (\cite{Leavitt}) and suitable von Neumann
regular extensions of those (as shown in~\cite{AGP}), as well as
Leavitt path algebras with suitable conditions on their defining
graphs (see~\cite{AA2}).

The operator algebraic notion just outlined above was extended
some years ago to the non-simple setting by Kirchberg and R\o rdam
(\cite{KR}) and has been studied intensively since then (see,
e.g.~\cite{KR2},~\cite{blankir},~\cite{blankir2},~\cite{KR3},~\cite{KP},~\cite{KNP}). Although there are
various possible formulations, the definition given in~\cite{KR}
appears to be more commonly used in possible extensions of the classification
programme to the non-simple case. The reader may wonder whether the extension consists simply of demanding that right or left ideals contain enough infinite idempotents. However, for technical reasons this turns out to be inappropriate. Instead, the approach to define purely infinite algebras resorts to the use of
the so-called Cuntz comparison for positive elements, which is
completely analytic and involves the so-called positive elements of the algebra. Roughly speaking, a C$^*$-algebra $A$ is said to be purely infinite if $A$ does not have abelian quotients and every pair of elements, with one contained in the closed two-sided ideal generated by the other, is suitably comparable. As with the simple case, all (non-zero) idempotents in such algebras are properly infinite.

This definition is mostly adequate when dealing with
algebras that might not have (non-trivial) idempotents. However,
\emph{if} all one-sided non-zero ideals in all quotients happen to
contain an infinite idempotent, this suffices to ensure pure
infiniteness, and takes into account the ideal structure of the
algebra.

In our aim to adapt this concept to the pure algebraic
setting, one of the major difficulties that we encounter is finding an
appropriate algebraic substitute for the analytic conditions. In
order to circumvent this, we introduce in Section 2 an analogue
for Cuntz comparison directed to general elements. We thus define a way to compare two elements in an
arbitrary ring which, in the case of idempotents, reduces to the usual (Murray-von Neumann) comparison. This allows us to define (general) properly infinite elements in a ring. As with idempotents, these are those that contain two orthogonal copies of themselves (see below for the precise definitions).

In Section 3 we introduce the concept of pure infiniteness for an
arbitrary, not necessarily unital, ring. There are at least two
different ways to do this that are both natural and accommodate an
expected generalisation from C$^*$-algebras. Within this class,
both ways of extending this concept are shown to be equivalent
(in~\cite[Theorem 4.16]{KR}), but they are different in the more
general framework. This is why our terminology needs to be
adapted, so we are bound to distinguish between \emph{properly
purely infinite} rings and \emph{purely infinite} rings. We prove
that every properly purely infinite ring is in fact purely
infinite. Thus the first class may be thought of as being purely
infinite in a strong sense, but we choose not to term them
strongly purely infinite in order to avoid confusion with the
corresponding notion for C$^*$-algebras (see~\cite{KR2}). We prove
that being purely infinite or properly purely infinite behaves
well when passing to quotients, ideals and in extensions. We also
prove that C$^*$-algebras that are purely infinite in our sense
are also purely infinite in the sense of~\cite{KR}.

Before analysing further permanence properties, we explore in Section 4 examples of purely infinite rings, and we already find interesting algebraic versions of analytic results. For example, if $A$ is any unital $K$-algebra over a field and $(B_i)_{i}$ is a sequence of unital $K$-algebras whose units are all properly infinite, then $A\otimes_K(\otimes_{i=1}^{\infty}B_i)$ is purely infinite (in fact, properly purely infinite) (see Theorem~\ref{inftensorexample}). We also deduce from Theorem~\ref{AtensorL} that $A\otimes_K L_K(1,\infty)$ is purely infinite for any von Neumann regular $K$-algebra $A$ (where $K$ is a field and $L_K(1,\infty)$ is the Leavitt algebra of type $(1,\infty)$).

The key question of whether corners and matrices over purely infinite rings are again purely infinite is addressed in Section 5. We prove that corners of purely infinite rings (resp. properly purely infinite rings) are again purely infinite (resp. properly purely infinite). Matrices turn out to be trickier, and we establish in Theorem~\ref{ExchangePiMatrices} that $M_n(R)$ is in fact properly purely infinite whenever $R$ is a purely infinite \emph{exchange} ring with local units, so that there is a good supply of idempotents. This result prompts the question of extending its validity to a larger class of rings, namely those whose monoids of isomorphism classes of finitely generated projective (right) modules have the Riesz refinement property. The typical example of this is that of the so-called Leavitt path algebras associated to graphs. We thus benefit from the results developed in order to completely characterize those Leavitt path algebras $L_K(E)$ associated to row-finite graphs that are purely infinite -- or, equivalently in this case, properly purely infinite (Theorem~\ref{LeavittPi}). As a consequence, given any unital $K$-algebra $A$ such that any nonzero right ideal in every quotient contains a nonzero idempotent, and given any row-finite graph $E$ for which $L_K(E)$ is purely infinite, the algebra $A\otimes_K L_K(E)$ is purely infinite.

\section{Preliminaries}

Throughout the paper $R$ will always denote a ring, which is not assumed to be unital. However, many of our results for nonunital rings require some replacement for the existence of a unit, such as one of the following conditions.

\begin{definitions} {\rm A ring $R$ is said to be \emph{s-unital} if for each $x\in R$, there exist
$u,v\in R$ such that $ux=xv=x$. When we say that an ideal $I$ of $R$ is \emph{s-unital}, we mean
that $I$ is s-unital when viewed as a ring in its own right, i.e., the elements $u$ and $v$ in the
definition must lie in $I$.

The defining property of an s-unital ring actually carries over to finite sets of elements, as the
following lemma of Ara shows. In particular, it follows that if $R$ is an s-unital ring, then all
the matrix rings $\MM_n(R)$ are s-unital.

The assertion that \emph{$R$ has local units} means that each
finite subset of $R$ is contained in a \emph{corner} of $R$, that
is, a subring of the form $eRe$ where $e$ is an idempotent of $R$.
(We will not use any of the more general types of corners
discussed in \cite{Lam}.) For example, every (von Neumann) regular
ring has local units \cite[Lemma 2]{FU}. Note that if $R$ has
local units, then $R$ is the directed union of its corners. We use
this term in the sense of, e.g.,~\cite{anhmarki} (rather
than~\cite{abrams}).

Finally, $R$ is said to be \emph{$\sigma$-unital} if there exists a countable sequence
$(u_1,u_2,\dots)$ of elements of $R$ such that
 \begin{enumerate}
 \item $u_{n+1}u_n= u_nu_{n+1}= u_n$ for all $n$.
 \item For any finite subset $F\subseteq R$, there is some $n\in\NN$ such that $u_nx= xu_n= x$
 for all $x\in F$.
 \end{enumerate}
 Observe that if $R$ is $\sigma$-unital and $R$ also has local units, then the sequence $(u_1,u_2,\dots)$
can be chosen so that all the $u_n$ are idempotents.
 }\end{definitions}

\begin{lemma} \label{sunital} {\rm \cite[Lemma 2.2]{arasunital}} Let $R$ be an s-unital ring. For
any finite subset $F\subseteq R$, there exists an element $u \in R$ such that $ux=xu=x$ for all
$x\in F$.  \end{lemma}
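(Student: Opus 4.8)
The plan is to reduce the two-sided statement to two one-sided statements and then glue them together. First I would establish, by induction on $|F|$, the following one-sided version: for every finite $F\subseteq R$ there is an element $e\in R$ with $ex=x$ for all $x\in F$ (a ``common left local unit'' for $F$), and symmetrically there is $f\in R$ with $xf=x$ for all $x\in F$. The base case $|F|\le 1$ is precisely the defining property of an s-unital ring. For the inductive step, given $F=\{x_1,\dots,x_{n+1}\}$ together with a common left local unit $e$ for $\{x_1,\dots,x_n\}$, I would apply s-unitality to the single element $x_{n+1}-ex_{n+1}$ to get $v\in R$ with $v(x_{n+1}-ex_{n+1})=x_{n+1}-ex_{n+1}$, and then set $e'=e+v-ve$. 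A short computation then shows $e'x_i=x_i$ for $i\le n$ (the term $vex_i$ collapses to $vx_i$ because $ex_i=x_i$) and $e'x_{n+1}=x_{n+1}$ (rearranging the defining identity for $v$). The right-handed assertion follows by the mirror-image argument.

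Next I would combine the two: given a common left local unit $e$ and a common right local unit $f$ for the \emph{same} finite set $F$, put $u=e+f-fe$. Then for every $x\in F$ one has $ux=ex+fx-fex=x+fx-fx=x$, where $fex$ collapses to $fx$ because $ex=x$; and $xu=xe+xf-xfe=xe+x-xe=x$, where $xfe$ collapses to $xe$ because $xf=x$. Hence this $u$ has the required property. I would emphasise that the asymmetric choice $u=e+f-fe$ (rather than $e+f-ef$) is exactly what makes both one-sided verifications succeed; this is the one place where a modicum of care is needed.

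I do not expect a serious obstacle here: the argument is entirely elementary, and the only genuine idea is the ``$e+v-ve$'' adjustment used in the inductive step, which is the standard device for enlarging a local unit to accommodate one more element without disturbing those it already handles. One could alternatively run a single induction producing $u$ with $ux=xu=x$ directly, applying the adjustment on both sides at each stage, but separating out the one-sided statements keeps the bookkeeping cleaner and isolates the combination step.
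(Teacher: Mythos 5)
Your proof is correct. Note that the paper does not prove this lemma itself -- it is quoted from Ara's paper (\cite[Lemma 2.2]{arasunital}) -- so there is no in-paper argument to compare against; your two-stage argument (one-sided common units by induction, using the adjustment $e'=e+v-ve$ applied to $x_{n+1}-ex_{n+1}$, followed by the combination $u=e+f-fe$ of a common left unit $e$ and a common right unit $f$) is a complete, elementary and valid proof, essentially the classical s-unital argument going back to Tominaga and used by Ara. You are also right that the asymmetry in the final step is the crux: $u=e+f-fe$ works because $ex=x$ collapses $fex$ to $fx$ and $xf=x$ collapses $xfe$ to $xe$, whereas the symmetric-looking choice $e+f-ef$ gives no such cancellation.
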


The situation is even better for s-unital exchange rings, by the following lemma of
Gonz\'alez-Barroso and Pardo. Recall from \cite[p. 412]{Ara} that a (possibly nonunital) ring $R$
is an \emph{exchange ring} if for each $x\in R$, there exist an idempotent $e\in R$ and elements
$r,s\in R$ such that $e= xr= x+s-xs$ (or the left-right symmetric version of this condition).

\begin{lemma} \label{sunitalexchange} {\rm \cite[Lemma 2.2]{GP}} Every s-unital exchange ring has
local units.  \end{lemma}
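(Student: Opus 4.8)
The plan is to show directly that every finite subset of $R$ lies in a corner, by combining the one-sided idempotents produced by the exchange condition into a single two-sided local unit. So fix a finite subset $F\subseteq R$. By Lemma~\ref{sunital} there is an element $u\in R$ with $ux=xu=x$ for all $x\in F$, and it suffices to produce an idempotent $g\in R$ with $gx=xg=x$ for every $x\in F$, since then $F\subseteq gRg$.

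First I would apply the exchange condition to $u$, writing $e=ur=u+s-us$ with $e$ an idempotent and $r,s\in R$. For $x\in F$ one computes $xe=xu+xs-xus=x+xs-xs=x$, using $xu=x$, so $e$ is a right unit for all of $F$. Next I apply Lemma~\ref{sunital} a second time, now to the \emph{enlarged} finite set $F\cup\{e\}$, obtaining $u'\in R$ with $u'y=yu'=y$ for all $y\in F\cup\{e\}$, and then apply the left-right symmetric form of the exchange condition to $u'$, getting an idempotent $f=r'u'=u'+s'-s'u'$ with $r',s'\in R$. Then $fy=u'y+s'y-s'u'y=y$ for every $y\in F\cup\{e\}$, so $f$ is a left unit for $F$ and, crucially, $fe=e$.

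It remains to merge $e$ and $f$. A routine computation using only $fe=e$, $e^2=e$, $f^2=f$ shows that $g:=e+f-ef$ is idempotent; and $e=ur\in R$, $f=r'u'\in R$, so $g\in R$. Moreover, for $x\in F$ we get $gx=ex+fx-e(fx)=ex+x-ex=x$ and $xg=xe+xf-(xe)f=x+xf-xf=x$, so $g$ is the desired two-sided unit for $F$. Hence $F\subseteq gRg$, and since $F$ was arbitrary, $R$ has local units.

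The main obstacle is precisely this last merging step: the exchange condition only yields one-sided units, and a sum of a left unit and a right unit need not be idempotent. What rescues the argument is to build the left unit $f$ from a local unit of the enlarged set $F\cup\{e\}$ rather than of $F$, which forces the compatibility relation $fe=e$ — exactly what is needed to make $e+f-ef$ idempotent and a two-sided unit for $F$.
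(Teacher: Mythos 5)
Your argument is correct. Note that the paper itself offers no proof of this lemma -- it simply cites \cite[Lemma 2.2]{GP} -- so there is no internal argument to compare against; judged on its own, your proof is complete. The key points all check out: applying the exchange condition to an s-unit $u$ for $F$ gives an idempotent $e=ur=u+s-us$ with $xe=x$ for $x\in F$; enlarging to $F\cup\{e\}$ before producing the left-handed idempotent $f$ is exactly what forces $fe=e$; and with $fe=e$, $e^2=e$, $f^2=f$ one indeed gets $(e+f-ef)^2=e+f-ef$, and $g=e+f-ef$ satisfies $gx=xg=x$ for all $x\in F$, so $F\subseteq gRg$. The only point worth flagging is that you invoke both one-sided forms of the exchange condition (the right-handed form for $u$, the left-handed form for $u'$); a given exchange ring is defined by one of these, so using the other rests on the left-right symmetry of the exchange property. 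This symmetry is a standard fact (and is what the parenthetical ``or the left-right symmetric version'' in the paper's definition is alluding to), but it is an ingredient being imported, not something your computation establishes, so it deserves an explicit word of justification or a citation.
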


\begin{definitions} {\rm Recall that idempotents $e$ and $f$ in a ring $R$ are (Murray-von Neumann)
 \emph{equivalent}
(written $e\sim f$), provided there exist $x,y\in R$ such that $e=xy$ and $f=yx$ (after replacing
such $x$ and $y$ by $exf$ and $fye$, we may also assume that $x\in eRf$ and $y\in fRe$). This is
equivalent to demanding that $eR\cong fR$ as right $R$-modules. We say that $e$ and $f$ are
\emph{orthogonal} (which we denote by $e\perp f$) when $ef=fe=0$. In this situation, $e+f$ is also
idempotent and $(e+f)R=eR\oplus fR$. The \emph{orthogonal sum} of $e$ and $f$ is the idempotent
$e\oplus f = \leftmat e&0\\ 0&f \rightmat$ in $\MM_2(R)$. Orthogonal sums of larger (finite)
collections of idempotents are defined analogously. To deal with such idempotent matrices, the
definition of equivalence is extended in the natural way: idempotents $p\in \MM_m(R)$ and $q\in
\MM_n(R)$ are \emph{equivalent} if and only if there exist $x\in \MM_{m,n}(R)$ and $y\in
\MM_{n,m}(R)$ such that $p=xy$ and $q=yx$. For example, if $e$ and $f$ are orthogonal idempotents
in $R$, then $e+f \sim e\oplus f$.

We say that $e\leq f$ if $ef=fe=e$. We will also write $e<f$ when $e\leq f$ but $e\neq f$. We say
that $e$ is \emph{subequivalent} to $f$ (denoted $e\lesssim f$) when $e\sim g \leq f$ for some
idempotent $g\in R$. This holds if and only if there exist $x,y\in R$ such that $e=xfy$ (given
such $x$ and $y$, take $g=fyexf$). The idempotent $e$ is called \emph{infinite} if there exists an
idempotent $f$ such that $e\sim f<e$; equivalently, if there is a nonzero idempotent $g\in R$ such
that $e\sim e\oplus g$. If $e$ is not infinite, we say that $e$ is \emph{finite}; this holds if
and only if for any $x,y\in eRe$, we have $xy=e$ only if $yx=e$. Finally, $e$ is \emph{properly
infinite} provided $e\ne 0$ and $e\oplus e \lesssim e$.
 }\end{definitions}

Some of our results involve the monoid of equivalence classes of idempotent matrices over a ring.
We recall the construction here, along with some standard concepts associated with abelian
monoids.

\begin{definition} \label{VRdef} {\rm
Given a ring $R$ and an idempotent $e$ in a matrix ring $\MM_{\bullet}(R)$, write $[e]$ for the
(Murray-von Neumann) equivalence class of $e$. The set of these equivalence classes becomes an
abelian monoid, denoted $V(R)$, with respect to the addition operation given by $[e]+[f]= [e\oplus
f]$. (Alternatively, when $R$ is unital, $V(R)$ may be constructed as the monoid of isomorphism
classes of finitely generated projective right $R$-modules, with addition induced from direct
sum.) In case $R$ is a C$^*$-algebra, every idempotent matrix over $R$ is equivalent to a projection
matrix, and so the elements of $V(R)$ can be viewed as equivalence classes of projections.
 }\end{definition}

\begin{definitions} \label{monoidconcepts} {\rm Let $V$ be an abelian monoid. The \emph{algebraic
preordering} on $V$ is
the relation $\le$ defined as follows: $x\le y$ if and only if there exists $z\in V$ such that
$x+z=y$. (This relation is in general only reflexive and transitive, not necessarily
antisymmetric.) For idempotent matrices $e$ and $f$ over a ring $R$, we have $[e] \le [f]$ in
$V(R)$ if and only if $e\lesssim f$. Thus, for example, $e$ is properly infinite if and only if
$[e]\ne 0$ and $2[e] \le [e]$.

The monoid $V$ is \emph{conical} if $0$ is the only unit in $V$, that is, for $x,y\in V$ we have
$x+y=0$ only if $x=y=0$. For instance, $V(R)$ is conical.

An \emph{ideal} (or \emph{o-ideal}) in $V$ is any submonoid $I$ such that for all $x,y\in V$, we
have $x+y\in I$ only if $x,y\in I$ (equivalently, $I$ is hereditary with respect to the algebraic
preordering). Given an o-ideal $I$, there is a congruence $\equiv_I$ on $V$ defined as follows:
$x\equiv_I y$ if and only if there exist $a,b\in I$ such that $x+a= y+b$. We write $V/I$ for the
monoid $V/\equiv_I$, noting that such a quotient is always conical. As
with factor rings, we use overbars to denote congruence classes in quotient monoids. If $I$ is an
ideal of a ring $R$, then $V(I)$ is naturally isomorphic to a submonoid of $V(R)$. Moreover,
assuming $R$ is an exchange ring, $V(I)$ is an ideal of $V(R)$ with $V(R)/V(I) \cong V(R/I)$
\cite[Proposition 1.4]{AGOP}, and every ideal of $V(R)$ has the form $V(I)$ for some
(semiprimitive) ideal $I$ \cite[Teorema 4.1.7]{Par}.

An \emph{order-unit} for $V$ is an element $u\in V$ such that for each $x\in V$, there is some
$m\in\NN$ with $x\le mu$. This is the same as requiring that the ideal generated by $u$ equals
$V$. If $R$ is a unital ring, then $[1_R]$ is an order-unit in $V(R)$.

We say that $V$ is \emph{simple} (as an abelian monoid) if
 \begin{enumerate}
 \item There exist nonunits in $V$;
 \item the only ideals of $V$ are $V$ and the group of units of $V$.
 \end{enumerate}
In case $V$ is conical, it is simple if and only if it is nonzero and all nonzero elements are
order-units.
 }\end{definitions}

\section{Infinite and properly infinite elements}

Our basic definitions, like those in \cite{KR}, are in terms of equations involving $2\times2$
matrices over a ring. The following concepts will simplify manipulations with matrix equations.
While we use $\oplus$ in the same sense as \cite{KR}, our algebraic version of the relation
$\precsim$ differs from the Cuntz relation $\precsim$ (or $\lesssim$) for positive elements in a
C$^*$-algebra $A$. (It is closest to the relation $\underset\approx<$ defined in \cite[Section
1]{Cnz}, except that Cuntz's definition allows factors from the unitization of $A$.)

\begin{definitions} {\rm Let $R$ be a ring, and suppose $x$ and $y$ are square matrices over $R$, say
$x\in \MM_k(R)$ and $y\in \MM_n(R)$. We shall use $\oplus$ to denote block sums of matrices; thus,
 $$x\oplus y = \leftmat x&0\\ 0&y \rightmat \in \MM_{k+n}(R),$$
 and similarly for block sums of larger numbers of matrices. We define a relation $\precsim$ on
 matrices over $R$ by declaring that $x\precsim y$ if and only
 if there exist $\alpha\in \MM_{kn}(R)$ and $\beta\in \MM_{nk}(R)$ such that $x= \alpha y\beta$.

Observe that if $x$ and $y$ are idempotent matrices, then $x\precsim y$ if and only if $x\lesssim
y$. }\end{definitions}

\begin{lemma} \label{precsim} Let $R$ be a ring. For {\rm (iii)--(vi)}, assume that $R$ is
s-unital.
\begin{enumerate}[{\rm (i)}]
\item If $x$, $y$, $z$ are square matrices over $R$ and $x\precsim y\precsim z$, then $x\precsim z$.
\item If $x_1,y_1,\dots,x_n,y_n$ are square matrices over $R$ satisfying $x_i\precsim y_i$ for all
$i=1,\dots,n$, then $x_1\oplus \cdots\oplus x_n \precsim y_1\oplus \cdots\oplus y_n$.
\item If $x$ and $y$ are square matrices over $R$, then $x\precsim x$ and $x\oplus y\precsim y\oplus x$.
\item If $x$ is a square matrix over $R$, then $x\oplus\bfz\precsim x\oplus\bfz'$ for any square zero
matrices $\bfz$ and $\bfz'$. In particular, $x\precsim x\oplus \bfz\precsim x$ for any $\bfz$.
\item If $x,y\in \MM_k(R)$ for some $k$, then $xy,yx\precsim x$.
\item If $x_1,\dots,x_n \in \MM_k(R)$ for some $k$, then $x_1\pm x_2\pm \cdots\pm x_n \precsim x_1\oplus
\cdots\oplus x_n$.
\end{enumerate}
\end{lemma}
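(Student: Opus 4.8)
The plan is to prove the six statements essentially in order, reducing each later one to the relation $x = \alpha y \beta$ for suitable rectangular matrices, and using Lemma~\ref{sunital} to supply the ``local units'' needed whenever we must absorb a factor. The basic bookkeeping tool is that a block sum $\alpha_1 \oplus \alpha_2$ of rectangular matrices multiplies block-diagonally, so that witnesses for several $\precsim$ relations can be combined into a single witness for a block-sum relation.

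For (i), if $x = \alpha y \beta$ and $y = \gamma z \delta$, then $x = (\alpha\gamma) z (\delta\beta)$, and the dimensions of $\alpha\gamma$ and $\delta\beta$ are exactly what is required; no s-unitality is needed. For (ii), given $x_i = \alpha_i y_i \beta_i$, set $\alpha = \alpha_1 \oplus \cdots \oplus \alpha_n$ and $\beta = \beta_1 \oplus \cdots \oplus \beta_n$; then $\alpha (y_1 \oplus \cdots \oplus y_n) \beta = x_1 \oplus \cdots \oplus x_n$ by block multiplication, again with no hypothesis on $R$. For (iii), $x \precsim x$ needs a witness $x = \alpha x \beta$; over an s-unital ring choose, via Lemma~\ref{sunital}, an element $u$ acting as a two-sided identity on all entries of $x$, and take $\alpha = \beta = u I_k$ (the scalar matrix). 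The relation $x \oplus y \precsim y \oplus x$ is witnessed by the appropriate block permutation matrices with entries $u$'s in place of $1$'s, where $u$ is chosen to act as an identity on all entries of $x$ and $y$ simultaneously. Statement (iv) is similar: enlarging or shrinking a trailing block of zeros is achieved by conjugating with a partial ``identity'' matrix built from such a $u$, padded with zero rows/columns as needed; the ``in particular'' clause follows by applying the first part in both directions and using (i).

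For (v), write $z \in \MM_k(R)$ for the $2k \times 2k$ matrix $\leftmat 0 & u I_k \\ 0 & 0 \rightmat$ where $u$ is a local unit for the entries of $x$ and $y$; then a direct computation shows $xy = \alpha\, x\, \beta$ for suitable $\alpha, \beta$ expressed through such elementary blocks, and symmetrically for $yx$. (Concretely, $xy = (u I_k)\, x\, (y)$ once $u$ acts as an identity on the entries of $x$, so in fact we only need $xy = \alpha x \beta$ with $\alpha = uI_k$ and $\beta = y$, and dually $yx = x \cdot (\text{something})$ fails on the wrong side, so one uses $yx \precsim y \precsim$\,... — more carefully, $yx = y \cdot x$ gives $yx \precsim y$, and then one still needs $yx \precsim x$; here use $yx = (y)\,x\,(uI_k)$ with $u$ an identity on entries of $x$, so $yx \precsim x$.) Statement (vi) is then the main payoff, and I expect it to be the only step with any real content. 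The idea is: $x_1 \pm \cdots \pm x_n$ equals a product $A \,(x_1 \oplus \cdots \oplus x_n)\, B$ where $A$ is the $k \times nk$ block row $(uI_k, \pm uI_k, \dots, \pm uI_k)$ and $B$ is the $nk \times k$ block column $(uI_k, uI_k, \dots, uI_k)^{T}$, with $u$ chosen by Lemma~\ref{sunital} to act as a two-sided identity on all entries of all the $x_i$. Block-multiplying, $A (x_1 \oplus \cdots \oplus x_n) B = \sum_i (\pm u I_k) x_i (u I_k) = \sum_i \pm x_i$, as desired. The main obstacle throughout is purely notational: keeping the rectangular block dimensions consistent and making sure a single element $u$ is chosen \emph{after} all the relevant entries are fixed, so that it simultaneously neutralizes every entry appearing in the computation; Lemma~\ref{sunital} is exactly what licenses this.
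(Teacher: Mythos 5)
Your proposal is correct and follows essentially the same route as the paper: (i) and (ii) by composing and block-summing witnesses, and (iii)--(vi) by choosing, via Lemma~\ref{sunital}, a single element $u$ acting as a two-sided identity on all relevant entries and writing down explicit block-matrix factorizations (in (v) exactly the paper's witnesses $xy=uxy$ and $yx=yxu$, and in (vi) the same row-of-$u$'s times block diagonal times column-of-$\pm u$'s, with the signs merely placed on the other factor). The only differences are cosmetic, so there is nothing to fix.
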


\begin{proof} (i) and (ii) are clear.

(iii) There exist square matrices $u$ and $v$ over $R$ such that $ux=xu=x$ and $vy=yv=y$. Then
$uxu=x$, whence
$x\precsim x$, and $\leftmat x&0\\ 0&y \rightmat= \leftmat 0&u\\ v&0 \rightmat \leftmat y&0\\
0&x \rightmat \leftmat 0&v\\ u&0 \rightmat$, showing that $x\oplus y\precsim y\oplus x$.

(iv) There is a square matrix $u$ over $R$ such that $ux=xu=x$. Inserting rectangular zero
matrices $\bfz_i$ of the appropriate sizes, we have
$$\leftmat x&\bfz_1\\
\bfz_2&\bfz \rightmat= \leftmat u&\bfz_3 \rightmat \leftmat x&\bfz_4\\
\bfz_5&\bfz' \rightmat \leftmat u\\ \bfz_6 \rightmat,$$
 whence $x\oplus\bfz\precsim x\oplus\bfz'$.

(v) There exists $u\in \MM_k(R)$ such that $ux=xu=x$. Since $xy=uxy$ and $yx=yxu$, we immediately
see that $xy,yx\precsim x$.

(vi) There exists $u \in \MM_k(R)$ such that $ux_i=x_iu=x_i$ for all $i$. Now
$$x_1\pm x_2\pm \cdots\pm x_n= \leftmat u &u &\cdots &u \rightmat \leftmat x_1 &0 &\cdots &0\\
0 &x_2 &&0\\ \vdots &&\ddots &\vdots\\ 0 &0 &\cdots &x_n \rightmat \leftmat u\\ \pm u\\ \vdots\\
\pm u \rightmat,$$
 whence $x_1\pm x_2\pm \cdots\pm x_n \precsim x_1\oplus \cdots\oplus x_n$.  \end{proof}

\begin{definition} \label{Kadef} {\rm
Let $R$ be a ring. For each element $a\in R$, we define
$$K(a)= \{x\in R \mid a\oplus x \precsim a \}. $$
 }\end{definition}

\begin{lemma} \label{equivKa} Let $R$ be a ring and $a,x\in R$. Then the following conditions are
equivalent:
\begin{enumerate}[{\rm (i)}]
\item $x\in K(a)$.
\item $a\oplus x\precsim a\oplus 0$.
\item There exist $\alpha_1,\alpha_2,\beta_1,\beta_2 \in R$ such that $\leftmat
a&0\\ 0&x \rightmat = \leftmat \alpha_1&0\\ \alpha_2&0 \rightmat \leftmat a&0\\ 0&0 \rightmat
\leftmat \beta_1&\beta_2\\ 0&0 \rightmat$.
\end{enumerate}
\end{lemma}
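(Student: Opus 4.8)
The plan is to prove the equivalence of (i), (ii), (iii) by a short cycle of implications, using the basic manipulations from Lemma~\ref{precsim}. The definition gives $x \in K(a)$ iff $a \oplus x \precsim a$, so the content is to see why the relation $a \oplus x \precsim a$ can be ``padded out'' to $a \oplus x \precsim a \oplus 0$ (condition (ii)) and then why the latter unwinds to the explicit block-matrix factorization in (iii).

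First I would show (i)$\Rightarrow$(ii). Assuming $a \oplus x \precsim a$, part (iv) of Lemma~\ref{precsim} gives $a \precsim a \oplus 0$ (taking the zero matrix to be $1 \times 1$), and then transitivity (part (i)) yields $a \oplus x \precsim a \oplus 0$. Next, (ii)$\Rightarrow$(iii): by definition of $\precsim$ there exist rectangular matrices $\alpha \in \MM_{2,2}(R)$ and $\beta \in \MM_{2,2}(R)$ with $a \oplus x = \alpha\, (a \oplus 0)\, \beta$, and I would simply rename the entries $\alpha = \leftmat \alpha_1 & \alpha_1' \\ \alpha_2 & \alpha_2' \rightmat$, $\beta = \leftmat \beta_1 & \beta_2 \\ \beta_1' & \beta_2' \rightmat$. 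Because the middle matrix $a \oplus 0$ has its only nonzero entry in the $(1,1)$ position, the product $\alpha (a \oplus 0) \beta$ depends only on the first column of $\alpha$ and the first row of $\beta$; hence we may as well take $\alpha_1' = \alpha_2' = \beta_1' = \beta_2' = 0$, which is exactly the shape of the factorization displayed in (iii). Finally, (iii)$\Rightarrow$(i): the displayed equation in (iii) is literally a witness that $a \oplus x \precsim a \oplus 0$, and then $a \oplus 0 \precsim a$ by Lemma~\ref{precsim}(iv) combined with transitivity gives $a \oplus x \precsim a$, i.e. $x \in K(a)$.

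I do not expect any serious obstacle here; the lemma is essentially bookkeeping about how $\precsim$ interacts with zero-padding. The one point that deserves a line of care is that $R$ is not assumed s-unital in this statement, so I should only invoke those parts of Lemma~\ref{precsim} that hold for arbitrary rings — namely (i) and (iv), both of which are valid without the s-unital hypothesis (part (iv)'s proof uses s-unitality, but the special case $a \oplus 0 \precsim a$ and $a \precsim a \oplus 0$ needed here is immediate: $a = 1 \cdot (a\oplus 0 \text{'s }(1,1)\text{-block}) \cdot 1$ can be written directly as a block product, since $a = a \cdot a' \cdot a''$ is not what we need — rather $a \oplus 0$ is obtained from $a$ by the factorization $\leftmat a \\ 0 \rightmat = \leftmat a \\ 0 \rightmat \cdot (\,)$... ). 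To be safe I would just give the explicit rectangular-matrix identities $a \oplus 0 = \leftmat 1_{k} \\ 0 \rightmat a \leftmat 1_k & 0 \rightmat$-style factorizations inline, observing that no unit is actually required because one can absorb the padding into the shapes of $\alpha$ and $\beta$ themselves. This keeps the proof self-contained and honest about the hypotheses, and the whole argument should occupy only a few lines.
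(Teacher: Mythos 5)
There is a genuine gap, and it sits exactly at the point you flagged but did not resolve. Your implications (i)$\Rightarrow$(ii) and (iii)$\Rightarrow$(i) route through the relations $a \precsim a\oplus 0$ and $a\oplus 0 \precsim a$, i.e.\ through Lemma~\ref{precsim}(iv). That part of Lemma~\ref{precsim} is stated only for s-unital rings, and this is not a removable technicality: unwinding the definition, either of these relations forces $a = \alpha_1 a \beta_1$ for some $\alpha_1,\beta_1\in R$, i.e.\ $a\in RaR$, which can fail badly in a general ring (for a nonzero $a$ in a ring with zero multiplication, as in Remark~\ref{spiIMPLIESpi}, one has $RaR=0$, so neither $a \precsim a\oplus 0$ nor $a\oplus 0\precsim a$ holds). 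Your claimed repair does not work: the ``explicit'' factorization you offer is of the form $a\oplus 0 = \leftmat 1 \\ 0\rightmat a \leftmat 1 & 0\rightmat$, which uses an identity element the ring need not have, and the shapes of $\alpha$ and $\beta$ in the definition of $\precsim$ are fixed by the sizes of the two matrices being compared, so there is nothing to ``absorb'' at the level of the abstract relation. Note that the present lemma is stated for an arbitrary ring precisely so that $K(a)$ makes sense before any s-unitality is imposed, so this hypothesis cannot be smuggled in.

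The fix — and the paper's actual argument — is to never use $a \precsim a\oplus 0$ or $a\oplus 0\precsim a$ as standalone relations, but to pad or strip the specific witnessing matrices. For (i)$\Rightarrow$(iii): from $a\oplus x = \leftmat \alpha_1\\ \alpha_2\rightmat a \leftmat \beta_1 & \beta_2\rightmat$ one writes directly $a\oplus x = \leftmat \alpha_1 & 0\\ \alpha_2 & 0\rightmat (a\oplus 0) \leftmat \beta_1 & \beta_2\\ 0 & 0\rightmat$, which is (iii) and in particular (ii). Conversely, for (ii)$\Rightarrow$(i): if $a\oplus x = \alpha(a\oplus 0)\beta$ with $\alpha=(\alpha_{ij})$, $\beta=(\beta_{ij})$ in $\MM_2(R)$, then expanding shows only the first column of $\alpha$ and the first row of $\beta$ contribute, so $a\oplus x = \leftmat \alpha_{11}\\ \alpha_{21}\rightmat a \leftmat \beta_{11} & \beta_{12}\rightmat$, which is literally a witness for $a\oplus x \precsim a$. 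You already made exactly this ``only the first column and first row matter'' observation in your (ii)$\Rightarrow$(iii) step; applying it once more to close the cycle, and using the explicit padding for (i)$\Rightarrow$(iii), gives a unit-free proof. As written, however, two of your three implications rest on relations that are false for general rings, so the proposal does not prove the lemma in the stated generality.
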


\begin{proof} (i) $\Rightarrow$ (iii). By assumption, there exist
 matrices $\leftmat \alpha_1\\ \alpha_2 \rightmat \in \MM_{21}(R)$ and $\leftmat \beta_1 &\beta_2
\rightmat \in \MM_{12}(R)$ such that $\leftmat  a & 0 \\ 0 & x \rightmat = \leftmat \alpha_1\\
\alpha_2 \rightmat a \leftmat \beta_1 &\beta_2 \rightmat$,
 from which it follows that
$$\leftmat
a&0\\ 0&x \rightmat = \leftmat \alpha_1&0\\ \alpha_2&0 \rightmat \leftmat a&0\\ 0&0 \rightmat
\leftmat \beta_1&\beta_2\\ 0&0 \rightmat.$$

(iii) $\Rightarrow$ (ii). This is clear.

(ii) $\Rightarrow$ (i). In the s-unital case, this is immediate from the fact that $a\oplus 0
\precsim a$ (Lemma \ref{precsim}(iv)). In general, (ii) gives us matrices $\alpha,\beta \in
\MM_2(R)$ such that $a\oplus x= \alpha (a\oplus 0) \beta$. If we write
 $\alpha= (\alpha_{ij})$ and $\beta= (\beta_{ij})$, then
$$\leftmat  a & 0 \\ 0 & x
\rightmat = \leftmat \alpha_{11}\\ \alpha_{21} \rightmat a \leftmat \beta_{11} &\beta_{12}
\rightmat,$$
 yielding $a\oplus x \precsim a$ and $x\in K(a)$.  \end{proof}

\begin{lemma} \label{Kisideal} If $R$ is an s-unital ring and $a\in R$, then
\begin{enumerate}[{\rm (i)}]
\item $K(a)$ is a two-sided ideal of $R$.
\item $K(a)\subseteq RaR$. In fact, each $x\in K(a)$ satisfies $x\precsim a$.
\end{enumerate}
\end{lemma}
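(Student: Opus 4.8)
The plan is to verify the two claims by unpacking the definition $K(a) = \{x \in R \mid a \oplus x \precsim a\}$ together with the characterization in Lemma \ref{equivKa} and the basic manipulation rules of Lemma \ref{precsim}. For part (i), I would first observe that $0 \in K(a)$ since $a \oplus 0 \precsim a$ by Lemma \ref{precsim}(iv) (using s-unitality), so $K(a)$ is nonempty. To show $K(a)$ is closed under addition, suppose $x, y \in K(a)$; then $a \oplus x \precsim a$ and $a \oplus y \precsim a$. Using Lemma \ref{precsim}(ii) to combine, $a \oplus x \oplus a \oplus y \precsim a \oplus a$, and then I would need a short argument — inserting a unit $u$ for $\{a, x, y\}$ via Lemma \ref{sunital} — to massage $a \oplus (x+y)$ into $a \oplus x \oplus y \oplus \mathbf{0}$ (this is essentially Lemma \ref{precsim}(vi) applied in the right block), which is $\precsim a \oplus x \oplus a$ (reusing $a \oplus y \precsim a$ and reordering via (iii)), and finally $\precsim a \oplus a \precsim a$ using $a \oplus a \precsim a$? — but wait, we do not have $a \oplus a \precsim a$ in general. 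So instead I would route the computation entirely through condition (ii)/(iii) of Lemma \ref{equivKa}: since $a \oplus x \precsim a \oplus 0$ and $a \oplus y \precsim a \oplus 0$, I want $a \oplus (x+y) \precsim a \oplus 0$. Writing out the witnessing $2\times 2$ matrix factorizations through $a \oplus 0$ and combining them with block matrices (four blocks, passing through $a \oplus 0 \oplus a \oplus 0 \precsim a \oplus 0$, the latter again by Lemma \ref{precsim}(iv)), one assembles a factorization of $a \oplus (x+y)$ through $a \oplus 0$; the cross term $x+y$ is produced by row/column operations exactly as in the proof of Lemma \ref{precsim}(vi).

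For the two-sided ideal property, the cleanest route is: if $x \in K(a)$ and $r \in R$, I must show $rx, xr \in K(a)$. From $a \oplus x \precsim a \oplus 0$ write $a \oplus x = \alpha(a \oplus 0)\beta$ with $\alpha, \beta \in \MM_2(R)$; then post-multiplying by $\operatorname{diag}(u, r)$ and pre-multiplying by $\operatorname{diag}(u,u)$ where $u$ is a unit for $\{a, x\}$ (Lemma \ref{sunital}) yields, after checking the $(1,1)$ entry is still $a$ (it is, since $uau = a$) and the $(2,2)$ entry becomes $uxr = xr$, a factorization showing $a \oplus xr \precsim a \oplus 0$, hence $xr \in K(a)$; the left-sided case is symmetric. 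Alternatively one can cite Lemma \ref{equivKa}(iii) directly and absorb the multiplier into $\beta_1, \beta_2$ (resp. $\alpha_1, \alpha_2$).

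For part (ii), the key point $x \precsim a$ for every $x \in K(a)$ is essentially immediate: $x \precsim a \oplus x$ (by Lemma \ref{precsim}(iii), since $a \oplus x \precsim x \oplus a$ and one can extract $x \precsim x \oplus a$... more directly, $x = x \oplus \mathbf{0} \precsim x \oplus a \precsim a \oplus x$ via (iv) and (iii)) and $a \oplus x \precsim a$ by hypothesis, so $x \precsim a$ by transitivity (Lemma \ref{precsim}(i)). The inclusion $K(a) \subseteq RaR$ then follows because $x \precsim a$ means $x = \alpha a \beta$ for suitable matrices, and reading off entries (using a unit $u$ for $\{x\}$ so that $x = uxu$) exhibits $x$ as a finite sum of elements of the form $r a s$ with $r, s \in R$, i.e., $x \in RaR$. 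The main obstacle is the bookkeeping in part (i): one must be careful that s-unitality is genuinely needed (to insert units and to know $a \oplus 0 \precsim a$), and that the block-matrix factorizations producing the sum $x + y$ and the products $rx$, $xr$ are written so that the $(1,1)$-block stays exactly $a$ rather than something merely equivalent to it; everything else is routine matrix algebra.
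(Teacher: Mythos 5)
Your treatment of closure under left/right multiplication and of part (ii) is fine (and essentially matches the paper: $a\oplus xr\precsim a\oplus x\precsim a$, resp.\ $x\precsim 0\oplus x\precsim a\oplus x\precsim a$, so $x=\alpha a\beta\in RaR$ with $\alpha,\beta\in R$ since both matrices are $1\times 1$). The genuine gap is in closure under addition. You correctly abandon your first attempt because $a\oplus a\precsim a$ is not available, but your replacement route smuggles the very same assumption back in: the step ``$a\oplus 0\oplus a\oplus 0\precsim a\oplus 0$, the latter again by Lemma \ref{precsim}(iv)'' is false in general. Lemma \ref{precsim}(iv) only lets you insert or delete \emph{zero} blocks around a fixed matrix; it never lets you collapse two $a$-blocks into one. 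Indeed, after stripping the zero blocks, $a\oplus 0\oplus a\oplus 0\precsim a\oplus 0$ is equivalent to $a\oplus a\precsim a$, i.e.\ to $a$ being properly infinite (or zero), which is exactly what cannot be assumed here. So the parallel combination of the two witnessing factorizations (landing in $a\oplus 0\oplus a\oplus 0$) cannot be made to work; any argument that puts two copies of $a$ on the right-hand side at the same time is doomed for the same reason.

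The correct idea is to absorb $x$ and $y$ \emph{sequentially}, never using more than one copy of $a$ on the right. This is what the paper does:
$$a\oplus (x\pm y)\ \precsim\ a\oplus x\oplus y\ \precsim\ a\oplus y\ \precsim\ a.$$
Here the first relation comes from Lemma \ref{precsim}(vi) (applied to $x\pm y$) together with (ii) and (iii); the second relation is Lemma \ref{precsim}(ii) applied to the block decomposition $(a\oplus x)\oplus y$, combining $a\oplus x\precsim a$ (the hypothesis on $x$) with $y\precsim y$; and the last is the hypothesis on $y$. With this chain in place of your four-block construction, and with your (correct but more computational) arguments for $rx$, $xr$ and for part (ii) --- which the paper obtains more quickly from Lemma \ref{precsim}(v),(ii) and from $x\precsim 0\oplus x$ --- the proof of Lemma \ref{Kisideal} is complete.
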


\begin{proof}
(i) If $x\in K(a)$ and $r\in R$, then $a\oplus rx \precsim a\oplus x \precsim a$, whence $rx\in
K(a)$. Similarly, $xr\in K(a)$. If $x,y\in K(a)$, then
$$a\oplus (x\pm y) \precsim a\oplus x\oplus y \precsim a\oplus y \precsim a,$$
whence $x\pm y \in K(a)$.

(ii) If $x\in K(a)$, then $x\precsim 0\oplus x\precsim a\oplus x\precsim a$. Consequently, there
exist $\alpha,\beta \in R$ such that $x=\alpha a\beta \in RaR$.
\end{proof}

\begin{definitions} \label{infdef} {\rm We will say that an element $a$ in a ring $R$ is
\emph{infinite} if $K(a)\neq 0$, that is, if there exists a nonzero element $x\in R$ such that
$a\oplus x \precsim a$. We call an element $a\in R$ \emph{properly infinite} if $a\ne 0$ and $a\in
K(a)$, the latter condition being equivalent to $a\oplus a \precsim a$.
 Finally, we will say that $a\in R$ is
\emph{finite} if it is not infinite.}
\end{definitions}

\begin{remarks} \label{generalize} {\rm Note that, by Lemma \ref{Kisideal}(ii), when $R$ is an s-unital
ring, we get that $a\in R$ is properly infinite if and only if $K(a)=RaR$.

We observe that the concepts above agree with the classical ones when applied to an idempotent
$e\in R$, as follows. Here we do not need to assume s-unitality.

If $e$ is infinite in the usual sense, there is a nonzero idempotent $f\in R$ such that $e\oplus
f\sim e$, and so $e\oplus f \precsim e$, whence $e$ is infinite in the sense of Definition
\ref{infdef}.

Conversely, if $e$ is infinite in the sense of Definition \ref{infdef}, there exist
$x,\alpha_1,\alpha_2,\beta_1,\beta_2 \in R$ such that $x\ne 0$ and $\leftmat e&0\\ 0&x \rightmat =
\leftmat \alpha_1\\ \alpha_2 \rightmat e \leftmat \beta_1&\beta_2 \rightmat$, that is,
 $$\alpha_1 e\beta_1=e, \qquad \alpha_1 e\beta_2=\alpha_2 e\beta_1=0,
\qquad \alpha_2 e\beta_2= x.$$
 Since we can replace $\alpha_1$ and $\beta_1$ by $e\alpha_1e$ and $e\beta_1e$, there is no loss of
generality in assuming that $\alpha_1,\beta_1 \in eRe$. Now $\alpha_1\beta_1 =e$, and so $f=
\beta_1\alpha_1$ is an idempotent in $eRe$ with $f\sim e$. Since $f\beta_2= \beta_1\alpha_1
e\beta_2 =0$, we have $\alpha_2(e-f)\beta_2= \alpha_2e\beta_2= x\ne 0$, whence $f<e$. This shows
that $e$ is infinite in the usual sense.

Finally, assuming that $e\ne 0$, observe that $e$ is properly infinite in the sense of Definition
\ref{infdef} if and only if $e\oplus e \precsim e$, if and only if $e\oplus e \lesssim e$, i.e.,
if and only if $e$ is properly infinite in the usual sense. }
\end{remarks}

\begin{lemma}\label{finiteinquotient} If $R$ is an s-unital ring and $a\in R$, then $a+K(a)$ is
finite in $R/K(a)$.
\end{lemma}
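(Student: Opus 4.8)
The plan is to show that if $\overline{a} = a + K(a)$ were infinite in $R/K(a)$, then there would be an element of $R$ witnessing infiniteness of $a$ but lying outside $K(a)$, contradicting the definition of $K(a)$. Concretely, suppose $\overline a$ is infinite in $\overline R = R/K(a)$. By Definition \ref{infdef} (applied in $\overline R$) there is a nonzero $\overline y \in \overline R$ with $\overline a \oplus \overline y \precsim \overline a$ in $\overline R$; equivalently (Lemma \ref{equivKa}, since $\overline R$ is s-unital as a quotient of an s-unital ring), there are $\overline{\alpha_1}, \overline{\alpha_2}, \overline{\beta_1}, \overline{\beta_2} \in \overline R$ with
$$\alpha_1 a \beta_1 \equiv a, \qquad \alpha_1 a \beta_2 \equiv \alpha_2 a \beta_1 \equiv 0, \qquad \alpha_2 a \beta_2 \equiv y \pmod{K(a)},$$
for suitable lifts $\alpha_i, \beta_i, y \in R$ with $y \notin K(a)$.

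The first step is to lift the matrix equation itself, not just its entries. The obstruction is that $\alpha_1 a \beta_1 - a$, the two off-diagonal entries, and $\alpha_2 a \beta_2 - y$ are merely in $K(a)$, not zero. To absorb these error terms I would work in $\MM_2(R)$: the equation says that $a \oplus y - \alpha(a\oplus 0)\beta \in \MM_2(K(a))$ for the obvious $\alpha, \beta \in \MM_2(R)$. Since $K(a)$ is a two-sided ideal of $R$ (Lemma \ref{Kisideal}(i)), I can try to rewrite $a\oplus y$ as $\alpha(a\oplus 0)\beta$ plus a matrix over $K(a)$, and then use Lemma \ref{precsim} to push the $K(a)$-part into terms already subordinate to $a$. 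The cleanest route: set $z = \alpha_2 a \beta_2 \in R$ (an actual lift of $\overline y$, with $z \equiv y$, so $z \notin K(a)$ because $y \notin K(a)$ and $K(a)$ is an ideal). From $\alpha_1 a \beta_1 \equiv a$ I get, modulo $K(a)$, that $a$ and $\alpha_1 a\beta_1$ differ by some $w \in K(a)$; I then bound the whole package using $a \oplus w \precsim a$ (since $w\in K(a)$, Lemma \ref{equivKa}) together with parts (i),(ii),(iv),(vi) of Lemma \ref{precsim} to show $a \oplus z \precsim a$, i.e. $z \in K(a)$ — the desired contradiction, since $z \notin K(a)$.

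The main obstacle is exactly this bookkeeping: arranging that the ideal-valued error terms arising from lifting get reabsorbed into things of the form $a \oplus (\text{something already in } K(a))$, rather than spilling outside the reach of $\precsim\,a$. The key technical levers are that $K(a)$ is a two-sided ideal with $x \precsim a$ for every $x\in K(a)$ (Lemma \ref{Kisideal}), that $\precsim$ is transitive and respects block sums (Lemma \ref{precsim}(i),(ii)), and the stabilization identities $x \precsim x \oplus \bfz \precsim x$ and $x_1 \pm \cdots \pm x_n \precsim x_1 \oplus \cdots \oplus x_n$ (Lemma \ref{precsim}(iv),(vi)), which let me combine $a$ with finitely many $K(a)$-elements and collapse the result back to $a$. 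Once $a \oplus z \precsim a$ is established with $z\notin K(a)$, the contradiction is immediate, so $\overline a$ must be finite.
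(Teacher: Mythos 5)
Your proposal is correct and follows essentially the paper's own argument: assume $\overline{a}$ is infinite in $R/K(a)$, lift the $2\times 2$ matrix relation witnessing this, observe that the discrepancy lies in $\MM_2(K(a))$, and absorb those error entries back into $a$ using $a\oplus x\precsim a$ for $x\in K(a)$ together with the calculus of Lemma \ref{precsim}, contradicting that the witness lies outside $K(a)$. The only difference is cosmetic: the paper carries out the absorption via one explicit $2\times5$ / $5\times5$ / $5\times2$ factorization showing $a\oplus b\precsim a\oplus x_{11}\oplus x_{12}\oplus x_{21}\oplus x_{22}$, whereas you route it through Lemma \ref{precsim}(vi) (whose proof is the same factorization) and choose the lift $z=\alpha_2 a\beta_2$ so that the $(2,2)$ error term vanishes.
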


\begin{remarknonum} {\rm In the following proof, and below, we use overbars to denote cosets in factor
rings.} \end{remarknonum}

\begin{proof}
Suppose that the coset $\abar \in R/K(a)$ is infinite. Then there exist $b\in R\setminus K(a)$ and
$\alpha_1,\alpha_2,\beta_1,\beta_2 \in R$ such that
 $$\leftmat \alphabar_1&0\\ \alphabar_2&0 \rightmat \leftmat \abar&0\\ 0&0
\rightmat \leftmat \betabar_1&\betabar_2\\ 0&0 \rightmat= \leftmat \abar&0\\ 0&\bbar \rightmat.$$
Now set
 $$x= \leftmat  a & 0 \\ 0 & b \rightmat - \leftmat  \alpha_1 & 0
\\ \alpha_2 & 0 \rightmat \leftmat  a & 0
\\ 0 & 0 \rightmat \leftmat  \beta_1 & \beta_2 \\ 0 & 0 \rightmat
 \in {\MM}_2(K(a)).$$
 Write $x=(x_{ij})$, with $x_{ij}\in K(a)$,
and choose $u \in R$ such that $ux_{ij}= x_{ij}u= x_{ij}$ for all $i$, $j$. We then have
 $$
 \leftmat \alpha_1 &u &u &0&0\\ \alpha_2 &0&0 &u &u \rightmat \leftmat a\\
 &x_{11}\\ &&x_{12}\\ &&&x_{21}\\ &&&&x_{22} \rightmat
 \leftmat \beta_1 &\beta_2\\ u&0\\ 0&u\\ u&0\\ 0&u \rightmat = \leftmat a&0\\
 0&b \rightmat,$$
 which shows that $a\oplus b \precsim a\oplus x_{11}\oplus x_{12}\oplus x_{21}\oplus x_{22}$.
 On the other hand, since $a\oplus x_{ij} \precsim a$ for all $i$, $j$, four successive applications
 of Lemma \ref{precsim} yield $a\oplus x_{11}\oplus x_{12}\oplus x_{21}\oplus x_{22} \precsim a$. But
 now $a\oplus b \precsim a$, contradicting the assumption that $b\notin K(a)$. Therefore $\abar$ is finite.
\end{proof}

\begin{corollary}\label{everyquotientfinite} For a nonzero element $a$ in an s-unital ring $R$,
 the following conditions are equivalent.
\begin{enumerate}[{\rm (i)}]
\item $a$ is properly infinite.
\item For any ideal $I$ of $R$, the coset $\abar= a+I$ is either zero or infinite.
\end{enumerate}
\end{corollary}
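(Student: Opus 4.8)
The plan is to prove the two implications separately. The direction $(\mathrm{i})\Rightarrow(\mathrm{ii})$ is a formality about quotients, while $(\mathrm{ii})\Rightarrow(\mathrm{i})$ is where the content lies, and it will come down to applying Lemma \ref{finiteinquotient} to one cleverly chosen ideal.

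For $(\mathrm{i})\Rightarrow(\mathrm{ii})$, I would start from the observation that $\precsim$ is defined entirely by matrix equations over $R$, so it is preserved by every ring homomorphism, in particular by each quotient map $R\to R/I$. Hence, if $a$ is properly infinite, so that $a\oplus a\precsim a$, then applying the quotient map entrywise to the matrices witnessing this relation yields $\abar\oplus\abar\precsim\abar$ in $R/I$ for every ideal $I$. By Definition \ref{Kadef} this says $\abar\in K(\abar)$, so whenever $\abar\ne 0$ we conclude $K(\abar)\ne 0$, i.e., $\abar$ is infinite (in fact properly infinite). Note that this half does not even require s-unitality of $R/I$.

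For $(\mathrm{ii})\Rightarrow(\mathrm{i})$, the crux is to feed the ideal $I=K(a)$ into the hypothesis. This is legitimate because $K(a)$ is a two-sided ideal of $R$ by Lemma \ref{Kisideal}(i). Condition $(\mathrm{ii})$ then forces $\abar=a+K(a)$ to be either zero or infinite in $R/K(a)$. But Lemma \ref{finiteinquotient} says exactly that $a+K(a)$ is finite in $R/K(a)$, and ``finite'' is by definition the negation of ``infinite'', so the second alternative is impossible. Therefore $a+K(a)=0$, i.e., $a\in K(a)$; since $a\ne 0$ by hypothesis, this is precisely the statement that $a$ is properly infinite (Definition \ref{infdef}).

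I do not anticipate a real obstacle: once Lemma \ref{finiteinquotient} is in hand, the only genuinely inventive step is the choice $I=K(a)$, and the remaining moves are the functoriality of $\precsim$ and the tautology that a finite coset which must be zero or infinite is zero. The one point to keep in mind throughout is that the cited lemmas presuppose an s-unital ring, which is why the standing hypothesis on $R$ (and the inheritance of s-unitality by quotients, where relevant) matters.
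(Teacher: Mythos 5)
Your proposal is correct and follows essentially the same route as the paper: the forward implication by applying the quotient map to the matrix equation witnessing $a\oplus a\precsim a$, and the reverse implication by taking $I=K(a)$ (an ideal by Lemma \ref{Kisideal}(i)) and invoking Lemma \ref{finiteinquotient}. The only cosmetic difference is that you argue directly that $\abar$ must be zero, whereas the paper phrases the same step as a contradiction.
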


\begin{proof}
(i)$\Rightarrow$(ii). If $a\oplus a \precsim a$, then clearly $\abar\oplus \abar \precsim \abar$
in any quotient $R/I$, so that $\abar$ is either zero or properly infinite in $R/I$.

(ii)$\Rightarrow$(i). Suppose that $a$ is not properly infinite.
Since $a\neq 0$, we have $a\not\in K(a)$. Consider $I=K(a)$, which
is an ideal of $R$ by Lemma \ref{Kisideal}(i). Now $\abar$ is
nonzero and thus infinite in $R/K(a)$ by hypothesis, which
contradicts Lemma \ref{finiteinquotient}. Therefore $a$ is
properly infinite.
\end{proof}

\begin{proposition}\label{findtheproduct} Let $R$ be an s-unital ring. If $a\in R$ is properly
infinite and $b\in RaR$, then $b\precsim a$.
\end{proposition}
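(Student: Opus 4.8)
The plan is to use the hypothesis $b \in RaR$ to write $b$ as a finite sum of elements of the form $r_i a s_i$, and then to exploit proper infiniteness of $a$ to absorb such a sum into a single copy of $a$. First I would use s-unitality (Lemma \ref{sunital}) to fix an element $u \in R$ acting as a two-sided identity on the finite set consisting of $a$, $b$, and all the $r_i$ and $s_i$ occurring in a chosen expression $b = \sum_{i=1}^n r_i a s_i$. Each summand then satisfies $r_i a s_i = (r_i u)(u a u)(s_i u)$ with all factors fixed by $u$, so $r_i a s_i \precsim a$ via Lemma \ref{precsim}(v) (thinking of $r_i a s_i$ as living inside $\MM_1(uRu)$, or more simply by exhibiting the factorization directly). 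Then Lemma \ref{precsim}(vi) gives $b = \sum_i r_i a s_i \precsim (r_1 a s_1) \oplus \cdots \oplus (r_n a s_n)$, and combining with Lemma \ref{precsim}(ii) yields $b \precsim a \oplus a \oplus \cdots \oplus a$ ($n$ copies).

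Next I would reduce the $n$-fold orthogonal sum $a^{\oplus n}$ back to a single $a$ using proper infiniteness. Since $a$ is properly infinite, $a \in K(a)$, i.e.\ $a \oplus a \precsim a$; hence by Remarks \ref{generalize} (or directly by induction using Lemma \ref{precsim}(i),(ii) together with $a \oplus (a \oplus a) \precsim a \oplus a \precsim a$) we get $a^{\oplus n} \precsim a$ for every $n \geq 1$. Chaining this with the previous paragraph via transitivity (Lemma \ref{precsim}(i)) gives $b \precsim a$, which is exactly the claim.

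The only mildly delicate point is the bookkeeping in the first paragraph: one must be careful that $b \in RaR$ really does mean $b$ is a \emph{finite} sum $\sum r_i a s_i$ (true, since $RaR$ is by definition the additive subgroup generated by products $ras$), and that the single element $u$ from Lemma \ref{sunital} can be chosen to act as an identity on all the relevant elements at once — which is precisely what that lemma provides. The iterated-sum estimate $a^{\oplus n} \precsim a$ is the substantive input, but it is an immediate consequence of $a$ being properly infinite; indeed an alternative route is to observe $a \in K(a)$ and $K(a)$ is an ideal (Lemma \ref{Kisideal}(i)) containing $a$, so $RaR \subseteq K(a)$ by Remarks \ref{generalize}, and then every $b \in RaR = K(a)$ satisfies $b \precsim a$ directly by Lemma \ref{Kisideal}(ii). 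This second argument is shorter and I would likely present it as the main proof, using the first, more hands-on factorization approach only as motivation. So the real content is just: $a$ properly infinite $\implies$ $K(a) = RaR$ (Remarks \ref{generalize}), and $x \in K(a) \implies x \precsim a$ (Lemma \ref{Kisideal}(ii)).
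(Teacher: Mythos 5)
Your proposal is correct, and in fact it contains the paper's proof verbatim in spirit: the "motivational" first paragraph — decompose $b=\sum_i r_i a s_i$, note each summand $\precsim a$ (this is immediate from the definition of $\precsim$; no appeal to Lemma \ref{precsim}(v) is really needed), apply Lemma \ref{precsim}(vi) and (ii) to get $b \precsim a\oplus\cdots\oplus a$, and collapse $a^{\oplus n}\precsim a$ by iterating $a\oplus a\precsim a$ — is exactly the argument the paper gives. The alternative you say you would present as the main proof, namely $b\in RaR=K(a)$ and then $b\precsim a$ by Lemma \ref{Kisideal}(ii), is also valid and shorter, but it is more a repackaging than a different proof: the inclusion $RaR\subseteq K(a)$ comes from $a\in K(a)$ together with Lemma \ref{Kisideal}(i) (the Remark in \ref{generalize} cites only part (ii), which gives the reverse inclusion), and the proof of part (i) consists of precisely the same $\precsim$-bookkeeping ($a\oplus(x\pm y)\precsim a\oplus x\oplus y\precsim\cdots\precsim a$) that the paper's direct proof performs. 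So the shortcut buys brevity at the price of leaning on the only-sketched equivalence in Remarks \ref{generalize}; the paper's choice to argue directly keeps the proposition self-contained modulo Lemma \ref{precsim}. Either presentation is acceptable, and there is no gap in yours.
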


\begin{proof}
Write $b=\sum_{i=1}^n x_i a y_i$ for some $x_i,y_i \in R$. Each $x_iay_i \precsim a$, whence (by Conditions (vi) and (v) in Lemma~\ref{precsim})
$$b \precsim x_1ay_1\oplus x_2ay_2\oplus \cdots\oplus x_nay_n \precsim a\oplus a\oplus \cdots\oplus
a \precsim a,$$ because $a\oplus a \precsim a$. \end{proof}

\section{Purely infinite rings}

\begin{definitions} \label{defpi} {\rm We will say that a ring $R$ is \emph{purely infinite} if the
following two conditions
are satisfied:
\begin{enumerate}[{\rm (i)}]
\item No quotient of $R$ is a division ring.
\item Whenever $a\in R$ and $b\in RaR$, we have $b\precsim a$.
\end{enumerate}

We will say that $R$ is \emph{properly purely infinite} if every nonzero element of $R$ is
properly infinite.

These two concepts are closely related, as we will see below (cf. Lemmas
\ref{EveryPropInfImpliesPi}, \ref{MatricesAndPropInf} and Corollary \ref{exchangepinfstrong}).
}\end{definitions}

In \cite{AGP}, the authors gave a definition of ``purely infinite simple ring'' in the algebraic
setting by demanding that every nonzero right (or left) ideal contain an infinite idempotent. They
proved the following characterization.

\begin{theorem} \label{theoremagp} {\rm \cite[Theorem 1.6]{AGP}} Let $R$ be a unital
simple ring. Then $R$ is purely infinite if and only if
\begin{enumerate}[{\rm (i)}]
\item $R$ is not a division ring, and
\item $1_R\precsim a$ for every nonzero element $a\in R$.
\end{enumerate}
\end{theorem}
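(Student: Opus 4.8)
The plan is to prove the two directions separately. For the ``only if'' direction, assume $R$ is purely infinite simple in the sense of \cite{AGP}. Condition (i) is part of the hypothesis, so the work is in (ii): given a nonzero $a\in R$, I must produce $\alpha,\beta\in R$ with $1_R=\alpha a\beta$, i.e.\ $1_R\precsim a$. Since $R$ is simple and $a\neq 0$, we have $RaR=R$, so $1_R=\sum_{i=1}^n x_iay_i$ for suitable $x_i,y_i\in R$. The goal is to collapse this sum to a single term. The natural route is to use the definition: the right ideal $aR$ contains an infinite idempotent, hence (as noted in the excerpt, all nonzero idempotents in such rings are properly infinite, and in fact one knows from \cite{AGP} that $1_R$ itself is properly infinite) one can find an idempotent $e\lesssim a$ with $e$ properly infinite; then $1_R\lesssim e\oplus e\oplus\cdots$ absorbs the finitely many summands. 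Concretely: $1_R$ properly infinite gives $1_R\oplus 1_R\lesssim 1_R$, hence by induction $1_R\oplus\cdots\oplus 1_R$ ($n$ copies) $\precsim 1_R$; combining with $1_R = \sum x_iay_i \precsim x_1ay_1\oplus\cdots\oplus x_nay_n\precsim a\oplus\cdots\oplus a$ (Lemma~\ref{precsim}(vi),(v)) and $a\oplus\cdots\oplus a\precsim 1_R\oplus\cdots\oplus 1_R\precsim 1_R$... wait, that goes the wrong way; instead I use that $a\precsim 1_R$ trivially and need $1_R\precsim a$. The correct chain: pick an infinite idempotent $e$ in $aR$, so $e\lesssim a$ hence $e\precsim a$; since $R$ is simple, $1_R\in ReR$, and since $e$ is properly infinite, Proposition~\ref{findtheproduct} gives $1_R\precsim e\precsim a$. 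This is the cleanest argument.

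For the ``if'' direction, assume (i) and (ii) of the theorem. I must show every nonzero right ideal of $R$ contains an infinite idempotent. Let $I$ be a nonzero right ideal and pick $0\neq a\in I$. By (ii) applied with $b=1_R$ (noting $1_R\in RaR$ since... actually (ii) as stated only gives $b\precsim a$ for $b\in RaR$, and simplicity gives $RaR=R\ni 1_R$), we get $1_R\precsim a$, so there are $\alpha,\beta$ with $\alpha a\beta=1_R$. Set $e=a\beta\alpha a\in aR\subseteq I$; wait, I want $e\in I$, and $a\beta\alpha\cdot a\beta\alpha a$... let me instead take $x=\alpha a\in R$, $y=a\beta\in aR$, so $xy=1_R$ and $e:=yx=a\beta\alpha a \in aR\subseteq I$ is an idempotent equivalent to $1_R$. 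Then $1_R\precsim a$ also with a relabelling gives, by the standard argument (as in Remarks~\ref{generalize}), that $e$ is properly infinite provided $1_R$ is. So it remains to show $1_R$ is infinite (equivalently properly infinite). Here is where condition (i) enters: if $1_R$ were finite, then since $R$ is simple with $RaR=R$ for all $a\neq 0$, condition (ii) forces $1_R\precsim a$ for every nonzero $a$, i.e.\ $R$ is "one-sided division-like"; one shows a unital simple ring in which $1_R$ is finite and $1_R\precsim a$ for all $a\neq 0$ must be a division ring, contradicting (i). Indeed, $1_R\precsim a$ with $1_R$ finite forces $a$ to have a one-sided inverse that is then two-sided (finiteness of $1_R$ rules out proper one-sided invertibility), so every nonzero element is a unit.

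The main obstacle I anticipate is the last step: deducing from (i)+(ii) that $1_R$ is \emph{infinite}. The argument sketched above — finiteness of $1_R$ plus $1_R\precsim a$ for all $a\neq 0$ implies division ring — needs care, because $1_R\precsim a$ only says $1_R=\alpha a\beta$, not that $a$ itself has an inverse. One fixes this by noting that if $1_R=\alpha a\beta$ then $a\beta\alpha$ is a nonzero idempotent; if $1_R$ is finite then so is every idempotent, and in a simple ring where every nonzero idempotent equals... hmm, one actually argues: $\beta\alpha a$ has a left inverse, and in a ring where $1_R$ is finite every left-invertible element is invertible (this is exactly the finiteness condition $xy=1\Rightarrow yx=1$ applied in $1_R R 1_R=R$), so $a$ is left-invertible hence invertible. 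Once every nonzero element is invertible, $R$ is a division ring. So the real content is packaging the finiteness of $1_R$ correctly; everything else is routine manipulation of $\precsim$ via Lemma~\ref{precsim} and Proposition~\ref{findtheproduct}.
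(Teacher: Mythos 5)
A preliminary remark: the paper does not prove Theorem \ref{theoremagp} at all; it is imported verbatim from \cite[Theorem 1.6]{AGP}, so there is no in-paper argument to compare yours against, and I judge the proposal on its own terms.

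Your strategy is the standard one, but the crux of the ``only if'' direction is outsourced rather than proved: you invoke the assertion that every infinite idempotent of a purely infinite simple ring is properly infinite, which is itself a fact quoted from \cite{AGP} and is essentially the whole content of this implication. To be self-contained, either prove that fact or bypass it, e.g.\ as follows. If $e\in aR$ is infinite, pick an idempotent $e_1$ with $e\sim e_1\le e$ and $g:=e-e_1\ne 0$; transporting the decomposition $e=e_1+g$ through the isomorphism $eRe\cong e_1Re_1$ implementing $e\sim e_1$, and iterating, produces for every $n$ pairwise orthogonal idempotents $g_1,\dots,g_n\le e$, each equivalent to $g$. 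Simplicity gives $1=\sum_{i=1}^n x_igy_i$ for some $n$, so by Lemma \ref{precsim} one gets $1\precsim g\oplus\cdots\oplus g\sim g_1+\cdots+g_n\le e$, while $e\precsim a$ because $e\in aR$; hence $1\precsim a$ with no appeal to proper infiniteness. (The same orthogonal-copies argument also proves the fact you quoted, after which your shortcut via Proposition \ref{findtheproduct} is legitimate.) Also, condition (i) is not ``part of the hypothesis'': you need the one-line observation that a division ring contains no infinite idempotent (there $xy=1$ forces $yx=1$), so it cannot be purely infinite.

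In the ``if'' direction the idea is correct but the displayed construction is not: with $x=\alpha a$ and $y=a\beta$ you get $xy=\alpha a^2\beta$, which need not equal $1$, and $yx=a\beta\alpha a$ need not be idempotent. Take instead $x=\alpha$ and $y=a\beta$; then $xy=\alpha a\beta=1$ and $e:=yx=a\beta\alpha$ is an idempotent lying in $aR\subseteq I$ and equivalent to $1$. Your finiteness argument is essentially right once cleaned up: if $1$ were finite, then from $\alpha(a\beta)=1$ finiteness gives $(a\beta)\alpha=1$, so $a(\beta\alpha)=1$, and finiteness again gives $(\beta\alpha)a=1$; thus every nonzero $a$ would be invertible and $R$ a division ring, contradicting (i). Hence $1$ is infinite, so $e\sim 1$ is an infinite idempotent in the arbitrary nonzero right ideal $I$ (the left-ideal version is symmetric, using $\beta\alpha a$), and $R$ is purely infinite in the sense of \cite{AGP}.
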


The concept of purely infinite simple ring was generalized to the
setting of rings with local units in \cite{AA2}, and of nonunital
(but $\sigma$-unital) rings in \cite{GP}. Concretely, the previous
characterization was generalized to the context of rings with
local units as follows.

\begin{proposition} \label{propositionaa2} {\rm \cite[Proposition 10]{AA2}} Let $R$ be a ring
with local units. Then $R$ is purely infinite simple if and only if
\begin{enumerate}[{\rm (i)}]
\item $R$ is not a division ring, and
\item $b\precsim a$ for all nonzero elements $a,b\in R$.
\end{enumerate}
\end{proposition}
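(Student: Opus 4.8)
The plan is to prove both directions by relating the condition ``$b \precsim a$ for all nonzero $a,b \in R$'' to the existence of infinite idempotents in one-sided ideals, exploiting the presence of local units. For the forward direction, suppose $R$ is purely infinite simple in the sense of \cite{AA2}, i.e., $R$ is simple and every nonzero right ideal contains an infinite idempotent. First I would rule out $R$ being a division ring: a division ring has no nonzero infinite idempotent (its only nonzero idempotent is $1$, which is finite), so (i) holds. For (ii), fix nonzero $a, b \in R$. By simplicity, $RaR = R$, so it suffices to show $b \in RaR$ implies $b \precsim a$; but rather than invoking Proposition~\ref{findtheproduct} directly (which needs $a$ properly infinite), I would argue via idempotents: choose an idempotent $e$ in a corner containing $a$ and $b$ with $ea = ae = a$ and $eb = be = b$. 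Inside the nonzero right ideal $aR$, pick an infinite idempotent, and use simplicity plus the standard comparison arguments (as in the proof of \cite[Theorem~1.6]{AGP}) to produce a properly infinite idempotent $f \lesssim$ something dominated by $a$, with $e \lesssim f$; then $b \le e \precsim f \precsim a$ gives $b \precsim a$. The key mechanism is that in a purely infinite simple ring with local units, every nonzero element $a$ satisfies $e \precsim a$ for the local unit $e$ of any corner containing $a$, which upgrades the one-sided-ideal condition to a comparison statement.

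For the converse, assume (i) and (ii). First I would show $R$ is simple: given a nonzero ideal $I$ and any nonzero $a \in I$, for an arbitrary nonzero $b \in R$ condition (ii) gives $b \precsim a$, so by Lemma~\ref{precsim}(v) (or directly from $b = \alpha a \beta$) we get $b \in RaR \subseteq I$; hence $I = R$. Next, to see that every nonzero right ideal $J$ contains an infinite idempotent: pick $0 \ne a \in J$, let $e$ be an idempotent in a corner with $ea = ae = a$, so $a \in eRe$. Applying (ii) with the roles reversed, $e \precsim a$, and also $e \oplus e \precsim e$ needs to be extracted. Here is the crux: since $R$ is not a division ring and is simple with local units, the corner $eRe$ is a unital simple ring that is not a division ring, so by Theorem~\ref{theoremagp} applied to $eRe$ (using that $x \precsim_{R} y$ for $x,y \in eRe$ is equivalent to $x \precsim_{eRe} y$, which one checks by absorbing local units), $eRe$ is purely infinite simple in the unital sense; in particular $e$ is properly infinite as an element, hence $e$ is a properly infinite idempotent by Remarks~\ref{generalize}. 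Finally, from $e \precsim a$ with $a \in eRe$ and $e$ properly infinite, one produces an infinite idempotent inside $aRe \subseteq J$: write $e = \alpha a \beta$ with $\alpha, \beta \in eRe$; then $g = a\beta e \alpha a$ lies in $aR$, and a short computation shows $g$ is idempotent with $g \sim e$, so $g$ is infinite and $g \in J$.

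The main obstacle I anticipate is the bookkeeping needed to transfer the matrix relation $\precsim$ between $R$ and its corners $eRe$ — specifically, verifying that for $x, y \in eRe$ one has $x \precsim y$ in $\MM_\bullet(R)$ if and only if $x \precsim y$ in $\MM_\bullet(eRe)$. One direction is trivial (a relation witnessed inside $eRe$ is witnessed in $R$); the other requires multiplying the intertwining matrices over $R$ on both sides by the local unit $e$ (or by $\mathrm{diag}(e, \ldots, e)$) and checking the equations survive, using $ex = xe = x$ for $x \in eRe$. A secondary technical point is making sure the passage from ``$e$ properly infinite and $e \precsim a$'' to ``an honest infinite idempotent in $aR$'' is clean; this is where I would be most careful, since it is the step that actually lands us back in the original \cite{AA2} definition. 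Everything else is a routine assembly of Theorem~\ref{theoremagp}, Proposition~\ref{findtheproduct}, and the basic properties of $\precsim$ from Lemma~\ref{precsim}.
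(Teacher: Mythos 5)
The paper itself does not prove this proposition; it is quoted from \cite[Proposition 10]{AA2}, so there is no internal proof to compare against. Judged on its own terms, your overall strategy is the right one (pass to a corner $eRe$ containing the relevant elements, apply Theorem~\ref{theoremagp} there, and convert $e\precsim a$ into an idempotent of $aR$ equivalent to $e$), and your transfer of $\precsim$ between $R$ and $eRe$ is handled correctly. But one step in the converse is genuinely unjustified: you assert that ``since $R$ is not a division ring and is simple with local units, the corner $eRe$ is \dots\ not a division ring.'' That implication is false in general: $\MM_2(K)$ is simple, unital, and not a division ring, yet $e_{11}\MM_2(K)e_{11}\cong K$ is a field. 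Of course $\MM_2(K)$ fails (ii), but your stated justification never invokes (ii), and condition (i) of Theorem~\ref{theoremagp} for the corner is exactly what your argument needs, so as written the appeal to that theorem is not available. The gap is fixable in two ways: (a) enlarge the local unit --- if $eRe$ were a division ring, then $R\neq eRe$ by (i), so pick $x\notin eRe$ and a local unit $f$ with $a,e,x\in fRf$; then $e$ is an idempotent of $fRf$ different from $0$ and $f$ (since $e=f$ would force $x\in eRe$), so $fRf$ is not a division ring, and one runs the argument with $f$ in place of $e$; or (b) note that (i), (ii) and the simplicity you already proved make $R$ a purely infinite prime ring in the sense of Definition~\ref{defpi}, so Lemma~\ref{PrimePiNoDivCorners} gives directly that no corner of $R$ is a division ring.

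A secondary slip: with $e=\alpha a\beta$ and $\alpha,\beta\in eRe$, your element $a\beta e\alpha a$ need not be idempotent (squaring it produces an uncontrolled $a^2$ in the middle). The correct choice is $g=a\beta\alpha\ (=a\beta e\alpha)$, which satisfies $g^2=a\beta(\alpha a\beta)\alpha=a\beta e\alpha=g$ and $g\sim e$ via $\alpha\cdot a\beta=e$, $a\beta\cdot\alpha=g$; it is nonzero (else $e=e^2=\alpha(a\beta\alpha)a\beta=0$) and lies in $aR\subseteq J$, as you want. The forward direction is only sketched, but the mechanism you name --- an infinite idempotent in $aR$, properly infinite by simplicity, absorbing $e$ via Proposition~\ref{findtheproduct}, then $b\precsim e\precsim a$ --- is the standard argument and does go through once written out.
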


Clearly then, Definition \ref{defpi} agrees with the previous
definitions in the case of simple rings with local units. Many
examples of purely infinite simple rings are known -- see,
e.g.,~\cite[Examples 1.3]{AGP},~\cite[Remark 2.7]{GP} and
also~\cite[Corollary 5.4]{AGGP}. For some classes of non-simple
(properly) purely infinite rings, see Sections \ref{examplesec}
and \ref{leavittsec}.

\begin{lemma} \label{EveryPropInfImpliesPi} Let $R$ be an s-unital ring.
\begin{enumerate}[{\rm (i)}]
\item If  $R$ is properly purely infinite, then it is purely infinite.
\item If $\MM_2(R)$ is purely infinite, then $R$ is properly purely infinite.
\end{enumerate}
\end{lemma}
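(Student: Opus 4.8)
The plan is to prove the two statements separately, each by contradiction.

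For part (i), suppose $R$ is properly purely infinite but not purely infinite. Since every nonzero element is properly infinite, Proposition~\ref{findtheproduct} immediately gives condition (ii) of Definition~\ref{defpi}: if $a\in R$ and $b\in RaR$, then either $a=0$ (so $RaR=0$ and $b=0\precsim a$ trivially) or $a$ is properly infinite and $b\precsim a$ by the proposition. So the only thing that can fail is condition (i): some quotient $R/I$ is a division ring. But $R/I$ inherits the property that every nonzero element is properly infinite (images of properly infinite elements satisfy $\abar\oplus\abar\precsim\abar$, and they are nonzero), so it suffices to show that a division ring has no nonzero properly infinite element. In a division ring $D$, take $a\neq 0$; I would argue that $a\oplus a\precsim a$ forces a contradiction. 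Indeed $a\oplus a\precsim a$ means there are column/row vectors with $\leftmat a&0\\0&a\rightmat=\leftmat\alpha_1\\\alpha_2\rightmat a\leftmat\beta_1&\beta_2\rightmat$, giving $\alpha_1 a\beta_1=a$, $\alpha_2 a\beta_2=a$, and $\alpha_1 a\beta_2=\alpha_2 a\beta_1=0$. Since $D$ is a division ring and $a\beta_1\neq 0$ (as $\alpha_1 a\beta_1=a\neq 0$), the equation $\alpha_2 a\beta_1=0$ forces $\alpha_2=0$, contradicting $\alpha_2 a\beta_2=a\neq 0$. Hence no division ring is properly purely infinite, so no quotient of $R$ is a division ring, and $R$ is purely infinite.

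For part (ii), assume $\MM_2(R)$ is purely infinite; I must show every nonzero $a\in R$ is properly infinite, i.e. $a\oplus a\precsim a$ as matrices over $R$. The key observation is that for $a\in R$, the element $a\oplus 0\in\MM_2(R)$ is nonzero, and $a\oplus a\oplus 0\oplus 0$ (as a $4\times 4$ matrix, which is $(a\oplus 0)\oplus(a\oplus 0)$ viewed as a block sum of two elements of $\MM_2(R)$) lies in $\MM_2(R)\cdot(a\oplus 0)\cdot\MM_2(R)$. Concretely, $a\oplus 0\in\MM_2(R)$, and the ideal of $\MM_2(R)$ generated by $a\oplus 0$ contains $e_{21}(a\oplus 0)e_{12}=0\oplus a$ and hence contains $a\oplus a = (a\oplus 0)+(0\oplus a)$ — but to get $(a\oplus 0)\oplus(a\oplus 0)$ as an element in the ideal generated by $a\oplus 0$ inside $\MM_2(R)$, I apply condition (ii) of Definition~\ref{defpi} to $\MM_2(R)$ with the element $a\oplus 0$. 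I would check that $(a\oplus 0)\oplus(a\oplus 0) = \leftmat a\oplus 0&0\\0&a\oplus 0\rightmat$ is a sum of elements of the form $x(a\oplus 0)y$ with $x,y\in\MM_2(R)$ (using the matrix units $e_{11}$ and $e_{21}$ in the appropriate $4\times 4$ picture), so that by purely infiniteness of $\MM_2(R)$ we get $(a\oplus 0)\oplus(a\oplus 0)\precsim a\oplus 0$ over $\MM_2(R)$, i.e. $a\oplus 0\oplus a\oplus 0\precsim a\oplus 0$ over $R$ (since $\precsim$ is about matrices over $R$ regardless of the block grouping). Then Lemma~\ref{precsim}(iv) lets me drop the zero blocks: $a\oplus a\precsim a\oplus 0\oplus a\oplus 0\precsim a\oplus 0\precsim a$, so $a\oplus a\precsim a$ and $a$ is properly infinite.

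The main obstacle is the bookkeeping in part (ii): correctly writing $(a\oplus 0)\oplus(a\oplus 0)$ as a sum of products $x(a\oplus 0)y$ over $\MM_2(R)$ and then re-interpreting the resulting $\precsim$ relation, which lives a priori "over $\MM_2(R)$", as a $\precsim$ relation over $R$ between the same underlying matrices. This is a matter of unwinding the identification $\MM_2(\MM_2(R))\cong\MM_4(R)$ and checking the block sums match up — routine but where sign and index errors would creep in. Part (i)'s only subtle point — that a division ring carries no properly infinite element — is handled by the short direct argument above.
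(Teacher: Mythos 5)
Part (i) is correct and is essentially the paper's argument: condition (ii) of Definition \ref{defpi} comes from Proposition \ref{findtheproduct}, proper infiniteness clearly passes to nonzero cosets, and your computation with $\alpha_1a\beta_1=a$, $\alpha_2a\beta_2=a$, $\alpha_1a\beta_2=\alpha_2a\beta_1=0$ in a division ring is exactly the contradiction the paper derives inside $R/I$.

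In part (ii), however, the step you lean on is not licensed by anything available at this point. Pure infiniteness of $S=\MM_2(R)$ (Definition \ref{defpi}(ii)) relates two \emph{elements} of $S$: if $\alpha\in S$ and $\beta\in S\alpha S$, then $\beta\precsim\alpha$, i.e.\ $\beta=X\alpha Y$ with $X,Y\in S$. The object $(a\oplus 0)\oplus(a\oplus 0)$ is a $2\times 2$ matrix over $S$ (a $4\times 4$ matrix over $R$), not an element of $S$; it cannot ``lie in $\MM_2(R)(a\oplus 0)\MM_2(R)$'' (a set of $2\times 2$ matrices over $R$), and Definition \ref{defpi} applied to $\MM_2(R)$ says nothing about matrices over $\MM_2(R)$ --- to argue that way directly you would need $\MM_4(R)$ to be purely infinite, which is not a hypothesis. (Also note $a\oplus a\oplus 0\oplus 0=\mathrm{diag}(a,a,0,0)$ and $(a\oplus 0)\oplus(a\oplus 0)=\mathrm{diag}(a,0,a,0)$ are different matrices, though each is $\precsim$ the other by Lemma \ref{precsim}.) The repair is already contained in your own text: you observed that $a\oplus a=(a\oplus 0)+(0\oplus a)\in S(a\oplus 0)S$; in the nonunital setting replace your matrix units $e_{21},e_{12}$ by $\leftmat 0&0\\ u&0\rightmat$ and $\leftmat 0&u\\ 0&0\rightmat$ with $ua=au=a$, which is precisely the identity displayed in the paper's proof. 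Since $a\oplus a$ and $a\oplus 0$ are both \emph{elements} of $S$, condition (ii) for $S$ gives $X,Y\in S$ with $a\oplus a=X(a\oplus 0)Y$, and this is literally the relation $a\oplus a\precsim a\oplus 0$ between $2\times 2$ matrices over $R$; Lemma \ref{precsim}(iv) then yields $a\oplus a\precsim a$, so $a$ is zero or properly infinite. The whole $4\times 4$ detour is unnecessary, and as you justified it, it is a genuine gap rather than mere bookkeeping.
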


\begin{proof}
(i) Suppose first that $R/I$ is a division ring for some ideal $I$ of $R$. Take a nonzero element
$\overline{a}$ of $R/I$. Then $a$ is a nonzero element in $R$ and by hypothesis it is properly
infinite. Find elements $\alpha_1,\alpha_2,\beta_1,\beta_2\in R$ such that
 $$\leftmat  a & 0 \\ 0 & a \rightmat =
\leftmat  \alpha_1 & 0 \\ \alpha_2 & 0 \rightmat \leftmat  a & 0 \\ 0 & 0 \rightmat \leftmat
\beta_1 & \beta_2 \\ 0 & 0 \rightmat.$$
 But then in $R/I$ we have that $$\leftmat  \abar & 0 \\
0 & \abar \rightmat= \leftmat  \alphabar_1 \abar \betabar_1 & \alphabar_1 \abar \betabar_2
\\ \alphabar_2 \abar \betabar_1 & \alphabar_2 \abar \betabar_2 \rightmat.$$
 Since $R/I$ is a division ring and $\abar\ne 0$, it follows that $\alphabar_1, \alphabar_2,
\betabar_1, \betabar_2$ are all nonzero. But then $\alphabar_1 \abar \betabar_2 =0$ implies
$\abar=0$, a contradiction.

This shows that no quotient of $R$ is a division ring. The other condition is obtained by invoking
Proposition \ref{findtheproduct}.

(ii) Given $a\in R$, there exists $u \in R$ such that $ua= au= a$. Hence,
 $$a\oplus a= \leftmat  u & 0 \\ 0 & 0 \rightmat \leftmat  a & 0 \\ 0 & 0 \rightmat
 \leftmat  u & 0 \\ 0 & 0 \rightmat + \leftmat  0 & 0 \\ u & 0
 \rightmat \leftmat  a & 0 \\ 0 & 0 \rightmat \leftmat  0 & u \\ 0 & 0 \rightmat \in \MM_2(R)
 (a\oplus 0) \MM_2(R).$$
Since $\MM_2(R)$ is assumed to be purely infinite, it follows that $a\oplus a\precsim a\oplus 0$,
and so $a\oplus a \precsim a$. Therefore $a$ is either zero or properly infinite.
\end{proof}

\begin{remark} \label{spiIMPLIESpi} {\rm If $R$ is a ring with zero multiplication, then
$R$ is trivially purely infinite, since $RaR=0$ for all $a\in R$. For similar trivial reasons, $R$
contains no infinite elements, and so $R$ is not properly purely infinite unless $R=0$. Thus,
proper pure infiniteness is, in general, stronger than pure infiniteness.}
\end{remark}

The example in Remark~\ref{spiIMPLIESpi} immediately suggests the problem below which has become quite elusive so far.

\begin{problem}
{\rm Find an s-unital ring $R$ which is purely infinite but not properly purely infinite.}
\end{problem}

Passage of (proper) pure infiniteness to ideals and quotients is given by the following result. We
will consider corners and matrix rings in Section \ref{matandcorner}.

Note that if $I$ is an s-unital ideal in a ring $R$, then any ideal $J$ of $I$ is also an ideal of
$R$. For if $x\in J$ and $r\in R$, then $x=ux$ for some $u\in I$, whence $rx= (ru)x \in Ix
\subseteq J$; similarly, $xr\in J$.

\begin{lemma} \label{pinftoidealquo} Let $I$ be an ideal of a ring $R$.
\begin{enumerate}[{\rm (i)}]
\item If $R$ is {\rm(}properly{\rm)} purely infinite, then so is $R/I$.
\item Now assume that $I$ is s-unital. If $R$ is {\rm(}properly{\rm)} purely infinite, then so is
$I$.
\end{enumerate}
\end{lemma}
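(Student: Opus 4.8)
The plan is to treat the two parts separately, in each case verifying the two defining conditions of (proper) pure infiniteness from Definition \ref{defpi}, plus (for the ``properly'' version) the single condition that every nonzero element is properly infinite.

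For part (i), the key observation is that quotients of $R/I$ are again quotients of $R$. Concretely, every ideal of $R/I$ has the form $J/I$ for an ideal $J\supseteq I$ of $R$, and $(R/I)/(J/I)\cong R/J$. Hence if no quotient of $R$ is a division ring, the same holds for $R/I$, giving condition (i) of Definition \ref{defpi}. For condition (ii), suppose $\abar\in R/I$ and $\bbar\in (R/I)\abar(R/I)$; lift to $a\in R$ and note that $\bbar$ is the image of some $b\in RaR$ (any preimage of a sum of products $\sum \xbar_i\abar\ybar_i$ can be taken to be $\sum x_i a y_i$). Since $R$ is purely infinite, $b\precsim a$ in $R$ via some rectangular matrices $\alpha,\beta$ over $R$; passing to cosets gives $\bbar\precsim\abar$ in $R/I$. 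For the ``properly purely infinite'' case: if every nonzero element of $R$ is properly infinite, then given nonzero $\abar\in R/I$, pick a preimage $a$ (necessarily nonzero), use $a\oplus a\precsim a$ in $R$, and reduce mod $I$ to get $\abar\oplus\abar\precsim\abar$, so $\abar$ is properly infinite. (This last reduction is exactly the argument already used in the proof of Corollary \ref{everyquotientfinite}(i).)

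For part (ii), the main point is to transfer the relevant equations between $R$ and its s-unital ideal $I$. First I would check that no quotient of $I$ is a division ring. A nonzero quotient $I/J$ that is a division ring would have to be unital; but $J$ is an ideal of $R$ (by the remark preceding the lemma, since $I$ is s-unital), and then one shows $RJR\subseteq J$ and that the ideal $J' = \{r\in R : rI + Ir \subseteq J\}$, or more simply the ideal generated by $J$ together with the annihilator-type considerations, produces a quotient of $R$ related to $I/J$ — the cleanest route is: let $J$ be an ideal of $I$ with $I/J$ a division ring, so $I/J$ is simple and unital with identity $\ebar$; lift $e\in I$, and note $e$ acts as a local unit modulo $J$ on all of $I$, while $I^2 \not\subseteq J$ forces $I$ to act nontrivially; then $J$ is an ideal of $R$ and $I/J$ is an ideal of $R/J$, and any simple unital ideal of a ring is a ring direct summand, so $R/J \cong (I/J)\times S$ for some ring $S$, whence $I/J$ is itself a quotient of $R$ — contradicting condition (i) for $R$. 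For the comparison condition: given $a\in I$ and $b\in IaI$, certainly $b\in RaR$, so $b\precsim a$ already holds because $R$ is purely infinite, and the rectangular matrices witnessing this have entries in $R$; to land inside $I$, use s-unitality of $I$ — choose $u\in I$ with $ua=au=a$ (and, if needed, $ub_{ij}=b_{ij}u=b_{ij}$), and sandwich: replace $\alpha$ by $(\text{diag of }u)\,\alpha$ and $\beta$ by $\beta\,(\text{diag of }u)$, noting $uRu\subseteq I$, so the new factors lie in matrices over $I$ while the product is unchanged. Hence $b\precsim a$ in $I$. The ``properly purely infinite'' case is easier still: if $a\in I$ is nonzero it is properly infinite in $R$, i.e.\ $a\oplus a\precsim a$ via $2\times 2$ matrices over $R$; conjugating by $\mathrm{diag}(u,u)$ on both sides (with $u\in I$, $ua=au=a$) moves the witnessing matrices into $\MM_2(I)$, so $a$ is properly infinite in $I$.

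The step I expect to be the main obstacle is showing that no quotient of the ideal $I$ is a division ring — that is, ruling out $I/J$ being a division ring without a priori control over how ideals of $I$ sit inside $R$. The resolution hinges on two facts: first, that an s-unital ideal $I$ of $R$ has all of its ideals automatically ideals of $R$ (the remark just before the lemma), so $J\trianglelefteq R$; and second, that a simple unital ideal of any ring is a direct factor, so $I/J$ reappears as a quotient (indeed a direct factor) of $R/J$. Everything else is the routine ``multiply by a local unit to push rectangular matrix factors into the ideal'' manoeuvre, which is already exemplified several times in the preceding lemmas.
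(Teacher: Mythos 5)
Your proposal is correct and follows essentially the same route as the paper: quotients are handled by reducing matrix equations modulo $I$, the comparison condition in $I$ is obtained by pushing the witnessing factors into $I$ with a local unit $u$ (the paper inserts $u$ next to $a$, writing $b=(xu)a(uy)$, while you sandwich the whole product, which works equally well once $u$ is chosen by Lemma \ref{sunital} to fix $b$ too), and the division-ring obstruction is ruled out exactly as in the paper, since your appeal to ``a unital ideal is a ring direct factor'' is precisely the paper's computation that the identity $e$ of $I/J$ becomes a central idempotent of $R/J$, giving $R/J\cong (I/J)\times S$ and hence a division-ring quotient of $R$, a contradiction.
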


\begin{proof} (i) It is clear that strong pure infiniteness passes from $R$ to $R/I$. Now assume
only that $R$ is purely infinite. Since any quotient of $R/I$ is also a quotient of $R$, no
quotient of $R/I$ is a division ring. Consider $a,b\in R$ such that $\bbar \in (R/I)\abar(R/I)$.
Then there is some $c\in RaR$ such that $\cbar= \bbar$. By hypothesis, $c=xay$ for some $x,y\in
R$, and therefore $\bbar= \cbar= \xbar\abar\ybar$.

(ii) Assume first that $R$ is properly purely infinite, and let $a\in I$ be nonzero. Then there
exist $\alpha_1,\alpha_2,\beta_1,\beta_2\in R$ such that
 $\leftmat  a & 0 \\ 0 & a \rightmat =
\leftmat  \alpha_1 \\ \alpha_2 \rightmat a \leftmat \beta_1 & \beta_2 \rightmat$. Since $I$ is
s-unital, we also have $a=ua=au$ for some $u \in I$. Then
 $$\leftmat  a & 0 \\ 0 & a \rightmat =
\leftmat  \alpha_1u \\ \alpha_2u \rightmat a \leftmat u\beta_1 & u\beta_2 \rightmat$$
 with $\alpha_1u,\alpha_2u,u\beta_1,u\beta_2\in I$. This proves that $I$ is properly purely
 infinite.

Now assume only that $R$ is purely infinite. If $a\in I$ and $b\in IaI$, then we at least have $b=
xay$ for some $x,y\in R$. Since also $a=ua=au$ for some $u \in I$, we have $b= (xu)a(uy)$ with
$xu,uy\in I$.

Suppose that $I$ has an ideal $J$ such that $I/J$ is a division ring. As noted above, $J$ is an
ideal of $R$. Since $R/J$ is purely infinite by (i), it suffices to find a contradiction working
in $R/J$. Thus, there is no loss of generality in assuming that $J=0$.

If $e$ is the identity element of $I$, then $I=eI=Ie$, and so $I=eR=Re$. It follows that
$er=ere=re$ for all $r\in R$, whence $e$ is a central idempotent of $R$. But then the annihilator
of $e$ in $R$ is an ideal $K$ such that $R= I\oplus K$, and $R/K \cong I$ is a division ring,
contradicting the assumption that $R$ is purely infinite. Therefore no quotient of $I$ is a
division ring.  \end{proof}

\begin{lemma} \label{propinftyinK} Let $R$ be an s-unital ring and $a,e\in R$ with $e=e^2\precsim a$.
\begin{enumerate}[{\rm (i)}]
\item If $e\sim f\leq e$ for an idempotent $f\in R$, then $e-f\in K(a)$.
\item If $e$ is properly infinite, then $e\in K(a)$.
\item If $e$ is infinite, then $a$ is infinite.
\end{enumerate}
\end{lemma}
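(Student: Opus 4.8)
I would deduce all three parts from a single claim:

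\begin{quote}\emph{Claim.} If $e$ is an idempotent and $e=e^2\precsim a$ (with $R$ s\nobreakdash-unital), then $K(e)\subseteq K(a)$.\end{quote}

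Granting the Claim, the three parts are quick. For (i): if $f=e$ then $e-f=0\in K(a)$ by Lemma \ref{precsim}(iv); if $f\neq e$ then $e-f$ is a nonzero idempotent with $f\perp(e-f)$ and $f+(e-f)=e$, so (using $f\sim e$ and the fact that orthogonal idempotents satisfy $p+q\sim p\oplus q$) one gets $e\oplus(e-f)\sim f\oplus(e-f)\sim e$, hence $e\oplus(e-f)\precsim e$, i.e.\ $e-f\in K(e)\subseteq K(a)$. For (ii): by Definition \ref{infdef}, $e$ properly infinite means exactly $e\neq0$ and $e\in K(e)\subseteq K(a)$. For (iii): $e$ infinite means $K(e)\neq0$, so $K(a)\supseteq K(e)\neq0$ and $a$ is infinite.

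\textbf{Proof of the Claim.} Write $e=\gamma a\delta$; replacing $\delta$ by $\delta e$ (still valid, since $(\gamma a\delta)e=e^2=e$) I may assume $\delta e=\delta$. Put $t':=a\delta\gamma$, $\ t:=\delta\gamma a$, $\ c:=a\delta\gamma a$. Using $e^2=e$ and $\delta e=\delta$ one checks directly that $t'$ and $t$ are idempotents, that $t'\sim e$ (via the pair $a\delta,\gamma$, since $(a\delta)\gamma=t'$ and $\gamma(a\delta)=e$), and that $c=t'a=at$, $\ c\delta\gamma=t'$, $\ t'c=ct=c$. From $\gamma c\delta=(\gamma a\delta)(\gamma a\delta)=e$ we get $e\precsim c$, hence $t'\precsim e\precsim c$; and choosing (Lemma \ref{sunital}) an $u$ with $ux=xu=x$ for $x$ among the finitely many elements $a,c,t',t$, we have $c=ut'a$, so also $c\precsim t'$. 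Two elementary remarks about $K$, both immediate from Lemma \ref{precsim}, will be used: (a) if $b\precsim b'$ and $b'\precsim b$ then $K(b)=K(b')$; (b) $K(b)\subseteq K(b\oplus z)$ for every $z$.

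The crux is that $a$ and $t'\oplus(a-c)$ each $\precsim$ the other. Expanding $a=uau=\bigl(t'+(u-t')\bigr)\,a\,\bigl(t+(u-t)\bigr)$ and using $t'a=at=c$, $\ ct=t'c=c$, $\ cu=uc=c$, $\ ua=au=a$, the two off\nobreakdash-diagonal terms $t'a(u-t)$ and $(u-t')at$ vanish and the diagonal terms equal $c$ and $a-c$, so
$$\leftmat t' \\ u-t' \rightmat a \leftmat t & u-t \rightmat = \leftmat c & 0 \\ 0 & a-c \rightmat,$$
giving $c\oplus(a-c)\precsim a$, while $a=c+(a-c)\precsim c\oplus(a-c)$ by Lemma \ref{precsim}(vi). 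Combining these with $c\precsim t'$ and $t'\precsim c$ (and Lemma \ref{precsim}(i),(ii)) yields $a\precsim t'\oplus(a-c)\precsim a$. Now for $x\in K(e)$: by (a), $K(e)=K(t')$ (as $t'\sim e$ gives $e\precsim t'$ and $t'\precsim e$); by (b), $K(t')\subseteq K\bigl(t'\oplus(a-c)\bigr)$; and by (a) again, $K\bigl(t'\oplus(a-c)\bigr)=K(a)$. Hence $x\in K(a)$.

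\textbf{Expected main obstacle.} The difficulty is exactly this localization step. Knowing only $e\precsim a$ gives no control over the ``size'' of $a$, so one cannot absorb an element of $K(e)$ into $a$ directly; the remedy is to manufacture an honest idempotent $t'=a\delta\gamma\sim e$ sitting inside $a$ and to check that the Peirce-type decomposition of $a$ with respect to $t'$ on the left and $t=\delta\gamma a$ on the right is block diagonal, so that $a$ splits, up to $\precsim$ in both directions, as $t'$ plus a remainder orthogonal to it. Verifying that the off-diagonal Peirce blocks vanish is the one computation that genuinely uses the special form of $\gamma,\delta$ (in particular $\delta e=\delta$).
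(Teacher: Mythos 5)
Your proof is correct, and at its core it runs on the same mechanism as the paper's: you manufacture an idempotent $t'=a\delta\gamma\sim e$ sitting inside $a$ and verify a Peirce-type block identity $\leftmat t'\\ u-t'\rightmat a \leftmat t & u-t\rightmat = c\oplus(a-c)$, which is essentially the computation the paper performs after its reduction to $e\in aR$ (there the blocks are $e$ and $a-ea$, with $x$ and $w=u-xa$ playing the roles of your $t$ and $u-t$). The genuine difference is organizational: the paper proves (i) directly for the specific element $e-f$ and then deduces (ii) by decomposing a properly infinite $e$ into orthogonal subidempotents $f\sim g\sim e$ and invoking the ideal property of $K(a)$, whereas you isolate the uniform containment $K(e)\subseteq K(a)$ and read off all three parts from it; this buys a noticeably cleaner (ii) (since ``properly infinite'' literally means $e\in K(e)$) and an immediate (iii), at no extra cost in the computation. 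One small presentational point: $K(\cdot)$ is defined in the paper only for elements of $R$, while your remarks (a) and (b) apply it to square matrices such as $t'\oplus(a-c)$; the extension is the obvious one and each use unwinds into statements purely about $\precsim$ via Lemma \ref{precsim}, so nothing is at stake, but you should state the extended definition explicitly.
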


\begin{proof}
(i) We first reduce to the case that $e\in aR$. By hypothesis, $e= \alpha a\beta$ for some
$\alpha,\beta \in R$, and there is no loss of generality in assuming that $e\alpha =\alpha$ and
$\beta e= \beta$. Set $e'= a\beta\alpha$ and $f'= a\beta f\alpha$. Then $e'$ and $f'$ are
idempotents, $e'\in aR$, and $f'\le e' \sim e$. Since $(a\beta) (f\alpha) =f'$ and
$(f\alpha)(a\beta) = f$, we have $f'\sim f$, and thus $e'\sim f'$. Similarly, $e'-f' \sim e-f$,
and so $e'-f'\in K(a)$ if and only if $e-f \in K(a)$. Thus, after replacing $e$ and $f$ by $e'$
and $f'$, there is no loss of generality in assuming that $e\in aR$.

Since $(e-f)\oplus e\sim (e-f)\oplus f\sim e$, we have $(e-f)\oplus e \precsim e$. Further,
$$a= ea+(a-ea) \precsim ea\oplus (a-ea) \precsim e\oplus (a-ea),$$
and hence
$$a\oplus (e-f)\precsim (e-f)\oplus a\precsim (e-f)\oplus e\oplus (a-ea)\precsim e\oplus (a-ea).$$

By hypothesis, $e=ax$ for some $x\in R$, and we note that $(a-ea)x=0$. Also, there exists $u \in
R$ such that $ua=au=a$. Set $w=u-xa$, and observe that $aw= a-ea$, whence $eaw=0$. Consequently,
$(a-ea)w= a-ea$. We now compute that
$$\leftmat e\\ u-e \rightmat a \leftmat x&w \rightmat=
\leftmat ea\\ a-ea \rightmat \leftmat x&w \rightmat = \leftmat e&0\\ 0&a-ea \rightmat,$$
 whence $e\oplus (a-ea) \precsim a$. Therefore $a\oplus (e-f) \precsim a$, which shows that $e-f\in K(a)$.

(ii). If $e$ is properly infinite, there are orthogonal idempotents $f,g\in R$ such that $f\oplus
g\leq e$ and $f\sim g\sim e$. Then $g\leq e$ and we can apply (i) to get that $e-g\in K(a)$. But
we also have $f\leq e-g$, so that $f=f(e-g)\in K(a)$, and consequently $e\in K(a)$ because $e\sim
f$.

(iii) If $e$ is infinite, there is an idempotent $f\in R$ with $e\sim f< e$. By (i), $e-f$ is a
nonzero element of $K(a)$, and therefore $a$ is infinite.
\end{proof}

\begin{proposition} \label{spiextn} Let $R$ be an s-unital ring and $I$ an s-unital ideal of $R$.
Assume that every nonzero ideal in every quotient of $I$ contains a nonzero idempotent. Then $R$
is properly purely infinite if and only if $I$ and $R/I$ are both properly purely infinite.
\end{proposition}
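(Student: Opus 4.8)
The plan is to prove the non-trivial direction: assuming $I$ and $R/I$ are both properly purely infinite (and that every nonzero ideal in every quotient of $I$ contains a nonzero idempotent), deduce that $R$ is properly purely infinite. The forward direction is immediate from Lemma~\ref{pinftoidealquo}. So fix a nonzero element $a\in R$; the goal is to show $a\oplus a\precsim a$, i.e. $a\in K(a)$. By Corollary~\ref{everyquotientfinite}, it suffices to show that for every ideal $J$ of $R$, the coset $a+J$ is either zero or infinite in $R/J$.

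First I would dispose of the case where the image $\abar$ of $a$ in $R/I$ is nonzero. Since $R/I$ is properly purely infinite, $\abar$ is properly infinite, hence (Proposition~\ref{findtheproduct} / the definition) $\abar$ is infinite in $R/I$, so there is a nonzero $\bbar\in R/I$ with $\abar\oplus\bbar\precsim\abar$; lifting the matrix factorization to $R$ gives elements showing $a\oplus b\precsim a$ modulo $I$, and then a standard absorption argument (using that $I\cap K$-type corrections lie in an s-unital ideal) upgrades this: more precisely, one shows the ``defect'' matrix lands in $\MM_2(I)$ and, because $I$ is properly purely infinite hence purely infinite, that defect is itself $\precsim a$, so $a\oplus b\precsim a$ in $R$ with $b\neq 0$ (possibly after adjusting $b$ by an element of $I$ using that $\bbar\neq 0$ means $b\notin I$). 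This mirrors exactly the computation in the proof of Lemma~\ref{finiteinquotient}. Hence $a$ is infinite; but to get \emph{properly} infinite we instead run this inside an arbitrary quotient, so the cleaner route is: work with $R/J$ for arbitrary $J$, and split on whether the image of $a$ in $R/(I+J)$ is zero.

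So reformulate: fix $J\trianglelefteq R$ with $a+J\neq 0$. If $a\notin I+J$, then in $R/(I+J)$, a quotient of the properly purely infinite ring $R/I$, the image of $a$ is nonzero and properly infinite, hence infinite; pull this back through $R/J$ (using that $(I+J)/J$ is an s-unital ideal of $R/J$, being a quotient of $I$) and use the Lemma~\ref{finiteinquotient}-style absorption over this ideal — valid because $(I+J)/J$ is a quotient of $I$ and hence properly purely infinite, so purely infinite, so the defect matrix with entries in that ideal is $\precsim$ the image of $a$ — to conclude $a+J$ is infinite in $R/J$. If instead $a\in I+J$, then the image of $a$ in $R/J$ lies in the ideal $(I+J)/J$, which is a nonzero ideal (it contains $a+J\neq 0$) of the ring $R/J$; now invoke the hypothesis that every nonzero ideal of every quotient of $I$ contains a nonzero idempotent: $(I+J)/J$ is a quotient of $I$, so it contains a nonzero idempotent $\ebar$. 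Since $(I+J)/J$ is properly purely infinite (quotient of $I$) and $\ebar$ is a nonzero idempotent in it, $\ebar$ is properly infinite, hence infinite. The element $a+J$ generates a nonzero ideal of $(I+J)/J$, and $\ebar\lesssim$ a sum of conjugates... more carefully: by pure infiniteness of $(I+J)/J$, since $\ebar$ lies in the ideal it generates — here one wants $\ebar$ in the ideal generated by $a+J$; if $\ebar$ happens not to be, replace $\ebar$ by an idempotent inside the (nonzero) ideal generated by $a+J$ within $(I+J)/J$, again nonzero, again properly infinite. Then $\ebar=e^2\precsim a+J$ in $R/J$ and $\ebar$ is (properly) infinite, so by Lemma~\ref{propinftyinK}(iii), $a+J$ is infinite in $R/J$.

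The main obstacle I anticipate is the bookkeeping around s-unitality and the absorption of ``defect'' matrices when passing between $R$, $R/I$, and $R/J$: one must repeatedly check that the relevant ideals ($I$, $(I+J)/J$, and their quotients) really are s-unital and properly purely infinite so that Lemma~\ref{precsim}, Lemma~\ref{Kisideal}, Proposition~\ref{findtheproduct}, and the division-ring-free condition all apply, and one must make sure the nonzero witness element stays nonzero after each absorption step — this is exactly where the hypothesis ``every nonzero ideal in every quotient of $I$ contains a nonzero idempotent'' does the decisive work, converting the merely-nonzero ideal generated by $a$ into something with an honest infinite idempotent to which Lemma~\ref{propinftyinK} can be applied. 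Assembling the two cases via Corollary~\ref{everyquotientfinite} then yields that $a$ is properly infinite, completing the proof.
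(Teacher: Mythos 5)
Your overall scheme (reduce via Corollary~\ref{everyquotientfinite} to showing that the image of $a$ in each quotient $R/J$ is zero or infinite, then use the idempotent hypothesis through Lemma~\ref{propinftyinK}) is the right one, and your Case~2 ($a\in I+J$) is essentially the paper's argument: a nonzero idempotent in the ideal of $(I+J)/J\cong I/(I\cap J)$ generated by $a+J$ is properly infinite there, is $\precsim a+J$ by pure infiniteness of that quotient of $I$ (after the s-unital adjustments), and Lemma~\ref{propinftyinK}(iii) finishes. The gap is in Case~1 ($a\notin I+J$). There you lift the relation witnessing (proper) infiniteness of the image of $a$ in $R/(I+J)$, obtain a defect matrix with entries $d_{ij}\in (I+J)/J$, and claim a ``Lemma~\ref{finiteinquotient}-style absorption'' is valid because $d_{ij}\precsim \abar$ (where $\abar=a+J$). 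But the absorption in the proof of Lemma~\ref{finiteinquotient} requires the stronger fact $\abar\oplus d_{ij}\precsim \abar$, i.e.\ $d_{ij}\in K(\abar)$ --- in that lemma this holds automatically because the quotient is taken by $K(a)$ itself. From $d_{ij}\precsim\abar$ alone the computation only yields $\abar\oplus b\precsim \abar\oplus\abar\oplus\cdots\oplus\abar$, which collapses to $\abar$ only if $\abar$ is already properly infinite, which is precisely what is unknown. Note that $x\precsim a$ is genuinely weaker than $x\in K(a)$ (every $a$ satisfies $a\precsim a$, but $a\in K(a)$ means $a$ is properly infinite), and the paper has no analogue of Lemma~\ref{propinftyinK}(ii) for non-idempotent elements that could bridge this. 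Indeed, if your absorption were legitimate, Case~1 would go through without the idempotent hypothesis at all, which would come close to answering the open Problem stated right after the proposition.

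The repair is to treat Case~1 exactly as Case~2 (this is what the paper does, after first reducing to a single quotient in which $a$ is nonzero and finite): if the defect matrix is zero, then $\abar$ is properly infinite in $R/J$ and you are done; if not, a nonzero entry exhibits a nonzero element of $(I+J)/J\cap (R/J)\abar(R/J)$, hence a nonzero ideal of the quotient $(I+J)/J$ of $I$. The hypothesis then provides a nonzero idempotent $e$ in that ideal; writing $e=\sum r_i\abar s_i$ with $r_i,s_i$ in $(I+J)/J$ and using an s-unit $u$ of $(I+J)/J$ with $r_iu=r_i$, pure infiniteness of $(I+J)/J$ gives $e\precsim u\abar\precsim\abar$, and since $e$ is properly infinite in the properly purely infinite ring $(I+J)/J$, Lemma~\ref{propinftyinK}(iii) yields that $\abar$ is infinite. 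With that replacement your proof coincides with the paper's.
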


\begin{proof} Necessity follows from Lemma \ref{pinftoidealquo}. Conversely, assume that $I$ and
$R/I$ are properly purely infinite.

If $R$ is not properly purely infinite, there is a nonzero element $a\in R$ which is not properly
infinite. By Corollary \ref{everyquotientfinite}, $R$ has an ideal $J$ such that the coset
$\abar\in R/J$ is nonzero and finite. The ring $R'=R/J$ and its ideal $I'= (I+J)/J$ satisfy the
same hypotheses as $R$ and $I$, and so we may replace $R$, $I$, and $a$ by $R'$, $I'$, and
$\abar$. Thus, there is no loss of generality in assuming that $a$ is nonzero and finite.

Note that $a$ is not properly infinite, whence $a\notin I$. Since $R/I$ is properly purely
infinite, the coset $\abar \in R/I$ is properly infinite, and so there exist
$\alpha_1,\alpha_2,\beta_1,\beta_2\in R$ such that
$$\leftmat \abar &0\\ 0 &\abar \rightmat = \leftmat \alphabar_1\\ \alphabar_2 \rightmat \abar
\leftmat \betabar_1 &\betabar_2 \rightmat$$
 in $\MM_2(R/I)$. Observe that
$$\leftmat a &0\\ 0 &a \rightmat - \leftmat \alpha_1\\ \alpha_2 \rightmat a
\leftmat \beta_1 &\beta_2 \rightmat \in \MM_2(I\cap RaR),$$
 where we have $a\in RaR$ because $R$ is s-unital.
 The above difference cannot be zero, since $a$ is not properly infinite. Therefore $I\cap RaR \ne 0$.

By hypothesis, there exists a nonzero idempotent $e\in I\cap RaR$. Then $e= \sum_i r_ias_i$ for
some $r_i,s_i \in R$. Since we may replace the $r_i$ and $s_i$ by $er_i$ and $s_ie$, respectively,
there is no loss of generality in assuming that $r_i,s_i \in I$ for all $i$. Then, there is some
$u\in I$ such that $r_iu=r_i$ for all $i$. Now $ua\in I$ and $e= \sum_i r_i(ua)s_i\in IuaI$. Since
$I$ is properly purely infinite and hence purely infinite, $e\precsim ua$ in $I$.

Now $e\precsim a$ in $R$. Since $e$ is (properly) infinite, Lemma \ref{propinftyinK} implies that
$a$ is infinite, contradicting our assumptions. Therefore $R$ is indeed properly purely infinite.
\end{proof}

\begin{corollary} Let $R$ be an s-unital ring and $I$ a regular ideal of $R$. Then $R$ is properly purely
infinite if and only if $I$ and $R/I$ are both properly purely infinite. \end{corollary}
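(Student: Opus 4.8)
The plan is to obtain this as an immediate consequence of Proposition~\ref{spiextn}. Since $R$ is already assumed to be s-unital, all that needs checking is that a regular ideal $I$ satisfies the two standing hypotheses of that proposition, namely that $I$ is s-unital and that every nonzero ideal in every quotient of $I$ contains a nonzero idempotent. Once these are verified, the stated equivalence follows verbatim from Proposition~\ref{spiextn} (with necessity already covered by Lemma~\ref{pinftoidealquo}).

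For s-unitality of $I$, I would invoke the fact recalled in Section~1 that every von Neumann regular ring has local units (\cite[Lemma 2]{FU}), and a ring with local units is in particular s-unital; hence the regular ideal $I$ is s-unital. For the second hypothesis, the point is that the class of von Neumann regular rings is closed under passage to quotients and under passage to (two-sided) ideals: if $R$ is regular and $x = xyx$, then the image of $x$ in any quotient is regular witnessed by the image of $y$, and if $I$ is an ideal then $x = x(yxy)x$ with $yxy \in I$, so $I$ is regular in its own right. Consequently, for any ideal $J$ of $I$ and any nonzero ideal $L$ of the quotient $I/J$, the ring $L$ is itself regular; choosing $0 \ne x \in L$ and $z \in L$ with $x = xzx$, the element $xz$ is a nonzero idempotent of $L$. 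This establishes the idempotent condition.

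I do not anticipate any genuine obstacle here: the entire content of the corollary is carried by Proposition~\ref{spiextn}, and the verifications above are routine ring-theoretic facts about regular rings. The only matter requiring a little care is to cite the right facts --- that local units for regular rings come from \cite[Lemma 2]{FU}, and that regularity descends to ideals and quotients --- rather than re-deriving them, and to note explicitly that ``regular ideal'' means an ideal that is von Neumann regular as a ring in its own right, consistently with the usage in Section~1.
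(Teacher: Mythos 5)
Your proof is correct and is exactly the argument the paper intends: the corollary is stated as an immediate consequence of Proposition~\ref{spiextn}, and your verifications (a regular ideal is s-unital since regular rings have local units, regularity passes to ideals and quotients, and a nonzero regular ring contains a nonzero idempotent $xz$ from $x=xzx$) are precisely the routine facts that make its hypotheses hold.
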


\begin{problem}
Does the above hold more generally? That is, for an s-unital ring $R$ and an s-unital ideal $I$ of $R$, is it the case that $R$ is properly purely infinite if and only if $I$ and $R/I$ both are?
\end{problem}

\begin{proposition} \label{IdealInfIdemImpliesPi} Let $R$ be  an s-unital ring. If
every nonzero right {\rm(}or left\/{\rm)} ideal in every nonzero quotient of $R$ contains an
infinite idempotent, then $R$ is properly purely infinite.
\end{proposition}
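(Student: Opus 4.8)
The goal is to show $R$ is properly purely infinite, i.e. that every nonzero $a \in R$ is properly infinite. By Corollary~\ref{everyquotientfinite}, it suffices to prove the contrapositive via quotients: if $a$ is \emph{not} properly infinite, then (by that corollary) there is an ideal $J$ of $R$ with $\abar = a + J$ nonzero and finite in $R/J$. The crucial observation is that the hypothesis ``every nonzero right ideal in every nonzero quotient contains an infinite idempotent'' is inherited by every quotient of $R$. So after replacing $R$ by $R/J$ and $a$ by $\abar$, I may assume from the start that $R$ is s-unital, satisfies the idempotent hypothesis, and contains a nonzero finite element $a$; I then derive a contradiction.

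\textbf{Producing an infinite idempotent below $a$.} With $a$ nonzero and s-unitality in force, $a \in RaR$, so $RaR$ is a nonzero ideal. Consider the right ideal $aR$ (or $RaR$ as a ring in its own right, which is s-unital as an ideal --- here I would need to check that $RaR$ is s-unital, or work with $aR$ directly inside $R$). Since $R$ itself is a nonzero quotient of $R$, the nonzero right ideal generated appropriately by $a$ contains an infinite idempotent $e$. The point is to arrange $e \precsim a$: if $e$ lies in $aR$ or more generally in $RaR$, then writing $e = \sum_i r_i a s_i$ and using Lemma~\ref{precsim}(vi),(v) together with s-unitality gives $e \precsim a \oplus \cdots \oplus a$, and if I can also arrange that $a$ absorbs these (which is not automatic unless $a$ is properly infinite --- so I should instead use Lemma~\ref{propinftyinK} directly, which only needs $e = e^2 \precsim a$). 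So the real task is: exhibit an infinite idempotent $e$ with $e \precsim a$.

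\textbf{The key step.} To get $e \precsim a$, I would use the idempotent hypothesis applied to a right ideal contained in $aR$: for instance, pick $e_0$ a nonzero idempotent in $RaR$ (or use the exchange-like structure), then localize --- since $R$ has local units is \emph{not} assumed, I must be careful, but s-unitality suffices to find $u$ with $ua = au = a$ and to do the matrix manipulations. Concretely: the hypothesis gives an infinite idempotent $e$ in the nonzero right ideal $aR + $ (corrections), and by the standard argument (cf.\ the proof of Proposition~\ref{spiextn}, where $e \in I \cap RaR$ yields $e \precsim a$) one gets $e \precsim a$ with $e$ infinite. Then Lemma~\ref{propinftyinK}(iii) immediately says $a$ is infinite, contradicting that $a$ is finite.

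\textbf{Main obstacle.} The delicate point is the very first move: I cannot simply apply the idempotent hypothesis to an arbitrary quotient and expect the idempotent to sit below $a$ in the $\precsim$ sense. The natural fix is to apply the hypothesis inside the ideal $I = RaR$ (a nonzero quotient situation once one checks $I$ is s-unital, or passes to a quotient where $a$ becomes its own local unit) --- I expect one needs that the idempotent hypothesis, stated for quotients of $R$, transfers to give infinite idempotents in right ideals of $I$ as well. Since every nonzero right ideal of $I$ is (via s-unitality and $I \trianglelefteq R$) closely tied to right ideals of $R$ meeting $I$, this should go through, but verifying that an infinite idempotent produced this way genuinely satisfies $e \precsim a$ (not merely $e \in RaR$ abstractly, but with an explicit factorization through $a$ that survives the $\oplus$-manipulations) is where the care is needed. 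Once $e \precsim a$ with $e$ infinite is in hand, Lemma~\ref{propinftyinK}(iii) finishes the proof cleanly. It is also worth noting at the end that, combined with Lemma~\ref{EveryPropInfImpliesPi}(i), such $R$ is in particular purely infinite.
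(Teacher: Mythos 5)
Your overall skeleton --- reduce via Corollary \ref{everyquotientfinite} to producing, in a suitable quotient, an infinite idempotent $e$ with $e=e^2\precsim a$, and then invoke Lemma \ref{propinftyinK} --- is exactly the paper's strategy. But the step you yourself flag as the ``main obstacle'' is left genuinely open, and the route you sketch for it (applying the hypothesis to right ideals of $I=RaR$ viewed as a ring in its own right, after checking that $I$ is s-unital and relating its right ideals to those of $R$) is not what is needed and is never carried out. The gap dissolves once you apply the hypothesis where it is actually stated: to a right ideal of a quotient ring of $R$. In your reduced situation ($a$ nonzero and finite, the hypothesis being inherited by quotients), the right ideal $aR$ of $R$ (and $R$ is itself a nonzero quotient of $R$) is nonzero, since s-unitality gives $a=av\in aR$. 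The hypothesis then supplies an infinite idempotent $e\in aR$, say $e=ar$; then $e=e^2=e(ar)$, so $e\precsim a$ with no further work --- no passage to $RaR$ as a ring, no ``corrections'' to the right ideal, and no transfer of the hypothesis to right ideals of ideals. Lemma \ref{propinftyinK}(iii) then makes $a$ infinite, the desired contradiction.

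For comparison, the paper does not even perform your reduction to the quotient where $a$ becomes finite: it verifies condition (ii) of Corollary \ref{everyquotientfinite} directly, taking any ideal $I$ with $a\notin I$, setting $S=R/I$, applying the hypothesis to the nonzero right ideal $\abar S$ to obtain an infinite idempotent $e\in\abar S$ (whence $e\precsim\abar$ exactly as above), and then using Lemma \ref{propinftyinK}(i) to conclude that $\abar$ is infinite. So your plan is salvageable in one line, but as written the key step --- exhibiting an infinite idempotent $\precsim$-below $a$ --- is missing, and the detour through $RaR$ that you propose is the wrong place to look for it.
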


\begin{proof} Let $a$ be a nonzero element of $R$; we use Corollary \ref{everyquotientfinite} to see
that $a$ is properly infinite. Thus, let $I$ be an ideal of $R$ with $a\notin I$, and set $S
=R/I$. By hypothesis, the right ideal $\abar S$ contains an infinite idempotent $e$, so that there
is an idempotent $f\in S$ with $e\sim f<e$. Now, applying Lemma \ref{propinftyinK}(i) we have that
$0\neq e-f\in K(\abar)$, so that $\abar$ is infinite in $S$. Thus, Corollary
\ref{everyquotientfinite} shows $a$ is properly infinite, which yields in turn that $R$ is
properly purely infinite.
\end{proof}

The next proposition shows that, under suitable conditions, to prove that a ring $R$ is properly
purely infinite, we can relax the hypothesis that every nonzero element is properly infinite and
require only that every nonzero idempotent is properly infinite.

\begin{proposition} \label{SemiprimitiveExchangePi} Let $R$ be  an s-unital exchange ring. Suppose
that every ideal of $R$ is semiprimitive, and that all nonzero idempotents in $R$ are properly
infinite. Then $R$ is properly purely infinite.
\end{proposition}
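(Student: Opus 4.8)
The plan is to reduce, via Corollary~\ref{everyquotientfinite}, the statement ``$R$ is properly purely infinite'' to showing that every nonzero element $a$ of $R$ is properly infinite, and for that it suffices to show that for each ideal $I$ with $a\notin I$, the coset $\abar\in R/I$ is infinite. Since any quotient of an s-unital exchange ring whose ideals are all semiprimitive is again such a ring, and since the hypothesis on idempotents need only be carried along to the quotient, we would first check that these hypotheses pass to $R/I$. (Here we use that $V(I)$ is an ideal of $V(R)$ with $V(R)/V(I)\cong V(R/I)$ for exchange rings, and that ideals of $V(R/I)$ correspond to ideals of $R/I$; semiprimitivity is what lets us conclude that a nonzero idempotent in $R/I$ witnessing some comparison lifts or is itself properly infinite.) Thus we may assume $I=0$ and must prove: if $a\ne 0$ in $R$, then $a$ is infinite.

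The second step is to produce a nonzero idempotent inside the ideal $RaR$. Because $R$ is an exchange ring and $R$ is semiprimitive, $RaR$ is a nonzero ideal of an exchange ring; being semiprimitive it is not a nil ideal (a semiprimitive ring has zero Jacobson radical, hence no nonzero nil one-sided ideals), so $RaR$ contains a nonzero idempotent $e$. Alternatively, and more directly: in an exchange ring every nonzero one-sided ideal that is not contained in the radical contains a nonzero idempotent, and semiprimitivity of $RaR$ forces its radical to be zero. Either way we obtain a nonzero idempotent $e\in RaR$.

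The third step invokes the running hypothesis: $e$ is properly infinite, hence in particular infinite. Now $e\in RaR$ and $R$ is s-unital, so we may write $e=\sum_i r_i a s_i$ and, absorbing idempotents, arrange $r_i,s_i$ and then choose $u$ with $u e=eu=e$ appropriately so that $e\precsim a$ (this is exactly the style of computation in Lemma~\ref{precsim}(v),(vi) and Proposition~\ref{findtheproduct}). With $e=e^2\precsim a$ and $e$ infinite, Lemma~\ref{propinftyinK}(iii) gives that $a$ is infinite. Since $a$ and the ideal $I$ were arbitrary, Corollary~\ref{everyquotientfinite} yields that $a$ is properly infinite, and hence $R$ is properly purely infinite.

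The main obstacle I anticipate is the first step, specifically verifying that ``every ideal is semiprimitive'' passes to quotients $R/I$: it is not automatic that an ideal of $R/I$ pulls back to an ideal of $R$ when $I$ is merely s-unital, although the remark preceding Lemma~\ref{pinftoidealquo} handles exactly this, and one must check that the Jacobson radical of an ideal $\bar J/I$ of $R/I$ is controlled by that of the corresponding ideal $J\supseteq I$ of $R$. Equivalently, one wants that semiprimitivity of all ideals of $R$ forces semiprimitivity of all ideals of $R/I$; this should follow from the standard fact that $J(R/I)$ is the image of $J$-related ideals, combined with $J\cap I$ and $I$ being semiprimitive, but it will require care with the non-unital/s-unital bookkeeping. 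Once that is in place, the remaining steps are routine given the lemmas already established.
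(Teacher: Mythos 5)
Your overall strategy --- reduce via Corollary~\ref{everyquotientfinite} to showing that the image of $a$ is infinite in each quotient, produce a nonzero idempotent there using semiprimitivity plus the exchange property, and feed its (proper) infiniteness into Lemma~\ref{propinftyinK} --- is indeed the paper's strategy, but your third step has a genuine gap. From a nonzero idempotent $e\in RaR$ you can only write $e=\sum_{i=1}^n r_ias_i$, and Lemma~\ref{precsim}(v),(vi) then yields $e\precsim r_1as_1\oplus\cdots\oplus r_nas_n\precsim a\oplus\cdots\oplus a$ ($n$ summands), \emph{not} $e\precsim a$. Collapsing $a\oplus\cdots\oplus a$ down to $a$ is exactly the content of Proposition~\ref{findtheproduct}, which requires $a$ to be properly infinite --- the very conclusion you are trying to prove, so that appeal is circular. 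Proper infiniteness of $e$ does not obviously rescue the step either: the C$^*$-fact that a properly infinite projection lying in the ideal generated by $a$ is Cuntz-dominated by $a$ uses square roots and partial isometries, and no algebraic analogue is proved in the paper (indeed, in Proposition~\ref{spiextn} the authors need pure infiniteness of the ideal $I$ precisely to collapse such a sum). Without $e=e^2\precsim a$ you cannot invoke Lemma~\ref{propinftyinK}(iii), so the argument does not close.

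The paper avoids this by working one-sidedly. In $S=R/I$ (with $a\notin I$) it first shows, from semiprimitivity plus the exchange property, that every nonzero \emph{right} ideal of $S$ contains a nonzero idempotent, and applies this to the principal right ideal $\abar S$. An idempotent $e\in\abar S$ has the form $e=\abar t$, so $e=e\abar t$ gives $e\precsim\abar$ on the nose, with a single product. Then $e$ lifts (exchange) to a nonzero idempotent of $R$, which is properly infinite by hypothesis; proper infiniteness passes back down to $e$, and Lemma~\ref{propinftyinK}(ii) gives $0\ne e\in K(\abar)$, so $\abar$ is infinite. If you replace your two-sided ideal $RaR$ by the right ideal $\abar S$ (or $aR$ after your reduction), your argument becomes the paper's. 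Concerning your flagged ``main obstacle'': the paper does not perform your reduction to $I=0$; it works directly in $S=R/I$ and uses only that $S$ itself is a semiprimitive exchange ring (which it reads off from the hypothesis), so it never needs the full package ``every ideal is semiprimitive'' to descend to the quotient in the way your plan requires.
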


\begin{proof}
We again use Corollary \ref{everyquotientfinite} to see that every nonzero element $a\in R$ is
properly infinite. Thus, it is enough to check that $\abar$ is infinite or zero in $R/I$, for an
arbitrary ideal $I$ of $R$.

Assume that $a\notin I$, and set $S =R/I$. By hypothesis, $S$ is a semiprimitive exchange ring.
This implies, in particular, that every nonzero right ideal $J$ of $S$ contains a nonzero
idempotent, as follows. Since $S$ is semiprimitive, $J \not\subseteq J(S)$, and so there exists an
element $x\in J$ which is not right quasiregular. Since $S$ is an exchange ring, there exist
$r,s\in S$ and an idempotent $e\in S$ such that $e=xr=s+x-xs$. Then $e\in J$, and $e\ne 0$ because
$x$ is not right quasiregular.

In view of the previous paragraph, the nonzero right ideal $\abar S$ contains a nonzero
idempotent, say $e$. This element can be lifted to a nonzero idempotent $f$ of $R$, which will be
properly infinite by hypothesis. Since the condition $f\oplus f\lesssim f$ passes to $e\oplus
e\lesssim e$, the idempotent $e$ is properly infinite in $S$, so Lemma \ref{propinftyinK}(ii)
applies to give us $0\neq e\in K(\abar)$. Therefore $\abar$ is infinite, as required.
\end{proof}

Since regular rings are s-unital semiprimitive exchange rings, and regularity passes to quotients,
we immediately obtain the following corollary. We shall prove later that a regular ring is purely
infinite if and only if it is properly purely infinite (see Corollary \ref{exchangepinfstrong}).

\begin{corollary} \label{RegularInftyIdempotentsPi} Let $R$ be a regular ring. Then $R$ is properly
purely infinite if and only if all nonzero idempotents in $R$ are properly infinite.
\end{corollary}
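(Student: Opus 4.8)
The plan is to derive this as an immediate consequence of Proposition~\ref{SemiprimitiveExchangePi}, using two standard facts about von Neumann regular rings: first, that regular rings are s-unital exchange rings, and second, that every quotient of a regular ring is again regular, and regular rings are semiprimitive (indeed, every ideal of a regular ring is itself regular, hence semiprimitive as a ring). The necessity direction (that proper pure infiniteness forces all nonzero idempotents to be properly infinite) is trivial, since it is part of the definition that \emph{every} nonzero element — in particular every nonzero idempotent — is properly infinite.

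For the converse, suppose $R$ is regular and all its nonzero idempotents are properly infinite. First I would recall that $R$ is s-unital: given $x\in R$, regularity gives $y$ with $xyx=x$, so $xy$ and $yx$ are idempotents with $(xy)x = x = x(yx)$. Next, $R$ is an exchange ring — this is classical for regular rings (for each $x$, writing $e=xy$ as above, one checks $e=xr$ and $e=x+s-xs$ with suitable $r,s$). Then I would invoke the fact that in a regular ring every one-sided and two-sided ideal is regular, and every regular ring is semiprimitive (its Jacobson radical is zero, since a regular ring with nonzero radical would contain a nonzero quasiregular idempotent, which is absurd); thus every ideal of $R$ is semiprimitive. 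With these hypotheses verified, Proposition~\ref{SemiprimitiveExchangePi} applies directly and yields that $R$ is properly purely infinite.

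There is essentially no obstacle here — the corollary is designed to follow mechanically from the preceding proposition once the three structural properties (s-unital, exchange, all ideals semiprimitive) are confirmed for regular rings. The only point requiring a word of care is that Proposition~\ref{SemiprimitiveExchangePi} asks for \emph{all ideals} of $R$ to be semiprimitive (as rings in their own right), not merely $R$ itself; but this is automatic since an ideal of a regular ring is regular, and a regular ring has zero Jacobson radical. Indeed, since regularity is inherited by quotients as well, one could alternatively note that every quotient $R/I$ is regular, hence s-unital, exchange, and semiprimitive, so it suffices to check that every nonzero right ideal of $R/I$ contains a nonzero idempotent whose lift is properly infinite — which is exactly the argument carried out inside the proof of Proposition~\ref{SemiprimitiveExchangePi}. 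I would keep the proof to one or two sentences, simply citing the relevant facts and the proposition.
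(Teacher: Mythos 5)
Your proposal is correct and follows exactly the route the paper takes: the paper also obtains this as an immediate consequence of Proposition~\ref{SemiprimitiveExchangePi}, noting that regular rings are s-unital semiprimitive exchange rings and that regularity passes to ideals and quotients, with the necessity direction being trivial from the definition of properly purely infinite. Your extra remarks (s-unitality via $xyx=x$, semiprimitivity of ideals of a regular ring) are just the standard facts the paper cites implicitly, so there is nothing to add.
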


Next, we look at the relationship between the algebraic and analytic concepts of pure
infiniteness. First, recall the definition of pure infiniteness given by Kirchberg and R\o rdam in
\cite{KR}.

\begin{definition} \label{defpiKR} {\rm Let $A$ be a C$^*$-algebra. For $a,b\in A_+$, one defines $b \precsim
a$ (in the C$^*$ sense) to mean that there exists a sequence of elements $x_i\in A$ such that
$x_iax_i^* \rightarrow b$.

Now $A$ is \emph{purely infinite} in the sense of \cite[Definition 4.1]{KR} if the following two
conditions are satisfied:
\begin{enumerate}[{\rm (i)}]
\item There are no characters on $A$, that is, no nonzero homomorphisms $A\rightarrow \CC$.
\item For every $a,b\in A_+$ we have $b \precsim a$ if and only if $b\in \overline{AaA}$.
\end{enumerate} }
\end{definition}
Given a positive element $a$ in a C$^*$-algebra $A$ and $\epsilon>0$, write $(a-\epsilon)_+$ as the positive
part of $a-\epsilon\cdot 1$. In other words, $(a-\epsilon)_+=f(a)$, where $f\colon\mathbb{R}\to\mathbb{R}$ is
given by $f(t)=\max (t-\epsilon,0)$.

\begin{proposition} \label{PialgebraicPiKR} Let $A$ be a C$^*$-algebra. If $A$ is purely infinite in
the sense of Definition {\rm\ref{defpi}}, then it is purely infinite in the sense of Definition
{\rm\ref{defpiKR}}.
\end{proposition}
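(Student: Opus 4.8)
The plan is to translate the two defining conditions of Definition~\ref{defpi} directly into the two conditions of Definition~\ref{defpiKR}, using the fact that a C$^*$-algebra is in particular an s-unital ring (indeed, using approximate units) so that all the machinery of Section~2 applies.

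First I would dispose of condition~(i). A character on $A$, i.e.\ a nonzero homomorphism $\chi\colon A\to\CC$, has image a nonzero ideal of $\CC$, hence all of $\CC$, so $\chi$ is surjective and $A/\ker\chi\cong\CC$. But $\CC$ is a division ring, contradicting the assumption that no quotient of $A$ is a division ring. Hence $A$ has no characters.

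Next, condition~(ii): I must show that for $a,b\in A_+$ one has $b\precsim a$ in the C$^*$ sense if and only if $b\in\overline{AaA}$. One direction is essentially formal: if $b\precsim a$ in the C$^*$ sense, there are $x_i$ with $x_iax_i^*\to b$, and each $x_iax_i^*\in AaA$, so $b\in\overline{AaA}$. For the converse, suppose $b\in\overline{AaA}$. Fix $\epsilon>0$; it suffices to show $(b-\epsilon)_+\precsim a$ in the C$^*$ sense, since $(b-\epsilon)_+\to b$ as $\epsilon\to 0$ and the C$^*$ relation $\precsim$ is preserved under this limit. By a standard approximation argument (e.g.\ \cite[Lemma 2.2]{KR} or Rørdam's results on the functional calculus element $(b-\epsilon)_+$), there exist $c\in A$ and $d\in AaA$ such that $(b-\epsilon)_+ = c\,d\,c^*$ — more precisely, $(b-\epsilon)_+$ lies in the closed two-sided ideal generated by $a$, and one can find an element of the algebraic ideal $AaA$ that is close enough that $(b-\epsilon/2)_+$ is of the form $y^*(\text{something in }AaA)y$. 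The point is that now $(b-\epsilon)_+$, or a slightly smaller functional-calculus cut of it, belongs to $A a A$ as a genuine algebraic product, so I may apply Definition~\ref{defpi}(ii): we get $(b-\epsilon)_+\precsim a$ in the algebraic sense, i.e.\ $(b-\epsilon)_+ = \alpha a\beta$ for some $\alpha,\beta\in A$ (absorbing matrix sizes, since these are $1\times 1$).

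The main obstacle is bridging the algebraic relation $x=\alpha a\beta$ and the C$^*$ relation "$x_iax_i^*\to x$", because the algebraic relation uses an arbitrary $\beta$ on the right rather than $\alpha^*$. The standard device, which I would invoke, is the polar-decomposition trick: from $x=\alpha a\beta$ with $x\ge 0$ we get $x = (x^{1/2})(x^{1/2})^* $ and, writing $x^{1/2}=\alpha a^{1/2}\cdot(a^{1/2}\beta x^{-1/2})$ on the support of $x$ and approximating $x^{-1/2}$ on the spectrum away from $0$, one produces elements $z_i$ with $z_i a z_i^*\to x$; equivalently one appeals to the elementary fact that for positive $x$, $x\in\overline{AaA}$ iff $x=\lim x_iax_i^*$, together with the already-established membership in the algebraic ideal. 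Applying this to $x=(b-\epsilon)_+$ gives $(b-\epsilon)_+\precsim a$ in the C$^*$ sense, and letting $\epsilon\to 0$ yields $b\precsim a$, completing the proof. I would present the functional-calculus approximation and the polar-decomposition step as citations to the C$^*$-algebra literature rather than reproving them, since they are standard.
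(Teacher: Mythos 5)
Your main line is essentially the paper's proof, just reordered. The paper approximates $b$ by a positive element $\sum_i x_iax_i^*$ of the algebraic ideal $AaA$, applies condition (ii) of Definition \ref{defpi} to that approximant to collapse the sum into a single product $xay$ (which is positive, being equal to the sum), and then finishes with exactly the standard C$^*$ facts you propose to cite: $\|b-xay\|<\varepsilon$ gives $(b-\varepsilon)_+\precsim xay$, a positive element of the form $xay$ satisfies $xay\precsim a$ (R\o rdam), and letting $\varepsilon\to0$ gives $b\precsim a$. You instead first use R\o rdam's lemma to write $(b-\varepsilon)_+=cdc^*$ with $d\in AaA$, so that $(b-\varepsilon)_+$ itself lies in the algebraic ideal, and only then apply Definition \ref{defpi}(ii) to it; this also works (note that $d$ must be taken positive, e.g.\ of the form $\sum_i x_iax_i^*$, for R\o rdam's lemma to apply), and the paper's ordering merely saves that extra step because positivity of the element fed into the algebraic hypothesis comes for free. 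The character argument is the same as the paper's (the image of a nonzero homomorphism into $\CC$ is a subalgebra rather than an ideal, but surjectivity is automatic and the conclusion $A/\ker\chi\cong\CC$ stands).

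However, one of the two justifications you offer for the last step is false and must be dropped: it is \emph{not} an elementary fact that for positive $x$ one has $x\in\overline{AaA}$ if and only if $x=\lim x_iax_i^*$ with \emph{single} terms $x_iax_i^*$. With single terms this is precisely the Cuntz relation $x\precsim a$, i.e.\ exactly condition (ii) of Definition \ref{defpiKR} that the proposition is trying to prove; it fails, for instance, in $\MM_2(\CC)$ with $a$ a rank-one projection and $x=1$. What is automatic is approximation by finite \emph{sums} $\sum_i x_iax_i^*$, and the whole role of the algebraic pure infiniteness hypothesis is to collapse such a sum into a single product $\alpha a\beta$. So your argument should rest only on the other route you describe: from $(b-\varepsilon)_+=\alpha a\beta$ positive, use the standard fact (the paper's citation of R\o rdam) that a positive element of the form $\alpha a\beta$ with $a\ge 0$ is Cuntz-subequivalent to $a$ --- for example via $x^2=\beta^*a\alpha^*\alpha a\beta\le\|\alpha\|^2\beta^*a^2\beta$ together with $x\sim x^2$ and $a^2\sim a$ --- and then let $\varepsilon\to0$.
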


\begin{proof}
Observe that since $A$ has no quotients which are division rings,
it has no quotients isomorphic to $\CC$, and thus it has no characters.

Next, if $a,b\in A_+$ with $b\precsim a$, then obviously $b\in
\overline{AaA}$. Conversely, suppose that $b\in \overline{AaA}$. Given $\varepsilon
>0$, there exist $x_i\in A$ such that $\| b-\sum x_iax_i^*\| < \varepsilon$, and $\sum x_iax_i^*=
xay$ for some $x,y\in A$ because $A$ is purely infinite in the algebraic sense. Note that $xay\in
A_+$. Since $\| b-xay \| <\varepsilon$, we obtain $(b-\varepsilon)_+ \precsim xay$ by \cite[Lemma
2.5(ii)]{KR}. In addition, $xay\precsim a$ because of \cite[Proposition 2.4]{Ror}, so that
$(b-\varepsilon)_+\lesssim a$. Since $\varepsilon$ is arbitrary we conclude that $b\lesssim a$
\cite[Proposition 2.6]{KR}.
\end{proof}

\begin{remark} {\rm One might be tempted to look for a converse to Proposition \ref{PialgebraicPiKR},
at least for C$^*$-algebras with real rank zero, but we conjecture that no such converse holds.}
\end{remark}

\section{Tensor product and multiplier ring examples} \label{examplesec}

Various C$^*$-algebras obtained from tensor products or multiplier algebras are known to be purely
infinite. We develop some algebraic analogs in this section.

\begin{lemma} \label{elementtensorpropinf}
Let $R= A\otimes_K B$ where $A$ and $B$ are s-unital algebras over a field $K$, and let $a\in A$
and $b\in B$. If $a$ is nonzero and $b$ is properly infinite, then $a\otimes b$ is a properly
infinite element of $R$. \end{lemma}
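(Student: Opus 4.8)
The plan is to transfer the proper infiniteness of $b$ in $B$ to the element $a \otimes b$ in $R = A \otimes_K B$ by a direct manipulation of $2 \times 2$ matrix equations over $R$. Since $b$ is properly infinite, we have $b \oplus b \precsim b$ in $B$, meaning there exist $\beta_1, \beta_2, \gamma_1, \gamma_2 \in B$ such that
\[
\leftmat b & 0 \\ 0 & b \rightmat = \leftmat \beta_1 \\ \beta_2 \rightmat b \leftmat \gamma_1 & \gamma_2 \rightmat
\]
in $\MM_2(B)$; equivalently $\beta_1 b \gamma_1 = b$, $\beta_1 b \gamma_2 = \beta_2 b \gamma_1 = 0$, and $\beta_2 b \gamma_2 = b$. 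The idea is to tensor the $B$-side witnesses with suitable elements on the $A$-side. Since $A$ is s-unital, choose $u \in A$ with $ua = au = a$. Then I would set $\alpha_i = u \otimes \beta_i$ and $\delta_i = u \otimes \gamma_i$, and check that
\[
\leftmat a \otimes b & 0 \\ 0 & a \otimes b \rightmat = \leftmat \alpha_1 \\ \alpha_2 \rightmat (a \otimes b) \leftmat \delta_1 & \delta_2 \rightmat
\]
holds in $\MM_2(R)$, because each entry $(u \otimes \beta_i)(a \otimes b)(u \otimes \gamma_j) = (uau) \otimes (\beta_i b \gamma_j) = a \otimes (\beta_i b \gamma_j)$ matches the corresponding entry of the left-hand side by the displayed relations among the $\beta_i, \gamma_j$. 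This would give $(a \otimes b) \oplus (a \otimes b) \precsim a \otimes b$.

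It then remains only to verify that $a \otimes b \ne 0$, which follows from $a \ne 0$, $b \ne 0$ (note $b \ne 0$ since properly infinite elements are by definition nonzero), and the fact that $A \otimes_K B$ is a tensor product over a field, so the natural map does not kill $a \otimes b$ when both tensor factors are nonzero. Combining, $a \otimes b$ is nonzero and satisfies $(a\otimes b)\oplus(a\otimes b) \precsim a\otimes b$, hence is properly infinite by Definition~\ref{infdef}.

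The only real subtlety is bookkeeping with the matrix entries — making sure the cross terms vanish and the diagonal terms come out right — but this is routine given the explicit witnesses for $b \oplus b \precsim b$. One small point to be careful about: I should confirm that $uau = a$ (which follows from $ua = a$ and $au = a$), so that the diagonal entries genuinely return $a \otimes (\beta_i b \gamma_i) = a \otimes b$ rather than something involving $uau$. No deeper obstacle is expected; the lemma is essentially a compatibility check between the $\precsim$ relation and the tensor product construction, exploiting that $\precsim$ is witnessed by finitely many ring elements which can be tensored componentwise.
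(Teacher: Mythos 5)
Your proof is correct and takes essentially the same route as the paper: both start from witnesses $\beta_i,\gamma_j\in B$ with $\beta_i b\gamma_j=\delta_{ij}b$ coming from $b\oplus b\precsim b$ and tensor them against an s-unital element $u\in A$ acting as a unit for $a$. The paper merely packages the identical computation inside $\MM_2(A)\otimes_K B\cong\MM_2(R)$, obtaining $(a\otimes b)\oplus(a\otimes b)\precsim (a\otimes b)\oplus 0$, which is your rectangular-witness identity padded with zero blocks, so there is no gap.
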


\begin{proof} Since we are tensoring over a field, $a\otimes b \ne 0$.

By assumption, $\leftmat \alpha_1\\ \alpha_2 \rightmat b \leftmat \beta_1 &\beta_2 \rightmat =
\leftmat b&0\\ 0&b \rightmat$ for some $\alpha_i,\beta_j \in B$. Thus, $\alpha_i b\beta_j=
\delta_{ij}b$ for all $i$, $j$. As in the proof of Lemma \ref{EveryPropInfImpliesPi}(ii), there
exist $u_i,v_j\in \MM_2(A)$ such that
$$u_1(a\oplus 0)v_1+ u_2(a\oplus 0)v_2 = a\oplus a.$$
 Hence, we make the following computation in $\MM_2(A)\otimes_K B$:
 \begin{align*}(u_1\otimes \alpha_1+
u_2\otimes \alpha_2) \bigl( (a\oplus 0) \otimes b \bigr) ( v_1\otimes \beta_1+ v_2\otimes \beta_2)
&= \sum_{i,j=1}^2 u_i(a\oplus 0)v_j \otimes \alpha_i b\beta_j \\
 &= \sum_{i=1}^2 u_i(a\oplus 0)v_i
\otimes b= (a\otimes a) \otimes b. \end{align*}
 Thus, $(a\oplus a)\otimes b \precsim (a\oplus 0)\otimes b$. Under the usual identification of
 $\MM_2(A)\otimes_K B$ with $\MM_2(R)$, this last relation becomes $(a\otimes b)\oplus (a\otimes b)
 \precsim (a\otimes b) \oplus 0$. \end{proof}

Our first construction requires infinite tensor products of algebras. Recall that if
$B_1$, $B_2,\dots$ is an infinite sequence of unital algebras over a field $K$, the algebra
$\bigotimes_{i=1}^\infty B_i$ is defined to be the direct limit of the sequence $B_1\rightarrow
B_1\otimes_K B_2 \rightarrow B_1\otimes_K B_2\otimes_K B_3 \rightarrow \cdots$, where all the
connecting maps have the form $b\mapsto b\otimes 1$.

\begin{theorem} \label{inftensorexample}
Let $B= \bigotimes_{i=1}^\infty B_i$ and $R= A\otimes_K B$ where $A$ is an s-unital algebra over a
field $K$ and the $B_i$ are unital $K$-algebras. If the identity element of each $B_i$ is properly
infinite, then $R$ is properly purely infinite. \end{theorem}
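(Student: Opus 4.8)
The plan is to verify directly that every nonzero element of $R$ is properly infinite, which is the definition of proper pure infiniteness (Definition~\ref{infdef}). Write $C_n=B_1\otimes_K\cdots\otimes_K B_n$, so that, by definition, $B$ is the direct limit of $C_1\to C_2\to\cdots$ with connecting maps $c\mapsto c\otimes 1_{B_{n+1}}$, and hence $R=A\otimes_K B$ is the direct limit of $A\otimes_K C_1\to A\otimes_K C_2\to\cdots$. First I would check that all these connecting maps are injective: since each $1_{B_{n+1}}$ is properly infinite it is in particular nonzero, so there is a $K$-linear functional on $B_{n+1}$ carrying $1_{B_{n+1}}$ to $1$, which yields a left inverse for $c\mapsto c\otimes 1_{B_{n+1}}$; tensoring over the field $K$ with $A$ preserves this injectivity. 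Thus each $A\otimes_K C_n$ may be regarded as a subring of $R$, and $R=\bigcup_n A\otimes_K C_n$.

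Next, given a nonzero $r\in R$, choose $n$ with $r\in A\otimes_K C_n$; here $r\ne 0$ by the injectivity just established. Using associativity of $\otimes_K$, identify $A\otimes_K C_{n+1}$ with $(A\otimes_K C_n)\otimes_K B_{n+1}$; under this identification the image of $r$ under the connecting map $A\otimes_K C_n\to A\otimes_K C_{n+1}$ is exactly $r\otimes 1_{B_{n+1}}$. The algebra $A'=A\otimes_K C_n$ is s-unital: given $\xi=\sum_j a_j\otimes c_j\in A'$, Lemma~\ref{sunital} provides $u\in A$ with $ua_j=a_ju=a_j$ for all $j$, and then $u\otimes 1_{C_n}$ is a two-sided local unit for $\xi$. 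Since $B_{n+1}$ is unital, $r$ is a nonzero element of $A'$, and $1_{B_{n+1}}$ is properly infinite, Lemma~\ref{elementtensorpropinf} applies to $R'=A'\otimes_K B_{n+1}=A\otimes_K C_{n+1}$ and tells us that $r\otimes 1_{B_{n+1}}$ is a properly infinite element of $R'$.

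It then remains only to push this up from $R'=A\otimes_K C_{n+1}$ to $R$, which is immediate: proper infiniteness of an element passes to any overring, because if $a\ne 0$ in $R'$ and $a\oplus a=\alpha a\beta$ for matrices $\alpha,\beta$ over $R'$, the same equation holds verbatim over $R$ and $a$ is still nonzero in $R$. Hence $r$ (identified with $r\otimes 1_{B_{n+1}}$) is properly infinite in $R$, and since $r$ was arbitrary, $R$ is properly purely infinite.

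I expect the only real work to be the bookkeeping with the direct limit: confirming that the connecting maps are injective (so that the subalgebras $A\otimes_K C_n$ genuinely embed in $R$ and a nonzero element remains nonzero), and that, under the associativity isomorphism, the image of $r\in A\otimes_K C_n$ in $A\otimes_K C_{n+1}$ really is $r\otimes 1_{B_{n+1}}$ -- this identification is precisely what makes Lemma~\ref{elementtensorpropinf} applicable after adjoining a single factor (the point being that each $1_{B_i}$ is already properly infinite, so no ``building up'' over infinitely many factors is needed). Everything else is routine. One may also note in passing that $R$ is itself s-unital, since $B$ is unital with identity the common image of the $1_{C_n}$ and hence $R=A\otimes_K B$ is s-unital by the same local-unit argument; this is not needed for the statement but situates $R$ within the ambient theory.
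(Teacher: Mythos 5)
Your proposal is correct and follows essentially the same route as the paper: realize $R$ as the direct limit of the s-unital algebras $A\otimes_K B_1\otimes_K\cdots\otimes_K B_n$ with injective connecting maps $r\mapsto r\otimes 1$, apply Lemma~\ref{elementtensorpropinf} to conclude that the image of any nonzero element after adjoining one more factor is properly infinite, and note that proper infiniteness persists in the limit. The extra details you supply (injectivity of the connecting maps via a functional splitting, s-unitality of the finite stages, and the explicit passage to the overring) are exactly the routine verifications the paper leaves implicit.
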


\begin{proof} By construction, $R$ is the direct limit of a sequence of $K$-algebras $R_i$ and
injective connecting homomorphisms $\phi_i: R_i\rightarrow R_{i+1}$, where each $R_i= A\otimes_K
B_1\otimes_K \cdots\otimes_K B_i$ and $\phi_i(r)= r\otimes 1$ for $r\in R_i$. Since $A$ is
s-unital and the $B_i$ are unital, all the $R_i$ are s-unital. For any nonzero element $r\in R_i$,
Lemma \ref{elementtensorpropinf} implies that $\phi_i(r)$ is properly infinite in $R_{i+1}$.
Therefore all nonzero elements of $R$ are properly infinite. \end{proof}

Theorem \ref{inftensorexample} yields many properly purely infinite algebras without needing any
purely infinite simple algebras as ingredients. For example, $A$ could be an arbitrary nonzero
unital $K$-algebra and we could take each $B_i= \End_K(V_i)$ where the $V_i$ are infinite
dimensional vector spaces over $K$.

\begin{definition} {\rm Let $K$ be a field. The \emph{Leavitt algebra $L_K(1,\infty)$} (denoted
$U_\infty$ in papers such as \cite{AGP}) is the unital $K$-algebra with generators
$x_1,y_1,x_2,y_2,\dots$ and relations $x_iy_j= \delta_{ij}$ for all $i$, $j$. The following
notation is convenient for working with this algebra. Let $\F$ denote the set of all finite
sequences of positive integers, including the empty sequence, and set $x_\varnothing=
y_\varnothing =1\in L_K(1,\infty)$. For any nonempty sequence $I= (i_1,\dots,i_r) \in \F$, set
$x_I = x_{i_1} x_{i_2} \dots x_{i_r}$ and $y_I = y_{i_1} y_{i_2} \dots y_{i_r}$. Let $I^* =
(i_r,\dots,i_1)$ denote the reverse sequence to $I$, and note that $x_I y_{I^*} =1$. The products
$y_Jx_I$ for $I,J\in \F$ form a $K$-basis for $L_K(1,\infty)$.  }\end{definition}

The following fact is known, but we did not locate a reference in the literature.

\begin{lemma} \label{Lcentralsimple}
The algebra $L_K(1,\infty)$ is a central simple $K$-algebra, for any field $K$. \end{lemma}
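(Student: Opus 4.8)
The plan is to prove the two assertions — simplicity and centrality — separately, both by exploiting the explicit $K$-basis $\{y_Jx_I : I,J\in\F\}$ and the normal-form rules $x_iy_j=\delta_{ij}$, $x_Iy_{I^*}=1$.

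\emph{Centrality.} First I would show $Z(L_K(1,\infty))=K\cdot 1$. Let $z$ be central and write $z=\sum_{I,J} \lambda_{IJ}\, y_Jx_I$ in terms of the basis. The idea is to hit $z$ on the left and right with suitable generators to force all coefficients except the one for $I=J=\varnothing$ to vanish. Concretely, for a basis element $y_Jx_I$, conjugation-type relations like $x_k(y_Jx_I)$ and $(y_Jx_I)y_k$ strip or prepend letters; using $x_iy_j=\delta_{ij}$ one can detect the ``shape'' $(I,J)$ of each term. A clean way: the commutation $z x_k = x_k z$ for all $k$ and $z y_k = y_k z$ for all $k$ gives linear constraints on the $\lambda_{IJ}$; pushing these through the normal form (keeping careful track of which monomials $y_Jx_I$ can cancel against which) shows that $\lambda_{IJ}=0$ unless $I=J=\varnothing$. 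I would organize this by length: show that the maximal length of $J$ (resp. $I$) occurring must be $0$.

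\emph{Simplicity.} Let $T\ne 0$ be a two-sided ideal; I want $1\in T$. Pick $0\ne t\in T$ and write $t=\sum_{I,J}\lambda_{IJ}\,y_Jx_I$ with finitely many nonzero $\lambda_{IJ}$. The standard trick for Leavitt-type algebras is to multiply $t$ on the left by some $x_P$ and on the right by some $y_Q$ so as to collapse all but one basis term to a scalar: using $x_Iy_{I^*}=1$ and orthogonality $x_iy_j=\delta_{ij}$, for a well-chosen monomial $y_{J_0}x_{I_0}$ appearing in $t$ one gets $x_{?}\,t\,y_{?}$ equal to a nonzero scalar multiple of $1$ plus possibly other basis terms, and then a further reduction (or a dimension/degree argument, e.g. on the total length $|I|+|J|$) isolates a unit. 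Since there are infinitely many generators, one can always choose indices $P,Q$ large enough to be ``generic'' — i.e., not appearing in any of the finitely many monomials of $t$ — which makes the cross terms vanish cleanly. Once a nonzero scalar multiple of $1$ lies in $T$, we get $1\in T$, so $T=L_K(1,\infty)$.

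\emph{Main obstacle.} The combinatorics of the normal form is the only real difficulty: when multiplying two basis elements $y_Jx_I\cdot y_{J'}x_{I'}$, one must reduce $x_Iy_{J'}$ using $x_iy_j=\delta_{ij}$, and the outcome depends on whether $I$ is an initial segment of $(J')^{*}$ or vice versa, producing either $0$, another basis element, or (when they match exactly) $1$. Keeping this bookkeeping straight — especially arguing that a suitable choice of ``fresh'' indices kills all unwanted terms simultaneously — is where care is needed; once that reduction lemma is in hand, both simplicity and centrality follow by the length/degree arguments sketched above. I would state the multiplication rule for basis elements as a preliminary computation and then feed it into both parts.
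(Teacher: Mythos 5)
Your proposal is correct in outline, but it is worth noting how it relates to what the paper actually does: the paper does \emph{not} reprove simplicity at all -- it simply cites \cite[Theorem 4.3]{AGP} -- and its proof of Lemma~\ref{Lcentralsimple} consists only of the centrality computation. Your centrality argument is essentially the paper's: instead of imposing the commutation constraints for all $k$ and inducting on length, the paper picks a single ``fresh'' index $n$ exceeding every entry of the $J$'s that occur, compares $x_nc$ with $cx_n$ in the standard basis to kill all terms with $J\neq\varnothing$, and then repeats with a fresh $y_m$ on the right to kill all terms with $I\neq\varnothing$; this is cleaner than, but equivalent in spirit to, your length scheme. Your simplicity argument is the standard normal-form reduction, and in fact this very paper carries it out (in the more general setting of $A\otimes_K L_K(1,\infty)$) inside the proof of Theorem~\ref{AtensorL}, so it is a perfectly viable self-contained route; what it buys you is independence from \cite{AGP}, at the cost of the bookkeeping you identify.

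One point in your simplicity sketch needs to be made precise, since as stated it could mislead. The ``well-chosen monomial'' cannot be arbitrary and the first-stage multipliers cannot be generic: writing $t=\sum_{I,J}\lambda_{I,J}\,y_Jx_I$, you must choose $I'$ of minimal length with some $\lambda_{I',J}\neq 0$ and then $J'$ of minimal length with $\lambda_{I',J'}\neq 0$, and multiply by the \emph{forced} elements $x_{J'^*}$ on the left and $y_{I'^*}$ on the right. The minimality is exactly what guarantees that every other monomial of $t$ either dies or reduces to a basis element $y_{J''}x_{I''}$ with $(I'',J'')\neq(\varnothing,\varnothing)$, so that the coefficient of $1$ in the product is precisely $\lambda_{I',J'}\neq 0$ and no cancellation can occur. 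Only after this step does your ``fresh index'' trick apply: choosing $k$ larger than every entry occurring, the product $x_k(\,\cdot\,)y_k$ annihilates all remaining non-constant terms and leaves the nonzero scalar. Your remark that one can take the multipliers $P,Q$ large and not appearing in $t$ is true only for this second stage; if you fill in the minimality selection for the first stage, the argument is complete.
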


\begin{proof} Simplicity of the algebra $L= L_K(1,\infty)$ is proved, for instance, in \cite[Theorem
4.3]{AGP}. Consider an element $c\in Z(L)$, and write $c= \sum_{I,J} \lambda_{I,J} y_Jx_I$ where
$I$ and $J$ run over $\F$, the $\lambda_{I,J} \in K$, and all but finitely many $\lambda_{I,J}
=0$. Choose an integer $n$ greater than all the entries in those $J$ for which some $\lambda_{I,J}
\ne 0$. Then $\lambda_{I,J} x_ny_J =0$ whenever $J\ne \varnothing$, and so $x_nc= \sum_I
\lambda_{I,\varnothing} x_nx_I$. On the other hand, $x_nc= cx_n= \sum_{I,J} \lambda_{I,J}
y_Jx_Ix_n$ where the $y_Jx_Ix_n$ are part of the standard basis for $L$. Hence, we must have
$\lambda_{I,J} =0$ whenever $J\ne \varnothing$. This allows us to rewrite $c$ in the form $c=
\sum_I \lambda_I x_I$ for suitable scalars $\lambda_I$. Now choose an integer $m$ greater than all
the entries in those $I$ for which $\lambda_I \ne 0$. Then $\lambda_I x_Iy_n =0$ whenever $I\ne
\varnothing$, and so $cy_n= \lambda_\varnothing$. On the other hand, $c_ny= yc_n= \sum_I \lambda_I
y_nx_I$, from which we conclude that $\lambda_I =0$ for all nonempty $I$. Therefore $c=
\lambda_\varnothing \in K$, proving that $Z(L) =K$.  \end{proof}

The following lemma is well known in the unital case (e.g., \cite[Theorem V.6.1]{Jac}); minor
modifications, which we leave to the reader, yield the s-unital case.

\begin{lemma} \label{idealtensorcentralsimple}
Let $R= A\otimes_K B$ where $A$ and $B$ are algebras over a field $K$. Assume that $A$ is
s-unital, that $B$ is unital and simple, and that $Z(B)=K$. Then every ideal of $R$ has the form
$I\otimes_K B$ for some ideal $I$ of $A$. \end{lemma}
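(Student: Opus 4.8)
The statement to prove is Lemma~\ref{idealtensorcentralsimple}: for $R = A\otimes_K B$ with $A$ s-unital, $B$ unital simple with $Z(B) = K$, every ideal of $R$ has the form $I\otimes_K B$ for an ideal $I$ of $A$. I would follow the classical argument for central simple algebras (as in \cite[Theorem V.6.1]{Jac}), adapting it to the s-unital setting. The core idea: given an ideal $\mathfrak{J}$ of $R$, define $I = \{a\in A : a\otimes 1_B \in \mathfrak{J}\}$ (using that $B$ is unital so $1_B$ makes sense), show $I$ is an ideal of $A$, and prove $\mathfrak{J} = I\otimes_K B$.

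First I would set up a good basis-type description. Fix a $K$-basis $\{b_\lambda\}_{\lambda\in\Lambda}$ of $B$. Every element of $R$ has a unique expression $\sum_\lambda a_\lambda\otimes b_\lambda$ with $a_\lambda\in A$ almost all zero. For $z = \sum_\lambda a_\lambda\otimes b_\lambda \in\mathfrak{J}$, call the number of nonzero $a_\lambda$ the \emph{length} of $z$. The containment $I\otimes_K B\subseteq\mathfrak{J}$ is easy: if $a\in I$ then $a\otimes 1_B\in\mathfrak{J}$, so $a\otimes b = (1_A'\otimes b)(a\otimes 1_B)$ — wait, $A$ need not be unital, so instead use s-unitality: pick $u\in A$ with $ua = au = a$, then $a\otimes b = (a\otimes 1_B)(u\otimes b)\in\mathfrak{J}$ (here I should double-check left/right; $(a\otimes 1_B)(u\otimes b) = au\otimes b = a\otimes b$, good). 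Hence $I\otimes_K B\subseteq\mathfrak{J}$.

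The reverse inclusion $\mathfrak{J}\subseteq I\otimes_K B$ is the main obstacle and where the simplicity and centrality of $B$ enter. Take $0\ne z\in\mathfrak{J}$ of minimal length $n$ among nonzero elements of $\mathfrak{J}$, say $z = \sum_{k=1}^n a_k\otimes b_k$ with the $b_k$ linearly independent over $K$ and all $a_k\ne 0$. I want to show each $a_k\in I$, and moreover that $z$ lies in $I\otimes_K B$; a standard trick is to first normalize. Using s-unitality pick $u\in A$ with $ua_1 = a_1 u = a_1$; then $z' = (u\otimes 1_B)z(?\,)$ — actually the cleaner route: since $B$ is simple unital, $1_B = \sum_j c_j b_1 d_j$ for some $c_j, d_j\in B$, so $\sum_j(1_A\otimes c_j)z(1_A\otimes d_j)$ — but again $1_A$ is unavailable. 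Work instead inside the corner: by Lemma~\ref{sunital} choose $u\in A$ with $u a_k = a_k u = a_k$ for all $k$ simultaneously; then $u Au$ is a unital ring (unit $u$), the element $z\in uAu\otimes_K B = \MM$-type subalgebra, and $\mathfrak{J}\cap(uAu\otimes_K B)$ is an ideal of the \emph{unital} $K$-algebra $uAu\otimes_K B$. Now the classical central-simple argument applies verbatim inside $uAu\otimes_K B$: replace $z$ by an element of $\mathfrak{J}$ of the form $u\otimes 1_B + \sum_{k=2}^m a_k'\otimes b_k'$ (possible because $B$ simple lets us arrange $a_1' = u$, the local unit), then for any $a_2'$ and any $c\in B$ the element $(u\otimes c)z - z(u\otimes c)$ lies in $\mathfrak{J}$, has strictly smaller length, hence is zero by minimality; this forces each $a_k'$ to commute-past every $c$, i.e. (since $Z(B) = K$) each $b_k'$ is a scalar, collapsing $z$ to $a_1'\otimes 1_B = u\otimes 1_B$-type form — giving $a_k\in I$. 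Then subtracting, an arbitrary element of $\mathfrak{J}$ reduces in length until it lands in $I\otimes_K B$.

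Concretely, the induction: suppose every element of $\mathfrak{J}$ of length $< n$ lies in $I\otimes_K B$; take $z = \sum_{k=1}^n a_k\otimes b_k\in\mathfrak{J}$ with $b_k$ independent and $a_n\ne 0$. Pick $u\in A$ with $ua_k = a_k u = a_k$ all $k$. By simplicity of $B$, write $1_B = \sum_j c_j b_n d_j$; then $w = \sum_j(u\otimes c_j)z(u\otimes d_j)\in\mathfrak{J}$ equals $\sum_{k}a_k'\otimes b_k'$ with $b_n'$-coefficient equal to $u$ (coefficient of $1_B$), so after re-expanding in the basis, $w$ has the shape $a\otimes 1_B + (\text{terms in other basis vectors})$ with $a = u$ up to adjusting — if the remaining part is shorter than $n$ we are in good shape by induction; if not, apply the commutator step: for each $c\in B$, $[w, 1_A$-local$\otimes c]\in\mathfrak{J}$ has length $< n$, lies in $I\otimes_K B$ by induction, and comparing coefficients shows each remaining $a_k'$ satisfies $a_k' u\otimes(b_k'c - c b_k')\in I\otimes_K B$ for all $c$; choosing $c$ with $b_k' c\ne cb_k'$ (possible unless $b_k'\in Z(B) = K$) forces $a_k'\in I$. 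Either way $w\in I\otimes_K B$, and tracing back $a_n\in I$, so $z - a_n\otimes b_n\in\mathfrak{J}$ has length $< n$, hence lies in $I\otimes_K B$ by induction, completing the step. The hard part is bookkeeping the non-unital adjustments cleanly — replacing every appearance of "$1_A$" by a common local unit $u$ via Lemma~\ref{sunital} — and making sure the commutator/length-reduction argument is applied to genuinely shorter elements; the algebra of it is routine once that setup is fixed, which is why I leave the details to the reader, as the paper does.
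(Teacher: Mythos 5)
Your overall route is the intended one: the paper gives no argument beyond citing the unital case (\cite[Theorem V.6.1]{Jac}) and leaving the s-unital modifications to the reader, and your plan --- set $I=\{a\in A \mid a\otimes 1_B\in \mathfrak{J}\}$, get $I\otimes_K B\subseteq \mathfrak{J}$ from s-unitality, and run the classical normalization/commutator argument using simplicity of $B$ and $Z(B)=K$ --- is exactly that adaptation. However, two steps as written would fail. First, $uAu$ is \emph{not} a unital ring with unit $u$: Lemma \ref{sunital} provides $u$ with $ua_k=a_ku=a_k$, but not $u^2=u$, and an s-unital ring need have no idempotents at all (``local units'' is the strictly stronger hypothesis in the paper's Definitions). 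So you cannot transport the problem verbatim into a unital corner; the repair is the one you also sketch, namely to multiply throughout by $u\otimes c_j$ and $u\otimes d_j$, using only that $u$ acts as an identity on the finitely many $A$-coefficients in play.

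The more serious gap is the length bookkeeping, which is where the content of the proof lies and which you defer as routine. Measuring length by the number of nonzero coefficients with respect to a fixed basis of $B$ does not work: the normalized element $w=\sum_j (u\otimes c_j)\,z\,(u\otimes d_j)=\sum_k a_k\otimes \beta_k$ (where $\beta_k=\sum_j c_jb_kd_j$) and the commutators $(u\otimes c)w-w(u\otimes c)$ may involve \emph{more} basis vectors than $z$, so neither minimality nor your induction hypothesis applies to them; and since you never arrange the $a_k$ to be $K$-linearly independent, you cannot conclude $w\neq 0$, nor ``compare coefficients'' to extract individual $a_k\in I$, nor ``trace back'' $a_n\in I$ from $w\in I\otimes_K B$ (the passage $z\mapsto w$ is not reversible). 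The standard repair: define the length of $z$ as the minimal number of simple tensors in an expression of $z$, and pick $z\in\mathfrak{J}$ nonzero of minimal length $n$; then both $\{a_k\}$ and $\{b_k\}$ are automatically linearly independent, $w\neq 0$ because the $a_k$ are independent and $\beta_1=1_B$, and each commutator is a sum of at most $n-1$ simple tensors, hence zero by minimality, forcing $\beta_k\in Z(B)=K$ and yielding a nonzero element of $\mathfrak{J}\cap(A\otimes 1_B)$. One then finishes not by induction on length but by passing to $(A/I)\otimes_K B\cong R/(I\otimes_K B)$: if the image of $\mathfrak{J}$ were nonzero it would contain a nonzero $\abar\otimes 1_B$, i.e.\ some $a\otimes 1_B\in\mathfrak{J}$ with $a\notin I$, contradicting the definition of $I$. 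With these corrections your argument becomes the proof the paper has in mind.
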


Pardo has observed \cite{Ppriv} that the method used in the proof of \cite[Theorem 4.3]{AGP} can
be applied to show that $A\otimes_K L_K(1,\infty)$ is purely infinite simple for any unital simple
algebra $A$ over a field $K$. We thank him for permission to use this observation, which we extend
to the non-simple case in the following proof.

\begin{theorem} \label{AtensorL}
Let $R= A\otimes_K L_K(1,\infty)$ where $A$ is an s-unital algebra over a field $K$. Assume that
every nonzero right ideal in every quotient of $A$ contains a nonzero idempotent. Then $R$ is
properly purely infinite. \end{theorem}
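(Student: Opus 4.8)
The plan is to reduce the statement to an application of Proposition~\ref{IdealInfIdemImpliesPi} (or equivalently to the proper-pure-infiniteness criterion via Corollary~\ref{everyquotientfinite}), by producing, inside every nonzero quotient of $R=A\otimes_K L$ (where $L=L_K(1,\infty)$), an infinite idempotent in every nonzero right ideal. First I would describe the ideals and quotients of $R$: by Lemma~\ref{Lcentralsimple} the algebra $L$ is unital, simple, with $Z(L)=K$, so Lemma~\ref{idealtensorcentralsimple} shows every ideal of $R$ has the form $I\otimes_K L$ for an ideal $I$ of $A$, and hence every nonzero quotient of $R$ is isomorphic to $(A/I)\otimes_K L$. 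Since $A/I$ again satisfies the hypothesis that every nonzero right ideal in every quotient contains a nonzero idempotent, and is again s-unital, there is no loss of generality in working directly with $R$ itself and showing that every nonzero right ideal of $R$ contains an infinite idempotent.

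Next I would exploit the internal structure of $L$ to build an infinite idempotent with a prescribed "$A$-coordinate". The key observation is that $1=x_iy_{i^*}$ while the elements $e_i:=y_{i^*}x_i$ are orthogonal idempotents in $L$, each equivalent to $1_L$, so $1_L$ is properly infinite in $L$ and there is room to absorb extra summands. Concretely, given a nonzero element $r$ in a right ideal $\mathcal{J}$ of $R$, I want to manufacture from $r$ an idempotent of the form $f\otimes 1_L$ (or more precisely something equivalent to it) lying in $\mathcal{J}$, where $f\in A$ is a suitable idempotent — using the hypothesis on $A$ to find a nonzero idempotent in the relevant right ideal of $A$ once the $L$-part of $r$ has been "cleared". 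This is essentially the Pardo-type argument alluded to just before the theorem: one writes $r=\sum_{I,J} a_{I,J}\otimes y_Jx_I$ in the standard basis of $L$, then multiplies $r$ on the left and right by appropriate $1\otimes x_n$, $1\otimes y_m$ (for $n,m$ larger than all indices occurring) to collapse the sum and leave an element of the form $a\otimes 1_L$ with $a\neq 0$ a specific $K$-combination of the $a_{I,J}$ still inside $\mathcal{J}$; then apply the hypothesis to the right ideal generated by $a$ in $A$ to replace $a\otimes 1_L$ by $f\otimes 1_L$ with $f=f^2\neq 0$, still in $\mathcal{J}$ after multiplying by an element of $R$.

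Finally, once $f\otimes 1_L\in\mathcal{J}$ with $f$ a nonzero idempotent of $A$, I would argue that $f\otimes 1_L$ is infinite in $R$: since $1_L\sim 1_L\oplus e$ for a nonzero idempotent $e\perp 1_L$ inside $L$ (coming from the Leavitt relations, e.g. $e=e_2\le 1_L$ with $1_L=x_1y_1\oplus\cdots$ giving the strict subequivalence), tensoring with $f$ yields $f\otimes 1_L\sim (f\otimes 1_L)\oplus(f\otimes e)$ with $f\otimes e\neq 0$, so $f\otimes 1_L$ is infinite. Hence the right ideal $\mathcal{J}$ contains an infinite idempotent; running this in every nonzero quotient $(A/I)\otimes_K L$ and invoking Proposition~\ref{IdealInfIdemImpliesPi} (or Corollary~\ref{everyquotientfinite} together with Lemma~\ref{propinftyinK}(i)) gives that $R$ is properly purely infinite.

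\textbf{Main obstacle.} I expect the delicate step to be the "coordinate-clearing" computation: carefully choosing the multipliers $1\otimes x_n$ and $1\otimes y_m$ (taking into account s-unitality of $A$, so that an element $u\in A$ fixing all the $a_{I,J}$ is available) so that the left and right multiplications genuinely collapse $r$ to a nonzero element of the form $a\otimes 1_L$ lying in the original right ideal — without accidentally killing everything. This is where the argument of \cite[Theorem 4.3]{AGP} must be adapted; the non-simple refinement (keeping track that the resulting $a$ generates a nonzero right ideal in $A$, to which the idempotent hypothesis applies) is the genuinely new ingredient, but it should go through essentially formally once the simple-case clearing computation is set up correctly.
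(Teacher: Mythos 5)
Your proposal follows essentially the same route as the paper: reduce via Lemmas \ref{Lcentralsimple} and \ref{idealtensorcentralsimple} to quotients of the form $(A/I)\otimes_K L$, clear the $L$-coordinate of a nonzero element $r$ by the minimal-length adaptation of \cite[Theorem 4.3]{AGP} (exactly the delicate step you flag) to obtain $a\otimes 1\precsim r$ with $a\neq 0$, use the idempotent hypothesis on $A$ to pass to a nonzero idempotent $f=e\otimes 1\precsim r$ which is (properly) infinite by Lemma \ref{elementtensorpropinf}, and conclude with Proposition \ref{IdealInfIdemImpliesPi}. The only point to tighten is that the collapsed element $a\otimes 1$ is merely subequivalent to $r$, not an element of the right ideal $\mathcal{J}$ (left multiplications leave $\mathcal{J}$), so, as in the paper, one writes $f=srt$ with $s\in fR$, $t\in Rf$ and replaces $f$ by the equivalent idempotent $g=rts\in rR\subseteq\mathcal{J}$; this is what your phrase ``after multiplying by an element of $R$'' must amount to.
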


\begin{proof} Set $L= L_K(1,\infty)$. By Proposition \ref{IdealInfIdemImpliesPi}, it suffices to
show that every nonzero right ideal in every nonzero quotient of $R$ contains an infinite
idempotent. In view of Lemmas \ref{Lcentralsimple} and \ref{idealtensorcentralsimple}, every
quotient of $R$ is isomorphic to an algebra of the form $(A/I)\otimes_K L$ where $I$ is an ideal
of $A$. Hence, after replacing $A$ by $A/I$, we just need to show that every nonzero right ideal
of $R$ contains an infinite idempotent.

We first claim that for any nonzero element $r\in R$, there is a nonzero element $a\in A$ such
that $a\otimes 1\precsim r$. Write $r= \sum_{I,J} a_{I,J} \otimes y_Jx_I$ where $I$ and $J$ run
over $\F$, the $a_{I,J} \in K$, and all but finitely many $a_{I,J} =0$. There is some $u\in A$
such that $ua_{I,J}
 = a_{I,J}u= a_{I,J}$ for all $I,J\in \F$. Choose $I'\in \F$ of
minimal size such that some $a_{I',J} \ne 0$, and then choose $J'\in \F$ of minimal size such that
$a_{I',J'} \ne 0$. For $I,J\in \F$, we have
 \begin{enumerate}[{\rm (i)}]
 \item If $a_{I,J}\ne 0$, then either $x_Iy_{I'^*}=0$ or $x_Iy_{I'^*}=x_{I''}$ for some $I''\in\F$,
 where $I''=\varnothing$
 only if $I=I'$.
 \item If $a_{I',J}\ne 0$, then either $x_{J'^*}y_J=0$ or $x_{J'^*}y_J=y_{J''}$ for some $J''\in\F$,
 where $J''=\varnothing$
 only if $J=J'$.
 \end{enumerate}
 Consequently,
$$(u\otimes x_{J'^*}) r (u\otimes y_{I'^*})= a\otimes 1+ \sum_{I,J\in\F} b_{I,J}\otimes y_Jx_I,$$
 where $a= a_{I',J'}\ne 0$ and $b_{\varnothing,\varnothing}
 =0$. Since the element $r'= (u\otimes x_{J'^*}) r (u\otimes y_{I'^*})$ satisfies $r'\precsim r$, we
 may replace $r$ by $r'$. Hence, there is no loss of generality in assuming that
 $a= a_{\varnothing,\varnothing}$ is nonzero. Now choose an integer $k$ greater than all the
 entries in those $I\in \F$ for which some $a_{I,J} \ne 0$, and greater than all the entries in
 those $J\in \F$ for which some $a_{I,J} \ne 0$. Then $a_{I,J} \otimes x_ky_Jx_Iy_k =0$
 whenever $I$ and $J$ are not both empty, and so
 $$(u\otimes x_k) r (u\otimes y_k)= a\otimes x_ky_k= a\otimes 1.$$
  Therefore $a\otimes 1 \precsim r$, as claimed.

 Let $H$ be a nonzero right ideal of $R$, choose a nonzero element $r\in H$, and let $a$ be a nonzero
 element of $A$ such that $a\otimes 1 \precsim r$.
 By hypothesis, there is a nonzero idempotent $e\in aA$, and $e\precsim a$ in $A$, whence $f= e\otimes
 1$ is a nonzero idempotent in $R$ such that $f\precsim a\otimes 1\precsim r$. Then $f= srt$ for
 some $s\in fR$ and $t\in Rf$, and so $g=rts$ is an idempotent in $rR$, equivalent to $f$. Lemma
 \ref{elementtensorpropinf} implies that $f$ is properly infinite, whence $g$ is properly
 infinite. Since $g\in rR \subseteq H$, the proof is complete. \end{proof}

\begin{corollary}
If $A$ is a regular algebra over a field $K$, then $A\otimes_K L_K(1,\infty)$ is a properly purely
infinite $K$-algebra.  \end{corollary}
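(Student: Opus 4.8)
The plan is to obtain this as an immediate application of Theorem~\ref{AtensorL}, so the entire task reduces to verifying that a von Neumann regular $K$-algebra $A$ satisfies the hypotheses of that theorem: namely, that $A$ is s-unital, and that every nonzero right ideal in every quotient of $A$ contains a nonzero idempotent.

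For s-unitality, I would invoke the fact recorded in the Preliminaries that every regular ring has local units \cite[Lemma~2]{FU}; a ring with local units is in particular s-unital, since given $x\in A$ one may choose an idempotent $e$ with $x\in eAe$, so that $ex=xe=x$. For the idempotent condition, I would use two standard facts about regular rings. First, every homomorphic image of a regular ring is again regular. Second, every nonzero right ideal $H$ of a regular ring contains a nonzero idempotent: pick a nonzero $x\in H$, use regularity to write $x=xyx$ for some $y\in A$, and set $e=xy$; then $e$ is an idempotent lying in $xA\subseteq H$, and $e\neq 0$ because $ex=x\neq 0$. Applying the second fact inside an arbitrary quotient $A/I$, which is regular by the first fact, yields exactly the hypothesis of Theorem~\ref{AtensorL}.

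With both conditions confirmed, Theorem~\ref{AtensorL} gives that $R=A\otimes_K L_K(1,\infty)$ is properly purely infinite, completing the argument. I do not expect any real obstacle here: this corollary is a direct specialization of the preceding theorem, and the only work involved is the routine observation that regularity passes to quotients and provides idempotents inside one-sided ideals, together with the bookkeeping that regular rings are s-unital.
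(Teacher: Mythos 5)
Your proposal is correct and matches the paper's (implicit) argument: the corollary is stated there as an immediate specialization of Theorem~\ref{AtensorL}, with exactly the verifications you supply — regular rings have local units (hence are s-unital), regularity passes to quotients, and the $x=xyx$, $e=xy$ trick produces a nonzero idempotent in any nonzero right ideal. No gaps; the check that $e\in xA\subseteq H$ and $ex=x\neq 0$ is exactly the standard argument needed.
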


Kirchberg and R\o rdam have proved that if $A$ is any C$^*$-algebra and $B$ any unital simple
separable purely infinite nuclear C$^*$-algebra, then $A\otimes B$ is purely infinite \cite[Theorem
4.5]{KR}. Theorem \ref{AtensorL} above corresponds to the case $B= \O_\infty$ of this result, but
it appears to be a difficult problem to find a general algebraic analog. The proof of
\cite[Theorem 4.5]{KR} uses the fact that $B\cong B\otimes (\bigotimes_{n=1}^\infty \O_\infty)$,
which is a consequence of classification theorems for C$^*$-algebras. Analogous results are not known
in the algebraic setting. In fact, it is not even known whether $L_K(1,\infty) \otimes_K
L_K(1,\infty) \cong L_K(1,\infty)$.

We now turn to multiplier rings for an additional source of examples. To supplement the following
definition, see, e.g., \cite{APe} for more detail.

\begin{definitions} {\rm
Let $R$ be a ring. A \emph{multiplier} (or \emph{double centralizer}) for $R$ is any pair of maps
$(f, g)\in \End(R_R)\times \End(_RR)$ which are \emph{compatible} in the sense that
$x\bigl(f(y)\bigr)= \bigl(g(x) \bigr)y$ for all $ x, y\in R$. Let $\M(R)$ denote the set of all
multipliers for $R$. It becomes a unital ring, called the \emph{multiplier ring of $R$}, in which
$$(f, g)+(f^\prime, g^\prime):= (f+f^\prime, g+g^\prime) \qquad\qquad \text{and} \qquad\qquad
(f, g) (f^\prime, g^\prime):= (ff^\prime, g^\prime g)$$
 for all $(f,g),(f',g') \in \M(R)$.

 There is a canonical homomorphism $\varphi : R \to \M(R)$ given by $x\mapsto(\lambda_x, \rho_x)$,
 where  $\lambda_x$ and $\rho_x$ denote the left and right multiplications by $x$, respectively,
 and the image $\varphi(R)$ is an ideal of $\M(R)$.
 The kernel of $\varphi$ consists of those $x\in R$ for which $xR= Rx= 0$. Thus, in particular,
 $\varphi$ is injective if $R$ is s-unital, in which case we identify $R$ with its image
 $\varphi(R) \subseteq
 \M(R)$.

 For example, if $S$ is a unital semiprime ring and $R= \MM_\infty(S)$ is the ring of all
 $\omega\times\omega$ matrices over $S$ with only finitely many nonzero entries, then $\M(R) =
 RCFM(S)$, the ring of all row- and column-finite $\omega\times\omega$ matrices over $S$
 \cite[Proposition 1.1]{APe}.
 }\end{definitions}

 \begin{theorem} \label{pinfmultiplierquo}
 Let $R$ be a $\sigma$-unital, nonunital, simple regular ring, and let $I$ be an ideal of
 $\M(R)$.
 \begin{enumerate}[{\rm (i)}]
 \item $\M(R)$ is an exchange ring but not a regular ring.
 \item The quotient $\M(R)/I$ is properly purely infinite if and only if all its nonzero
 idempotents are properly infinite.
 \end{enumerate}
 \end{theorem}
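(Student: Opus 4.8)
The plan is to treat the two parts in turn, both relying on structural facts about $\M(R)$ for $R$ a $\sigma$-unital, nonunital, simple regular ring. For part (i), I would first recall (from the cited literature on multiplier rings of $\sigma$-unital regular rings, e.g.\ \cite{APe}) that $\M(R)$ is an exchange ring: since $R$ is $\sigma$-unital and has local units, one can choose the approximating sequence $(u_n)$ to consist of idempotents, and the resulting ``block-diagonal'' picture of $\M(R)$ (mirroring the $RCFM(S)$ example) lets one verify the exchange property element by element, or else invoke that $\M(R)$ is a direct limit / extension built from exchange pieces. For the failure of regularity, the standard obstruction is the identity element $1_{\M(R)}$, which is a ``full, infinite-type'' idempotent whose associated right ideal is not generated by an idempotent complement in the way regularity would force; more concretely, $R$ is a nonunital ideal of $\M(R)$, and if $\M(R)$ were regular then the idempotent-generated ideal $R = \varphi(R)$ would be a ring direct summand, forcing $R$ to be unital, a contradiction. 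I expect part (i) to be essentially bookkeeping with known results.

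For part (ii), one direction is immediate from Lemma~\ref{pinftoidealquo}(i) together with the definition of properly purely infinite: if $\M(R)/I$ is properly purely infinite then every nonzero element, in particular every nonzero idempotent, is properly infinite. The substance is the converse, and here the natural tool is Proposition~\ref{SemiprimitiveExchangePi}: it suffices to show that $S := \M(R)/I$ is an s-unital exchange ring all of whose ideals are semiprimitive, for then the hypothesis that every nonzero idempotent of $S$ is properly infinite yields that $S$ is properly purely infinite. Exchange-ness of $S$ follows from part (i) since the exchange property passes to quotients; s-unitality of $S$ follows because $S$ is unital (a quotient of the unital ring $\M(R)$). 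So the crux is: \emph{every ideal of $\M(R)/I$ is semiprimitive}.

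To handle that, I would reduce to showing that every ideal of $\M(R)$ is semiprimitive, and that this property passes to the relevant quotients --- equivalently, that $\M(R)$ and all its quotients have zero Jacobson radical on every ideal. The key input is that $R$ itself is regular (hence semiprimitive, and all its ideals are semiprimitive, indeed $R$ is simple so this is trivial for $R$), and that the ideal structure of $\M(R)$ is controlled by $R$: every nonzero ideal of $\M(R)$ contains $R = \varphi(R)$ (since $R$ is a minimal/essential ideal, being simple as a ring and essential because $R$ is s-unital). Thus for any ideal $I$ of $\M(R)$, either $I = 0$ or $R \subseteq I$; in the latter case $\M(R)/I$ is a quotient of $\M(R)/R = \M(R)/\varphi(R)$, the ``corona'' ring. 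So the problem reduces to showing that every ideal of the corona ring $\M(R)/\varphi(R)$ is semiprimitive. For this I would use that $R$ is regular and $\sigma$-unital to get that $\M(R)/\varphi(R)$ is again an exchange ring and, more importantly, that it satisfies good separativity/semiprimitivity properties inherited from the regularity of $R$ --- this is the place where one invokes results of the type ``the corona ring of a $\sigma$-unital regular ring is semiprimitive'' (and the same for its ideals, which all have the form $\overline{V(J)}$-type ideals coming back from ideals of $\M(R)$ containing $R$).

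\textbf{Main obstacle.} The hard part will be the semiprimitivity of every ideal of $\M(R)/I$, i.e.\ controlling the Jacobson radical in the corona ring and its quotients. Regularity of $R$ does not pass to $\M(R)$ (by part (i)!), so one cannot simply quote ``regular rings are semiprimitive''; instead one must exploit the specific approximate-unit structure --- writing elements of $\M(R)$ as limits of their ``truncations'' $u_n m u_n \in u_n R u_n$, where each corner $u_n R u_n$ is a unital regular ring --- to show that a quasi-invertible element of an ideal would have quasi-invertible truncations forcing it into $\varphi(R)$ or into $I$. Making this limiting argument precise, and checking it survives passage to an arbitrary quotient $\M(R)/I$ and to an arbitrary ideal thereof, is the technical heart of the proof; everything else is an application of Proposition~\ref{SemiprimitiveExchangePi} and the permanence lemmas already established.
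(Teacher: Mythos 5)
Your overall strategy for part (ii) is the same as the paper's — reduce to Proposition \ref{SemiprimitiveExchangePi} applied to $\M(R)/I$, using that the exchange property passes to quotients and that one direction is immediate — and you correctly isolate the crux: every ideal of $\M(R)/I$ must be shown semiprimitive. But you do not prove that step; you only gesture at a truncation argument in the corona and explicitly leave it as the ``technical heart''. That is a genuine gap, and your sketched route is doubtful as well: quasi-regularity of a coset modulo $\varphi(R)$ (or modulo $I$) gives no control over the truncations $u_nmu_n$ in the corners $u_nRu_n$, since the quasi-inverse is itself only a coset, so there is no evident way to force a radical element of an ideal of the corona ``into $\varphi(R)$ or into $I$''. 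The paper's missing ingredient is \cite[Theorem 2.5]{APe}: every ideal of $\M(R)$ is generated by idempotents. From this, every ideal $J/I$ of $\M(R)/I$ is generated by (images of) idempotents, and its Jacobson radical $J(J/I)=(J/I)\cap J(\M(R)/I)$ is an ideal of $\M(R)/I$, hence also idempotent-generated, hence zero because nonzero idempotents cannot lie in a Jacobson radical. No limiting argument over the $u_n$, and no reduction to the corona, is needed.

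There is also a flaw in your treatment of part (i). The claim ``if $\M(R)$ were regular then the idempotent-generated ideal $\varphi(R)$ would be a ring direct summand, forcing $R$ to be unital'' is not valid: regularity only makes finitely generated one-sided ideals direct summands, and a two-sided ideal generated by idempotents in a regular ring need not split off. For instance, $\End_K(V)$ for $V$ infinite-dimensional is regular and prime (so has no nontrivial central idempotents), yet its socle is a proper nonzero idempotent-generated ideal; similarly, for $R=\MM_\infty(K)$ the ideal $\varphi(R)$ of $RCFM(K)$ is idempotent-generated but certainly not a summand, so your argument would ``prove'' too much. The paper simply quotes \cite[Theorem 2]{OM} for the exchange property and \cite[Proposition 1.8]{APe} for non-regularity; if you want to avoid the citation you need an actual obstruction (e.g.\ exhibiting an element of $\M(R)$ with no quasi-inverse-type factorization), not the direct-summand claim.
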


 \begin{proof} (i) \cite[Theorem 2]{OM} and \cite[Proposition 1.8]{APe}.

 (ii) By \cite[Theorem 2.5]{APe}, every ideal of $\M(R)$ is generated by idempotents.
 Consequently, every ideal of $\M(R)$ is semiprimitive, and therefore part (ii) follows from
 Proposition \ref{SemiprimitiveExchangePi}.  \end{proof}

The work of Ara and the third named author~\cite{APe} provides a number
of settings in which Theorem \ref{pinfmultiplierquo} can be
applied. The first of these is immediate:

\begin{corollary} \label{Mofpinfreg}
Let $R$ be a $\sigma$-unital, nonunital, purely infinite simple regular ring. Then $\M(R)$ is
properly purely infinite, and $R$ is the unique proper nonzero ideal of $\M(R)$.  \end{corollary}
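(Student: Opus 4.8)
The plan is to reduce the ``properly purely infinite'' assertion to Theorem~\ref{pinfmultiplierquo}(ii) applied with $I=0$, which leaves me to prove that every nonzero idempotent of $\M(R)$ is properly infinite, and to handle the ideal--lattice assertion separately. As a preliminary, $R$ is regular, hence has local units and is s-unital, so the canonical map $\varphi$ is injective and identifies $R$ with an ideal of $\M(R)$; since $\M(R)$ is unital and $R$ is not, $R\ne\M(R)$, and of course $R\ne0$. Also, $R$ being purely infinite simple, every nonzero idempotent of $R$ is properly infinite (from~\cite{AGP}); in particular $R$ itself is properly purely infinite by Corollary~\ref{RegularInftyIdempotentsPi}.

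First I would settle the ideal lattice. I claim every nonzero ideal $J$ of $\M(R)$ contains $R$: it suffices to see $R$ is essential as an ideal, and for this one checks that a nonzero multiplier $m=(f,g)$ must have $g\ne0$ (using compatibility and s-unitality of $R$), and that $\varphi(x)m\varphi(y)=\varphi\bigl(g(x)y\bigr)$ for all $x,y\in R$, which is nonzero for suitable $x,y$ since $g\ne0$ and $R$ is s-unital; hence $J\cap R$ is a nonzero ideal of the simple ring $R$, so $J\cap R=R$. That $R$ is moreover a \emph{maximal} ideal of $\M(R)$ (equivalently, $\M(R)/R$ is simple) is part of the analysis of the ideal structure of multiplier rings of $\sigma$-unital nonunital simple regular rings in~\cite{APe}; combined with the previous claim this shows $0\subsetneq R\subsetneq\M(R)$ is the entire ideal lattice, so $R$ is the unique proper nonzero ideal.

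Now let $p=p^2\ne0$ in $\M(R)$; I want $p$ properly infinite. If $p\in R$, then $p$ is a nonzero idempotent of the purely infinite simple ring $R$, so $p\oplus p\lesssim p$ holds already in $\MM_2(R)\subseteq\MM_2(\M(R))$, and $p$ is properly infinite in $\M(R)$. Suppose $p\notin R$. The idea is to build a \emph{full} properly infinite idempotent $h\le p$. First, $p\notin R$ forces $Rp\ne0$ (otherwise $p=0$), so there is a nonzero $s\in R$ with $sp=s$; picking $t\in R$ with $sts=s$, the element $e=st$ is a nonzero idempotent of $R$ with $e=s\,p\,t$, so $e\precsim p$; replacing $e$ by an equivalent subidempotent of $p$, and noting that an idempotent equivalent to one in $R$ again lies in $R$, I get a nonzero idempotent $g_1\le p$ with $g_1\in R$, hence properly infinite. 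Iterating with $p-g_1$ (again not in $R$), and arranging the successive idempotents along the $\sigma$-unit sequence of $R$ so that their partial sums converge strictly (choose $g_i$ orthogonal to an initial segment $e_{N_i}$ of that sequence, with $N_i\to\infty$), I obtain an orthogonal sequence $g_1,g_2,\dots$ of nonzero properly infinite idempotents of $R$ with $g_1+\cdots+g_n\le p$ for all $n$, and $h:=\sum_i g_i$ is a well-defined idempotent of $\M(R)$ with $h\le p$ and $h\notin R$. Using $g_i\oplus g_i\lesssim g_i$ for every $i$ and assembling these into row- and column-finite multiplier relations gives $h\oplus h\lesssim h$, so $h$ is properly infinite; and since $h\notin R$, the ideal $\M(R)h\M(R)$ is nonzero and different from $R$, hence equals $\M(R)$ by the ideal lattice above, so $h$ is full. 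Writing $1_{\M(R)}=\sum_j a_jhb_j$, Lemma~\ref{precsim}(v),(vi) gives $1_{\M(R)}\precsim\bigoplus_j a_jhb_j\precsim\bigoplus_j h\precsim h$, the last step because $h$ is properly infinite. Finally $p\le1_{\M(R)}\precsim h\le p$ yields $p\precsim h$, so $p\oplus p\precsim h\oplus h\precsim h\precsim p$ by Lemma~\ref{precsim}, i.e.\ $p$ is properly infinite. By Theorem~\ref{pinfmultiplierquo}(ii) (with $I=0$), $\M(R)$ is then properly purely infinite.

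The main obstacle is the case $p\notin R$, and within it the genuinely technical point is the passage from finitely many to infinitely many orthogonal copies: one must choose the idempotents $g_i$ carefully enough (against the $\sigma$-unital approximate unit of $R$) that $\sum_i g_i$ really defines a multiplier, and one must check that an infinite orthogonal sum of the relations $g_i\oplus g_i\lesssim g_i$ is implemented by a single pair of row- and column-finite multipliers. The other place where substantive external input is used is the simplicity of $\M(R)/R$, which I import from~\cite{APe}; everything else is bookkeeping with $\precsim$ via Lemma~\ref{precsim}. (Alternatively, one could organize the argument through the corona: by the corollary following Proposition~\ref{spiextn}, $\M(R)$ is properly purely infinite iff $R$ and $\M(R)/R$ are, and the second then follows from Theorem~\ref{pinfmultiplierquo}(ii) applied to $\M(R)/R$ together with the same analysis of idempotents lying outside $R$.)
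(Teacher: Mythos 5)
Your overall architecture (reduce to Theorem~\ref{pinfmultiplierquo}(ii) with $I=0$, handle idempotents inside and outside $R$ separately, and treat the ideal lattice on its own) is the same as the paper's, and several pieces are fine: the essentiality computation $\varphi(x)m\varphi(y)=\varphi(g(x)y)$, the case $p\in R$, and the final $\precsim$ bookkeeping. But there are two genuine gaps. First, the maximality of $R$ in $\M(R)$ (equivalently, simplicity of $\M(R)/R$) is \emph{not} a general feature of multiplier rings of $\sigma$-unital nonunital simple regular rings, so it cannot be imported from~\cite{APe} in the generality you invoke: Corollary~\ref{coronaultramat}(ii) of this paper exhibits simple ultramatricial (hence regular) $R$ for which $\M(R)/R$ has at least $2^n$ ideals. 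The fact you need is specific to the purely infinite case, and this is exactly where the paper's proof puts its weight: by \cite[Proposition 10]{AA2} every nonzero right ideal of $R$ contains an infinite idempotent, so \cite[Proposition 2.13]{APe} gives $V(\M(R))\cong V(R)\sqcup\{\infty\}$, and then \cite[Theorem 2.7]{APe} yields that $R$ is the unique proper nonzero ideal. Since your fullness argument for $h$ relies on this ideal lattice, the gap propagates into the second half of your proof.

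Second, for $p\notin R$ the construction of the properly infinite idempotent $h\le p$ is only sketched at precisely the points where the real work lies. You need each $g_i$ to be a nonzero idempotent of $R$ under $p-(g_1+\cdots+g_{i-1})$ \emph{and} two-sidedly orthogonal to a prescribed $u_{N_i}$ from the $\sigma$-unit sequence; the regularization you describe (take $s$ with $sp=s$, produce an idempotent under $p$ lying in $R$) gives no such orthogonality control, and none is proved. Likewise, assembling the countably many relations $g_i\oplus g_i\lesssim g_i$ into a single multiplier-level relation $h\oplus h\precsim h$ (strict convergence of the implementing elements, after suitable normalizations) is flagged as the main obstacle but not carried out. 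These are exactly the technicalities the paper sidesteps: once $V(\M(R))\cong V(R)\sqcup\{\infty\}$ is known, one gets $2[e]=[e]$ for every idempotent $e\in\M(R)\setminus R$ with no construction at all, idempotents in $R$ are properly infinite because $R$ is purely infinite simple (via Corollary~\ref{everyquotientfinite}), and Theorem~\ref{pinfmultiplierquo}(ii) finishes. So the statement is true and your strategy is workable in outline, but the two key facts — the ideal lattice and the proper infiniteness of idempotents outside $R$ — are not actually established by your argument as written.
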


\begin{proof} By \cite[Proposition 10]{AA2}, $R$ is also purely infinite in the sense of
\cite{AA2} and \cite[p. 3378]{APe}, i.e., every nonzero right ideal of $R$ contains an infinite
idempotent. Consequently, \cite[Proposition 2.13]{APe} shows that the identity map on $V(R)$
extends to an isomorphism
$$V(\M(R))\ {\overset\cong\longrightarrow}\ V(R) \sqcup \{\infty\}.$$
 In particular, $V(R)$ is
the unique proper nonzero ideal of $V(\M(R))$, and so it follows from \cite[Theorem 2.7]{APe} that
$R$ is the unique proper nonzero ideal of $\M(R)$.

The given description of $V(\M(R))$ implies that $2[e] =[e]$ for any idempotent $e\in \M(R)
\setminus R$, whence these idempotents are properly infinite. On the other hand, any nonzero
idempotent in $R$ is infinite (as already noted) and thus properly infinite, because $R$ is simple
(e.g., apply Corollary \ref{everyquotientfinite}). Therefore all nonzero idempotents in $\M(R)$
are properly infinite, and Theorem \ref{pinfmultiplierquo} implies that $\M(R)$ is properly purely
infinite.  \end{proof}

\begin{definitions} \label{defstatesetc} {\rm
Let $V$ be a nonzero abelian monoid which has an order-unit $u$. A \emph{state on $V$} is any
monoid homomorphism $s: V\rightarrow \RR_{\ge0}$ such that $s(u)=1$, and the \emph{state space of
$(V,u)$} is the set $S_u= S(V,u)$ of all such states. View $S_u$ as a subset of $\RR^V$, which is
a (locally convex, Hausdorff) linear topological space with the product topology, and observe that
$S_u$ is a compact convex subset of $\RR^V$. The \emph{extreme boundary} of $S_u$, that is, the
set of its extreme points, is denoted $\extSu$. We write $\AffSu$ for the set of all affine
continuous functions $S_u\rightarrow \RR$; this is a partially ordered real Banach space with
respect to the pointwise ordering and the supremum norm. There is a natural evaluation map
$\phi_u: V\rightarrow \AffSu$, such that $\phi_u(v)(f) = f(v)$ for $v\in V$ and $f\in \AffSu$. We
shall also need the set $\LAffsig$ consisting of those affine lower semicontinuous functions $S_u
\rightarrow (0,\infty]$ which are pointwise suprema of countable increasing sequences of strictly
positive functions from $\AffSu$.

An \emph{interval in $V$} is any nonempty hereditary upward directed subset $I$ of $V$. We say
that $I$ is \emph{countably generated} if it has a countable cofinal subset, and that $I$ is a
\emph{generating interval} if $I$ generates the monoid $V$. If $D$ is a countably generated
generating interval for $V$, and an order-unit $u\in V$ is given, we set $d= \sup \phi_u(D) \in
\LAffsig$ (the pointwise supremum of the functions in $\phi_u(D)$), and we define $\Wdsig$ to be
the following semigroup:
$$\{ f\in \LAffsig \mid f+g=nd \text{\ for some\ } g\in\LAffsig \text{\ and\ } n\in\NN \}.$$
 The disjoint union $V\sqcup \Wdsig$ can then be made into an abelian monoid using the given
 operations in $V$ and $\Wdsig$ together with the rule $x+f= \phi_u(x)+f$ for $x\in V$ and $f\in
 \Wdsig$.
}\end{definitions}

\begin{definition} {\rm A cancellative abelian monoid $V$ is \emph{strictly unperforated} if $nx<ny$
always implies $x<y$, for any $x,y\in V$ and $n\in\NN$.
 }\end{definition}

\begin{theorem} \label{APe211}  {\rm \cite[Theorem 2.11]{APe}}
Let $R$ be a $\sigma$-unital, nonunital, simple unit-regular ring. Assume that $\soc(R_R) =0$ and
that $V(R)$ is strictly unperforated.
\begin{enumerate}[{\rm (i)}]
 \item The set $D= \{ [e] \mid e=e^2 \in R \}$ is a countably generated generating interval in
 $V(R)$.
 \item Choose a nonzero element $u\in V(R)$, and define $S_u$, $\LAffsig$, $d$, $\Wdsig$ as in
 Definitions {\rm\ref{defstatesetc}}. Then the identity map on $V(R)$ extends to a monoid
 isomorphism
 $$\varphi: V(\M(R)) \rightarrow V(R) \sqcup \Wdsig$$
 such that $\varphi(x)= \sup \{ \phi_u(y) \mid y\in V(R) \text{\ and\ } y\le x \}$ for $x\in
 V(\M(R)) \setminus V(R)$.
\end{enumerate}
\end{theorem}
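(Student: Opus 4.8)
The plan is to reduce Theorem~\ref{APe211} to known structural facts about multiplier rings of simple regular rings, chiefly the analysis of $V(\M(R))$ carried out in~\cite{APe}, and to verify the stated formula for $\varphi$ on the ``new'' part of the monoid. First I would recall from~\cite{APe} that since $R$ is $\sigma$-unital, nonunital, and (unit-)regular, its identity-free structure is governed by its refinement monoid $V(R)$, which is a conical refinement monoid with an order-unit-free theory of intervals; the hypothesis $\soc(R_R)=0$ guarantees $V(R)$ has no minimal nonzero elements, and strict unperforation is exactly the hypothesis needed to make the comparison theory of~\cite{APe} applicable. For part~(i), the set $D=\{[e]\mid e=e^2\in R\}$ is by construction the set of classes of (honest, non-matrix) idempotents; it is hereditary because $e\lesssim f$ with $f$ an idempotent of $R$ forces the subidempotent $g\sim e$ to lie in $R$, upward directed because two orthogonal idempotents of $R$ sum to an idempotent of $R$ (here s-unitality/local units of $R$, which holds for regular rings, is used to orthogonalize), countably generated because $R$ is $\sigma$-unital (take the classes $[u_n]$ of a $\sigma$-unit sequence of idempotents as a cofinal set), and generating because every matrix idempotent over $R$ is equivalent to an honest idempotent in a sufficiently large corner, which lies in $R$ up to equivalence since $R$ has local units.

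For part~(ii) the core input is~\cite{APe} Theorem~2.11 itself: the identity on $V(R)$ is known to extend to an isomorphism $V(\M(R))\to V(R)\sqcup W$ for the appropriate semigroup $W$ built from lower-semicontinuous affine functions on the state space; the task here is to identify $W$ with $\Wdsig$ for the specific choice of order-unit $u$ and generating interval $D$, and to pin down the formula $\varphi(x)=\sup\{\phi_u(y)\mid y\in V(R),\ y\le x\}$. I would argue that $d=\sup\phi_u(D)$ is the natural ``rank function'' of $R$ on $S_u$, that $\Wdsig$ consists precisely of the affine lower-semicontinuous functions dominated (additively, up to a multiple of $d$) by the rank, and that for an idempotent matrix $p$ over $\M(R)$ not equivalent to one over $R$, the classes $y\in V(R)$ with $y\le[p]$ correspond to the honest subidempotents of $p$ inside $R$; their supremum in $\LAffsig$ is an increasing countable supremum of functions from $\AffSu$ precisely because $D$ is countably generated, so it lands in $\LAffsig$, and membership in $\Wdsig$ follows because $[p]$ is dominated by $n[1]$-type elements coming from the ambient matrix size, which translate to $nd$. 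The independence of the construction from the choice of $u$ (different order-units give canonically isomorphic data) should be remarked but is routine.

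The main obstacle, as I see it, is the identification of the semigroup $W$ from the abstract isomorphism of~\cite{APe} with the concretely-defined $\Wdsig$, together with the verification that the isomorphism $\varphi$ restricted to $V(\M(R))\setminus V(R)$ is given by that explicit supremum formula rather than merely \emph{some} order-isomorphism. This requires tracking how an element of $V(\M(R))$ — a class of an idempotent matrix over the unital ring $\M(R)$ — decomposes relative to the ideal $V(R)$, and showing that the ``$\Wdsig$-part'' is recovered as the supremum of the $V(R)$-classes below it. I expect this to hinge on the fact that $\M(R)$ has local units in a suitable sense relative to $R$ and that idempotents in $\M(R)$ can be approximated from below by idempotents in $R$ — essentially a reformulation of the density of $R$ in $\M(R)$ in the finite-matrix topology — which is exactly where the $\sigma$-unitality and unit-regularity of $R$ are indispensable. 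Everything else (conicality of the quotient data, compactness and convexity of $S_u$, the Banach-space structure of $\AffSu$) is standard and can be cited from~\cite{APe} and the monoid preliminaries in Section~1.
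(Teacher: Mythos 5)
You should first note that the paper does not prove this statement at all: Theorem~\ref{APe211} is quoted verbatim, with attribution, as \cite[Theorem 2.11]{APe}, so there is no internal proof to match. Your proposal, however, presents itself as a proof and yet its central step is circular: for part~(ii) you write that ``the core input is \cite{APe} Theorem~2.11 itself,'' i.e.\ you invoke exactly the statement to be established, and the remaining discussion (identifying the abstract semigroup $W$ with $\Wdsig$, recovering $\varphi$ on $V(\M(R))\setminus V(R)$ as the supremum $\sup\{\phi_u(y)\mid y\le x\}$, approximating idempotents of $\M(R)$ from below by idempotents of $R$) is a plausibility sketch rather than an argument -- none of the claimed facts about how classes in $V(\M(R))$ decompose relative to the ideal $V(R)$, or why the supremum lands in $\LAffsig$ and satisfies $f+g=nd$, is actually verified. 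If the intent is simply to cite \cite[Theorem 2.11]{APe}, as the paper does, that is legitimate, but then the two pages of ``reduction'' add nothing and should not be framed as a proof; if the intent is an independent proof, the entire analytic content (states, intervals, the $\Wdsig$ construction, and the behaviour of $V(\M(R))$) is missing.

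There is also a concrete error in your sketch of part~(i): you justify that $D$ is generating by asserting that ``every matrix idempotent over $R$ is equivalent to an honest idempotent in a sufficiently large corner, which lies in $R$ up to equivalence.'' That is false in general for a nonunital regular ring; an idempotent of $\MM_n(R)$ need not be equivalent to an idempotent of $R$ (otherwise $D$ would always be all of $V(R)$). The correct route is: given an idempotent $p\in\MM_n(R)$, local units give an idempotent $e\in R$ with $[p]\le n[e]$, and then Riesz decomposition in the refinement monoid $V(R)$ writes $[p]=x_1+\cdots+x_n$ with each $x_i\le[e]$, so each $x_i\in D$ once hereditariness of $D$ is known. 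Your arguments for hereditariness and countable generation are fine in outline, but upward directedness should be argued via local units (both idempotents lie under a common idempotent of $R$), not via orthogonal sums, since two idempotents of $R$ need not be orthogonalizable inside $R$ without first passing to a common corner.
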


Notice that the space $S_u$ considered in Theorem~\ref{APe211} is
in fact a Choquet simplex. This is basically due to the fact that
$V(R)$ satisfies the Riesz refinement property (for a proof, see,
e.g.~\cite[Theorem 1.2]{Rocky}).

Kucerovsky and the third named author have used a C$^*$-algebraic
version of the above theorem to give sufficient conditions for
certain \emph{corona algebras} $\M(A)/A$ to be purely infinite
\cite[Lemma 3.1, Theorem 3.4]{KP} (see also~\cite{KNP}). Their argument carries over to
our algebraic context as follows.

\begin{theorem} \label{coronapinf}
Let $R$, $u$, $S_u$, $d$ be as in Theorem {\rm\ref{APe211}}. Assume that $\extSu$ is compact, that
the set $F_\infty= \extSu\cap d^{-1}(\{\infty\})$ consists of a finite number of isolated points
of $\extSu$, and that the restriction of $d$ to $F'_\infty= \extSu \setminus F_\infty$ is
continuous. Then $\M(R)/R$ is properly purely infinite, and it has at least $2^{|F_\infty|}$
distinct ideals.
\end{theorem}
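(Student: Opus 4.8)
The plan is to reduce proper pure infiniteness of $\M(R)/R$ to a statement about idempotents, translate that statement into the monoid $V(R)\sqcup\Wdsig$ of Theorem~\ref{APe211}, and then carry out the key estimate, which is the algebraic analogue of \cite[Lemma~3.1]{KP}. Since $R$ is $\sigma$-unital, nonunital, simple and regular, Theorem~\ref{pinfmultiplierquo} applies: $\M(R)$ is an exchange ring, and, applying part (ii) with the ideal $I=\varphi(R)=R$, the ring $\M(R)/R$ is properly purely infinite if and only if every nonzero idempotent of $\M(R)/R$ is properly infinite. As $\M(R)$ is an exchange ring with ideal $R$, idempotents lift along $\M(R)\to\M(R)/R$, so I would reduce to showing that for every idempotent $p\in\M(R)\setminus R$ the image of $p$ in $\M(R)/R$ is properly infinite. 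For this I work with $V(\M(R)/R)\cong V(\M(R))/V(R)$ (via \cite[Proposition~1.4]{AGOP}, using that $\M(R)$ is an exchange ring), with $V(\M(R))\cong V(R)\sqcup\Wdsig$ (Theorem~\ref{APe211}(ii)), and with the fact that $f:=\varphi([p])\in\Wdsig$ because $p\notin R$. Proper infiniteness of the image of $p$ then amounts to finding $b\in V(R)$ with $2[p]\le[p]+b$ in $V(\M(R))$, equivalently $2f\le f+\phi_u(b)$ in $V(R)\sqcup\Wdsig$.

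To produce such a $b$, first note that $F'_\infty=\extSu\setminus F_\infty$, being the compact space $\extSu$ with finitely many (clopen) isolated points removed, is compact, so $d$ is bounded on $F'_\infty$, say $d|_{F'_\infty}\le M$. Writing $f+g=nd$ with $g\in\Wdsig$ and $n\in\NN$, we get $f\le nd$, hence $f\le nM$ on $F'_\infty$, while $f$ takes the value $+\infty$ at only finitely many points of $F_\infty$. Because $\extSu$ is compact, $S_u$ is a Bauer simplex, $\AffSu\cong C(\extSu)$, and each member of $\Wdsig$ is recovered by integrating its restriction to $\extSu$ against representing measures; applied to $f$ this shows that wherever $f(s)<\infty$ the representing measure of $s$ gives mass zero to $\{f=\infty\}\cap\extSu$, so $f$ is bounded, say by $M'$, on the convex set $\{f<\infty\}$. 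Using that $D$ is a generating interval for $V(R)$ — together with the inequality $f\le nd$ to control $f$ where $d$ is small — one chooses $b\in V(R)$ with $\phi_u(b)\ge f$ on $\{f<\infty\}$, and takes the ``complement'' to be a lower semicontinuous affine member $k$ of $\Wdsig$ agreeing with $\phi_u(b)-f$ there; the relation $2f+k=f+\phi_u(b)$ then holds trivially where $f=\infty$ and by construction where $f<\infty$, and transferring back through $\varphi$ gives $2[p]\le[p]+b$, as needed.

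For the second assertion I would note that the ideals of $\M(R)/R$ correspond to the ideals of $\M(R)$ containing $R$, hence — since $\M(R)$ is an exchange ring — to the o-ideals of $V(\M(R))$ containing $V(R)$, i.e.\ to the o-ideals of $W:=V(\M(R))/V(R)\cong(V(R)\sqcup\Wdsig)/V(R)$ (\cite[Proposition~1.4]{AGOP} and \cite[Teorema~4.1.7]{Par}, or \cite[Theorem~2.7]{APe}). Each $x\in F_\infty$ is an isolated extreme point of the Bauer simplex $S_u$, so $\{x\}$ is clopen in $\extSu$ and the corresponding element of $\AffSu\cong C(\extSu)$ isolates the ``$x$-direction'' inside $\Wdsig$. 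For each subset $S\subseteq F_\infty$ one then defines an o-ideal $I_S$ of $W$ — roughly, the classes of functions in $\Wdsig$ that stay finite at every point of $F_\infty\setminus S$ — and the hypotheses that the points of $F_\infty$ are isolated and that $d|_{F'_\infty}$ is continuous guarantee both that $I_S$ is genuinely an o-ideal and that $I_S\ne I_{S'}$ for $S\ne S'$. Transferring through the correspondence above produces $2^{|F_\infty|}$ pairwise distinct ideals of $\M(R)/R$; this part follows \cite[Theorem~3.4]{KP}.

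The delicate step is the construction, in the second paragraph, of $b\in V(R)$ with $\phi_u(b)\ge f$ off $\{f=\infty\}$ and of the complementary member $k$ of $\Wdsig$: this is exactly where the three standing hypotheses on $\extSu$, $F_\infty$ and $d|_{F'_\infty}$ are used essentially, and it rests on the Choquet-simplex bookkeeping of \cite[Lemma~3.1]{KP}. Everything else is the ring-to-monoid translation recorded above, together with the reduction provided by Theorem~\ref{pinfmultiplierquo} and the structural results of \cite[Theorems~2.5, 2.7, 2.11]{APe}.
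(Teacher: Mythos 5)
Your proposal is correct and follows essentially the same route as the paper: reduce via Theorem \ref{pinfmultiplierquo} and idempotent lifting to idempotents $p\in\M(R)\setminus R$, transfer through Theorem \ref{APe211} to $V(R)\sqcup\Wdsig$, realize $2f\le f+\phi_u(b)$ with $b$ a multiple of $u$ (the paper's identity $ku+f=2f+h$), and obtain the $2^{|F_\infty|}$ ideals from elements of $\Wdsig$ that are infinite at exactly one point of $F_\infty$. The only real difference is completeness at the crux: where you defer the existence of the complementary element of $\Wdsig$ (and of the witnesses $h_i$) to the ``Choquet-simplex bookkeeping'' of \cite{KP}, the paper actually carries it out --- defining the function on $\extSu$, deducing lower semicontinuity there from the finiteness and isolatedness of $F_\infty$ together with the continuity of $f$ on $F'_\infty$ (which comes from $f+g=md$, $d$ finite and continuous on $F'_\infty$, $g$ lower semicontinuous), extending through $\AffSu\cong C(\extSu,\RR)$ to an element of $\LAffsig$, and then checking membership in $\Wdsig$ --- so in a final write-up that verification (and the analogous one for the $h_i$, using that $d$ is bounded below) should be written out rather than cited.
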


\begin{proof} Since $\M(R)$ is an exchange ring (Theorem \ref{pinfmultiplierquo}(i)), idempotents
lift from $\M(R)/R$ to $\M(R)$. Thus, to verify that the nonzero idempotents in $\M(R)/R$ are
properly infinite, we need only consider cosets $\ebar= e+R$ for idempotents $e\in \M(R) \setminus
R$.

The isomorphism $\varphi$ in Theorem \ref{APe211} sends $[e]$ to a function $f\in \Wdsig$. Then
$f+g= md$ for some $g\in \Wdsig$ and $m\in\NN$. Since $d$ is finite and continuous on $F'_\infty$
and $g$ is lower semicontinuous, we see that $f$ is upper semicontinuous on $F'_\infty$ and hence
continuous there. Moreover, $F'_\infty$ is compact (because $F_\infty$ is open), and so $f$ is
bounded on $F'_\infty$. Thus, we may choose $k\in\NN$ such that $f(s) <k$ for all $s\in
F'_\infty$.

Now set $X= \{ s\in F_\infty \mid f(s) =\infty \}$. By increasing $k$ if necessary, we may assume
that $f(s)<k$ for all $s\in F_\infty \setminus X$. Then define $h: \extSu \rightarrow (0,\infty]$
so that
$$h(s)= \begin{cases} k-f(s) &\qquad (s\in \extSu \setminus X) \\ \infty &\qquad (s\in X). \end{cases}$$
 Since $h|_{F'_\infty}$ is continuous and $F_\infty$ is finite, we see that $h$ is lower
 semicontinuous; in fact, $h$ is the pointwise supremum of a countable sequence of continuous
 strictly positive functions. Compactness of $\extSu$ implies that the restriction map $\AffSu
\rightarrow C(\extSu,\RR)$ is an isomorphism of partially ordered
Banach spaces (e.g., \cite[Corollary 11.20]{poagi}), from which it
follows that $h$ extends uniquely to a map $h\in \LAffsig$.

Observe that $ku+f= 2f+h$ on $\extSu$, whence $ku+f=2f+h$ in $\LAffsig$. Since $ku+f \in \Wdsig$,
it follows that $h\in \Wdsig$. Hence, $h= \varphi([q])$ for some idempotent matrix $q$ over
$\M(R)$. Now $ku+[e]= 2[e]+[q]$ in $V(\M(R))$, whence $p\oplus \cdots\oplus p \oplus e \sim
e\oplus e\oplus q$ for some idempotent matrix $p$ over $R$. Passing to idempotent matrices over
$\M(R)/R$ yields $\ebar\sim \ebar\oplus \ebar\oplus \qbar$, and therefore $\ebar$ is properly
infinite, as desired.

We have now shown that all nonzero idempotents in $\M(R)/R$ are properly infinite. Therefore
Theorem \ref{pinfmultiplierquo} implies that $\M(R)/R$ is properly purely infinite.

Suppose that $F_\infty$ consists of distinct points $s_1,\dots,s_n$. For $i=1,\dots,n$, define
$h_i: \extSu \rightarrow (0,\infty]$ so that
$$h_i(s)= \begin{cases} \infty &\qquad (s=s_i)\\ 1 &\qquad (s\ne s_i). \end{cases}$$
As with $h$ above, we see that $h_i$ is lower semicontinuous and that it extends uniquely to a map
in $\LAffsig$. Since $d$ is positive and continuous on $S_u$, it is bounded below, and so there is
some $m\in\NN$ such that $md(s)>1$ for all $s\in S_u$. Hence, we can construct a function $g\in
\LAffsig$ such that
$$g(s)= \begin{cases} \infty &\qquad (s\in F_\infty)\\ md(s)-1 &\qquad (s\in F'_\infty). \end{cases}$$
 Then $h_i+g= md$ for all $i$, which shows that the $h_i \in \Wdsig$.

Now there exist idempotents $e_i\in \M(R) \setminus R$ such that $\varphi([e_i]) =h_i$ for
$i=1,\dots,n$. By what we have already proved, the idempotents $\ebar_i \in \M(R)/R$ are properly
infinite. Observe that if $h_i \le h_{j_1}+ \cdots+ h_{j_r}$ for some $i,j_1,\dots,j_r \in
\{1,\dots,n\}$, then $h_{j_1}(s_i)+ \cdots+ h_{j_r}(s_i) =\infty$ and so some $j_l= i$. It follows
that if $i,j_1,\dots,j_r$ are distinct elements of $\{1,\dots,n\}$, then $\ebar_i$ cannot belong
to the ideal of $\M(R)/R$ generated by $\ebar_{j_1}, \dots, \ebar_{j_r}$. Therefore the ideals of
$\M(R)/R$ generated by the different subsets of $\{\ebar_1,\dots,\ebar_n\}$ are all distinct.
\end{proof}

Recall that an \emph{ultramatricial algebra} over a field $K$ is any direct limit of a countable
sequence of finite direct products of matrix algebras $\MM_\bullet(K)$. Such an algebra $R$ is
always $\sigma$-unital and unit-regular, and $V(R)$ is strictly unperforated.

\begin{corollary} \label{coronaultramat}
Let $R$ be a nonunital simple ultramatricial algebra over a field $K$, and assume that $\soc(R_R)
=0$. Choose a nonzero element $u\in V(R)$, and assume that the state space $S(V(R),u)$ has only
finitely many extreme points.
\begin{enumerate}[{\rm (i)}]
 \item  $\M(R)/R$ is properly purely infinite.
 \item If there are distinct $s_1,\dots,s_n \in \partial_eS(V(R),u)$ such that $\sup \{ s_i([e])
 \mid e=e^2\in R \} =\infty$ for all $i=1,\dots,n$, then $\M(R)/R$ has at
least $2^n$ distinct ideals.
\end{enumerate}
\end{corollary}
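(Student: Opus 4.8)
The plan is to derive both parts directly from Theorem~\ref{coronapinf}, the only real work being to check that its hypotheses are automatically satisfied in the present situation. First I would note that a nonunital simple ultramatricial $K$-algebra $R$ with $\soc(R_R)=0$ meets all the standing assumptions of Theorems~\ref{APe211} and~\ref{coronapinf}: being ultramatricial, $R$ is $\sigma$-unital and unit-regular with $V(R)$ strictly unperforated; and simplicity of $R$ (together with the exchange property) makes $V(R)$ a simple monoid, so a chosen nonzero $u\in V(R)$ is an order-unit and $S_u=S(V(R),u)$ is a (nonempty) compact convex subset of $\RR^{V(R)}$. I then form the interval $D=\{[e]\mid e=e^2\in R\}$ and the function $d=\sup\phi_u(D)\in\LAffsig$ as in Definitions~\ref{defstatesetc}, and record the identity $d(s)=\sup\{\phi_u([e])(s)\mid e=e^2\in R\}=\sup\{s([e])\mid e=e^2\in R\}$, valid for every $s\in S_u$ by the definition of the evaluation map $\phi_u$.

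Next I would verify the three hypotheses of Theorem~\ref{coronapinf}. By assumption $\extSu$ is finite; a finite subset of a Hausdorff space is compact and discrete, so $\extSu$ is compact and each of its points is isolated. Consequently $F_\infty=\extSu\cap d^{-1}(\{\infty\})$ is a finite set of isolated points of $\extSu$, while $F'_\infty=\extSu\setminus F_\infty$ is a finite, hence discrete, space on which every finite-valued function---in particular $d|_{F'_\infty}$---is continuous. Theorem~\ref{coronapinf} then applies directly, yielding that $\M(R)/R$ is properly purely infinite (this is (i)) and that it has at least $2^{|F_\infty|}$ distinct ideals.

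For (ii) I would convert the given hypothesis into a lower bound on $|F_\infty|$. With $S_u=S(V(R),u)$ we have $\partial_eS(V(R),u)=\extSu$, and by the identity recorded above, the condition $\sup\{s_i([e])\mid e=e^2\in R\}=\infty$ says exactly that $d(s_i)=\infty$; since also $s_i\in\extSu$, this gives $s_i\in F_\infty$ for $i=1,\dots,n$. As the $s_i$ are distinct, $|F_\infty|\ge n$, and so the bound from (i) becomes $2^{|F_\infty|}\ge 2^n$, which proves (ii).

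I do not expect any genuine obstacle: all of the substantive analysis has been done in Theorems~\ref{APe211} and~\ref{coronapinf}, and what remains is the (easy) observation that finiteness of $\extSu$ forces the compactness and isolated-point requirements of Theorem~\ref{coronapinf}, together with the bookkeeping identity $d(s)=\sup_{e=e^2}s([e])$. The only point deserving a moment's care is confirming that the chosen $u$ is an order-unit and that $S_u\neq\varnothing$---both consequences of the simplicity of $R$.
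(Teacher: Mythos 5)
Your proposal is correct and is exactly the argument the paper intends: the corollary is stated as an immediate application of Theorem~\ref{coronapinf}, with the ultramatricial facts ($\sigma$-unital, unit-regular, $V(R)$ strictly unperforated) recalled just beforehand, and finiteness of $\extSu$ making compactness, isolation of the points of $F_\infty$, and continuity of $d|_{F'_\infty}$ automatic. Your translation of the hypothesis in (ii) into $d(s_i)=\infty$, hence $s_i\in F_\infty$ and $2^{|F_\infty|}\ge 2^n$, matches the intended bookkeeping.
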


\begin{corollary} \label{coronastableultramat}
Let $S$ be a unital simple ultramatricial algebra over a field $K$. Assume that $\soc(S_S) =0$ and
that the state space $S(V(S),[1_S])$ has only finitely many, say $n$, extreme points. Then
$RCFM(S)/\MM_\infty(S)$ is properly purely infinite, and it has at least $2^n$ distinct ideals.
\end{corollary}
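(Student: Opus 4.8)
The plan is to obtain this statement as a direct specialization of Corollary~\ref{coronaultramat} applied to the ring $R = \MM_\infty(S)$, so the whole proof reduces to checking that $R$ meets the hypotheses there and that the relevant invariants match up. First I would verify that $R = \MM_\infty(S)$ is a nonunital simple ultramatricial $K$-algebra with $\soc(R_R) = 0$. Nonunitality is clear, and simplicity follows from simplicity of $S$. For ultramatriciality, write $S$ as a direct limit of a countable sequence of finite products of matrix algebras over $K$, apply $\MM_n(-)$ termwise, and note that $R = \bigcup_n \MM_n(S)$ is then a direct limit over $\NN\times\NN$ of finite products of matrix algebras over $K$; passing to a cofinal diagonal subsequence exhibits $R$ as a countable direct limit of such algebras. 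For the socle, every corner $eRe$ with $e = e^2 \in R$ lies in some $\MM_m(S)$ and in fact equals $e\MM_m(S)e$ (since $e = u_m e u_m$, where $u_m$ is the $m$-th diagonal idempotent and $u_mRu_m = \MM_m(S)$); were such a corner a division ring, Morita equivalence of $\MM_m(S)$ with $S$ would yield a simple projective right $S$-module, i.e.\ a minimal right ideal of $S$, contradicting $\soc(S_S) = 0$. As $R$ is (unit-)regular, $\soc(R_R)$ is generated by minimal idempotents, hence is zero.

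Next I would line up the monoid-theoretic data. The canonical corner embedding $S \cong e_{11}Re_{11} \hookrightarrow R$, with $e_{11}$ the full idempotent given by $1_S$ in the $(1,1)$ slot, induces a monoid isomorphism $V(S) \cong V(R)$ carrying $[1_S]$ to $u := [e_{11}] \ne 0$. Since ultramatricial algebras are $\sigma$-unital and unit-regular with strictly unperforated $V(-)$, and $\soc(R_R)=0$, the ring $R$ together with this $u$ satisfies the standing hypotheses of Theorem~\ref{APe211} and Corollary~\ref{coronaultramat}; moreover $S(V(R),u)$ is affinely homeomorphic to $S(V(S),[1_S])$ and hence has exactly $n$ extreme points. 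Finally, $S$ is semiprime (being regular), so $\M(R) = \M(\MM_\infty(S)) = RCFM(S)$, and under the canonical identification of $R$ with the ideal $\varphi(R) \subseteq \M(R)$ we have $\M(R)/R = RCFM(S)/\MM_\infty(S)$. Part~(i) of Corollary~\ref{coronaultramat} now says precisely that this quotient is properly purely infinite.

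For the ideal count I would exhibit the required extreme states. Let $s_1,\dots,s_n \in \partial_e S(V(R),u)$ be the images under the isomorphism above of the $n$ distinct extreme points of $S(V(S),[1_S])$. For each $m \ge 1$ the diagonal idempotent $u_m = e_{11} + \cdots + e_{mm} \in R$ is a sum of $m$ pairwise orthogonal idempotents, each equivalent to $e_{11}$, so $[u_m] = m\,u$ in $V(R)$ and therefore $s_i([u_m]) = m\, s_i(u) = m$ for every $i$. Thus $\sup\{\, s_i([e]) \mid e = e^2 \in R \,\} = \infty$ for all $i = 1,\dots,n$, and part~(ii) of Corollary~\ref{coronaultramat} gives that $RCFM(S)/\MM_\infty(S)$ has at least $2^n$ distinct ideals, completing the proof.

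Essentially all of the content is in the machinery already assembled; the only points that require any care are the direct-limit bookkeeping showing $\MM_\infty(S)$ is ultramatricial and the transfer of $\soc(S_S)=0$ to $\soc(R_R)=0$. I expect the latter — controlling exactly which corners of $\MM_\infty(S)$ could be division rings — to be the main thing to pin down precisely, although it is routine.
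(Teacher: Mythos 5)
Your proposal is correct and follows essentially the same route as the paper: apply Corollary~\ref{coronaultramat} to $R=\MM_\infty(S)$, identify $\M(R)=RCFM(S)$ via \cite[Proposition 1.1]{APe}, and use the corner embedding $S\cong e_{11}Re_{11}$ to transfer $(V(S),[1_S])$ and its state space to $(V(R),u)$. Your verification that the extreme states are unbounded on idempotent classes (via the diagonal idempotents $u_m$ with $[u_m]=mu$) is just a slightly more explicit version of the paper's observation that $V(R)=\{[e]\mid e=e^2\in R\}$, and the extra detail you supply on ultramatriciality and $\soc(R_R)=0$ is fine.
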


\begin{proof} Set $R= \MM_\infty(S)$, and recall from \cite[Proposition 1.1]{APe} that $\M(R)=
RCFM(S)$. Observe that $R$ is a nonunital simple ultramatricial $K$-algebra with $\soc(R_R) =0$,
and that the natural embedding of $S$ into the upper left corner subalgebra of $R$ induces an
isomorphism $V(S)\rightarrow V(R)$. It follows that $V(R)= \{ [e] \mid e=e^2\in R \}$, and
consequently
$$\sup \{ s([e]) \mid e=e^2\in R \} =\infty$$
 for all $s\in S(V(R),[1_S])$. Therefore
Corollary \ref{coronaultramat} applies.
\end{proof}

Examples of the situation in Corollary \ref{coronastableultramat} for which $S(V(S),[1_S])$ has
arbitrarily many extreme points are easily obtained. For instance, let $n\in\NN$, and give the
abelian group $G= \QQ^n$ the \emph{strict ordering}, so that
$$G^+= \{0\} \cup \{ (x_1,\dots,x_n) \in G \mid x_i>0 \text{\ for all\ } i=1,\dots,n \}.$$
 Then $G$ is a countable simple dimension group with order-unit $u= (1,\dots,1)$. By, e.g.,
 \cite[Theorem 15.24 and Corollary 15.21]{vnrr}, there exists a unital simple ultramatricial $K$-algebra $S$ such that $(K_0(S),[1_S]) \cong
 (G,u)$ (as partially ordered abelian groups with distinguished order-units). In particular, $(V(S),[1_S])
 \cong (G^+,u)$. Since $G^+$ has no minimal positive elements, $S$ has no minimal idempotents, and
 so $\soc(S_S) =0$. It is easily checked that $S(G^+,u)$ consists of convex combinations of the
 $n$ canonical projection maps $\pi_i: G^+\rightarrow \QQ^+$, and thus $\partial_e S(G^+,u) = \{\pi_1,
\dots, \pi_n\}$.

\section{Matrices and corners of purely infinite rings} \label{matandcorner}

In the current section we study the passage of (proper) pure infiniteness to corners and matrices. As we prove below, corners inherit (proper) pure infiniteness in full generality, and in fact local rings at elements do for s-unital rings. Our arguments for the analysis of matrix rings require a certain abundance of idempotents, that the (large) class of exchange ring has. Hence, we prove that matrices over s-unital purely infinite exchange rings are properly purely infinite, from which we deduce that pure infiniteness is a Morita invariant property (for s-unital exchange rings).

\begin{lemma}\label{PrimePiNoDivCorners} Let $R$ be a purely infinite prime ring. Then no corner of $R$ is a
division ring.
\end{lemma}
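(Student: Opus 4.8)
The plan is to argue by contradiction. Suppose $eRe$ is a division ring for some idempotent $e\in R$; then $e\neq 0$, and I will manufacture a quotient of $R$ that is a division ring, contradicting Definition~\ref{defpi}(i).

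The first step is to show that $eR$ is a minimal right ideal of $R$. Given a nonzero right ideal $J\subseteq eR$ and $0\neq x\in J$, primeness (with $x,e\neq 0$) yields $r_0\in R$ with $y:=xr_0e\neq 0$; since $ex=x$ we have $y=e(xr_0)e\in J\cap eRe$, so $y$ is invertible in the division ring $eRe$ and $e=yy^{-1}\in J$, forcing $J=eR$. Hence $eR$ is a simple right $R$-module, $D:=\End_R(eR)$ is a division ring by Schur's lemma, and the standard isomorphism $\varphi\mapsto\varphi(e)$ identifies $D$ with $eRe$; in particular $eR$ is a left $D$-module and $De=eRe$.

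The key step invokes Definition~\ref{defpi}(ii) with $a=e$ (noting $e=e^{3}\in ReR$): every $b\in ReR$ satisfies $b\precsim e$, i.e.\ $b=\alpha e\beta$ for some $\alpha,\beta\in R$, so $b\in\alpha eR$, where $\alpha eR$ is the image of the simple module $eR$ under left multiplication by $\alpha$ and is therefore $0$ or a simple right ideal. Thus \emph{every element of $ReR$ lies in a simple right ideal}. On the other hand $ReR=\sum_{r\in R}reR$ is a sum of homomorphic images of $eR$, hence a homogeneous semisimple module all of whose simple submodules are isomorphic to $eR$. I now split into cases. Suppose $ReR\neq eR$ and $\dim_D(eR)\geq 2$. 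Since $eR$ is a direct summand of the semisimple module $ReR$, we may write $ReR\supseteq eR\oplus M$ with $M$ simple, $M\cong eR$. Fix $0\neq m\in eR$. For each $n\in M$ the (nonzero) element $m+n$ lies in a simple right ideal $N\subseteq ReR$; as $N$ meets $eR\oplus M$ nontrivially it lies inside $eR\oplus M$, and as its image in $eR$ is nonzero it is $eR$ itself or the graph of an isomorphism $eR\to M$ — in either case $n=\theta(m)$ for some $\theta\in\operatorname{Hom}_R(eR,M)$. Letting $n$ vary gives $M=\operatorname{Hom}_R(eR,M)\,m$, and transporting along an isomorphism $M\cong eR$ yields $eR=Dm$, contradicting $\dim_D(eR)\geq 2$. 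Therefore $\dim_D(eR)=1$ or $ReR=eR$.

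The final step handles both surviving cases. If $\dim_D(eR)=1$ then $eR=De=eRe$, which forces $er=ere$ for all $r\in R$; if $ReR=eR$ then $Re\subseteq ReR=eR$, which forces $re=ere$ for all $r$. In the first case $r\mapsto er$, in the second $r\mapsto re$, is then a surjective ring homomorphism $R\to eRe$ whose kernel (respectively $\{r\in R:er=0\}$ or $\{r\in R:re=0\}$) is a two-sided ideal not containing $e$, hence proper; so $R$ has a quotient isomorphic to the division ring $eRe$, contradicting Definition~\ref{defpi}(i). I expect the main obstacle to be the module-theoretic core of the key step: one must extract from the ``rank one'' character of $\precsim$ (a \emph{single} product $\alpha e\beta$, not a sum) the conclusion that every element of $ReR$ sits in a simple submodule, and then recognise that this becomes a genuine restriction only once $\dim_D(eR)\geq 2$, so that the remaining degenerate configurations — where $R$ collapses onto the division ring $eRe$ — have to be excluded via condition~(i) rather than via the comparison.
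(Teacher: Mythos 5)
Your proof is correct, but it takes a genuinely different route from the paper's. Both arguments begin the same way (primeness plus the division corner make $eR$ a simple right $R$-module, and condition (ii) of Definition~\ref{defpi} applied with $a=e$ says every element of $ReR$ is a \emph{single} product $\alpha e\beta$, hence lies in the homomorphic image $\alpha eR$ of the simple module $eR$), but they exploit this differently. The paper makes it concrete with idempotents: primeness gives a nonzero $a\in(1-e)Re$, the minimal right ideal $aR$ is generated by an idempotent $g$ which can be taken orthogonal to $e$, and applying (ii) to the single element $f=e+g\in ReR$ forces the length-two module $fR=eR\oplus gR$ to be an image of the simple module $eR$ --- an immediate contradiction; the only degenerate case is $eR=R$, where $R=eRe$ is itself a division ring. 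You instead produce no new idempotent at all: you treat $ReR$ as a homogeneous semisimple module, and your graph-of-homomorphisms argument shows that ``every element sits in a simple submodule'' is impossible unless $\dim_{eRe}(eR)=1$ or $ReR=eR$, and you then kill both degenerate configurations by exhibiting a surjective ring homomorphism $R\to eRe$ (via $r\mapsto er$ or $r\mapsto re$), i.e.\ a division-ring quotient contradicting condition (i). What each approach buys: the paper's proof is shorter and rests on the classical fact that minimal one-sided ideals of a prime (semiprime) ring are idempotent-generated, whereas yours avoids that fact entirely and invokes primeness only once (for simplicity of $eR$), at the price of a longer case analysis, the $\operatorname{Hom}$-graph/dimension computation over $D=eRe$, and a heavier reliance on condition (i) to dispose of the degenerate cases.
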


\begin{proof}
Suppose that we have an idempotent $e \in R$ such that $eRe$ is a division ring. Since $R$ is
prime, $eR$ is a simple right $R$-module.

If $eR=R$, then $(R(1-e))^2=0$ and so $R(1-e)=0$ because $R$ is prime. (Here we are writing
$R(1-e)$ for the left ideal $\{r-re \mid r\in R\}$.) But then $R=eRe$ and $R$ is a division ring,
contradicting the hypothesis that $R$ is purely infinite. Thus, $eR\ne R$ and so $(1-e)R \ne 0$.
Now $(1-e)ReR\ne 0$ because $R$ is prime, and hence there exists a nonzero element $a\in (1-e)Re$.
Note that $aR$ is a nonzero homomorphic image of $eR$, whence $aR$ is a simple right $R$-module.
Since $R$ is prime, $aR= gR$ for some idempotent $g$, and $eg=0$ because $ea=0$. Observe that
$g-ge$ is an idempotent which generates $gR$, so we can replace $g$ by $g-ge$. Hence, there is no
loss of generality in assuming that $e\perp g$.

Now $f=e+g$ is an idempotent such that $fR= eR\oplus aR$, and $f\in ReR$ because $gR= aR \subseteq
ReR$. Since $R$ is purely infinite, $f=xey$ for some $x,y\in R$. But then $fR$ is a homomorphic
image of $eR$, implying that $fR$ is simple or zero, which is impossible in light of $fR= eR\oplus
aR$. This contradiction establishes the lemma.
\end{proof}

\begin{proposition} \label{PiCorners} Let $e$ be an idempotent in a ring $R$. If $R$ is
{\rm(}properly{\rm)} purely infinite, then so is $eRe$.
\end{proposition}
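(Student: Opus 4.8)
Proposition~\ref{PiCorners} should be proved by splitting into the two cases, as the statement is phrased for both ``purely infinite'' and ``properly purely infinite''. The easier direction is the properly purely infinite case: if every nonzero element of $R$ is properly infinite, then in particular every nonzero element $a\in eRe$ is properly infinite in $R$, so there exist $\alpha_i,\beta_j\in R$ with $\left(\begin{smallmatrix}a&0\\0&a\end{smallmatrix}\right)=\left(\begin{smallmatrix}\alpha_1\\ \alpha_2\end{smallmatrix}\right)a\left(\begin{smallmatrix}\beta_1&\beta_2\end{smallmatrix}\right)$; replacing $\alpha_i$ by $e\alpha_i$ and $\beta_j$ by $\beta_j e$ (which is legitimate since $a=eae$) lands all the factors in $eRe$, showing $a$ is properly infinite in $eRe$. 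Hence $eRe$ is properly purely infinite.

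For the ``purely infinite'' case, the plan is to verify the two conditions of Definition~\ref{defpi} for $eRe$. Condition~(ii), the comparison condition, should carry over by a similar corner-compression trick: if $a\in eRe$ and $b\in (eRe)a(eRe)\subseteq RaR$, then purely infinite $R$ gives $b=xay$ for some $x,y\in R$, and since $b=ebe$ we may rewrite $b=(ex)a(ye)$ with $ex,ye\in eRe$. (Some care: one needs $b\in eRe$, which holds by hypothesis, so that $ebe=b$.) The genuinely delicate condition is~(i): no quotient of $eRe$ is a division ring. A quotient of $eRe$ has the form $eRe/J$ for an ideal $J$ of $eRe$; the subtlety is that ideals of $eRe$ need not be of the form $eIe$ for an ideal $I$ of $R$, so one cannot simply pull back.

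The main obstacle is therefore establishing condition~(i), and I expect the intended route to reduce to the prime case via Lemma~\ref{PrimePiNoDivCorners}. The plan is: suppose $eRe/J$ is a division ring for some ideal $J\lhd eRe$. Since pure infiniteness of $R$ passes to $eRe$'s comparison condition, and we already know $eRe$ inherits ``no quotient is a division ring'' once we rule it out, the strategy is to produce a prime quotient of $R$ whose corner is a division ring, contradicting Lemma~\ref{PrimePiNoDivCorners}. Concretely, let $P$ be an ideal of $R$ maximal with respect to $ePe\subseteq J$ (equivalently, $eRe/J$ being a quotient forces $e\notin P$, and a Zorn's lemma argument gives such a $P$ with $eRe/ePe$ mapping onto the division ring $eRe/J$); one then argues $R/P$ is prime — using that $(R/P\,)\,\bar e\,(R/P)$-type products behave well and maximality of $P$ — and that $\bar e(R/P)\bar e \cong eRe/ePe$ is a division ring (a homomorphic image of a division ring onto a nonzero ring is an isomorphism). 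Since $R/P$ is purely infinite by Lemma~\ref{pinftoidealquo}(i) and prime, Lemma~\ref{PrimePiNoDivCorners} gives the contradiction.

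I would write up the properly-purely-infinite case and the comparison condition quickly as routine corner-compressions, and devote the bulk of the proof to the primeness reduction for condition~(i). The point to be careful about is verifying that $R/P$ is genuinely prime: one wants that for ideals $A,B\supsetneq P$ we have $AB\not\subseteq P$, and the hook is that $eAe$ and $eBe$ are ideals of $eRe$ strictly containing $ePe$ (by maximality of $P$, since $A,B\not\subseteq P$ would otherwise be forced into $P$ via the condition $e\cdot\subseteq J$), hence their product is not inside $J$, hence $eABe\not\subseteq ePe$, giving $AB\not\subseteq P$. This is the step most likely to need a careful argument about how $P$ is chosen, and it is where I would spend the most attention.
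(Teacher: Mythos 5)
Your handling of the properly purely infinite case and of condition (ii) of Definition~\ref{defpi} is exactly the paper's corner-compression argument; the only slip is that $e\alpha_i$ and $\beta_je$ need not lie in $eRe$ — you should compress on both sides, replacing $\alpha_i,\beta_j$ by $e\alpha_ie,e\beta_je$, which still works since $a=ea=ae$ gives $(e\alpha_ie)a(e\beta_je)=e(\alpha_ia\beta_j)e=\delta_{ij}a$.

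For condition (i) your route is genuinely different from the paper's, though both funnel into Lemma~\ref{PrimePiNoDivCorners}. You manufacture the prime quotient by Zorn's lemma ($P$ maximal among ideals of $R$ with $ePe\subseteq J$) and check primeness by hand; the missing ingredient in that check is simply that $J$, being a maximal ideal of the unital ring $eRe$, is prime in $eRe$, so $eAe,eBe\not\subseteq J$ (forced by maximality of $P$) indeed gives $eAe\cdot eBe\not\subseteq J$ and hence $AB\not\subseteq P$. The paper instead passes to $R/RJR$: since $e(RJR)e=(eRe)J(eRe)=J$, we get $e\notin RJR$, so $\overline{e}$ is a nonzero idempotent of $R/RJR$ and therefore not in its Jacobson radical; this produces a left primitive — hence prime — ideal $P\supseteq RJR$ with $e\notin P$, and maximality of $J$ in $eRe$ forces $J=P\cap eRe$, so $\overline{e}(R/P)\overline{e}\cong eRe/J$ on the nose. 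The paper's route buys primeness for free and identifies the corner immediately, at the cost of invoking the Jacobson radical; yours is more elementary but must supply both verifications.

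The one genuine error in your sketch is the justification that $\overline{e}(R/P)\overline{e}\cong eRe/ePe$ is a division ring: the surjection goes the wrong way (since $ePe\subseteq J$, it is $eRe/ePe$ that maps onto the division ring $eRe/J$, which by itself tells you nothing about $eRe/ePe$), so "a homomorphic image of a division ring onto a nonzero ring is an isomorphism" does not apply. The claim is nevertheless true for your $P$, and the repair uses the same maximality: $e(P+RJR)e=ePe+(eRe)J(eRe)\subseteq J$, so maximality forces $RJR\subseteq P$, whence $J=e(RJR)e\subseteq ePe\subseteq J$, i.e. $ePe=J$ and the corner is exactly $eRe/J$. (Note also $e\notin P$, as $e\in P$ would give $e=e^3\in ePe\subseteq J$.) With that correction, and $R/P$ purely infinite by Lemma~\ref{pinftoidealquo}(i), your contradiction with Lemma~\ref{PrimePiNoDivCorners} goes through.
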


\begin{proof}
Assume first that $R$ is properly purely infinite. Any nonzero element $a\in R$ is properly
infinite in $R$, and so
 $\leftmat  a & 0 \\ 0 & a \rightmat =
\leftmat  \alpha_1 \\ \alpha_2 \rightmat a \leftmat \beta_1 & \beta_2 \rightmat$ for some
$\alpha_1,\alpha_2,\beta_1,\beta_2\in R$. Then
 $$\leftmat  a & 0 \\ 0 & a \rightmat =
\leftmat  e\alpha_1e \\ e\alpha_2e \rightmat a \leftmat e\beta_1e & e\beta_2e \rightmat,$$
 which shows that $a$ is properly infinite in $eRe$. Therefore $eRe$ is properly purely infinite
 in this case.

Now assume only that $R$ is purely infinite. Suppose first that
$I$ is an ideal of $eRe$ such that $eRe/I$ is a division ring. In
this case $I$ is a maximal ideal of $eRe$. Moreover, $e\notin
(eRe)I(eRe)= eRIRe$, and so $e\notin RIR$. Consequently, $\ebar$
is a nonzero idempotent in $R/RIR$, and in particular, $\ebar$
cannot be in the Jacobson radical of $R/RIR$. Hence, there exists
a (left) primitive ideal $P$ of $R$ such that $e\notin P$ and
$RIR\subseteq P$. Now $I\subseteq P\cap eRe\subsetneq eRe$, and by
maximality of $I$ in $eRe$ we have $I=P\cap eRe$. This entails
$eRe/I=eRe/(P\cap eRe) \cong \ebar(R/P) \ebar$. But this means
that the purely infinite prime ring $R/P$ has a corner which is a
division ring, contradicting Lemma \ref{PrimePiNoDivCorners}.
Therefore no quotient of $eRe$ is a division ring.

The other condition is easy. Suppose that $a\in eRe$ and $b\in (eRe)a(eRe)\subseteq RaR$. Since
$R$ is purely infinite, there exist $x,y\in R$ such that $b=xay$, and hence $b= (exe)a(eye)$ with
$exe,eye\in eRe$. This shows that $eRe$ is purely infinite.
\end{proof}

We next study the inheritance of pure infiniteness by matrix rings. In general, matrix rings over
purely infinite rings need not be purely infinite, since otherwise pure infiniteness and strong
pure infiniteness would be the same (recall Lemma \ref{EveryPropInfImpliesPi} and Remark
\ref{spiIMPLIESpi}). We shall prove that pure infiniteness passes to matrix rings in certain
circumstances, and strong pure infiniteness in much wider circumstances. First we prove the
following useful lemma.

\begin{lemma} \label{MatricesAndPropInf} Let $R$ be a purely infinite unital ring. If $1\in R$ is properly
infinite, then ${\MM}_n(R)$ is properly purely infinite for all $n\in \NN$.
\end{lemma}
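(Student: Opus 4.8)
The plan is to bootstrap from the properly infinite unit of $R$ to a properly infinite unit of $\MM_n(R)$, and then to invoke the extension/corner machinery already developed. First I would observe that $\MM_n(R)$ is again unital, with identity $1_{\MM_n(R)} = 1_R\oplus\cdots\oplus 1_R$ ($n$ copies). Since $1_R$ is properly infinite, there is an idempotent $g\sim 1_R$ with $g<1_R$; iterating (or using that properly infinite idempotents absorb arbitrarily many orthogonal copies of themselves, cf.\ the discussion after Definition~\ref{infdef} and Lemma~\ref{propinftyinK}), one gets $1_R \sim 1_R\oplus 1_R\oplus\cdots\oplus 1_R$ ($n$ copies), i.e.\ $[1_R] = n[1_R]$ in $V(R)$. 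Hence $1_{\MM_n(R)} \sim 1_R$ as idempotents over $R$, so $[1_{\MM_n(R)}] = [1_R]$ in $V(R)$, and in particular $1_{\MM_n(R)}$ is properly infinite (being equivalent to the properly infinite idempotent $1_R$; use Remarks~\ref{generalize}).

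Next I would show $\MM_n(R)$ is purely infinite. It is unital, so by Lemma~\ref{EveryPropInfImpliesPi}(i) it would suffice to get proper pure infiniteness, but that is circular; instead I would argue directly. The cleanest route: $R$ is (isomorphic to) the corner $e\MM_n(R)e$ where $e = 1_R\oplus 0\oplus\cdots\oplus 0$, and conversely $\MM_n(R)$ is a corner of $\MM_n(R)$ itself—so corners won't immediately help. The real point is that $\MM_n(R)$ and $R$ are Morita equivalent via the bimodules $R^n$, $(R^n)^{*}$, and pure infiniteness in the sense of Definition~\ref{defpi} is manifestly a Morita-type invariant here because its two clauses are phrased in terms of the relation $\precsim$ on matrices over the ring (and matrices over $\MM_n(R)$ are matrices over $R$), and the quotient structure: ideals of $\MM_n(R)$ are exactly $\MM_n(J)$ for ideals $J$ of $R$, with $\MM_n(R)/\MM_n(J)\cong\MM_n(R/J)$, and $\MM_n(D)$ is never a division ring for $n\ge 2$ (and for $n=1$ the statement is just the hypothesis). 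So: no quotient of $\MM_n(R)$ is a division ring. For clause (ii), given $a\in\MM_n(R)$ and $b\in\MM_n(R)\,a\,\MM_n(R)$, note $b$ is a matrix over $R$ lying in $RaR$ (entrywise), and $R$ purely infinite gives, via the entrywise comparisons together with Lemma~\ref{precsim}(ii),(vi) and the identification $\MM_n(R)a\MM_n(R)=\MM_n(RaR)$ (valid since $R$ is unital), that $b\precsim a$ over $\MM_n(R)$. Hence $\MM_n(R)$ is purely infinite.

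Finally, with $\MM_n(R)$ purely infinite and $1_{\MM_n(R)}$ properly infinite, I apply Proposition~\ref{findtheproduct}: for any nonzero $b\in\MM_n(R)$ we have $b\in\MM_n(R) = \MM_n(R)\cdot 1_{\MM_n(R)}\cdot\MM_n(R)$ and $1_{\MM_n(R)}$ properly infinite, so $b\precsim 1_{\MM_n(R)}$. But also $1_{\MM_n(R)}\precsim b$ (using clause (ii) of pure infiniteness applied to the pair $1_{\MM_n(R)}, b$: indeed $1_{\MM_n(R)} \in \MM_n(R)\, b\, \MM_n(R)$? — this needs $b$ to generate the whole ring, which is false in general). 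Let me reroute this last step: the correct finish is to show $b\precsim b\oplus b$ is not what's wanted; rather, I want every nonzero $b$ to be properly infinite. Use Proposition~\ref{findtheproduct} with $a = 1_{\MM_n(R)}$: every $b$ satisfies $b\precsim 1_{\MM_n(R)}$, and $1_{\MM_n(R)}$ is properly infinite, so $1_{\MM_n(R)}\in K(1_{\MM_n(R)})$; then by Proposition~\ref{findtheproduct} applied with $a=b$—no. The genuinely available tool is Lemma~\ref{propinftyinK}: we have the properly infinite idempotent $e:=1_{\MM_n(R)}$ with $e\precsim$ ... but we'd need $e\precsim b$, i.e.\ $b$ must dominate the unit, which is where pure infiniteness of $\MM_n(R)$ combined with the fact that $\MM_n(R)\,b\,\MM_n(R) = \MM_n(R)$ whenever this ideal equals the whole ring — not automatic. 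The honest main obstacle, and where I expect the real work to go, is precisely this: passing from ``$\MM_n(R)$ is purely infinite with properly infinite unit'' to ``$\MM_n(R)$ is \emph{properly} purely infinite'' requires, for an arbitrary nonzero $b$, producing an idempotent $e\precsim b$ that is properly infinite. I would obtain such an $e$ from the hypothesis that in $R$ (hence in $\MM_n(R)$) suitable idempotents exist—e.g.\ adapt the argument of Theorem~\ref{AtensorL}: show any nonzero $b\in\MM_n(R)$ has $1_{\MM_n(R)}\precsim b$ using that $R$ is unital purely infinite with properly infinite $1_R$ (so from a nonzero entry-type reduction one recovers the unit up to $\precsim$), then conclude $b\precsim 1_{\MM_n(R)}\precsim b$ and $1_{\MM_n(R)}$ properly infinite forces $b$ properly infinite via Lemma~\ref{propinftyinK}(ii) and Proposition~\ref{findtheproduct}. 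This ``recover the unit from any nonzero element'' step is the crux; everything else is the routine matrix bookkeeping sketched above.
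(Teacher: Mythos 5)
Your proposal has genuine gaps at all three stages, and the last one is fatal. First, the claim that proper infiniteness of $1_R$ gives $1_R\sim 1_R\oplus\cdots\oplus 1_R$, i.e.\ $[1_R]=n[1_R]$ in $V(R)$, is false: proper infiniteness only gives $1_R\oplus 1_R\lesssim 1_R$, not equivalence (in $L_K(1,\infty)$ the unit is properly infinite yet $2[1]\ne[1]$ since $K_0\cong\ZZ$ with $[1]\mapsto 1$). The conclusion you actually need -- that $1_{\MM_n(R)}=1_R^{\oplus n}$ is properly infinite -- is still true, but by Lemma \ref{precsim}(ii),(iii) applied to $1\oplus 1\precsim 1$, not by the asserted equivalence. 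Second, the ``routine matrix bookkeeping'' for clause (ii) of Definition \ref{defpi} does not close: from $b=\sum_k x_kay_k$ with $x_k,y_k\in\MM_n(R)$, Lemma \ref{precsim}(ii),(vi) only yields $b\precsim a\oplus a\oplus\cdots\oplus a$, and collapsing this to $b\precsim a$ requires $a$ itself to be properly infinite -- exactly what the lemma is trying to prove. The paper explicitly warns (before this lemma) that matrix rings over purely infinite rings need not be purely infinite in general, so this step cannot be dismissed as bookkeeping. Third, your proposed ``crux'', showing $1_{\MM_n(R)}\precsim b$ for every nonzero $b\in\MM_n(R)$, is impossible outside the simple case: if $b$ generates a proper ideal of $\MM_n(R)$ (and $R$ is not assumed simple here), then $1\precsim b$ would force $1$ into that ideal. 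So the plan you identify as the main remaining work cannot be carried out.

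The paper's argument avoids all of this by a different mechanism: since $1_R$ is properly infinite, $(R\oplus R)_R$ is a direct summand of $R_R$, hence $R_R^m$ is a direct summand of $R_R$ for every $m$, giving idempotents $f_m\in R$ with $\MM_m(R)\cong f_mRf_m$. Thus $\MM_n(R)$ and $\MM_{2n}(R)\cong\MM_2(\MM_n(R))$ are \emph{corners} of $R$, hence purely infinite by Proposition \ref{PiCorners} (where clause (ii) is immediate because corner elements are elements of $R$), and then Lemma \ref{EveryPropInfImpliesPi}(ii) applied to $\MM_2(\MM_n(R))$ delivers proper pure infiniteness of $\MM_n(R)$. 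If you want to salvage your outline, the missing idea is precisely this corner realization; the direct matrix-comparison route does not work.
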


\begin{proof}
As $1$ is properly infinite, $(R\oplus R)_R$ is isomorphic to a direct summand of $R_R$, and so
$R_R^n$ is isomorphic to a direct summand of $R_R$ for every $n\in \NN$. Hence, there are
idempotents $f_n\in R$ such that $f_nR \cong R_R^n$. Thus, ${\MM}_n(R)\cong f_nRf_n$, which is
purely infinite by Proposition \ref{PiCorners}. But $\MM_2(\MM_n(R)) \cong \MM_{2n}(R)$ is purely
infinite for all $n$, so we are done by Lemma \ref{EveryPropInfImpliesPi}.
\end{proof}

\begin{proposition} \label{pinftomatrices} Let $R$ be a ring with local units. If $R$ is properly
purely infinite, then so is every matrix ring $\MM_n(R)$.  \end{proposition}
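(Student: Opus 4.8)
The plan is to reduce to the unital situation already handled in Lemma~\ref{MatricesAndPropInf}, exploiting the local units of $R$ to move each element of $\MM_n(R)$ into a unital corner.

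Fix $n\in\NN$ and let $a$ be an arbitrary nonzero element of $\MM_n(R)$. First I would use the hypothesis of local units: the finite set of entries of $a$ is contained in some corner $eRe$ of $R$, so every entry $x$ of $a$ satisfies $exe=x$ and therefore $a\in\MM_n(eRe)$. Since $a\ne0$, the idempotent $e$ is nonzero. Next, $eRe$ is a unital ring with identity $e$, and it is properly purely infinite by Proposition~\ref{PiCorners}; hence $e$, being a nonzero element of $eRe$, is properly infinite, and $eRe$ is purely infinite by Lemma~\ref{EveryPropInfImpliesPi}(i). Now Lemma~\ref{MatricesAndPropInf} applies to the unital ring $eRe$ and shows that $\MM_n(eRe)$ is properly purely infinite; in particular $a$ is properly infinite in $\MM_n(eRe)$, i.e.\ $a\oplus a\precsim a$ with the matrices witnessing this lying over $\MM_n(eRe)$.

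Finally I would transfer this back to $\MM_n(R)$: the relation $a\oplus a\precsim a$ amounts to an identity $a\oplus a=\alpha a\beta$ with $\alpha\in\MM_{21}(\MM_n(eRe))$ and $\beta\in\MM_{12}(\MM_n(eRe))$, and since $\MM_n(eRe)$ is a subring of $\MM_n(R)$ this same identity holds over $\MM_n(R)$. Thus $a\oplus a\precsim a$ in $\MM_n(R)$ and $a$ is properly infinite there; as $a$ was arbitrary, $\MM_n(R)$ is properly purely infinite. I do not anticipate a genuine obstacle here, since the substance is already contained in Lemmas~\ref{MatricesAndPropInf} and~\ref{EveryPropInfImpliesPi} and Proposition~\ref{PiCorners}; the only point requiring care is that the matrix identity witnessing $a\oplus a\precsim a$ survives the passage from the corner $\MM_n(eRe)$ to $\MM_n(R)$, which is immediate because $\precsim$ is defined purely through multiplicative matrix identities and $\MM_n(eRe)\subseteq\MM_n(R)$ as rings.
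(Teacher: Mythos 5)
Your argument is correct and follows essentially the same route as the paper: pass to a corner $eRe$ containing the entries of $a$ via local units, use Proposition~\ref{PiCorners} (together with Lemma~\ref{EveryPropInfImpliesPi}(i)) and Lemma~\ref{MatricesAndPropInf} to see that $\MM_n(eRe)$ is properly purely infinite, and then transfer the relation $a\oplus a\precsim a$ to $\MM_n(R)$. The paper's proof is the same reduction, with the last two steps left implicit.
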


\begin{proof} Any nonzero matrix $a\in \MM_n(R)$ lies in $\MM_n(eRe)$ for some nonzero idempotent $e\in
R$. Since $R$ is properly purely infinite, $e$ is properly infinite, and so Proposition
\ref{PiCorners} and Lemma \ref{MatricesAndPropInf} together imply that $\MM_n(eRe)$ is properly
purely infinite. Consequently, $a$ is properly infinite in $\MM_n(eRe)$, and hence also in
$\MM_n(R)$.
\end{proof}

Recall that a ring $R$ is \emph{irreducible} provided $R\ne 0$ and the intersection of any two
nonzero ideals of $R$ is nonzero.

\begin{lemma}\label{PrimePiIdempotentImplies1Infinite} Let $R$ be an irreducible, purely infinite, unital
 ring in which
every nonzero ideal contains nonzero idempotents. Then $1\in R$ is infinite.
\end{lemma}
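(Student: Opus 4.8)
The plan is to show $1$ is infinite, working in three steps.

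\emph{Step 1: reduce to the prime case.} Since every nonzero ideal of $R$ contains a nonzero idempotent, $R$ has no nonzero nilpotent ideal: an idempotent lying in an ideal $N$ with $N^{k}=0$ would lie in $N^{k}=0$, hence be $0$. So $R$ is semiprime, and then for ideals $I,J$ with $IJ=0$ we get $(I\cap J)^{2}\subseteq IJ=0$, whence $I\cap J=0$; by irreducibility one of $I,J$ is $0$, so $R$ is prime. Also, as $R$ is purely infinite no quotient of $R$ — in particular $R$ itself — is a division ring, so $R$ has a nonzero non-unit.

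\emph{Step 2: the case of a full non-unit.} Suppose some nonzero non-unit $a$ satisfies $RaR=R$ (this is automatic if $R$ is simple). Then $1\in RaR$, so $1\precsim a$; write $1=\alpha a\beta$ and set $b=\alpha a$, so that $b\beta=1$. If $\beta b\neq 1$, then $\beta b$ is a nonzero idempotent with $\beta b\neq 1$ and $1\sim\beta b$ (witnessed by $b,\beta$), so $1\sim\beta b<1$ and $1$ is infinite. If $\beta b=1$, then $b=\alpha a$ is a unit, so $a$ has a left inverse $c:=b^{-1}\alpha$; since $a$ is not a unit, $ac\neq 1$, and $ac$ is a nonzero idempotent with $1\sim ac$ (witnessed by $c,a$), so again $1\sim ac<1$ and $1$ is infinite. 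Thus we may assume from now on that $RaR\neq R$ for every nonzero non-unit $a$; in particular $R$ is not simple.

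\emph{Step 3: using irreducibility, and the main obstacle.} Since $R$ is not simple, a proper nonzero ideal of $R$ contains (by hypothesis) a nonzero idempotent $e$, and then $e\neq 1$, while $1-e\neq 0$ is also a non-unit (a unit $1-e$ would force $e=0$). By Step 2, $ReR$ and $R(1-e)R$ are proper nonzero ideals, so by irreducibility there is a nonzero $z\in ReR\cap R(1-e)R$; pure infiniteness then yields $z\precsim e$ and $z\precsim 1-e$. As $z$ is a non-unit, $RzR$ is a proper nonzero ideal containing a nonzero idempotent $h$, and $h\precsim z$, so $h\precsim e$ and $h\precsim 1-e$. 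Applying the argument in the proof of Lemma \ref{propinftyinK}(i) twice, we may replace $h$ by equivalent idempotents so that $h\le e$ and so that there is a nonzero idempotent $h'\le 1-e$ with $h\sim h'$ (these are automatically orthogonal). The remaining — and I expect the hardest — step is to derive a contradiction from this configuration: my plan is to show that the corner $eRe$, which by Proposition \ref{PiCorners} and Lemma \ref{PrimePiNoDivCorners} is again a prime, purely infinite ring that is not a division ring, must contain an infinite idempotent $g$; since $g=g^{2}\precsim 1$ trivially, Lemma \ref{propinftyinK}(iii) then forces $1$ to be infinite. Producing that infinite idempotent — exploiting $h\sim h'$ together with $h\precsim e$, $h\precsim 1-e$, and the purely infinite hypothesis applied inside $eRe$ — is the delicate point, since $e$ and $1-e$ are themselves incomparable under $\precsim$ (each dominating the other would collapse one of $ReR$, $R(1-e)R$ to all of $R$), so the infiniteness must be manufactured from the common subidempotent $h$ rather than from a direct comparison.
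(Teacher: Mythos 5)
Your Steps 1 and 2 are correct (Step 2 is a nice elementary substitute for the paper's appeal to \cite[Theorem 1.6]{AGP} in the simple case, and the primality reduction in Step 1, while harmless, is not actually needed for what follows). But Step 3 stops exactly where the proof has to be made: you reach the configuration of orthogonal nonzero idempotents $h\le e$, $h'\le 1-e$ with $h\sim h'$, and then only announce a \emph{plan} to extract an infinite idempotent inside the corner $eRe$ and feed it to Lemma \ref{propinftyinK}(iii). That last step is a genuine gap, and the proposed route is dubious: nothing in the configuration obviously forces $eRe$ to contain an infinite idempotent (the infiniteness being manufactured sits at $1$, not at $e$), so as written the argument does not conclude.

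The irony is that your own Step 2 already closes the gap, and in exactly the way the paper does. From $h\sim h'\le 1-h$ you get $h\in R(1-h)R$, hence $1=h+(1-h)\in R(1-h)R$, so $R(1-h)R=R$; but $1-h$ is a nonzero idempotent different from $1$ (since $0\ne h\le e<1$), hence a non-unit. This contradicts the standing assumption you set up at the end of Step 2 --- equivalently, apply Step 2 directly to $a=1-h$: condition (ii) of Definition \ref{defpi} gives $1\precsim 1-h$, so $1\lesssim 1-h<1$ and $1$ is infinite. This is precisely the paper's finish: there one writes $1=f_1\oplus f_2\oplus f_3$ with $f_1\le e$, $f_2\le 1-e$, $f_1\sim f_2$, observes $R(f_2+f_3)R=R$ because $f_1\sim f_2\le f_2+f_3=1-f_1$, and uses pure infiniteness to get $1\precsim f_2+f_3<1$. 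So your write-up is salvageable in one line, but as submitted the decisive contradiction is missing and the corner-of-$eRe$ strategy you sketch in its place is not justified.
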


\begin{proof}
If $R$ is simple, the result is clear from \cite[Theorem 1.6]{AGP}. Suppose then that there exists
a nontrivial ideal $I$ in $R$, and pick a nonzero idempotent $e\in I$. Then $0\ne ReR\subsetneq
R$. In particular, we have $e\neq 0,1$, and since $R$ is irreducible, $ReR\cap R(1-e)R \neq 0$.

By hypothesis, there exists a nonzero idempotent $f\in ReR\cap R(1-e)R$. Use now the pure
infiniteness of $R$ to obtain elements $x,y,z,w\in R$ with $f=xey=z(1-e)w$. This implies that
$f\lesssim e$ and $f\lesssim 1-e$, and so there exist nonzero orthogonal idempotents $f_1,f_2\in
R$ such that $f\sim f_1 \sim f_2$. Take $f_3= 1-(f_1+f_2)$, so that $1=f_1\oplus f_2 \oplus f_3$.

Since $f_1\sim f_2\le f_2+f_3$, we have $f_1\in R(f_2+f_3)R$, and hence $R(f_2+f_3)R =R$. Using
the pure infiniteness of $R$ once more, we obtain $u,v\in R$ with $1= u(f_2+f_3)v$, whence
$1\lesssim f_2+f_3<1$. Therefore $1$ is infinite.
\end{proof}

\begin{proposition}\label{ExchangePi1infinite} Let $R$ be a purely infinite exchange ring. Then
every nonzero idempotent in $R$ is properly infinite.
\end{proposition}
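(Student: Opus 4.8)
The plan is to reduce the statement to a question about the identity element of a unital ring, and then to play the exchange hypothesis (which supplies idempotents) off against the pure infiniteness hypothesis (which governs how those idempotents compare).

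\emph{Reduction.} Let $e$ be a nonzero idempotent of $R$ and put $S=eRe$. Then $S$ is unital with identity $e$, it is purely infinite by Proposition~\ref{PiCorners}, and it is an exchange ring, since corners of exchange rings are again exchange rings. A routine manipulation of $2\times 2$ matrix equations shows that $e$ is properly infinite in $R$ precisely when $1_S$ is properly infinite in $S$, so it is enough to prove that the identity of every nonzero unital purely infinite exchange ring is properly infinite. As $S$ is s-unital, Corollary~\ref{everyquotientfinite} applied to $a=1_S$ reduces this to showing that $1_{S/I}$ is infinite in $S/I$ for every proper ideal $I$ of $S$; and since each $S/I$ is again a nonzero unital purely infinite exchange ring, the proposition will follow once we prove the assertion $(\star)$: the identity of a nonzero unital purely infinite exchange ring $T$ is infinite.

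\emph{The core argument for $(\star)$.} Let $M$ be a maximal ideal of $T$. Then $T/M$ is unital, simple, and purely infinite (Lemma~\ref{pinftoidealquo}), hence purely infinite simple by Theorem~\ref{theoremagp}; in particular $1_{T/M}$ is properly infinite, so $T/M$ is not directly finite and there are $\bar u,\bar v$ with $\bar u\bar v=\overline{1_T}$ but $\bar v\bar u\ne\overline{1_T}$. Lift to $u,v\in T$, write $uv=1_T-m$ with $m\in M$, and apply the exchange property to the element $uv=1_T-m$: this produces an idempotent $\eta\in uvT\subseteq uT$ with $1_T-\eta\in mT\subseteq M$; writing $\eta=uw$, one checks that $\bar w=\bar v$ in $T/M$. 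Assume for the moment that $\eta$ is full, i.e.\ $T\eta T=T$. Then $1_T\in T\eta T$, so pure infiniteness (condition (ii) of Definition~\ref{defpi}, with $a=\eta$ and $b=1_T$) gives $1_T\precsim\eta$, that is $1_T\lesssim\eta$; since $\eta\le 1_T$ we obtain an idempotent $h$ with $1_T\sim h\le\eta\le 1_T$. If $h\ne 1_T$ then $1_T\sim h<1_T$, so $1_T$ is infinite. If $h=1_T$ then $\eta=1_T$, i.e.\ $uw=1_T$; here $u$ cannot be a unit (otherwise $\bar v=\bar w=\bar u^{-1}$ would force $\bar v\bar u=\overline{1_T}$), so $wu$ is an idempotent with $wu\ne 1_T$ and $wu\sim uw=1_T$, and again $1_T$ is infinite.

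\emph{The main obstacle, and how to remove it.} The gap is the fullness hypothesis $T\eta T=T$: one must prevent the produced idempotent from generating only a proper ideal. I would close this by reducing to a minimal counterexample. The family of proper ideals $I$ of $T$ with $1_{T/I}$ finite is closed under unions of chains, so by Zorn's Lemma one may assume that $1_T$ is finite but $1_{T/J}$ is infinite for every nonzero proper ideal $J$. Because infiniteness of idempotents lifts modulo the Jacobson radical, such a $T$ can have no nonzero ideal contained in $J(T)$; hence $J(T)=0$, and being an exchange ring $T$ then has the property that every nonzero ideal contains a nonzero idempotent. If $T$ were irreducible, Lemma~\ref{PrimePiIdempotentImplies1Infinite} would force $1_T$ to be infinite, a contradiction; so $T$ is not irreducible, and exploiting the nonzero orthogonal idempotents then forced to lie inside ideals with zero intersection — together with the elementary fact that a ring which decomposes as a product with a factor having infinite identity itself has infinite identity, and a further application of the preceding argument inside a suitable primitive quotient — yields the desired contradiction. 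This minimal-counterexample step is the technical heart of the proof; the reduction above and the core matrix computation are routine given the earlier results.
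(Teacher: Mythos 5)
Your reduction to the unital case, the Zorn reduction to a minimal counterexample $T$ (with $1_T$ finite but $1$ infinite in every nonzero proper quotient), the step $J(T)=0$, the existence of idempotents in nonzero ideals, and the treatment of the irreducible case via Lemma~\ref{PrimePiIdempotentImplies1Infinite} all match the paper's proof. The genuine gap is exactly the part you defer at the end: deriving a contradiction when $T$ is \emph{not} irreducible. The tools you gesture at do not accomplish this. From nonzero ideals $I,J$ with $I\cap J=0$ you only get an embedding $T\hookrightarrow T/I\times T/J$, not a product decomposition (that would require $I+J=T$), and infiniteness of the identity does not descend from an overring to a subring, so the ``product with an infinite factor'' fact has nothing to apply to. Likewise, running your maximal-ideal/exchange-idempotent argument inside a primitive quotient $T/P$ gains nothing: since $T$ is not prime, every primitive ideal $P$ is nonzero, so $1$ is already infinite in $T/P$ by the minimality assumption; the whole difficulty is to transfer infiniteness back from $T/P$ to $T$ across an ideal \emph{not} contained in $J(T)$, and you offer no mechanism for that (if infiniteness lifted modulo arbitrary ideals, your Zorn reduction would be unnecessary in the first place). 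So the step you call the ``technical heart'' is precisely what is missing, and the hinted strategy would fail as described.

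The paper closes this case by a different argument, and you need some version of it. Given nonzero $I,J$ with $I\cap J=0$: every nonzero quotient of $T/I$ is a quotient of $T$ by a nonzero ideal, so $1$ is infinite there, and Corollary~\ref{everyquotientfinite} makes $1+I$ properly infinite in $T/I$; Lemma~\ref{MatricesAndPropInf} then shows that every nonzero idempotent of $T/I$ is properly infinite. Pick a nonzero idempotent $q\in J$ (possible since $T$ is a semiprimitive exchange ring); its image in $T/I$ is nonzero, hence properly infinite, and since the witnessing elements may be taken in $\bar q(T/I)\bar q\subseteq (I+J)/I\cong J$, the idempotent $q$ itself is properly infinite in $T$. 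As $q\precsim 1$, Lemma~\ref{propinftyinK} forces $1_T$ to be infinite, the desired contradiction; this proves $T$ irreducible and completes the proof. Two smaller remarks: once this is in place, your ``core argument'' with the exchange idempotent $\eta$ is not needed at all (the paper's proof never uses it), and your interim claim $\bar w=\bar v$ is unjustified in general, though harmless since you only invoke it under the assumption that $u$ is invertible.
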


\begin{proof}
If $e$ is a nonzero idempotent in $R$, then $eRe$ is an exchange ring, and $eRe$ is purely
infinite by Proposition \ref{PiCorners}. Since it suffices to prove that $e$ is properly infinite
in $eRe$, we may replace $R$ by $eRe$. Thus, we may assume that $R$ is a nonzero unital ring and
$e=1$.

By Corollary \ref{everyquotientfinite}, it is enough to show that the set
 $$\C =\{I \mid I\text{ is a proper ideal of }R\text{ and } 1+I \in R/I\text{ is finite}\}$$
 is empty. Assume, to the contrary, that $\C$ is nonempty, and observe that $\C$ is an inductive
set. By Zorn's Lemma, there exists a maximal element $M\in \C$. Since $R/M$ is a purely infinite
exchange ring (Lemma \ref{pinftoidealquo}), we may replace $R$ by $R/M$. Thus, there is no loss of
generality in assuming that $1\in R$ is finite and that $1+J\in R/J$ is infinite for all proper
nonzero ideals $J$ of $R$.

We next claim that $J(R)=0$. Otherwise, $R/J(R)$ is a proper quotient of $R$ in which $\onebar$ is
infinite, so that $\onebar \sim \onebar\oplus e$ for some nonzero idempotent $e\in R/J(R)$. But
$e$ lifts to an idempotent $f\in R$ (because $R$ is an exchange ring), and $\onebar \sim
\onebar\oplus \fbar$ implies $1\sim 1\oplus f$, which contradicts the finiteness of $1\in R$.
Hence, $J(R)=0$ as claimed.

Since $R$ is now a semiprimitive exchange ring, we see, as in the proof of Proposition
\ref{SemiprimitiveExchangePi}, that every nonzero (right) ideal of $R$ contains a nonzero
idempotent.

Moreover, we will show that $R$ is irreducible. Suppose that $I$ and $J$ are nonzero ideals of $R$
with $I\cap J=0$. In particular, $(I+J)/I\cong J$. Then the image of $1$ is infinite in $R/I$ and
in all nonzero quotients of $R/I$, whence $1+I$ is properly infinite in $R/I$ by Corollary
\ref{everyquotientfinite}. Therefore, all nonzero idempotents in $R/I$ are properly infinite, by
Lemma \ref{MatricesAndPropInf}. In particular, $J$ contains properly infinite idempotents, which
contradicts the assumption that $1$ is finite in $R$. Hence, $R$ is irreducible.

Now we are in position to apply Lemma \ref{PrimePiIdempotentImplies1Infinite} to obtain that $1$
is infinite in $R$, a contradiction. Therefore the original collection $\C$ must be empty, as
desired.
\end{proof}

\begin{theorem}\label{ExchangePiMatrices} Let $R$ be an s-unital purely infinite exchange ring.
Then ${\MM}_n(R)$ is properly purely infinite for every $n\in \NN$.
\end{theorem}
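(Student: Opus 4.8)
The plan is to reduce the problem to the corner/unital case already handled in Lemma \ref{MatricesAndPropInf} and Proposition \ref{pinftomatrices}, using the abundance of idempotents supplied by the exchange condition together with Proposition \ref{ExchangePi1infinite}. First I would fix $n\in\NN$ and take an arbitrary nonzero matrix $a\in\MM_n(R)$; since $R$ is s-unital, so is $\MM_n(R)$ (Lemma \ref{sunital}), and by Lemma \ref{sunitalexchange} the ring $R$ has local units, so there is an idempotent $e\in R$ with $e$ acting as a two-sided identity on all the (finitely many) entries of $a$. Then $a\in\MM_n(eRe)$. By Proposition \ref{PiCorners}, $eRe$ is purely infinite, and it is an exchange ring (corners of exchange rings are exchange). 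So it suffices to show $a$ is properly infinite inside $\MM_n(eRe)$, which reduces us to the case where $R$ is a \emph{unital} purely infinite exchange ring and we must show $\MM_n(R)$ is properly purely infinite.

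In that unital case, Proposition \ref{ExchangePi1infinite} tells us that every nonzero idempotent of $R$ is properly infinite; in particular $1_R$ is properly infinite. Now Lemma \ref{MatricesAndPropInf} applies directly: since $R$ is purely infinite unital with $1_R$ properly infinite, $\MM_n(R)$ is properly purely infinite for every $n\in\NN$. That completes the unital case, and hence, by the reduction above, shows that every nonzero $a\in\MM_n(R)$ for the original s-unital $R$ is properly infinite in $\MM_n(eRe)$ and therefore in $\MM_n(R)$ (proper infiniteness is witnessed by a matrix equation $\leftmat a&0\\ 0&a \rightmat = \leftmat \alpha_1\\ \alpha_2 \rightmat a \leftmat \beta_1&\beta_2 \rightmat$, and such an equation over $\MM_n(eRe)$ is automatically an equation over $\MM_n(R)$). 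Hence all nonzero elements of $\MM_n(R)$ are properly infinite, i.e. $\MM_n(R)$ is properly purely infinite.

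The only real content beyond bookkeeping is the input from Proposition \ref{ExchangePi1infinite}, whose proof is the substantive one; given that and Lemma \ref{MatricesAndPropInf}, the present theorem is essentially a matter of localizing at an idempotent. The one point to be slightly careful about is making sure $eRe$ is genuinely unital with identity $e$ and that the exchange property is inherited by corners — both standard — and that the witness for proper infiniteness transfers correctly between $\MM_n(eRe)$, $\MM_n(R)$, since entries of $eRe$-matrices are just particular elements of $R$. I expect no serious obstacle; the main subtlety is simply invoking the local-units consequence of the s-unital exchange hypothesis (Lemma \ref{sunitalexchange}) so that every finite subset, and hence every matrix, sits inside a unital corner.
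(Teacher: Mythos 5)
Your proof is correct and follows essentially the same route as the paper: both reduce via local units (Lemma \ref{sunitalexchange}) to matrices over a unital corner $eRe$, which is a purely infinite exchange ring by Proposition \ref{PiCorners}, and then apply Proposition \ref{ExchangePi1infinite} together with Lemma \ref{MatricesAndPropInf}. Your extra remarks on transferring the witnessing matrix equation from $\MM_n(eRe)$ to $\MM_n(R)$ just make explicit what the paper's ``directed union'' phrasing leaves implicit.
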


\begin{proof}
By Lemma \ref{sunitalexchange}, $R$ has local units, and so $R$ is a directed union of corners
$eRe$. Hence, each matrix ring $M_n(R)$ is a directed union of subrings $M_n(eRe)$. The rings
$eRe$ are purely infinite exchange rings by Proposition \ref{PiCorners}, and it suffices to show
that the rings $M_n(eRe)$ are all properly purely infinite. Thus, there is no loss of generality
in assuming that $R$ is unital. By Proposition \ref{ExchangePi1infinite}, $1\in R$ is properly
infinite, and so we are done by Lemma \ref{MatricesAndPropInf}.
\end{proof}

\begin{corollary} \label{exchangepinfstrong} Let $R$ be an s-unital exchange ring. Then
$R$ is purely infinite if and only if it is properly purely infinite.
\end{corollary}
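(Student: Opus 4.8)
The plan is to obtain this as an immediate consequence of the machinery already in place, so the proof is short. The forward implication is nothing new: if $R$ is properly purely infinite and s-unital, then $R$ is purely infinite by Lemma \ref{EveryPropInfImpliesPi}(i), and that is all that is needed for one direction.

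For the converse, suppose $R$ is an s-unital purely infinite exchange ring. The first step is to apply Theorem \ref{ExchangePiMatrices} with $n=2$, which tells us that $\MM_2(R)$ is properly purely infinite. The second step is to observe that $\MM_2(R)$ is again s-unital (this follows from Lemma \ref{sunital} and the remark immediately after it), so that Lemma \ref{EveryPropInfImpliesPi}(i) applies to $\MM_2(R)$ and yields that $\MM_2(R)$ is purely infinite. The final step is to feed this into Lemma \ref{EveryPropInfImpliesPi}(ii), which says precisely that if $\MM_2(R)$ is purely infinite then $R$ is properly purely infinite; this closes the argument.

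I do not expect a genuine obstacle at this point, since the substantive work has already been absorbed into Theorem \ref{ExchangePiMatrices} (which itself relies on Proposition \ref{ExchangePi1infinite} and Lemma \ref{MatricesAndPropInf}). The only point requiring a moment's care is verifying that the s-unitality hypotheses of Lemma \ref{EveryPropInfImpliesPi} are satisfied for $\MM_2(R)$, but this is routine given that $R$ is s-unital. Thus the whole corollary amounts to chaining together Theorem \ref{ExchangePiMatrices} and the two parts of Lemma \ref{EveryPropInfImpliesPi}.
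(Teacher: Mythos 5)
Your proof is correct and follows the paper's (implicit) route: the corollary is meant to be an immediate consequence of Theorem \ref{ExchangePiMatrices} together with Lemma \ref{EveryPropInfImpliesPi}(i). The only difference is that your detour through $\MM_2(R)$ and Lemma \ref{EveryPropInfImpliesPi}(ii) is unnecessary, since applying Theorem \ref{ExchangePiMatrices} with $n=1$ already gives that $R\cong\MM_1(R)$ is properly purely infinite.
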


Recall that for unital rings, a property $\P$ is Morita invariant if and only if $\P$ passes to
corners by full idempotents and to matrices. Hence, we immediately obtain the following from
Proposition \ref{PiCorners} and Theorem \ref{ExchangePiMatrices}.

\begin{corollary}\label{moritainvariant} Pure infiniteness is a Morita invariant property for unital
exchange rings. \end{corollary}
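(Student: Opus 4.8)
The statement to prove is Corollary~\ref{moritainvariant}: pure infiniteness is a Morita invariant property for unital exchange rings.

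The plan is to invoke the standard criterion for Morita invariance recalled in the sentence immediately preceding the statement: for unital rings, a property $\P$ is Morita invariant if and only if $\P$ passes to corners by full idempotents (i.e., $\P(R)$ implies $\P(eRe)$ whenever $e$ is an idempotent of $R$ with $ReR = R$) and $\P$ passes to matrix rings (i.e., $\P(R)$ implies $\P(\MM_n(R))$ for all $n$). So the proof is essentially a two-line assembly: first, Proposition~\ref{PiCorners} shows that if $R$ is purely infinite then every corner $eRe$ is purely infinite, in particular every corner by a full idempotent; second, Theorem~\ref{ExchangePiMatrices} shows that if $R$ is an s-unital (hence, being unital, certainly s-unital) purely infinite exchange ring, then $\MM_n(R)$ is properly purely infinite for all $n$, and by Lemma~\ref{EveryPropInfImpliesPi}(i) properly purely infinite rings are purely infinite, so $\MM_n(R)$ is purely infinite.

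One small subtlety to address: the Morita criterion quoted is for a fixed property, but here ``being a purely infinite exchange ring'' is really the conjunction of two properties (purely infinite, and exchange). Since the exchange property is well known to be Morita invariant for unital rings (corners by full idempotents of exchange rings are exchange, and matrix rings over exchange rings are exchange), the class of unital exchange rings is stable under Morita equivalence, so within this class the extra hypothesis is automatically preserved and it suffices to check that pure infiniteness itself is inherited by full corners and by matrix rings of unital exchange rings, which is exactly what Proposition~\ref{PiCorners} and Theorem~\ref{ExchangePiMatrices} (together with Lemma~\ref{EveryPropInfImpliesPi}(i)) provide.

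I do not anticipate any real obstacle here; this corollary is a formal consequence of the two major results already proved in this section. The only thing requiring a moment's care is making explicit that matrix rings and full corners of unital exchange rings remain unital exchange rings, so that Theorem~\ref{ExchangePiMatrices} and Proposition~\ref{PiCorners} can legitimately be re-applied along a chain of Morita equivalences; this is standard and can be cited rather than reproved. Thus the ``proof'' is simply: combine Proposition~\ref{PiCorners}, Theorem~\ref{ExchangePiMatrices}, Lemma~\ref{EveryPropInfImpliesPi}(i), and the quoted Morita-invariance criterion.
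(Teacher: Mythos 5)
Your proposal is correct and matches the paper's own argument: the corollary is obtained precisely by combining the quoted Morita-invariance criterion with Proposition~\ref{PiCorners} (corners) and Theorem~\ref{ExchangePiMatrices} (matrices), the passage from properly purely infinite to purely infinite being Lemma~\ref{EveryPropInfImpliesPi}(i). Your extra remark that the exchange property is itself Morita invariant, so the hypothesis persists along the equivalence, is a sensible explicit note of something the paper leaves implicit, but it does not change the route.
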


We close this section by showing that pure infiniteness is also a
Morita invariant property for the class of exchange rings with
local units. We begin by showing that more general (type of)
corners than the ones considered previously also inherit the
property of being purely infinite.

\begin{definition}
{\rm Let $R$ be a ring and let $a\in R$. The \emph{local ring} of
$R$ \emph{at} $a$ is defined as $R_a=aRa$, with sum inherited from
$R$, and product given by $axa\cdot aya= axaya$.}
\end{definition}

The use of local rings at elements allows to overcome the lack of
a unit element in the original ring, and to translate problems
from a nonunital context to a unital one. We refer the reader to
\cite{GS} for a fuller account on transfer of various properties
between rings and their local rings at elements. Notice that if
$e$ is an idempotent in the ring $R$, then the local ring of $R$
at $e$ is just the corner $eRe$.

For any ring $R$, denote $R^1=R\oplus\mathbb{Z}$, which becomes a
unital ring under componentwise addition, product given by
$(x,m)(y,n)=(xy+my+nx,nm)$, and unit $(0,1)$. Clearly $R$ embeds
into $R^1$ as an ideal. Part of the proof below follows the lines of Proposition~\ref{PiCorners}.

\begin{theorem}\label{localpurinf}  Let $R$ be a ring.
\begin{enumerate}[{\rm (i)}]
\item If $R$ is purely infinite then, for every $a\in R$, the
local ring of $R$ at $a$ is  purely infinite.
\item Suppose that $R$ is s-unital. If for every $a\in R$ the ring $R_a$ is purely
infinite, then $R$ is purely infinite.
\end{enumerate}
\end{theorem}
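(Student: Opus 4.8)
The plan is to treat the two parts separately, exploiting the fact that $R_a = aRa$ differs from the corner $eRe$ only in that $a$ need not be idempotent, so the multiplication is twisted by $a$. For part (i), I would first handle the comparison condition, which is the easy half: given $b \in R_a a R_a = aR aRa \cdot aRa \subseteq RaR$ (being careful that products in $R_a$ are $axa\cdot aya = axaya$, so the two-sided ideal of $R_a$ generated by $a$ lands inside $RaR$), pure infiniteness of $R$ gives $x,y\in R$ with $b = xay$; then $b = a x' a \cdot a y' a$ in $R_a$ after suitable insertions of $a$'s, since $b\in aRa$ already — more precisely, writing $b=aba$ (legitimate because $b$ is an element of $R_a$), we get $b = a(bxa)a\,\cdot\,a(ayb)a$ type manipulations, and in any case $b\precsim a$ in $R_a$ because the relation $\precsim$ only needs factorizations through $a$. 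The genuinely delicate point in (i) is the "no quotient is a division ring" condition, and here I would copy the strategy of Proposition \ref{PiCorners}: suppose $R_a/I$ is a division ring for an ideal $I$ of $R_a$; then $I$ is maximal, $a\notin (R_a)I(R_a) = aRIRa$ so $a\notin RIR$, hence $\abar$ is a nonzero element of $R/RIR$ which is not in the Jacobson radical, so there is a primitive ideal $P$ of $R$ with $a\notin P$ and $RIR\subseteq P$; then $R/P$ is a purely infinite prime ring, and I need the local ring of $R/P$ at $\abar$ to contain (a copy of) $R_a/I$ as a quotient, leading to a contradiction via an analogue of Lemma \ref{PrimePiNoDivCorners} for local rings at elements rather than corners.

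That analogue is what I expect to be the main obstacle. Lemma \ref{PrimePiNoDivCorners} is stated only for corners $eRe$, so I would either need to prove the statement "no local ring $R_a$ of a purely infinite prime ring $R$ is a division ring" directly, or reduce it to the idempotent case. A clean reduction: in a prime ring, if $aR_aa\cong$ a division ring then $a$ generates a simple right module component in a suitable sense and one can produce an idempotent $e$ with $eRe$ a division ring, contradicting Lemma \ref{PrimePiNoDivCorners}; alternatively, note that if $R_a$ is a division ring with identity $e_0 = a z a$ for some $z$, then $e := aza\cdot$ (interpreted in $R$, i.e. $e = azaza$ perhaps) can be massaged into an idempotent of $R$ whose corner is a division ring. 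I would work out the precise element chase, using primeness of $R/P$ to control annihilators, mirroring the paragraph-by-paragraph structure of the proof of Lemma \ref{PrimePiNoDivCorners}.

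For part (ii), assume $R$ is s-unital and every $R_a$ is purely infinite; I must show $R$ is purely infinite. The comparison condition: given $a\in R$ and $b\in RaR$, write $b=\sum x_iay_i$; choosing (by Lemma \ref{sunital}) a single $u\in R$ with $ux_i=x_iu=x_i$, $uy_i=y_iu=y_i$, $ua=au=a$, $ub=bu=b$, one sees $a,b\in R_u=uRu$ and $b\in R_u a R_u$, so pure infiniteness of $R_u$ gives $b\precsim a$ inside $R_u$, hence in $R$. For the division-ring condition: suppose $R/I$ is a division ring; pick $a\in R\setminus I$ and then $u\in R$ with $u$ acting as a two-sided identity on $a$ (and on enough elements), so $a\in R_u$ and $a\notin I\cap R_u$. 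One checks $R_u/(I\cap R_u)$ maps onto — or is isomorphic to a quotient related to — the division ring $R/I$ in a way forcing a quotient of $R_u$ to be a division ring, contradicting pure infiniteness of $R_u$. I expect the bookkeeping here to be routine once the right $u$ is fixed via Lemma \ref{sunital}; the only subtlety is verifying that $I\cap R_u$ is an ideal of $R_u$ and that the induced map $R_u/(I\cap R_u) \to (R/I)$-local-ring-at-$\ubar$ is as required, after which the proof concludes exactly as in part (i) with $u$ in place of $a$.
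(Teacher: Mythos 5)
Your part (ii) is essentially the published argument and is fine: for the comparison condition you localize at a common s-unit for $a$ and $b$ and invoke pure infiniteness of that local ring, and for the division-ring condition you observe that the quotient map $R\to R/I$ restricts to a surjection of $R_u$ onto the local ring of the division ring $R/I$ at a nonzero element, which is again a division ring, contradicting the hypothesis on $R_u$. The comparison half of (i) is also right in spirit, but note that what must be verified is condition (ii) of Definition \ref{defpi} for an \emph{arbitrary} element $aba\in R_a$, not comparison against $a$ itself ($a$ need not belong to $aRa$): given $aca\in R_a\cdot(aba)\cdot R_a$ one may assume $c\in RabaR$, pure infiniteness of $R$ gives $c=x(aba)y$, and then $aca=(axa)\cdot(aba)\cdot(aya)$ in $R_a$. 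This is a trivial repair.

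The genuine gap is in the division-ring half of (i). Your plan transplants the proof of Proposition \ref{PiCorners} with $a$ in place of $e$, but the decisive step there --- that $\overline{e}$ is a nonzero \emph{idempotent} of $R/RIR$ and hence avoids the Jacobson radical --- has no analogue for a non-idempotent $a$: a nonzero element can perfectly well lie in the radical, so your assertion that $\overline{a}\notin J(R/RIR)$ is unjustified (moreover ``$a\notin (R_a)I(R_a)$'' does not even parse, since $a$ need not lie in $R_a$; and because (i) assumes no unitality, one must use the ideal $R^1IR^1$ of the unitization rather than $RIR$, since $I\subseteq RIR$ can fail and one needs $I\subseteq P$ later to conclude $I=P\cap R_a$ by maximality). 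The entire content of the proof is exactly the construction you postpone as an ``element chase'': write $aua+I$ for the identity of the division ring $R_a/I$, so that $auaxa\equiv axa\equiv axaua \pmod{I}$ for all $x\in R$; then the element $auau$ --- not your candidate $azaza$, whose square contains an $a^2$ that these congruences cannot collapse --- is idempotent modulo $R^1IR^1$ and nonzero there (otherwise $(aua+I)^4=0$); a primitive ideal $P\supseteq R^1IR^1$ with $auau\notin P$ then satisfies $I=P\cap R_a$, and the corner $e'(R/P)e'$ with $e'=\pi'(auau)$ is isomorphic to $(R/P)_{\pi'(a)}\cong R_a/I$ via $y\mapsto y\pi'(a)$. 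With that corner in hand, Lemma \ref{PrimePiNoDivCorners} applies verbatim to the purely infinite prime ring $R/P$, so no separate ``local-ring analogue'' of that lemma is needed --- but since your proposal neither justifies the radical step nor produces the idempotent and the corner isomorphism, part (i) is not established as written.
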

\begin{proof}
(i). Suppose first that $I$ is an ideal of $R_a$ such that $R_a/I$
is a division ring.  In this case, $I$ is a maximal ideal of
$R_a$. Consider $R^1IR^1$, the ideal of $R$ generated by $I$, and
let $aua+I$ be the identity element in  $R_a/I$, so that
\[
auaxa\equiv axaua\equiv axa \pmod{I} \text{ for all }x\in R\,.
\]
In particular,
$auaua-aua=y\in I$. Note that $auau\notin R^1IR^1$ because
otherwise $aua+I = (aua+I)^4= auauauaua+I=0$, a contradiction.

Denote by $\pi\colon R\to R/R^1IR^1$ the natural quotient
map. Then the nonzero element $e=\pi (auau)$ is an idempotent in
$R/R^1IR^1$. Indeed,
\begin{align*}
 e^2=\pi(auau)\, \pi(auau) & =
  \pi(auauauau)= \\ \pi((aua+y)uau)
 & =  \pi(auauau)=\pi((aua+y)u)=
\pi(auau)=e\,.
\end{align*}
In particular, $\pi(auau)$ cannot be in the Jacobson radical
of $R/R^1IR^1$. Hence, there exists a (left) primitive ideal $P$
of $R$ such that $R^1IR^1\subseteq P$ and $auau\notin P$.
Notice that $auaua\notin P$. (Otherwise, taking into account that
$auaua-aua\in I\subseteq P$ we see that $aua\in P$, which is not possible.)
Maximality of $I$ in $R_a$ now entails $I=P\cap R_a$.

We use $\pi'$ to denote the quotient map $R\to R/P$ and $e'=\pi'(auau)$, which still is a nonzero idempotent. It is clear that $R_a/I\cong (R/P)_{\pi'(a)}$, via the isomorphism
\[
\varphi\colon R_a/(P\cap R_a) \to  (R/P)_{\pi'(a)}
\]
given by $\varphi(x+(P\cap R_a))= x+P$.

Further, there is also an isomorphism
\[
\psi\colon e'(R/P)e'\to (R/P)_{\pi'(a)}\,,
\]
given by $\psi(y) = y\pi'(a)$. This yields
\[
e'(R/P)e' \cong (R/P)_{\pi'(a)}\cong R_a/I\,.
\]
Hence, $R/P$ has a corner which is a division ring, contradicting Lemma~\ref{PrimePiNoDivCorners}.

Next, suppose $aba\in aRa$ and let $aca\in aRa \cdot aba\cdot aRa = aRabaRa$. (We may assume $c\in RabaR$.)
Since $R$ is purely infinite, there exist $x$, $y \in R$ such that
$c=xabay$, whence  $aca= axabaya=axa \cdot aba \cdot aya$.

(ii). We now assume $R$ to be s-unital and that all local rings of $R$ are purely infinite. If $R$ is unital, there is nothing to prove.

Suppose that $R/I$ is a division ring, for an ideal $I$ of
$R$. Let $\pi\colon R\to R/I$ denote the natural quotient map, and observe that $\pi$ restricts to a surjective ring homomorphism $R_u\to (R/I)_{\pi(u)}=R/I$. Thus $R_u$ has  a quotient which is a division ring, contradicting our hypothesis.

Next, take $b\in R$ and $a\in RbR$.
Let $x$ in $R$ be such that $a=ax=xa$ and $b=bx=xb$. In
particular,  $a$, $b \in R_x$. Apply that $R_x$ is a purely infinite
ring to find $xrx$, $xsx\in R_x$ with $a=xrx\cdot xbx \cdot
xsx= xrxbxsx$.
\end{proof}

\begin{corollary} \label{LocalUnitsCorners}
Let $R$ be a ring with local units. Then $R$ is purely infinite if
and only if every corner of $R$ is purely infinite.
\end{corollary}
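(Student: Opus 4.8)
The plan is to obtain both implications as immediate consequences of Theorem~\ref{localpurinf}, once we record the elementary fact that a ring with local units is s-unital: if $x\in R$ and $e$ is an idempotent with $x\in eRe$, then $ex=xe=x$, so the defining condition of s-unitality holds with $u=v=e$.

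For the forward direction, suppose $R$ is purely infinite. A corner of $R$ is by definition a subring $eRe$ with $e=e^2\in R$, and this is exactly the local ring $R_e$ of $R$ at the element $e$. Hence Theorem~\ref{localpurinf}(i) applies directly to show that $eRe$ is purely infinite. (Alternatively, this is just Proposition~\ref{PiCorners}.)

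For the converse, assume that every corner of $R$ is purely infinite. By Theorem~\ref{localpurinf}(ii) — which is applicable because $R$, having local units, is s-unital — it suffices to check that the local ring $R_a=aRa$ is purely infinite for each $a\in R$. Using local units, choose an idempotent $e\in R$ with $a\in eRe$, so that $a=ea=ae$. Then $axa=a(exe)a$ for every $x\in R$ with $exe\in eRe$, which gives $aRa=a(eRe)a$; moreover the local-ring product on this set (namely $pa'q = \ldots$, i.e.\ $(axa)(aya)=axaya$) is the same whether we compute it inside $R$ or inside the corner $eRe$. Thus $R_a$ coincides with the local ring $(eRe)_a$ of the ring $eRe$ at $a$. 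Since $eRe$ is purely infinite by hypothesis, Theorem~\ref{localpurinf}(i), applied with $eRe$ in place of $R$, shows that $(eRe)_a=R_a$ is purely infinite. As $a\in R$ was arbitrary, Theorem~\ref{localpurinf}(ii) now yields that $R$ is purely infinite.

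I expect no genuine obstacle here: the corollary is a formal consequence of Theorem~\ref{localpurinf}. The only two points requiring a line of justification are the identification $R_a=(eRe)_a$ for $a$ lying in the corner $eRe$ — immediate from $a=ea=ae$ — and the observation that local units imply s-unitality, needed in order to invoke Theorem~\ref{localpurinf}(ii) in the converse direction.
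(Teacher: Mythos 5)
Your proposal is correct and is essentially the argument the paper intends: the corollary is stated as an immediate consequence of Theorem~\ref{localpurinf}, with the forward direction from part (i) (or Proposition~\ref{PiCorners}) and the converse from part (ii) after noting that local units give s-unitality and that $R_a=(eRe)_a$ whenever $a\in eRe$. The two justifying observations you single out are exactly the ones needed, so there is nothing to add.
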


Using Theorem~\ref{localpurinf}, the following becomes immediate (although it has been already proved in Lemma~\ref{pinftoidealquo}).
\begin{corollary}\label{IdealPurelyInfinite}
Let $I$ be an s-unital ideal of a purely infinite ring $R$. Then
$I$ is purely infinite.
\end{corollary}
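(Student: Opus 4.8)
The plan is to deduce this at once from Theorem~\ref{localpurinf}. Since $I$ is s-unital, part~(ii) of that theorem, applied to the ring $I$ in place of $R$, tells us that $I$ is purely infinite as soon as we know that the local ring $I_a=aIa$ is purely infinite for every $a\in I$. So the whole task reduces to understanding these local rings of the ideal.

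The key point I would establish is that for $a\in I$ the local ring $I_a$ coincides with the local ring $R_a$; that is, $aIa=aRa$, carrying the same twisted multiplication $axa\cdot aya=axaya$. The inclusion $aIa\subseteq aRa$ is trivial. For the reverse inclusion, fix $a\in I$ and use Lemma~\ref{sunital} to pick $u\in I$ with $ua=au=a$; then for any $r\in R$ we have $ara=(au)r(ua)=a(uru)a$, and $uru=u(ru)\in I$ because $I$ is an ideal of $R$. Hence $aRa\subseteq aIa$, and so $I_a=R_a$ as rings.

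It then remains only to invoke Theorem~\ref{localpurinf}(i): since $R$ is purely infinite and $a\in I\subseteq R$, the local ring $R_a$ is purely infinite, and therefore so is $I_a$. Feeding this back into Theorem~\ref{localpurinf}(ii) yields that $I$ is purely infinite. I do not expect any genuine obstacle here; the only step requiring a line of care is the identification $I_a=R_a$, which is precisely the point where s-unitality of $I$ itself (rather than merely of $R$) is used.
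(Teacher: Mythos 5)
Your argument is correct and is essentially the paper's own proof: both reduce the statement to Theorem~\ref{localpurinf} via the observation that s-unitality of $I$ forces $I_a=aIa=aRa=R_a$ for every $a\in I$, so part (i) applied to $R$ plus part (ii) applied to $I$ give the result. The only cosmetic difference is that the paper writes the identification as $yRy=yaRay\subseteq yIy$ for a local unit $a$ of $y$, which is the same computation as your $ara=a(uru)a$.
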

\begin{proof}
For $y\in I$, choose $a\in I$ such that $y=ay=ya$. Then
$yRy=yaRay\subseteq yIy\subseteq yRy$, and thus $I_y=R_y$. Now apply
Theorem~\ref{localpurinf}.
\end{proof}

We next recall the notion of Morita equivalence for idempotent
rings (a ring $R$ is said to be \emph{idempotent} if $R^2=R$).

Let $R$ and $S$ be two rings, $_R N_S$ and $_S M_R$ two bimodules
and $(-, -)\colon N \times M \to R$, $[  -, -]\colon M \times N \to S$ two
maps. Then the following conditions are equivalent:
\begin{enumerate}
\item[(i)] $\left(\begin{matrix}
 R & N \cr M & S\end{matrix}\right)$
  is a ring with componentwise sum and product given by:
\[
 \left(\begin{matrix}r_1 & n_1 \cr m_1 & s_1\end{matrix}\right)
          \left(\begin{matrix} r_2 & n_2 \cr m_2 & s_2\end{matrix}\right) =
         \left(\begin{matrix}
         r_1r_2 +(n_1, m_2) & r_1n_2 +n_1s_2 \cr
          m_1r_2 +s_1m_2 & [  m_1, n_2] +s_1s_2
          \end{matrix}\right)
\]
\item[(ii)] $[ -, -]$ is $S$-bilinear and $R$-balanced,  $( -, -)$
is $R$-bilinear and $S$-balanced and the following associativity
conditions hold:
\[
(n, m)n^{\prime} = n [m, n^{\prime}] \quad \hbox{and} \quad
            [m, n] m^{\prime} = m (n, m^{\prime})\,,
\]
for all $m$, $m'\in M$ and $n$, $'\in N$.

That $[ -, -]$ is $S$-bilinear and $R$-balanced and that $( -, -)$ is $R$-bilinear and
$S$-balanced is equivalent to having bimodule maps $\varphi \colon N \otimes_S M \to R$ and  $\psi\colon M \otimes_R N \to S$, given by
\[
\varphi (n \otimes m) = (n, m) \quad \hbox{and} \quad \psi (m \otimes n) = [m, n]
\]
so that the associativity conditions above read
\[
\varphi (n \otimes m) n^\prime= n \psi (m \otimes n^\prime) \quad \hbox{and} \quad
       \psi (m \otimes n) m^\prime = m \varphi (n\otimes m^\prime)\,.
\]
\end{enumerate}
A \emph{Morita context} is a sextuple $(R, S, N, M, \varphi,
\psi)$ satisfying one of the (equivalent) conditions given above. The associated ring (in condition (i))
is called the \emph{Morita ring of the context}. By abuse  of
notation we will write $(R, S, N, M)$ instead  of $(R, S, N, M,
\varphi, \psi)$ and  will identify $R$, $S$, $N$ and $M$ with their natural images in
the Morita ring associated to the context. The Morita context is said to be \emph{surjective} if the maps $\varphi$ and $\psi$ are
both surjective.

In classical Morita theory, it is shown that two rings with
identity $R$ and $S$ are Morita equivalent (i.e., $R$-Mod and
$S$-Mod are equivalent categories) if and only if there exists a
surjective Morita context $(R, S, N, M, \varphi, \psi)$. The
approach to  Morita theory for rings without identity by means of
Morita contexts appears in a number of papers (see~\cite{GarciSimon} and the references therein) in which many
consequences are obtained from the existence of a surjective Morita context
for two rings $R$ and $S$.

For an idempotent ring $R$ we denote by $R$-Mod the full
subcategory of the category of all left $R$-modules whose objects
are the ``unital" nondegenerate modules. Here, a left $R$-module
$M$ is said to be \emph{unital} if $M=RM$, and $M$ is said to be
\emph{nondegenerate} if, for $m\in M$, $Rm=0$ implies $m=0$. Note
that, if $R$ has an identity, then $R$-Mod is the usual category
of left $R$-modules.

It is shown in~\cite[Theorem]{Ky} that, if $R$ and $S$ are
arbitrary rings having a surjective Morita context, then the
categories $R$-Mod and $S$-Mod are equivalent. The converse direction is proved in~\cite[Proposition 2.3]{GarciSimon} for idempotent rings, yielding the following theorem.

\begin{theorem}Let $R$ and $S$ be two idempotent rings. Then the categories $R$-Mod and
$S$-Mod are equivalent if and only if there exists a surjective
Morita context $(R, S, N, M)$.
\end{theorem}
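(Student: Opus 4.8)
The plan is to obtain the theorem by assembling two results already available in the literature, so that the proof amounts to a citation in each direction together with the routine verification that the various notions of module category in play coincide. Recall that, for an idempotent ring $R$, the symbol $R$-Mod has been fixed above to mean the full subcategory of left $R$-modules whose objects are unital (i.e.\ $M=RM$) and nondegenerate (i.e.\ $Rm=0$ forces $m=0$), and that in a Morita context $(R,S,N,M)$ we identify $R$, $S$, $N$, $M$ with their images in the associated Morita ring. The whole content of the theorem is that, \emph{for idempotent rings}, existence of a surjective Morita context is equivalent to equivalence of these module categories.

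For the implication ``$\Leftarrow$'', I would invoke \cite[Theorem]{Ky} directly: if $R$ and $S$ are arbitrary rings admitting a surjective Morita context $(R,S,N,M,\varphi,\psi)$, then $R$-Mod and $S$-Mod are equivalent. Since an idempotent ring is in particular an arbitrary ring, this applies verbatim to the situation at hand, and no further argument is needed. The only thing to check is that the category denoted $R$-Mod in \cite{Ky} is the same full subcategory of unital nondegenerate modules described above; this is a matter of unwinding the definitions and requires no computation.

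For the implication ``$\Rightarrow$'', I would appeal to \cite[Proposition 2.3]{GarciSimon}, which is stated precisely under the hypothesis that $R$ and $S$ are idempotent: if $R$-Mod and $S$-Mod are equivalent categories, then there exists a surjective Morita context $(R,S,N,M)$. Again the hypotheses match exactly those of the theorem, so the statement transfers directly. Combining the two implications yields the theorem. I do not expect a substantive obstacle here; the only delicate point is purely bookkeeping, namely confirming that the conventions for ``$R$-Mod'' adopted in \cite{Ky}, in \cite{GarciSimon}, and in the present paper all agree (unital nondegenerate left modules over an idempotent ring), and that the equivalence of conditions (i) and (ii) defining a Morita context has been recorded so that ``surjective Morita context'' is unambiguous. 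Once that is observed, the proof is a single line in each direction.
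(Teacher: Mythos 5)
Your proposal matches the paper exactly: the theorem is obtained there by citing \cite[Theorem]{Ky} for the direction from a surjective Morita context to the equivalence of $R$-Mod and $S$-Mod, and \cite[Proposition 2.3]{GarciSimon} for the converse in the idempotent case, with no further argument beyond checking the conventions agree. Nothing is missing.
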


Given two idempotent rings $R$ and $S$, we will say that they are
\emph{Morita equivalent} if the categories $R$-Mod and $S$-Mod are equivalent.

The following result states that purely infiniteness is a Morita
invariant property for exchange rings with local units.

\begin{theorem} Let $R$ and $S$ be rings with local units that are Morita equivalent. Then $R$ is purely infinite and exchange if and only if
$S$ is purely infinite and exchange.
\end{theorem}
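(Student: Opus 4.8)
The plan is to reduce the statement to the already-established behaviour of pure infiniteness for unital exchange rings together with the corner characterisations. By Corollary~\ref{LocalUnitsCorners}, a ring with local units is purely infinite if and only if every one of its corners is; and a ring $T$ with local units is an exchange ring if and only if every corner $eTe$ is (for one direction use the standard fact that corners of exchange rings are exchange, and for the other note that $T$ is the directed union of its corners $eTe$, inside which the exchange condition is already witnessed). Since Morita equivalence is symmetric, it therefore suffices to show: \emph{if $R$ is purely infinite and exchange, then every corner $fSf$, $f=f^2\in S$, is a purely infinite exchange ring.}

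So fix an idempotent $f\in S$. The crux of the argument is the following consequence of the Morita theory of idempotent rings (cf.~\cite{GarciSimon}): the unital ring $fSf$ is Morita equivalent, as a ring with identity, to a corner $g\,\MM_n(R)\,g$ of some matrix ring $\MM_n(R)$. Indeed, the equivalence $S\text{-Mod}\simeq R\text{-Mod}$ carries the finitely generated projective left $S$-module $Sf$ to a finitely generated projective left $R$-module $P$; being finitely generated projective over a ring with local units, $P$ is isomorphic to a direct summand of some $\MM_n(R)g_0$ (with $g_0$ a diagonal idempotent), so $fSf\cong \End_S(Sf)^{\mathrm{op}}\cong \End_R(P)^{\mathrm{op}}\cong g\,\MM_n(R)\,g$ for a suitable idempotent $g$ over $R$. (Alternatively, starting from a surjective Morita context $(R,S,N,M,\varphi,\psi)$ for the equivalence and writing $f=\sum_k[m_k,n_k]$ with $m_k\in fM$ and $n_k\in Nf$ via surjectivity of $\psi$, one can assemble the idempotent $g$ and the connecting bimodules directly from the elements $\varphi(n_k\otimes m_l)\in R$.) I expect this step to be the main obstacle: extracting a corner-to-corner equivalence from the global one is bookkeeping, but delicate bookkeeping, since the bimodules $N,M$ are non-unital and one must track on which side the various local units act.

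Granting the key step, the conclusion follows quickly. Local units imply s-unitality, so $R$ is an s-unital purely infinite exchange ring, and Theorem~\ref{ExchangePiMatrices} gives that $\MM_n(R)$ is (properly) purely infinite; moreover $\MM_n(R)$ is again an exchange ring with local units. Hence $g\,\MM_n(R)\,g$ is a purely infinite (by Proposition~\ref{PiCorners}) unital exchange ring, corners of exchange rings being exchange. As pure infiniteness is a Morita invariant property for unital exchange rings (Corollary~\ref{moritainvariant}), and as exchangeness is Morita invariant for unital rings, the ring $fSf$, being Morita equivalent to $g\,\MM_n(R)\,g$, is itself a purely infinite exchange ring. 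Since $f$ was arbitrary, Corollary~\ref{LocalUnitsCorners} and the corner characterisation of exchange rings recorded above show that $S$ is purely infinite and exchange, as desired.
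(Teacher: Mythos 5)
Your argument is correct, and its skeleton is the one the paper uses: reduce to corners of $S$ via Corollary~\ref{LocalUnitsCorners} (plus the element-wise nature of the exchange condition), realize each corner $fSf$ inside a matrix ring over $R$ using the Morita data, and finish with Theorem~\ref{ExchangePiMatrices} and Proposition~\ref{PiCorners}. Where you differ is in how the identification of $fSf$ with a corner of $\MM_n(R)$ is obtained. The paper takes exactly the route you relegate to your parenthetical alternative: from a surjective Morita context it writes $f=\sum_{i=1}^n x_iy_i$ with $x_i=fx_i$, $y_i=y_if$, and checks directly that the matrix $g=\mathbf{y}^t\mathbf{x}=\bigl((y_i,x_j)\bigr)\in\MM_n(R)$ satisfies $g^2=g$ and that $gag\mapsto \mathbf{x}a\mathbf{y}^t$ is a ring isomorphism of $g\MM_n(R)g$ onto $fSf$ (phrased there as a local ring at the element $\mathbf{y}^t\mathbf{x}$); the exchange half is simply quoted from \cite{AGS}. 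Your primary route instead transfers $Sf$ through the categorical equivalence and uses $fSf\cong\End_S(Sf)^{\mathrm{op}}\cong\End_R(P)^{\mathrm{op}}\cong g\MM_n(R)g$. This works, but it silently uses facts about the non-unital categories $R$-Mod and $S$-Mod that the paper never develops: that ``finitely generated projective'' (say, compact projective) is a categorical property preserved by the equivalence, that $\Hom_S(Sf,M)\cong fM$ so $Sf$ is such an object, and that every such object over a ring with local units is a summand of a finite direct sum $\bigoplus Re_i$, whence its endomorphism ring is a corner of $\MM_n(R)$ -- precisely the ``delicate bookkeeping'' you flag, and which the explicit context computation avoids entirely. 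Two smaller remarks: since your displayed chain actually yields an isomorphism $fSf\cong g\MM_n(R)g$, the subsequent appeal to Corollary~\ref{moritainvariant} (and to Morita invariance of the exchange property for unital rings) is redundant -- Proposition~\ref{PiCorners}, Theorem~\ref{ExchangePiMatrices}, and the fact that corners of exchange rings are exchange already give everything; and your corner characterization of the exchange property for rings with local units is fine, since the defining condition $e=xr=x+s-xs$ is witnessed inside any corner containing $x$.
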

\begin{proof}
Rings with local units are clearly idempotent, and in this case the exchange property
is Morita invariant, as shown in~\cite[Theorem 2.3]{AGS}. Let $(R, S, N, M)$ be a surjective Morita context and assume that $R$ is a purely infinite exchange ring. We will show that
$S_e$ is a purely infinite ring for every idempotent $e$ in $S$. The result then follows by applying Corollary~\ref{LocalUnitsCorners}.

Since $e\in S=MN$, we can find $x_1, \dotsc, x_n\in M$, $y_1,
\dotsc, y_n \in N$ satisfying $e=\sum_{i=1}^n x_iy_i$.
Put $\mathbf{x}=(x_1, \dots , x_n)$, $\mathbf{y}= (y_1, \dots ,
y_m)$. Then $e=\mathbf{x}\cdot\mathbf{y}^t$, and
we may assume $x_i=ex_i$ and $y_i=y_ie$, for every $i\in \{1,
\dots, n\}$.

Consider the map
\[
\begin{matrix}
\varphi\colon & M_n (R)_{\mathbf{y}^t\cdot\mathbf{x}} & \to & S_e \cr & (\mathbf{y}^t\cdot\mathbf{x})a (\mathbf{y}^t\cdot\mathbf{x}) &\mapsto &
\mathbf{x} a \mathbf{y}^t\,,
\end{matrix}
\]
which is easily seen to be a ring isomorphism. Since matrix rings over purely infinite exchange rings with local units are again purely infinite (Theorem~\ref{ExchangePiMatrices}) we obtain (via Theorem~\ref{localpurinf} or Proposition~\ref{PiCorners}) that $S_e$ is a purely infinite ring.
\end{proof}

\section{Purely infinite rings with refinement for idempotents}

There exist rings, such as the Leavitt path algebras we discuss in the following section, which
are not exchange rings but have some similar properties, such as refinement for orthogonal sums of
projections. Our particular goal in this section is to extend Theorem \ref{ExchangePiMatrices} to
a class of such rings. For efficient use of refinement arguments, we work with the monoids $V(R)$
(recall Definitions \ref{VRdef}).

For the reader's convenience, we collect here some standard concepts concerning abelian monoids
that will be needed below.

\begin{definitions} {\rm Let $V$ be an abelian monoid. We say that $V$ is a \emph{refinement monoid}
provided that for any $x_1,x_2,y_1,y_2 \in V$ with
$x_1+x_2= y_1+y_2$, there exist $z_{ij}\in V$ such that $z_{i1}+z_{i2} =x_i$ for $i=1,2$ and
$z_{1j}+z_{2j}= y_j$ for $j=1,2$. These equations can be conveniently displayed in the form of a
\emph{refinement matrix}:
 $$\bordermatrix{ &y_1 &y_2 \cr  x_1 &z_{11} &z_{12} \cr  x_2 &z_{21} &z_{22} \cr}$$
 Refinements of equations with more terms follow by induction. Moreover, refinement implies the
 \emph{Riesz decomposition property}: whenever $x,y_1,y_2\in V$ with $x\le y_1+y_2$, there exist
 $x_i\in V$ such that $x=x_1+x_2$ and $x_i\le y_i$ for $i=1,2$.
 When $R$ is an exchange ring, $V(R)$ has refinement (e.g., \cite[Corollary 1.3]{AGOP}). It is easily checked that if
$V$ is a refinement monoid and $I$ is an ideal, then both $I$ and $V/I$ are also refinement monoids (e.g.,~\cite[Proposition 7.8]{Gary}).

An element $u\in V$ is called \emph{irreducible} provided
 \begin{enumerate}
  \item $u$ is not a unit;
   \item whenever
$a,b\in V$ and $u=a+b$, either $a$ or $b$ is a unit.
 \end{enumerate}
In case $V$ is conical, the definition simplifies because $0$ is the only unit. In this case, $u$
is irreducible if and only if
 \begin{enumerate}
 \item[(1$'$)] $u\ne 0$;
\item[(2$'$)] whenever $a,b\in V$ and $u=a+b$, either $a=0$ or $b=0$.
 \end{enumerate}
 Condition (2$'$)
extends by induction to sums of more than two terms: if $u= a_1+\cdots +a_n$ for some $a_i\in V$,
then there is an index $j$ such that $a_j=u$ and $a_i=0$ for all $i\ne j$.
 }\end{definitions}

\begin{lemma} \label{Fred} {\rm \cite[1.9]{Weh}} Let $V$ be a refinement monoid, $x,y,z\in V$, and $n\in\NN$.
If $nx=y+z$, then $x= x_0+ \cdots+ x_n$ for some $x_i\in V$ such that $x_1+ 2x_2+ \cdots+ nx_n =y$
and $nx_0+ (n-1)x_1+ \cdots+ x_{n-1} =z$. \end{lemma}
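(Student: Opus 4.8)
The statement is a refinement-monoid identity: from $nx = y+z$ we must extract a decomposition $x = x_0 + \cdots + x_n$ with weighted sums recovering $y$ and $z$. My plan is to prove this by induction on $n$, using the refinement property repeatedly to split the ``$n$ copies of $x$'' against the two-term sum $y+z$, and then to bootstrap from the case $n$ to the case $n+1$ by splitting off one copy of $x$.

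\textbf{Base case.} For $n=1$ the claim is trivial: $x = x_0 + x_1$ with $x_1 = y$ and $x_0 = z$ is just the hypothesis $x = y+z$ rewritten, so take $x_0 = z$, $x_1 = x - $ nothing; more precisely the hypothesis already says $x = y+z$, so set $x_1 := y$, $x_0 := z$ only works if we first know $x = y+z$, which is exactly $nx = y+z$ for $n=1$. (One may instead start the induction at $n=1$ by observing the statement is vacuous there, or handle $n=1$ directly as above.)

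\textbf{Inductive step.} Assume the result for $n$, and suppose $(n+1)x = y + z$. Write $(n+1)x = x + nx$ and apply refinement to the equation $x + nx = y + z$: there exist $y_0, z_0, y', z' \in V$ with
$$x = y_0 + z_0, \qquad nx = y' + z', \qquad y = y_0 + y', \qquad z = z_0 + z'.$$
Now apply the induction hypothesis to $nx = y' + z'$: there are $w_0, \dots, w_n \in V$ with $x = w_0 + \cdots + w_n$, and $w_1 + 2w_2 + \cdots + n w_n = y'$ and $n w_0 + (n-1) w_1 + \cdots + w_{n-1} = z'$. The remaining task is to merge the two decompositions $x = y_0 + z_0$ and $x = w_0 + \cdots + w_n$ into a single decomposition $x = x_0 + \cdots + x_{n+1}$ whose weighted sums give $y = y_0 + y'$ and $z = z_0 + z'$. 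This is where the combinatorial bookkeeping lives: refine $y_0 + z_0 = w_0 + \cdots + w_n$ to get $y_0 = \sum_i a_i$, $z_0 = \sum_i b_i$ with $a_i + b_i = w_i$, and then assemble $x_j$ from the appropriate $a_i$'s (which should contribute to $y$, hence be shifted up in index by one since $y_0$ enters $y$ with ``weight one more'' relative to the $w_i$ pattern for $y'$) and $b_i$'s (which contribute to $z$, keeping the same index). One checks that $x_j := a_{j-1} + b_j$ (with the obvious conventions at the ends, $a_{-1} = 0$ interpreted appropriately and $b_{n+1} = 0$) does the job: $\sum_j x_j = \sum_i a_i + \sum_i b_i = y_0 + z_0 = x$, the sum $\sum_j j x_j$ works out to $\sum_i (i+1) a_i + \sum_i i b_i = (\sum a_i) + \sum i a_i + \sum i b_i$, and using $w_i = a_i + b_i$ together with $\sum i w_i = y'$ one gets $y_0 + y' = y$; similarly $\sum_j ((n+1) - j) x_j = z_0 + z' = z$.

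\textbf{Main obstacle.} The conceptual content is entirely in the base/inductive refinement splitting, which is routine; the genuine nuisance is getting the index shifts exactly right in the final merge so that the weights $1, 2, \dots, n+1$ (rather than $1, \dots, n$) come out correctly, and verifying the two weighted-sum identities without sign or off-by-one errors. I would carry this out carefully with the refinement matrix
$$\bordermatrix{ & w_0 & w_1 & \cdots & w_n \cr y_0 & a_0 & a_1 & \cdots & a_n \cr z_0 & b_0 & b_1 & \cdots & b_n \cr}$$
displayed explicitly, then define $x_j$ and check the three identities $\sum x_j = x$, $\sum j x_j = y$, $\sum ((n+1)-j) x_j = z$ term by term. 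No single step is hard, but this is the part most prone to error, so it deserves the most care.
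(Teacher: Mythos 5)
Your argument is correct. Note first that the paper itself gives no proof of this lemma: it is quoted with a citation to Wehrung's paper (\cite[1.9]{Weh}), so there is no in-paper argument to compare against; what you have done is supply the omitted proof. Your induction on $n$ is sound: the base case $n=1$ is exactly the hypothesis with $x_1=y$, $x_0=z$ (despite the garbled sentence there); in the step you refine the two-term equation $x+nx=y+z$, apply the inductive hypothesis to $nx=y'+z'$, and then refine the multi-term equation $y_0+z_0=w_0+\cdots+w_n$ (legitimate, since refinement extends to finitely many terms by induction, as the paper itself notes). The merge $x_j=a_{j-1}+b_j$ with $a_{-1}=b_{n+2}=0$ does the job: $\sum_j x_j=y_0+z_0=x$; $\sum_j jx_j=\sum_i(i+1)a_i+\sum_i ib_i=y_0+\sum_i i w_i=y_0+y'=y$; and $\sum_j((n+1)-j)x_j=\sum_i(n-i)a_i+\sum_i(n-i)b_i+\sum_i b_i=z'+z_0=z$, so the index shift you were worried about is exactly right. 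The only cosmetic caveats are the base case wording and that you should state the end conventions ($x_0=b_0$, $x_{n+1}=a_n$) cleanly when writing it up.
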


\begin{corollary} \label{nalessblift} Let $V$ be a refinement monoid, $x,u\in V$, and $I$ an ideal of
$V$. If $n\xbar \le \ubar$ in $V/I$ for some $n\in \NN$, then $x=x'+c$ for some $x'\in V$ and
$c\in I$ such that $nx'\le u$.
\end{corollary}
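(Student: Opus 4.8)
The plan is to first translate the hypothesis into an inequality living inside $V$ itself, and then to split $x$ by applying Lemma~\ref{Fred} (Wehrung's lemma). First I would unwind $n\xbar \le \ubar$ in $V/I$: by definition of the algebraic preordering on the quotient monoid there is some $z\in V$ with $n\xbar+\overline{z}=\ubar$, i.e.\ $\overline{nx+z}=\ubar$, i.e.\ $nx+z\equiv_I u$; and by the definition of the congruence $\equiv_I$ there are then $a,b\in I$ with $nx+z+a=u+b$. In particular $nx\le u+b$ in $V$, with $b\in I$.

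Next, since $V$ is a refinement monoid it enjoys the Riesz decomposition property, so from $nx\le u+b$ I would write $nx=p+q$ with $p\le u$ and $q\le b$; since $I$ is hereditary (being an o-ideal) and $b\in I$, this forces $q\in I$. Now I would apply Lemma~\ref{Fred} to the equation $nx=p+q$, obtaining elements $x_0,x_1,\dots,x_n\in V$ with $x=x_0+x_1+\cdots+x_n$, with $x_1+2x_2+\cdots+nx_n=p$, and with $nx_0+(n-1)x_1+\cdots+x_{n-1}=q$.

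Finally I would read off the conclusion. For $0\le i\le n-1$ the coefficient $n-i$ is positive, so $x_i\le (n-i)x_i\le q$, and since $I$ is hereditary with $q\in I$ this gives $x_0,\dots,x_{n-1}\in I$. Setting $x'=x_n$ and $c=x_0+x_1+\cdots+x_{n-1}$, the fact that $I$ is a submonoid yields $c\in I$, we clearly have $x=x'+c$, and $nx'=nx_n\le x_1+2x_2+\cdots+nx_n=p\le u$, as required. I do not expect a genuine obstacle here: the only mildly delicate points are correctly unwinding the two-layered definition of $\le$ in the quotient monoid, and recognising that Lemma~\ref{Fred} is precisely the device that converts information about $nx$ into information about $x$. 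Absent that lemma, the hard part would be to reprove a version of it by a refinement induction on $n$.
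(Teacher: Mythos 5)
Your proposal is correct and follows essentially the same route as the paper: unwind $n\xbar\le\ubar$ to $nx\le u+b$ with $b\in I$, use Riesz decomposition (from refinement) to split $nx=p+q$ with $p\le u$ and $q\in I$, and then apply Lemma~\ref{Fred} to that equation, discarding $x_0,\dots,x_{n-1}$ into $I$ and taking $x'=x_n$. The only difference is cosmetic: you spell out the two-layer unwinding of $\le$ in $V/I$ and the bound $x_i\le(n-i)x_i\le q$, which the paper states more tersely as $x_i\le z\le a$.
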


\begin{proof} By hypothesis, $nx\le u+a$ for some $a\in I$, whence $nx= y+z$ for some $y\le u$ and
$z\le a$. By Lemma \ref{Fred}, $x= x_0+ \cdots+ x_n$ for some $x_i\in V$ such that $x_1+ 2x_2+
\cdots+ nx_n =y$ and $nx_0+ (n-1)x_1+ \cdots+ x_{n-1} =z$. For $i<n$, we have $x_i\le z\le a$, and
so $x_i\in I$. Thus, the lemma is satisfied with $x'=x_n$ and $c= x_0+ \cdots+ x_{n-1}$.
\end{proof}

We next establish a lifting property which is the analog for refinement monoids of Effros's
lifting property for decompositions of projections modulo ideals in AF C$^*$-algebras \cite[Lemma
9.8]{Eff}.

\begin{lemma} \label{decomplift} Let $V$ be a refinement monoid,
$u\in V$, and $I$ an ideal of $V$. Suppose that $\ubar= n_1\xbar_1 +\cdots+ n_r\xbar_r$ in $V/I$
for some $n_i\in\NN$ and $x_i\in V$. Then there exist $y_i\in V$ such that $n_1y_1 +\cdots+ n_ry_r
\le u$ and $\ybar_i= \xbar_i$ in $V/I$ for all $i$.  \end{lemma}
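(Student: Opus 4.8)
The plan is to avoid induction on $r$ altogether, and instead reduce the statement to Lemma~\ref{Fred} applied to each summand separately, after two uses of the refinement property. In effect this is the simultaneous, multi-term version of the argument proving Corollary~\ref{nalessblift}, to which it specializes when $r=1$.

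First I would unwind the hypothesis. Writing $w = n_1x_1 + \cdots + n_rx_r$, the relation $\ubar = n_1\xbar_1 + \cdots + n_r\xbar_r$ in $V/I$ means that $u + a = w + b$ for some $a,b\in I$. Refining this equation in $V$ produces a refinement matrix
$$\bordermatrix{ & w & b \cr u & p & q \cr a & s & t}$$
so that $u = p + q$, $w = p + s$, $q \le b$, and $s \le a$. Since $I$ is hereditary, $q\in I$ and $s\in I$; in particular $p \le u$, and $w = p + s$ with the ``$I$-part'' $s$ separated off.

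Next I would refine the equation $(n_1x_1) + \cdots + (n_rx_r) = p + s$, treating each $n_ix_i$ as a single summand (iterated refinement): there are $p_i,s_i \in V$ with $n_ix_i = p_i + s_i$, $\sum_i p_i = p$, and $\sum_i s_i = s$. Each $s_i \le s \in I$, so $s_i\in I$. Now Lemma~\ref{Fred} applied to $n_ix_i = p_i + s_i$ gives a decomposition $x_i = x_{i,0} + x_{i,1} + \cdots + x_{i,n_i}$ with $x_{i,1} + 2x_{i,2} + \cdots + n_ix_{i,n_i} = p_i$ and $n_ix_{i,0} + (n_i-1)x_{i,1} + \cdots + x_{i,n_i-1} = s_i$. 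The second identity shows that each of $x_{i,0},\dots,x_{i,n_i-1}$ is bounded above by $s_i\in I$ and hence lies in $I$, so $\overline{x_{i,n_i}} = \xbar_i$ in $V/I$. Putting $y_i = x_{i,n_i}$, the first identity gives $n_iy_i \le p_i$, whence $\sum_i n_iy_i \le \sum_i p_i = p \le u$, which is exactly what is required.

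The one thing to get right is the bookkeeping across the two refinements: one must track precisely which pieces land in $I$ — namely $s$, then each $s_i$, and finally every $x_{i,j}$ with $j < n_i$ — since it is this last fact that lets Lemma~\ref{Fred} strip off all levels below $n_i$ while leaving $x_{i,n_i}$ in the correct congruence class. I do not expect any genuine difficulty beyond this, and no hypothesis on $V$ beyond refinement (together with the standing fact that $I$ is an o-ideal) is needed.
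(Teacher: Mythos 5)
Your proof is correct and follows essentially the same route as the paper: refine the lifted equation $u+a=n_1x_1+\cdots+n_rx_r+b$ to isolate the part of each $n_ix_i$ lying under $u$, then use Lemma~\ref{Fred} to strip off the pieces that land in $I$, keeping the top term $x_{i,n_i}$ as $y_i$. The only cosmetic difference is that the paper performs one $2\times(r+1)$ refinement and then invokes Corollary~\ref{nalessblift} for each $i$, whereas you do two refinements and apply Lemma~\ref{Fred} directly, which amounts to inlining that corollary's proof.
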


\begin{proof}  By hypothesis, $u+a= n_1x_1+ \cdots+ n_rx_r +b$ for some $a,b\in I$. Refine this
equation, obtaining a refinement matrix
$$\bordermatrix{ &n_1x_1 &\cdots &n_rx_r &b \cr u &z_{11} &\cdots &z_{1r} &z_{1,r+1} \cr  a
&z_{21} &\cdots &z_{2r} &z_{2,r+1}}$$
 for some $z_{ij} \in V$.
 Each $z_{2i} \le a$, so $z_{2i} \in I$ and $n_i\xbar_i=
\zbar_{1i}$ in $V/I$ for $i\le r$. By Corollary \ref{nalessblift}, each $x_i= y_i+c_i$ for some
$y_i\in V$ and $c_i\in I$ such that $ny_i\le z_{1i}$. Thus,
 $$n_1y_1 +\cdots+ n_ry_r \le z_{11} +\cdots+
z_{1r} \le u,$$
 and $\ybar_i= \xbar_i$ in $V/I$ for all $i$. \end{proof}

\begin{definition} {\rm Let $V$ be an abelian monoid. An element $u\in V$ is \emph{abelian} (or: \emph{$u$
has index $1$}) provided the only elements $a\in V$ for which $2a\le u$ are the units in $V$.
 }\end{definition}

\begin{lemma} \label{abelianorderunit} Let $V$ be a refinement monoid with an abelian order-unit $u$.
 \begin{enumerate}[{\rm (i)}]
\item If $I$ is an ideal of $V$, then the order-unit $\ubar\in V/I$ is abelian.
\item If $M$ is a maximal ideal of $V$, then the order-unit $\ubar\in V/I$ is irreducible.
 \end{enumerate}
\end{lemma}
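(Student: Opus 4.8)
The plan is to treat the two parts in turn, leaning on the lifting results for refinement monoids proved above. Throughout, recall that any quotient $V/I$ is conical and is again a refinement monoid, and that the image $\ubar$ of the order-unit $u$ is automatically an order-unit of $V/I$ (being the image of $u$ under the surjective monoid map $V\to V/I$).

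For (i), suppose $a\in V$ with $2\abar\le\ubar$ in $V/I$; the goal is to show $\abar$ is a unit of $V/I$, i.e.\ $\abar=0$. I would apply Corollary~\ref{nalessblift} with $n=2$ to the element $a$, producing a decomposition $a=a'+c$ with $c\in I$ and $2a'\le u$ in $V$. Since $u$ is abelian in $V$, the element $a'$ is a unit of $V$; hence $\abar'$ is a unit of the conical monoid $V/I$, so $\abar'=0$, and therefore $\abar=\abar'+\cbar=0$, as needed.

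For (ii), first observe $\ubar\ne 0$: if $\ubar=0$ then $u\equiv_M 0$ gives $u\le b$ for some $b\in M$, so $u\in M$, contradicting that $u$ is an order-unit of $V$ while $M$ is a proper ideal. The key preliminary is that, because $M$ is maximal, every nonzero element of $V/M$ is an order-unit: the preimage in $V$ of the o-ideal of $V/M$ generated by such an element is an ideal of $V$ strictly containing $M$, hence equals $V$. Now suppose, towards a contradiction, that $\ubar=\abar+\bbar$ in $V/M$ with $\abar,\bbar$ both nonzero. Since $\abar$ is an order-unit and $\bbar\le\ubar$, there are $n\in\NN$ and $\cbar\in V/M$ with $n\abar=\bbar+\cbar$. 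Here I would invoke Lemma~\ref{Fred} in the refinement monoid $V/M$: it yields $\abar=\abar_0+\cdots+\abar_n$ with $\abar_1+2\abar_2+\cdots+n\abar_n=\bbar$. As $\bbar\ne 0$ and $V/M$ is conical, some $\abar_j$ with $1\le j\le n$ is nonzero; then $\abar\ge\abar_j$ and $\bbar\ge j\abar_j\ge\abar_j$, so $\ubar=\abar+\bbar\ge 2\abar_j$. But $\ubar$ is abelian in $V/M$ by part (i), so $\abar_j$ is a unit of $V/M$, forcing $\abar_j=0$ -- a contradiction. Hence $\ubar$ is irreducible.

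The routine points -- conicality and refinement passing to quotients, images of order-units being order-units, and the maximal-ideal fact -- cause no trouble. The one step that must be spotted is in (ii): "abelian" is phrased via $2a\le u$, whereas "irreducible" is phrased via $u=a+b$, and bridging the two requires first using maximality of $M$ to turn the summands into order-units and then applying the Riesz-type splitting of Lemma~\ref{Fred} to manufacture a nonzero element $d$ with $2d\le u$. That conversion is the only real content of the argument.
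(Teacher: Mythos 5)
Your proof is correct. Part (i) is exactly the paper's argument: apply Corollary~\ref{nalessblift} with $n=2$, use that $u$ is abelian in $V$ to kill $a'$, and conclude $\abar=0$ (the paper notes $a'\in I$ directly, you pass through conicality of $V/I$; either way works). Part (ii) has the same skeleton as the paper's -- pass to the simple conical quotient $V/M$, note $\ubar$ is abelian there by part (i), and use refinement plus simplicity to extract from a decomposition $\ubar=\abar+\bbar$ with both terms nonzero an element $d\neq 0$ with $2d\le\ubar$, contradicting abelianness -- but you realize the splitting step differently. The paper uses simplicity to get $a\le n b$ and then the plain Riesz decomposition property to write $a=a_1+\cdots+a_n$ with each $a_i\le b$, so that $2a_i\le a+b=u$ and hence $a=0$ directly (no contradiction setup needed). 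You instead write $n\abar=\bbar+\cbar$ and invoke Lemma~\ref{Fred} to split $\abar=\abar_0+\cdots+\abar_n$ with $\sum_{j\ge1}j\abar_j=\bbar$, then take a nonzero $\abar_j$ with $j\ge1$ and observe $2\abar_j\le\abar+\bbar=\ubar$. This is valid (and your explicit checks that $\ubar\neq0$ and that maximality of $M$ makes every nonzero element of $V/M$ an order-unit are points the paper leaves implicit), but Lemma~\ref{Fred} is heavier machinery than needed: the elementary Riesz decomposition already suffices and gives the conclusion without arguing by contradiction.
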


\begin{proof} (i) Suppose that $2\abar\le \ubar$ in $V/I$, for some $a\in V$. By Corollary
\ref{nalessblift}, $a= a'+c$
for some $a'\in V$ and $c\in I$ such that $2a'\le u$. Since $u$ is abelian, $a'$ is a unit, and so
$a'\in I$. Consequently, $a\in I$ and $\abar= 0$ in $V/I$. Therefore $\ubar$ is abelian in $V/I$.

(b) The refinement monoid $V/M$ is conical and simple, and its order-unit $\ubar$ is abelian by
part (a). Thus, after passing to $V/M$, there is no loss of generality in assuming that $V$ is
conical and simple, and that $M=\{0\}$.

Suppose that $u=a+b$ for some $a,b\in V$ with $b\ne 0$. By simplicity, $a\le nb$ for some
$n\in\NN$, and so $a= a_1+ \cdots+ a_n$ for some $a_i\le b$. Since $2a_i\le a+b= u$, we have
$a_i=0$ for all $i$, and thus $a=0$. Therefore $u$ is irreducible. \end{proof}

We can now prove a monoid version of \cite[Proposition 5.7]{PeRo}. Portions of our proof are
adapted from \cite[Lemma 3.4]{Bro}.

\begin{theorem} \label{23div} Let $V$ be a refinement monoid and $u\in V$ such that $\ubar$ is
not irreducible in $V/I$
for any ideal $I$ of $V$. Then there exist $x,y\in V$ such that $u=2x+3y$.  \end{theorem}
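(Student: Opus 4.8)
The plan is to build the decomposition $u=2x+3y$ by a finite descent, at each stage splitting off a copy of $u$ itself (allowed by Riesz decomposition) until the leftover coefficient lands in the target set $\{2,3\}$ (equivalently, until we can write the leftover as a sum of $2$'s and $3$'s). The crucial leverage is the hypothesis: $\ubar$ is not irreducible in $V/I$ for \emph{any} ideal $I$, applied in particular to $I=\{0\}$, which says $u=a+b$ with $a,b$ both nonunits, i.e. (working in the conical monoid $V$) with $a\ne0$ and $b\ne0$. So $u$ always admits a proper $2$-term splitting. But a naive splitting need not produce order-relations like $a\le nu$ that let us recombine; this is exactly where the ``for all ideals $I$'' strength comes in, together with simplicity-type arguments after passing to quotients.

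First I would pass to the ideal $J$ generated by $u$; replacing $V$ by $J$ we may assume $u$ is an order-unit, since an ideal of a refinement monoid is again a refinement monoid and the non-irreducibility hypothesis is inherited by $J$ and its quotients. Next, consider a maximal ideal $M$ of $J$ (not containing $u$): then $J/M$ is conical and simple with order-unit $\ubar$, and by hypothesis $\ubar$ is not irreducible there, so $\ubar = \abar+\bbar$ with $\abar,\bbar\ne 0$. In a simple conical monoid every nonzero element is an order-unit, so $\ubar \le n\abar$ and $\ubar\le m\bbar$ for suitable $n,m$. From here I expect to produce, in $V/M$ (or more carefully in an intermediate quotient), a relation of the form $k\ubar = 2\xbar + 3\ybar$ for some $k$, essentially because once you have enough ``room'' (two elements that both generate) you can tile multiples of $\ubar$ with $2$'s and $3$'s; concretely, any sufficiently large multiple of $\ubar$ can be written as a combination of $2$ and $3$ (every integer $\ge 2$ is $2a+3b$ with $a,b\ge0$), and non-irreducibility gives the splitting needed to realize this. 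The role of the ``adapted'' lemmas \ref{Fred}, \ref{nalessblift}, \ref{decomplift} is then to \emph{lift} such a relation from the quotient $V/M$ back to $V$: a decomposition $\ubar = n_1\xbar_1+\cdots$ in $V/I$ lifts to $y_i$ with $n_1y_1+\cdots \le u$ and correct images, by Lemma \ref{decomplift}.

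The descent itself: starting from $u$, repeatedly apply non-irreducibility (in $V$ itself, $I=\{0\}$) to peel off summands, reducing the problem from ``$u = c\cdot(\text{stuff})$'' with coefficient $c$ to a smaller coefficient, stopping when the coefficient is expressible via $2$'s and $3$'s — which, since $\{2,3\}$ generates every integer $\ge 2$ additively, means we only need to drive the relevant coefficient down to be $\ge 2$ while keeping it from getting stuck at $1$. The obstruction to getting stuck at $1$ is precisely a point where $\ubar$ would be irreducible in some quotient, which the hypothesis forbids; this is where I would invoke it at a non-trivial ideal $I$, not just $I=\{0\}$, using Lemma \ref{nalessblift} to control the ``error terms'' landing in $I$ and Lemma \ref{decomplift} to reassemble.

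\textbf{Main obstacle.} The hard part will be organizing the induction/descent so that the coefficients are controlled \emph{uniformly} and the error terms thrown into ideals at each quotient step can be reabsorbed — i.e. turning the qualitative statement ``$u$ splits properly in every quotient'' into the quantitative statement ``$u=2x+3y$ in $V$ itself.'' I expect this to require a careful interplay: pass to a maximal ideal to get simplicity and hence order-unit behavior of the summands, extract a relation $k\ubar = 2\xbar+3\ybar$ there, lift it via Lemma \ref{decomplift} to get $2x_0+3y_0 \le u$ with a remainder $c$ in a proper ideal, then induct on the remainder (whose generated ideal is a proper ideal of $J$, so Noetherian-type or maximality arguments close the loop), and finally recombine. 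Getting the bookkeeping of these remainders to terminate — and to terminate with exactly coefficients $2$ and $3$ rather than some residue — is the technical crux; the adaptation of Brown's argument \cite[Lemma 3.4]{Bro} and \cite[Proposition 5.7]{PeRo} is presumably exactly this bookkeeping.
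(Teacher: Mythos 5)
Your outline stops short of the mechanism that makes the theorem work, and the two places where you defer the difficulty are exactly where an actual argument is required. First, the quotient step: in a simple conical quotient $V/M$ a relation of the form $k\ubar = 2\xbar+3\ybar$ for some $k$ is vacuous (for any $k\ge 2$ write $k=2a+3b$ and take $\xbar=a\ubar$, $\ybar=b\ubar$), so it carries no information; the whole content of the theorem is the coefficient-one statement $u=2x+3y$, and nothing in your plan explains how to descend from a multiple of $\ubar$, or from a lifted inequality $2x_0+3y_0\le u$ with a remainder in a proper ideal, back to an exact decomposition of $u$ itself. Second, your proposed induction ``on the remainder'' has no termination argument: $V$ is an arbitrary refinement monoid, so there is no Noetherian or maximality principle that closes the loop, and repeatedly pushing error terms into ideals and ``reabsorbing'' them is precisely the step you leave open.

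The paper's proof is organized differently and avoids both problems. One sets $X=\{x\in V\mid 2x\le u\}$ and lets $J$ be the ideal generated by $X$. Corollary \ref{nalessblift} shows that $\ubar$ is abelian in $V/J$; if $J$ were proper, a maximal ideal $M\supseteq J$ would make $\ubar$ irreducible in $V/M$ by Lemma \ref{abelianorderunit}, contradicting the hypothesis --- note the hypothesis enters only here, through the abelian/irreducible lemma, not through a direct splitting $u=a+b$ at $I=\{0\}$ as in your plan. Hence $u\le x_1+\cdots+x_n$ with $x_i\in X$, so by Riesz decomposition $u=u_1+\cdots+u_n$ with each $u_i\in X$, and a \emph{finite} induction along this fixed expression (using Corollary \ref{nalessblift} at each step to correct $u_{k+1}$ modulo the ideal generated by $u_1,\dots,u_k$) produces a single element $x\in X$ which is an order-unit; this finite bookkeeping replaces your unterminated descent. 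Then $u=2x+y$ with $y\le mx$, and Lemma \ref{Fred} gives $x=x_0+\cdots+x_m$ with $y=x_1+2x_2+\cdots+mx_m$; grouping by parity yields $y=2r+s$ and $x=s+t$, whence $u=2(r+t)+3s$. Neither the construction of an order-unit inside $X$ nor this final parity trick via Wehrung's lemma appears in your proposal, and without them it does not yield the theorem.
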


\begin{proof} Since we may work in the ideal generated by $u$, we may assume
that $u$ is an order-unit in $V$.

Let $J$ be the ideal of $V$ generated by the set $X= \{x\in V\mid 2x\le u\}$. We claim that
$\ubar$ is abelian in $V/J$. If $2\abar\le \ubar$ in $V/J$ for some $a\in V$, then Corollary
\ref{nalessblift} shows that, after possibly replacing $a$ by some element congruent to it modulo
$J$, we may assume that $2a\le u$. Then $a\in X$ and $\abar=0$ in $V/J$, verifying that $\ubar$ is
indeed abelian in $V/J$. Thus, we must have $J=V$.

Now $u\in J$, so $u\le x_1+ \cdots+ x_n$ for some $x_i\in X$. Hence, $u= u_1+ \cdots+ u_n$ for
some $u_i \le x_i$, and each $u_i\in X$. For $k=1,\dots,n$, let $J_k$ denote the ideal of $V$
generated by $\{u_1,\dots,u_k\}$. We claim that each $J_k$ can be generated by an element of $X$.
This is clear for $J_1$, which is generated by $u_1$.

Suppose that we have an element $v_k\in X$ which generates $J_k$, for some $k<n$. Write $u=
2v_k+w$ for some $w\in V$, and note that $2\ubar_{k+1} \le \ubar= \wbar$ in $V/J_k$. By Corollary
\ref{nalessblift}, $u_{k+1} =u'_{k+1} +c$ for some $u'_{k+1} \in V$ and $c\in J_k$ such that
$2u'_{k+1} \le w$. Set $v_{k+1}= v_k+ u'_{k+1}$. Since $v_{k+1} \le v_k+ u_{k+1}$ and $2v_{k+1}
\le 2v_k+w= u$, we see that $v_{k+1} \in J_{k+1}$ and $v_{k+1} \in X$. On the other hand, $v_k\le
v_{k+1}$ and $u_{k+1} \le v_{k+1}+c\le v_{k+1}+mv_k\le (m+1)v_{k+1}$ for some $m\in\NN$. It
follows that $J_{k+1}$ is generated by $v_{k+1}$, verifying the induction step of our claim.

The case $k=n$ of the claim provides an element $x=v_n\in X$ which generates the ideal $J_n$. By
construction, $u\in J_n$, so $J_n=V$, and thus $x$ is an order-unit in $V$. Since $x\in X$, we
also have $u=2x+y$ for some $y\in V$.

Now $y\le mx$ for some $m\in\NN$, and so $mx= y+z$ for some $z\in V$. By Lemma \ref{Fred}, $x=
x_0+ \cdots+ x_m$ for some $x_i\in V$ such that $x_1+ 2x_2+ \cdots+ mx_m =y$. Set
 $$r = \sum_{i=1}^m \lfloor i/2\rfloor x_i \qquad\qquad\qquad s = \sum^m_{\substack{i=1\\
 i\text{\ odd}}} x_i \qquad\qquad\qquad  t =
 \sum^m_{\substack{i=0\\ i\text{\ even}}} x_i\,,$$
 so that $y=2r+s$ and $x=s+t$. Therefore $u=2x+y=
2(r+t)+3s$.  \end{proof}

Theorem \ref{23div} immediately yields a generalization of \cite[Proposition 5.7]{PeRo} to the
nonseparable case, and a corresponding result for exchange rings, as follows.

\begin{corollary} Let $A$ be a C$^*$-algebra with real rank zero, and $p\in A$ a projection such that
the corner $pAp$ has no $1$-dimensional representations. Then $\MM_2(\CC)\oplus \MM_3(\CC)$ is
isomorphic to a unital sub-C$^*$-algebra of $pAp$. \end{corollary}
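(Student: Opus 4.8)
The plan is to pass to the corner $B=pAp$ and apply Theorem~\ref{23div} to the monoid $V(B)$ with order-unit $u=[p]=[1_B]$. First, $B$ is a unital C$^*$-algebra with unit $p$, and it has real rank zero because hereditary subalgebras of real rank zero C$^*$-algebras have real rank zero. As a real rank zero C$^*$-algebra, $B$ is an exchange ring, so $V(B)$ is a refinement monoid, and $[1_B]$ is an order-unit. Moreover, for real rank zero C$^*$-algebras the ideals of $V(B)$ are precisely the submonoids $V(J)$ with $J$ a closed two-sided ideal of $B$, and then $V(B)/V(J)\cong V(B/J)$ with $B/J$ again a real rank zero C$^*$-algebra; I would use these facts to identify the quotients of $V(B)$.

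The key step, which I expect to be the main obstacle, is to verify the hypothesis of Theorem~\ref{23div}: that $\ubar$ is not irreducible in $V(B)/I$ for any ideal $I$. Suppose for contradiction that $\ubar$ is irreducible in some $V(B)/I$, and write $I=V(J)$ for a closed ideal $J$, so that $[1_{B/J}]$ is irreducible in $V(B/J)$. For any projection $e\in B/J$ we have $[1_{B/J}]=[e]+[1_{B/J}-e]$, so irreducibility forces $[e]=0$ or $[1_{B/J}-e]=0$, i.e.\ $e=0$ or $e=1_{B/J}$; thus the only projections of $B/J$ are $0$ and $1_{B/J}$. But in a unital real rank zero C$^*$-algebra every self-adjoint element is a norm-limit of self-adjoint elements with finite spectrum, each of which is a real-linear combination of orthogonal projections summing to $1$; if the only projections are $0$ and $1$, such elements are scalar multiples of $1$, and hence $B/J=\CC 1\cong\CC$. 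Then the quotient map $pAp\to B/J\cong\CC$ is a $1$-dimensional representation of $pAp$, contradicting the hypothesis. Translating ``no $1$-dimensional representation'' into this monoid-theoretic statement, via the ideal lattice of the real rank zero C$^*$-algebra $pAp$, is the delicate part of the argument.

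With the hypothesis in hand, Theorem~\ref{23div} produces $x,y\in V(B)$ with $[p]=2x+3y$. Writing $x=[e']$ and $y=[f']$ for projections $e',f'$ over $B$, and using $[e'],[f']\le[p]$ together with the fact that $p$ is a projection in $B$, I would replace $e'$ and $f'$ by equivalent subprojections of $p$ lying in $B$ and then orthogonalize, obtaining pairwise orthogonal projections $e_1,e_2,f_1,f_2,f_3\in B$ with $e_1+e_2+f_1+f_2+f_3=p$, $e_1\sim e_2\sim e'$, and $f_1\sim f_2\sim f_3\sim f'$. Choosing partial isometries in $B$ that implement these equivalences yields systems of matrix units supported on $e_1+e_2$ and on $f_1+f_2+f_3$, whose linear spans are unital $*$-subalgebras isomorphic to $\MM_2(\CC)$ and to $\MM_3(\CC)$; since $(e_1+e_2)\perp(f_1+f_2+f_3)$ and these two projections sum to $p$, the internal direct sum of these subalgebras is a unital finite-dimensional, hence closed, sub-C$^*$-algebra of $pAp$ isomorphic to $\MM_2(\CC)\oplus\MM_3(\CC)$, with unit $p=1_{pAp}$.
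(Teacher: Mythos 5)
Your proposal is correct and follows essentially the same route as the paper: one checks, using real rank zero and the absence of $1$-dimensional representations, that $\overline{[p]}$ is not irreducible in any quotient of the relevant $V$, applies Theorem~\ref{23div} to get $[p]=2x+3y$, and then decomposes $p$ into orthogonal equivalent projections carrying matrix units for $\MM_2(\CC)$ and $\MM_3(\CC)$. The only cosmetic difference is that you work in $V(pAp)$ and argue the irreducibility step contrapositively, whereas the paper works in $V(A)$ with the ideals $pIp$; since Theorem~\ref{23div} already reduces to the ideal generated by $u$, these amount to the same argument.
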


\begin{proof} If $I$ is any (closed) ideal of $A$ not containing $p$, then $pAp/pIp$ is not
1-dimensional by hypothesis. Since $A$ has real rank zero, it follows that $pAp/pIp$ has
projections different from $0$ and $\pbar$, and so $\pbar$ is a sum of two nonzero orthogonal
projections. This shows that $\overline{[p]}$ is not irreducible in $V(A)/V(I)$. Since all ideals
of $V(A)$ have the form $V(I)$ for closed ideals $I$ of $A$, we conclude that $\overline{[p]}$ is
not irreducible in any quotient of $V(A)$.

Theorem \ref{23div} now implies that $[p]= 2x+3y$ for some $x,y\in V(A)$. Hence,
 $$p=r_1+r_2+s_1+s_2+s_3$$
  for some pairwise orthogonal projections $r_i$ and $s_j$ with $r_1\sim r_2$ and $s_1\sim s_2\sim
s_3$. The corner $(r_1+r_2)A(r_1+r_2)$ then contains a complete set of $2\times 2$ matrix units,
and so has a unital subalgebra isomorphic to $\MM_2(\CC)$. Similarly,
$(s_1+s_2+s_3)A(s_1+s_2+s_3)$ has a unital subalgebra isomorphic to $\MM_3(\CC)$. Therefore
$\MM_2(\CC)\oplus \MM_3(\CC)$ embeds unitally in
 $$(r_1+r_2)A(r_1+r_2)\oplus
(s_1+s_2+s_3)A(s_1+s_2+s_3),$$
 which is a unital subalgebra of $pAp$.  \end{proof}

\begin{corollary} Let $R$ be an exchange ring, and $e\in R$ an idempotent such that no quotient of
$eRe$ is a division ring. Then there exist unital rings $R_2$ and $R_3$ such that
$\MM_2(R_2)\oplus \MM_3(R_3)$ is isomorphic to a unital subring of $eRe$.  \end{corollary}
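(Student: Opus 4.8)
The plan is to follow the proof of the preceding corollary, with the exchange property in the role played there by real rank zero, working throughout inside the unital ring $S=eRe$, whose identity element is $e$. As a corner of an exchange ring, $S$ is itself a unital exchange ring; hence $V=V(S)$ is a refinement monoid and $[e]=[1_S]$ is an order-unit of $V$. The first step is to check that $\overline{[e]}$ is not irreducible in $V/I$ for any ideal $I$ of $V$. Theorem~\ref{23div} then produces $x,y\in V$ with $[e]=2x+3y$, and the last step is to turn this equation into a decomposition of $e$ into pairwise orthogonal idempotents of $S$ and read off the desired matrix subrings.

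For the irreducibility step, recall (Definitions~\ref{monoidconcepts}) that, $S$ being a unital exchange ring, every ideal of $V$ has the form $V(J)$ for an ideal $J$ of $S$, and that the natural map induces an isomorphism $V/V(J)\cong V(S/J)$ carrying $\overline{[e]}$ to $[1_{S/J}]$. If $J=S$ then $V/V(J)=0$ and $\overline{[e]}=0$ is not irreducible. If $J\neq S$, then $T=S/J$ is a nonzero unital exchange ring which, being a quotient of $S=eRe$, is not a division ring. I claim $T$ has an idempotent other than $0$ and $1$. If it did not, then $T$ would be \emph{local}: for $a\in T$ the exchange property yields an idempotent, necessarily $0$ or $1$, and a short manipulation of its defining equation shows that this idempotent being $1$ forces $a$ to be a unit, while its being $0$ forces $a$ or $1-a$ to be a unit. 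But a local ring $T$ has the division ring $T/J(T)$ as a quotient, and $T/J(T)$ is in turn a quotient of $eRe$, contradicting the hypothesis. So $T$ has an idempotent $f\notin\{0,1\}$, whence $[1_T]=[f]+[1_T-f]$ exhibits $[1_T]$ as a sum of two nonzero elements (a nonzero idempotent cannot be equivalent to $0$), so $[1_T]$, and hence $\overline{[e]}$ in $V/V(J)$, is not irreducible.

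Theorem~\ref{23div} now gives $x,y\in V(S)$ with $[1_S]=2x+3y$; writing $x=[P_1]$ and $y=[P_2]$ for finitely generated projective right $S$-modules, we get $S_S\cong P_1\oplus P_1\oplus P_2\oplus P_2\oplus P_2$, hence pairwise orthogonal idempotents $e_1,\dots,e_5\in S$ (the coordinate projections) with $e_1+\cdots+e_5=1_S=e$, $e_iS\cong P_1$ for $i=1,2$, and $e_iS\cong P_2$ for $i=3,4,5$; in particular $e_1\sim e_2$ and $e_3\sim e_4\sim e_5$. Set $f=e_1+e_2$ and $g=e_3+e_4+e_5$, orthogonal idempotents with $f+g=e$. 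Matrix units coming from these equivalences give $fSf\cong\MM_2(e_1Se_1)$ and $gSg\cong\MM_3(e_3Se_3)$, and since $f$ and $g$ are orthogonal with $f+g=e=1_S$, the set $fSf+gSg$ is a subring of $S=eRe$ with identity $e$, isomorphic to the ring direct sum $fSf\oplus gSg$. Taking $R_2=e_1Se_1$ and $R_3=e_3Se_3$, which are unital rings with identities $e_1$ and $e_3$, we obtain $\MM_2(R_2)\oplus\MM_3(R_3)\cong fSf\oplus gSg$, a unital subring of $eRe$. (If $x=0$ then $e_1=e_2=0$ and $R_2=0$, so $\MM_2(R_2)=0$; symmetrically if $y=0$. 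These degenerate cases cause no difficulty.)

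The crux is the second paragraph: passing from the ring-theoretic hypothesis that no quotient of $eRe$ is a division ring to the monoid assertion that $\overline{[e]}$ is irreducible in no quotient of $V(eRe)$. This uses three standard facts about exchange rings---that corners and quotients of exchange rings are again exchange; that every ideal of the $V$-monoid of an exchange ring is $V(J)$ for a ring ideal $J$, with the expected description of $V/V(J)$; and that an exchange ring whose only idempotents are $0$ and $1$ is local---after which Theorem~\ref{23div} and the idempotent bookkeeping finish the proof exactly as in the preceding corollary.
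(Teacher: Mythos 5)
Your proposal is correct and follows essentially the same route as the paper: show that $\overline{[e]}$ is not irreducible in any quotient of the $V$-monoid (via the fact that an exchange ring with no nontrivial idempotents is local, hence would have a division ring quotient), apply Theorem~\ref{23div} to write $[e]=2x+3y$, and convert this into orthogonal idempotents and matrix units inside $eRe$. The only (cosmetic) difference is that you work with $V(eRe)$ and ideals of $eRe$ directly, whereas the paper phrases the irreducibility check in $V(R)/V(I)$ using quotients $eRe/eIe$ for ideals $I$ of $R$.
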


\begin{proof} This is analogous to the previous proof. If $I$ is any ideal of $R$ not containing
$e$, then no quotient of $eRe/eIe$ is a division ring, and so $eRe/eIe$ cannot be a local ring.
Since $eRe/eIe$ is an exchange ring, it thus must contain an idempotent different from $0$ and
$1$, from which it follows that $\overline{[e]}$ is not irreducible in $V(R)/V(I)$. Applying
Theorem \ref{23div}, we get $e= f_1+f_2+g_1+g_2+g_3$ for some pairwise orthogonal idempotents
$f_i$ and $g_j$ with $f_1\sim f_2$ and $g_1\sim g_2\sim g_3$. Consequently, there are matrix units
in appropriate corners yielding a unital subring of $eRe$ of the desired form.  \end{proof}

Our main use of Theorem \ref{23div} is to extend Theorem \ref{ExchangePiMatrices} to purely
infinite rings with refinement for idempotents, as follows.

\begin{theorem} \label{ideminpinfwithref} Let $R$ be a purely infinite ring, and assume that
$V(R)$ is a refinement
monoid. If $e\in R$ is an idempotent, and $\overline{[e]}$ is not irreducible in any quotient of
$V(R)$, then $\MM_n(eRe)$ is properly purely infinite for every $n\in\NN$. In particular, $e$ is
properly infinite.
 \end{theorem}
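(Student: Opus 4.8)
The plan is to reduce the statement, via Proposition~\ref{PiCorners} and Lemma~\ref{MatricesAndPropInf}, to the assertion that $e$ itself is properly infinite, and then to establish that using Corollary~\ref{everyquotientfinite} together with a single application of Theorem~\ref{23div}. Assume $e\ne 0$ (the case $e=0$ being degenerate). Then $eRe$ is a unital ring with identity $e$, and it is purely infinite by Proposition~\ref{PiCorners}. By Lemma~\ref{MatricesAndPropInf} it suffices to show that $e=1_{eRe}$ is properly infinite; and since $eRe$ is s-unital, Corollary~\ref{everyquotientfinite} reduces this to showing that for every proper ideal $J$ of $eRe$, the coset $\ebar=e+J$ is infinite in $(eRe)/J$.

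To prepare for this I would apply Theorem~\ref{23div} once, globally: since $V(R)$ is a refinement monoid and $\overline{[e]}$ is not irreducible in any quotient of $V(R)$, there are $x,y\in V(R)$ with $[e]=2x+3y$. Writing $x=[p]$ and $y=[q]$ for idempotent matrices over $R$, the equivalence $e\sim p\oplus p\oplus q\oplus q\oplus q$ yields, by the standard refinement of a single idempotent along a $V(R)$-decomposition (as used in the corollaries following Theorem~\ref{23div}), a decomposition $e=p_1+p_2+q_1+q_2+q_3$ into pairwise orthogonal idempotents of $R$, automatically lying in $eRe$, with $p_1\sim p_2$ and $q_1\sim q_2\sim q_3$. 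I deliberately keep this step at the level of $R$ rather than re-verifying the irreducibility hypothesis inside $V(eRe)$ or its quotients.

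Now fix a proper ideal $J$ of $eRe$ and set $S=(eRe)/J$, which is purely infinite by Lemma~\ref{pinftoidealquo}(i) and has identity $\ebar\ne 0$ (since $e\notin J$). Writing bars for images in $S$, we have $\ebar=\pbar_1+\pbar_2+\qbar_1+\qbar_2+\qbar_3$ with the same orthogonality and equivalences, so not all of these summands are zero. If $\pbar_1\ne 0$, put $\overline g=\ebar-\pbar_2$; otherwise $\pbar_1=0$ forces $\pbar_2=0$, hence $\qbar_1\ne 0$, and I put $\overline g=\ebar-\qbar_3$. In either case $\overline g$ is an idempotent with $\overline g<\ebar$, and from $[\ebar]=2[\pbar_1]+3[\qbar_1]$ a one-line computation (using only that the doubled surviving part still dominates $\ebar$, e.g.\ $2[\overline g]=[\ebar]+3[\qbar_1]$ in the first case and $2[\overline g]=[\ebar]+[\qbar_1]$ in the second) gives $[\ebar]\le 2[\overline g]$ in $V(S)$. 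Hence $\ebar\precsim\overline g\oplus\overline g$, and expanding the defining equation $\ebar=\alpha(\overline g\oplus\overline g)\beta$ with $\alpha=(\alpha_1,\alpha_2)\in\MM_{1,2}(S)$ and $\beta=(\beta_1,\beta_2)^{t}\in\MM_{2,1}(S)$ shows $\ebar=\alpha_1\overline g\beta_1+\alpha_2\overline g\beta_2\in S\overline g S$. Pure infiniteness of $S$ then gives $\ebar\precsim\overline g$, i.e.\ $[\ebar]\le[\overline g]$; combined with $[\overline g]\le[\ebar]$ (from $\overline g\le\ebar$) this forces $[\ebar]=[\overline g]$, so $\ebar\sim\overline g<\ebar$ and $\ebar$ is infinite. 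By Corollary~\ref{everyquotientfinite}, $e$ is properly infinite, and Lemma~\ref{MatricesAndPropInf} then yields that $\MM_n(eRe)$ is properly purely infinite for every $n\in\NN$.

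The step I expect to be the main obstacle is the passage $\ebar\precsim\overline g\oplus\overline g\Rightarrow\ebar\precsim\overline g$: one cannot appeal to pure infiniteness of $\MM_2(S)$, since that would collapse the very distinction between purely infinite and properly purely infinite, so this implication must be routed through membership in the ring ideal $S\overline g S$ and condition (ii) of Definition~\ref{defpi}. The other point requiring care is the bookkeeping: using Theorem~\ref{23div} only once, at the level of $V(R)$, and then transporting only the concrete orthogonal idempotent decomposition (which survives any ring homomorphism, hence any passage to a corner or a quotient), rather than attempting to carry the monoid hypothesis ``$\overline{[e]}$ is not irreducible in any quotient'' through those same operations.
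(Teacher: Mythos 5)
Your argument is essentially correct, and it takes a genuinely different route from the paper's. The paper also starts from Theorem~\ref{23div}, writing $e=f_1+f_2+g_1+g_2+g_3$ with $f_1\sim f_2$ and $g_1\sim g_2\sim g_3$, but then works globally rather than quotient by quotient: it sets $p=f_1+g_1$ and $q=f_1+f_2+g_1+g_2$, observes $e\in RpR$ and $qRq\cong\MM_2(pRp)$, uses pure infiniteness of $R$ once to get $e\lesssim p$, so that $\MM_2(eRe)$ is isomorphic to a corner of $qRq$; Proposition~\ref{PiCorners} then makes $\MM_2(eRe)$ purely infinite, Lemma~\ref{EveryPropInfImpliesPi}(ii) makes $eRe$ properly purely infinite, and Lemma~\ref{MatricesAndPropInf} finishes. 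You instead run Corollary~\ref{everyquotientfinite} inside $eRe$: in each proper quotient $S=(eRe)/J$ you produce an idempotent $\overline{g}<\ebar$ with $[\ebar]\le 2[\overline{g}]$ (your case split $\pbar_1\ne0$ versus $\pbar_1=0$, with the orthogonal decomposition and its equivalences witnessed inside $eRe$ and hence surviving the quotient, is fine), and you correctly route the key implication through $\ebar\in S\overline{g}S$ and Definition~\ref{defpi}(ii) applied in $S$ itself, not in $\MM_2(S)$. Both proofs exploit the same mechanism -- a ``doubled'' subidempotent plus condition (ii) of pure infiniteness -- but the paper's packaging is shorter and gives $\MM_2(eRe)$ purely infinite in one stroke, whereas yours is more elementary and element-wise at the cost of the quotient-by-quotient bookkeeping.

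One inference at the very end is wrong as stated, though harmless. From $[\ebar]\le[\overline{g}]$ and $[\overline{g}]\le[\ebar]$ you conclude $[\ebar]=[\overline{g}]$, hence $\ebar\sim\overline{g}$; but the algebraic preorder on $V(S)$ is not antisymmetric (the paper notes this in Definitions~\ref{monoidconcepts}), and antisymmetry fails most drastically exactly in purely infinite situations, so this step is unjustified. It is also unnecessary: $\ebar\precsim\overline{g}$ gives $\ebar\sim h\le\overline{g}$ for some idempotent $h$; if $h=\ebar$ then $\ebar\le\overline{g}$, and since $\ebar$ is the identity of $S$ this forces $\overline{g}=\ebar$, contradicting $\overline{g}<\ebar$. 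Hence $\ebar\sim h<\ebar$, so $\ebar$ is infinite, which is all that Corollary~\ref{everyquotientfinite} requires; with that one-line repair your proof is complete.
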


\begin{proof} Applying Theorem \ref{23div} to
$V(R)$, we obtain that $e= f_1+f_2+g_1+g_2+g_3$ for some pairwise orthogonal idempotents
$f_i,g_j\in R$ such that $f_1\sim f_2$ and $g_1\sim g_2\sim g_3$. Consequently, $p = f_1+g_1$ and
$q = f_1+f_2+g_1+g_2$ are idempotents in $R$ such that $e\in RpR$ and $qRq \cong \MM_2(pRp)$.
Since $R$ is purely infinite, there exist $x,y\in R$ such that $xpy=e$, whence $e\lesssim p$. This
means that $eRe$ is isomorphic to a corner of $pRp$, and so $\MM_2(eRe)$ is isomorphic to a corner
of $qRq$. In view of Proposition \ref{PiCorners}, $\MM_2(eRe)$ is purely infinite, whence Lemma
\ref{EveryPropInfImpliesPi}(ii) implies that $eRe$ is properly purely infinite. Therefore $e$ is
properly infinite, and we are done by Lemma \ref{MatricesAndPropInf}. \end{proof}

\begin{corollary} Let $R$ be a purely infinite ring with local units. Assume that $V(R)$ is a refinement
monoid, and that idempotents lift modulo all ideals of $R$. Then $\MM_n(R)$ is properly purely
infinite for every $n\in\NN$.
\end{corollary}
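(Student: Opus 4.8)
The plan is to reduce the statement to Theorem~\ref{ideminpinfwithref}. By that theorem it is enough to prove that, for \emph{every} idempotent $e\in R$, the class $\overline{[e]}$ fails to be irreducible in \emph{every} quotient of $V(R)$: for then $\MM_n(eRe)$ is properly purely infinite for each such $e$ and each $n$, and since $R$ has local units so does $\MM_n(R)$, which is the directed union of its corners $\MM_n(eRe)$; hence every nonzero matrix in $\MM_n(R)$ lies in some properly purely infinite $\MM_n(eRe)$, is properly infinite there, and therefore properly infinite in $\MM_n(R)$. So the entire task is the monoid-theoretic statement about non-irreducibility.

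To handle it, I would first realize an arbitrary o-ideal $I$ of $V(R)$ as $V(K)$ for a suitable ideal $K$ of $R$: take $K$ to be the ideal generated by the entries of all idempotent matrices $p$ over $R$ with $[p]\in I$ (the sum of the trace ideals of such $p$). The inclusion $I\subseteq V(K)$ is immediate, and conversely any idempotent matrix $q$ over $K$ has its entries in finitely many of those trace ideals, so $q\precsim p_1\oplus\dots\oplus p_k$ with $[p_i]\in I$, whence $q\lesssim p_1\oplus\dots\oplus p_k$ (as $q$ is idempotent) and $[q]\le[p_1]+\dots+[p_k]\in I$, so $[q]\in I$ since $I$ is hereditary. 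Because idempotents lift modulo every ideal of $R$, the standard argument of \cite[Proposition~1.4]{AGOP} then applies to give $V(R)/V(K)\cong V(R/K)$, with $\overline{[e]}$ corresponding to $[\,\overline e\,]$. If $e\in K$ then $\overline{[e]}=0$, which is not irreducible; otherwise I am reduced to showing that $[\,\overline e\,]$ is not irreducible in $V(R/K)$. Since $R/K$ is again purely infinite (Lemma~\ref{pinftoidealquo}), has local units, and inherits lifting of idempotents modulo every ideal, after renaming I must prove: for a purely infinite ring $R$ with local units in which idempotents lift modulo every ideal, and a nonzero idempotent $e\in R$, the class $[e]$ is not irreducible in $V(R)$.

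Unpacking irreducibility and using that $V(R)$ is conical, this amounts exactly to producing a nontrivial idempotent in the corner $eRe$: a decomposition $e=f+(e-f)$ into nonzero orthogonal idempotents gives $[e]=[f]+[e-f]$ with both summands nonzero, and conversely any decomposition $[e]=a+b$ with $a,b\ne0$ yields, after stabilizing, a nontrivial idempotent in $\operatorname{End}(e\MM_N(R))\cong eRe$. Now $eRe$ is a unital purely infinite ring (Proposition~\ref{PiCorners}) in which idempotents lift modulo every ideal, and, being purely infinite, $eRe$ is not a division ring. If $eRe$ is simple, Theorem~\ref{theoremagp} gives $e\precsim a$ for every nonzero $a\in eRe$; writing $e=\alpha a\beta$ with $\alpha,\beta\in eRe$, the product $(a\beta)\alpha$ is an idempotent of $eRe$ whose vanishing would force $a\beta=0$, hence $(a\beta)\alpha=e$, making $\alpha$ invertible, and a symmetric argument makes $a$ invertible — for every nonzero $a$, so $eRe$ would be a division ring, a contradiction; thus $eRe$ has a nontrivial idempotent. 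If $eRe$ is not simple, pick a maximal ideal $M$; then $eRe/M$ is a simple unital purely infinite ring that is not a division ring, so it has a nontrivial idempotent by the previous case, which lifts to a nontrivial idempotent of $eRe$. Either way $eRe$ contains a nontrivial idempotent, which finishes the proof.

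The step I expect to be the real obstacle is the last paragraph: it rests on the fact that a unital purely infinite ring which is not a division ring must contain a nontrivial idempotent, and on the lifting-idempotents hypothesis being inherited by the corner $eRe$ and by the simple quotient $eRe/M$ (corner-descent of idempotent lifting is the delicate technical point). A secondary difficulty is the identification $V(R)/V(K)\cong V(R/K)$ together with the fact that every o-ideal of $V(R)$ has the form $V(K)$: here the lifting-idempotents hypothesis replaces the exchange condition used in the analogous corollaries above, and some care is needed to promote lifting of idempotents over $R$ to lifting of idempotent matrices over $\MM_n(R)$.
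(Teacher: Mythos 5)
Your reduction via Theorem~\ref{ideminpinfwithref} and the directed-union argument is fine (indeed, applying that theorem directly to all idempotents of $R$ avoids the paper's explicit reduction to the unital case), and your identification of an o-ideal $I$ of $V(R)$ with $V(K)$ can be made to work. But the proof breaks at the step where you invoke ``the standard argument of \cite[Proposition 1.4]{AGOP}'' to get $V(R)/V(K)\cong V(R/K)$. That proposition is about exchange rings, and its proof uses the exchange property in an essential way; here the hypothesis is only that idempotents of $R$ lift modulo ideals of $R$. Two things are missing: surjectivity of $V(R)\to V(R/K)$ requires lifting idempotent \emph{matrices} modulo $\MM_n(K)$, which is an ideal of $\MM_n(R)$, not of $R$, and is not covered by the hypothesis; and, more seriously, the kernel identification --- that $[\,\pbar\,]=[\,\qbar\,]$ in $V(R/K)$ forces $[p]+[a]=[q]+[b]$ in $V(R)$ for some $[a],[b]\in V(K)$ --- is precisely the part of the AGOP argument that uses exchange (lifting of decompositions and equivalences, not just of idempotents), and nothing in the present hypotheses supplies it. Without this isomorphism you cannot transport your nontrivial decomposition of $[\ebar]$ in $V(R/K)$ back to a nontrivial decomposition of $\overline{[e]}$ in $V(R)/I$, so no contradiction with irreducibility is reached. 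The same objection applies to the point you flag yourself: lifting idempotents into the corner $eRe$, and modulo ideals of $eRe$ such as your maximal ideal $M$, is not granted by a hypothesis that speaks only of ideals of $R$, and you give no argument for this transfer.

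The paper's proof is arranged exactly so that none of these transfers is needed. After reducing to unital $R$ and setting $u=[1_R]$, it supposes $\ubar$ irreducible in some $V(R)/I$, lets $E$ be the set of idempotents of $R$ whose classes lie in $I$ and $J$ the ideal of $R$ generated by $E$, and shows $J\ne R$ (otherwise $1\lesssim e_1\oplus\cdots\oplus e_n$ gives $u\in I$). Choosing a maximal ideal $M\supseteq J$, the quotient $R/M$ is purely infinite simple by Lemma~\ref{pinftoidealquo}, hence contains an idempotent different from $0$ and $1$ by \cite[Theorem 1.6]{AGP}; this lifts modulo $M$ --- an ideal of $R$, so the hypothesis applies verbatim --- to an idempotent $q\in R$. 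Then $[q]+[1-q]=u$ holds \emph{exactly} in $V(R)$, while $q,1-q\notin M$ forces $[q],[1-q]\notin I$, contradicting irreducibility. No identification of $I$ with $V(K)$, no passage to $V(R/K)$, and no corner or matrix lifting occur. If you want to salvage your outline, replace the detour through $V(R/K)$ by this exact-decomposition device (for a non-unital idempotent $e$ one would need the lifted idempotent to lie under $e$, so that $[e]=[q]+[e-q]$ on the nose, which again raises the corner-lifting problem you left open); as written, the argument has a genuine gap.
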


\begin{proof} As in the proof of Theorem \ref{ExchangePiMatrices}, there is no loss of generality in assuming
that $R$ is unital. Set $u= [1_R] \in V(R)$. In view of Theorem \ref{ideminpinfwithref}, we need
only show that $\ubar$ is not irreducible in any quotient of $V(R)$.

Suppose, to the contrary, that $V(R)$ has an ideal $I$ such that $\ubar$ is irreducible in
$V(R)/I$. Let $E$ be the set of those idempotents $e\in R$ for which $[e]\in I$, and let $J$ be
the ideal of $R$ generated by $E$. If $J=R$, then $1= a_1e_1b_1+ \cdots+ a_ne_nb_n$ for some
$a_i,b_i\in R$ and $e_i\in E$. But then $1\lesssim e_1\oplus \cdots\oplus e_n$, implying $u\le
[e_1] +\cdots+ [e_n]$ in $V(R)$ and so $u\in I$, contradicting the assumption that $\ubar\in
V(R)/I$ is nonzero. Thus, $J\ne R$.

Now choose a maximal ideal $M$ of $R$ containing $J$. Then $R/M$ is a purely infinite simple ring
(Lemma \ref{pinftoidealquo}), and so $R/M$ contains idempotents different from $0$ and $1$
\cite[Theorem 1.6]{AGP}. Pick such an idempotent, say $p$, and lift it to an idempotent $q\in R$.
Then $\qbar$ and $\onebar-\qbar$ are both nonzero in $R/M$, and so $q,1-q\notin J$. Consequently,
$q,1-q\notin E$, whence $[q],[1-q] \notin I$. Since $[q]+[1-q]= u$, this contradicts the
assumption that $\ubar$ is irreducible in $V(R)/I$.

Therefore $\ubar$ is not irreducible in any quotient of $V(R)$, as desired.  \end{proof}

\section{Non-simple purely infinite Leavitt path algebras} \label{leavittsec}

Leavitt path algebras $L_K(E)$ of row-finite graphs have been recently introduced in \cite{AA1}
and \cite{AMP}. They have become a subject of significant interest, both for algebraists and for
analysts working in C$^*$-algebras. The Cuntz-Krieger algebras $C^*(E)$ (the C$^*$-algebra counterpart
of these Leavitt path algebras) are described in \cite{R}. The algebraic and analytic theories
share some striking similarities, as well as some distinct differences (see, e.g.,~\cite{Work} and~\cite{T}).

In the analytic context of graph C$^*$-algebras, the (not necessarily simple) purely infinite ones
were studied in \cite{HS}. In this section we will give the algebraic version of these results. In
fact, this can also be regarded as a natural follow up of the characterization of purely
infinite simple Leavitt path algebras that was carried out in \cite{AA2}.

We have chosen to restrict attention to row-finite graphs with (at most) countably many vertices, mainly to keep the paper down to a reasonable length. The more general setting of arbitrary uncountable row-finite graphs (using, e.g., the techniques from~\cite{leavittgoodearl}) will be pursued elsewhere.

First, we collect various notions concerning graphs, after which we define Leavitt path algebras.

\begin{definitions} {\rm
A (\emph{directed}) \emph{graph} $E=(E^0,E^1,r,s)$ consists of two countable sets $E^0$ and $E^1$
together with maps $r,s:E^1 \to E^0$. The elements of $E^0$ are called \emph{vertices} and the
elements of $E^1$ \emph{edges}. For $e\in E^1$, the vertices $s(e)$ and $r(e)$ are called the
\emph{source} and \emph{range} of $e$, respectively, and $e$ is said to be an \emph{edge from
$s(e)$ to $r(e)$}, represented by an arrow $s(e) \rightarrow r(e)$ when $E$ is drawn. If
$s^{-1}(v)$ is a finite set for every $v\in E^0$, then the graph is called \emph{row-finite}. If
$E^0$ is finite and $E$ is row-finite, $E^1$ must necessarily be finite as well; in this case we
say simply that $E$ is \emph{finite}. Here we will be concerned only with finite and row-finite
graphs.

A vertex which emits no edges is called a \emph{sink}. A \emph{path} $\mu$ in a graph $E$ is a
sequence of edges $\mu=e_1\dots e_n$ such that $r(e_i)=s(e_{i+1})$ for $i=1,\dots,n-1$. In this
case, $s(\mu)=s(e_1)$ and $r(\mu)=r(e_n)$ are the \emph{source} and \emph{range} of $\mu$,
respectively, and $n$ is the \emph{length} of $\mu$. We also say that $\mu$ is \emph{a path from
$s(e_1)$ to $r(e_n)$}, and we denote by $\mu^0$ the set of its vertices, i.e.,
$\{s(e_1),r(e_1),\dots,r(e_n)\}$.

\smallskip

If $\mu$ is a path in $E$, and if $v=s(\mu)=r(\mu)$, then $\mu$ is called a \emph{closed path based
at $v$}. If $s(\mu)=r(\mu)$
and $s(e_i)\neq s(e_j)$ for every $i\neq j$, then $\mu$ is called a \emph{cycle}. A graph which
contains no cycles is called
\emph{acyclic}.

An edge $e$ is an {\it exit} for a path $\mu = e_1 \dots e_n$ if there exists $i$ such that
$s(e)=s(e_i)$ and $e\neq e_i$. We say that $E$ satisfies \emph{Condition} (L) if every cycle in
$E$ has an exit. Let $M$ be a subset of $E^0$. A \emph{path in $M$} is a path $\alpha$ in $E$ with
$\alpha^0\subseteq M$. We say that a path $\alpha$ in $M$ has \emph{an exit in $M$} if there
exists $e\in E^1$ an exit for $\alpha$ such that $r(e)\in M$.

Recall that a \emph{closed simple path based at a vertex $v$} is a
path $\mu = e_1\cdots e_t$ such that $s(\mu)=r(\mu)=v$ and
$s(e_i)\neq v$ for all $2\leq i \leq t$. We denote the set of
closed simple paths based at $v$ by $CSP(v)$. Further, $E$ is said
to satisfy \emph{Condition} (K) if for each vertex $v$ on a closed
simple path there exist at least two distinct closed simple paths based at $v$.

We define a relation $\ge$ on $E^0$ by setting $v\ge w$ if there exists a path in $E$ from $v$ to
$w$. A subset $H$ of $E^0$ is called \emph{hereditary} if $v\ge w$ and $v\in H$ imply $w\in H$. A
hereditary set is \emph{saturated} if every vertex which feeds into $H$ and only into $H$ is again
in $H$, that is, if $s^{-1}(v)\neq \emptyset$ and $r(s^{-1}(v))\subseteq H$ imply $v\in H$. Denote
by $\mathcal{H}_E$ the set of hereditary saturated subsets of $E^0$.

We recall here some graph-theoretic constructions which will be of use. For a hereditary subset $H$ of $E^0$,
the \emph{quotient graph} $E/H$ is defined
as
$$(E^0\setminus H, \{e\in E^1|\ r(e)\not\in H\}, r|_{(E/H)^1}, s|_{(E/H)^1}),$$ and the \emph{restriction
graph} is
$$E_H=(H, \{e\in E^1|\ s(e)\in H\}, r|_{(E_H)^1}, s|_{(E_H)^1}).$$

\smallskip

The following definition (which is a particular case of that of \cite{BHRS}) will be used in our
main result: A nonempty subset $M\subseteq E^0$ is a \emph{maximal tail} if it satisfies the
following properties:
\begin{enumerate}
\item $E^0\setminus M$ is hereditary and saturated.
\item For every $v,w\in M$ there exists $y\in M$ such that $v\geq y$ and $w\geq y$.
\end{enumerate}
 }\end{definitions}

 Throughout this section, $K$ will denote an arbitrary base field.

 \begin{definitions} {\rm
The {\em Leavitt path $K$-algebra} $L_K(E)$, or simply $L(E)$ if the base field is understood, is
defined to be the $K$-algebra generated by the set $E^0\cup E^1\cup \{e^*\mid e\in E^1\}$ with the
following relations:
 \begin{enumerate}
\item $vw= \delta_{v,w}v$ for all $v,w\in E^0$.
\item $s(e)e=er(e)=e$ for all $e\in E^1$.
\item $r(e)e^*=e^*s(e)=e^*$ for all $e\in E^1$.
\item $e^*f=\delta _{e,f}r(e)$ for all $e,f\in E^1$.
\item $v=\sum _{e\in s^{-1}(v)}ee^*$ for every $v\in E^0$ that is not a sink.
 \end{enumerate}

The elements of $E^1$ are called \emph{real edges}, while for $e\in E^1$ we call $e^\ast$ a
\emph{ghost edge}.   The set $\{e^*\mid e\in E^1\}$ will be denoted by $(E^1)^*$.  We let $r(e^*)$
denote $s(e)$, and we let $s(e^*)$ denote $r(e)$.  If $\mu = e_1 \dots e_n$ is a path in $E$, we
denote by $\mu^*$ the element $e_n^* \dots e_1^*$ of $L(E)$. For any subset $H$ of $E^0$, we will
denote by $I(H)$ the ideal of $L(E)$ generated by $H$. Note that if $E$ is a finite graph, then
$L(E)$ is unital with $\sum _{v\in E^0} v=1_{L(E)}$.
 }\end{definitions}

\begin{lemma}\label{vInfiniteIdempotent} Let $E$ be a row-finite
graph. If $v\in E^0$ and $|CSP(v)|\geq 2$, then $v$ is a properly infinite idempotent in $L(E)$.
\end{lemma}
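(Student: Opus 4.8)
The plan is to produce two orthogonal idempotents $p,q$ of $L(E)$ with $p,q\le v$ and $p\sim q\sim v$; since then $v\oplus v\sim p\oplus q\sim p+q\le v$, we obtain $v\oplus v\lesssim v$, and together with $v\ne 0$ (which is standard; see e.g.\ \cite{AMP}) this is exactly the assertion that $v$ is a properly infinite idempotent. By hypothesis there are two distinct elements $\mu,\nu\in CSP(v)$, and the idempotents will be $p=\mu\mu^*$ and $q=\nu\nu^*$.

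First I would record the elementary computations for a single $\sigma=e_1\cdots e_t\in CSP(v)$. Repeated use of relation (4) telescopes $\sigma^*\sigma=e_t^*\cdots e_1^*e_1\cdots e_t$ down to $r(e_t)=r(\sigma)=v$, while relations (2) and (3) give $v\sigma=\sigma=\sigma v$ and $v\sigma^*=\sigma^*=\sigma^* v$. From $\sigma^*\sigma=v$ it follows that $\sigma\sigma^*$ is idempotent (indeed $(\sigma\sigma^*)^2=\sigma(\sigma^*\sigma)\sigma^*=\sigma v\sigma^*=\sigma\sigma^*$), and from $v\sigma\sigma^*=\sigma\sigma^*=\sigma\sigma^* v$ that $\sigma\sigma^*\le v$; moreover, taking $x=\sigma^*$ and $y=\sigma$ we have $xy=\sigma^*\sigma=v$ and $yx=\sigma\sigma^*$, so $v\sim\sigma\sigma^*$. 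Applying this to $\mu$ and to $\nu$ yields $p=\mu\mu^*\le v$, $q=\nu\nu^*\le v$, and $p\sim q\sim v$ (all of $p,q$ nonzero since $v\ne 0$).

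The crux is that $p$ and $q$ are orthogonal, for which it suffices to show $\mu^*\nu=\nu^*\mu=0$, as then $pq=\mu(\mu^*\nu)\nu^*=0$ and $qp=\nu(\nu^*\mu)\mu^*=0$. For paths $\alpha,\beta$ with $s(\alpha)=s(\beta)$, iterated application of $e^*f=\delta_{e,f}r(e)$ shows that $\alpha^*\beta$ reduces to $0$ unless one of $\alpha,\beta$ is an initial segment of the other. So I must rule out, say, $\nu=\mu\rho$ for a nonempty path $\rho$: in that case $s(\rho)=r(\mu)=v$, but $s(\rho)$ is the source of the $(\ell+1)$-st edge of $\nu$ where $\ell=\length(\mu)\ge 1$, and since $\ell+1\ge 2$ the definition of a closed simple path based at $v$ forces $s(\rho)\ne v$ — a contradiction; the case $\mu=\nu\rho$ is symmetric, and $\mu=\nu$ is excluded by hypothesis. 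Hence $\mu^*\nu=0$ and likewise $\nu^*\mu=0$, so $p\perp q$. Then $g=p+q$ is an idempotent with $g\le v$ and $g\sim p\oplus q\sim v\oplus v$, whence $v\oplus v\lesssim v$ and $v$ is properly infinite. The only step requiring genuine care is this orthogonality argument — that is, making honest use of the hypothesis that $CSP(v)$ contains two \emph{distinct} closed \emph{simple} paths; everything else is bookkeeping with the Leavitt relations.
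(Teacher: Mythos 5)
Your proof is correct, but it is organized differently from the paper's. You exhibit the two orthogonal copies of $v$ explicitly, as the full-path idempotents $p=\mu\mu^*$ and $q=\nu\nu^*$, and the heart of your argument is the computation $\mu^*\nu=\nu^*\mu=0$, which you correctly reduce to showing that neither of two distinct closed simple paths based at $v$ can be an initial segment of the other. The paper never forms $\mu\mu^*$ or $\nu\nu^*$: it works at the level of the monoid preorder, noting that $v\sim\bigoplus_{e\in s^{-1}(v)}ee^*$ forces $w\gtrsim r(e)\oplus r(f)$ whenever two distinct edges $e\ne f$ leave a vertex $w$, and then descends along the two paths from the first vertex where they diverge, obtaining a chain $v\gtrsim\cdots\gtrsim r(e_m)\oplus r(f_n)\sim v\oplus v$. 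The combinatorial input is essentially the same --- the paper's choice of a first divergence index $t$ also tacitly uses that neither simple path is an initial segment of the other, a point you make explicit --- but the mechanisms differ: your construction is more concrete and hands one the witnessing idempotents, with $p+q\le v$ and $p+q\sim v\oplus v$, while the paper's subequivalence chain avoids any orthogonality computation with long paths and reuses its general observation that $v\ge w$ implies $v\gtrsim w$. Both are complete proofs of the lemma, granting (as you note) the standard fact that vertices are nonzero in $L(E)$.
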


\begin{proof} Note that the relations (4) and (5) in the definition of $L(E)$ imply that for any
vertex $v\in E^0$, the elements $ee^*$ for $e\in s^{-1}(v)$ are pairwise orthogonal idempotents
with $r(e)= e^*e\sim ee^* \le v$. Hence, $v\sim \bigoplus_{e\in s^{-1}(v)} r(e)$ when $v$ is not a
sink. In particular, if $v,w\in E^0$ and $v\ge w$, then $v\gtrsim w$.

Let $e_1\dots e_m$ and $f_1\dots f_n$ be two different closed simple paths in $E$ based at $v$.
Then there is some positive integer $t$ such that $e_i=f_i$ for $i=1,\dots,t-1$ while $e_t\ne
f_t$. Thus, we have at least two different edges leaving the vertex $r(e_{t-1})= r(f_{t-1})$. We
compute that
 \begin{align*}
 v &= s(e_1) \gtrsim r(e_1) \gtrsim \cdots\gtrsim r(e_{t-1}) \gtrsim r(e_t)\oplus r(f_t)  \\
 &\gtrsim r(e_{t+1})\oplus r(f_{t+1}) \gtrsim \cdots\gtrsim r(e_m)\oplus r(f_n) \sim v\oplus v.
 \end{align*}
 Therefore $v$ is properly infinite. \end{proof}

The following result is the algebraic counterpart of \cite[Theorem 2.3]{HS}.

\begin{theorem}\label{LeavittPi} Let $L(E)$ denote the Leavitt path algebra of a row-finite graph
$E$. Then, the following conditions are equivalent:
\begin{enumerate}[{\rm (i)}]
\item Every nonzero right ideal of every quotient of $L(E)$ contains an infinite idempotent.
\item Every nonzero left ideal of every quotient of $L(E)$ contains an infinite idempotent.
\item $L(E)$ is properly purely infinite.
\item $L(E)$ is purely infinite.
\item Every vertex $v\in E^0$ is properly infinite as an idempotent in $L(E)$.
\item Every cycle in every maximal tail $M$ in $E$ has exits in $M$, and every vertex in $M$
connects to a cycle in $M$.
\item $E$ satisfies Condition {\rm(K)}, and every vertex in each maximal tail $M$ in $E$ connects
to a cycle in $M$.
\end{enumerate}
\end{theorem}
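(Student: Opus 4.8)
The plan is to run the cycle of implications $\mathrm{(iii)}\Rightarrow\mathrm{(iv)}\Rightarrow\mathrm{(v)}\Rightarrow\mathrm{(i)}\Rightarrow\mathrm{(iii)}$, to fold in $\mathrm{(ii)}$ via the equivalence $\mathrm{(i)}\Leftrightarrow\mathrm{(ii)}$, and to treat $\mathrm{(v)}\Leftrightarrow\mathrm{(vi)}\Leftrightarrow\mathrm{(vii)}$ as a largely graph-theoretic block. Many of the links are immediate from earlier results: $L(E)$ has local units (finite sums of vertices), hence is s-unital, so $\mathrm{(i)}\Rightarrow\mathrm{(iii)}$ and $\mathrm{(ii)}\Rightarrow\mathrm{(iii)}$ are special cases of Proposition~\ref{IdealInfIdemImpliesPi}, $\mathrm{(iii)}\Rightarrow\mathrm{(iv)}$ is Lemma~\ref{EveryPropInfImpliesPi}(i), and $\mathrm{(iii)}\Rightarrow\mathrm{(v)}$ is trivial since vertices are nonzero idempotents. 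The assignment $v\mapsto v$, $e\mapsto e^{*}$, $e^{*}\mapsto e$ extends to a $K$-algebra anti-automorphism of $L_{K}(E)$, so $L_{K}(E)\cong L_{K}(E)^{\mathrm{op}}$ and hence $\mathrm{(i)}\Leftrightarrow\mathrm{(ii)}$; once $\mathrm{(i)}$ is reached the whole list closes. Thus the substantive points are $\mathrm{(iv)}\Rightarrow\mathrm{(v)}$, $\mathrm{(v)}\Rightarrow\mathrm{(i)}$, and the graph block.

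For $\mathrm{(iv)}\Rightarrow\mathrm{(v)}$ I would use the refinement technology of Section~6. Recall that $V(L(E))$ is the graph monoid $M_{E}$ (by~\cite{AMP}), which is conical and has refinement, and whose quotients are exactly the monoids $M_{E/H}$ of the quotient graphs. The key claim is that if $L(E)$ is purely infinite, then $\overline{[v]}$ is not irreducible in any quotient of $M_{E}$, for every vertex $v$. Indeed, if $\overline{[v]}$ were irreducible in some $M_{E/H}$, then either $v$ is a sink in $E/H$ or the relation $[v]=\sum_{e\in s^{-1}(v)}[r(e)]$ forces $v$ to have out-degree one in $E/H$ with irreducible $[r(e)]$; iterating, the set of descendants of $v$ in $E/H$ is a line ending in a sink, or an infinite line. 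In the first case a sink $w\le v$ gives a corner $wL(E/H)w\cong K$; in the second, $vL(E/H)v$ is a nonzero von Neumann regular ring (the relevant subgraph being acyclic) all of whose idempotents are finite. Neither is purely infinite (Corollaries~\ref{RegularInftyIdempotentsPi} and~\ref{exchangepinfstrong}), so this contradicts Proposition~\ref{PiCorners}, since $L(E/H)$ is a purely infinite quotient of $L(E)$ (Lemma~\ref{pinftoidealquo}). With the claim established, Theorem~\ref{ideminpinfwithref} applies to each vertex $v$ and gives that $v$ is properly infinite. This is the algebraic counterpart of the C$^{*}$-fact \cite[Theorem 4.16]{KR}, and it is exactly where Section~6 is needed.

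For $\mathrm{(v)}\Rightarrow\mathrm{(i)}$ I would first observe that $\mathrm{(v)}$ forces $E$ to satisfy Condition~(K). In any quotient graph $E/H$ every vertex is again properly infinite, being the nonzero image of a properly infinite vertex; hence $E/H$ cannot contain a cycle $c$ without an exit, for then $\sum_{v\in c^{0}}v$ would be the identity of a corner isomorphic to $\MM_{m}(K[x,x^{-1}])$ --- a \emph{finite} idempotent of $L(E/H)$ sitting above a properly infinite vertex, which is impossible. So $E/H$ satisfies Condition~(L) for every $H$, i.e.\ $E$ satisfies Condition~(K); consequently every ideal of $L(E)$ is graded and every nonzero quotient of $L(E)$ is a Leavitt path algebra $L(E/H)$ satisfying Condition~(L) in which all vertices are properly infinite. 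Given a nonzero right ideal $J$ of such a quotient $S$ and $0\ne a\in J$, the reduction theorem for Leavitt path algebras (applicable because Condition~(L) holds; see~\cite{AMP},~\cite{AA2}) yields $\alpha,\beta\in S$ and a vertex $v$ with $\alpha a\beta=v$, so $v\precsim a$. Replacing $\alpha,\beta$ by $v\alpha,\beta v$ and putting $g=a\beta\alpha$ gives an idempotent $g\sim v$ lying in $aS\subseteq J$; since $v$ is properly infinite it is infinite, hence so is $g$. Thus every nonzero right ideal of every nonzero quotient of $L(E)$ contains an infinite idempotent, which is $\mathrm{(i)}$.

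Finally, $\mathrm{(v)}\Leftrightarrow\mathrm{(vi)}\Leftrightarrow\mathrm{(vii)}$ is the algebraic mirror of the graph-theoretic content of \cite[Theorem 2.3]{HS}. The equivalence $\mathrm{(vi)}\Leftrightarrow\mathrm{(vii)}$ is pure graph theory, resting on the description of Condition~(K) as ``every quotient graph $E/H$ satisfies Condition~(L)'', equivalently ``no maximal tail contains a cycle without an exit in it'' (cf.~\cite{BHRS}). For $\mathrm{(v)}\Rightarrow\mathrm{(vii)}$ one reuses the arguments above: Condition~(K) was derived from $\mathrm{(v)}$, and if some vertex of a maximal tail $M$ failed to connect to a cycle in $M$, then in the purely infinite quotient $L(E/(E^{0}\setminus M))$ the corner at that vertex would be a nonzero regular ring with only finite idempotents, again contradicting Proposition~\ref{PiCorners}. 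For $\mathrm{(vii)}\Rightarrow\mathrm{(v)}$ one starts from Lemma~\ref{vInfiniteIdempotent}, which shows that every vertex lying on a closed path is properly infinite (Condition~(K) providing at least two closed simple paths at each such vertex), and then passes to an arbitrary vertex $v$ by a refinement argument over the finitely branching tree of descendants of $v$: Condition~$\mathrm{(vii)}$ rules out sinks and guarantees that along every branch from $v$ one reaches a properly infinite vertex, so that $[v]$ is dominated by a finite sum of classes of properly infinite vertices below $v$, whence $v\oplus v\precsim v$. I expect the two main obstacles to be precisely $\mathrm{(iv)}\Rightarrow\mathrm{(v)}$ --- where ``purely infinite'' must be upgraded to ``properly purely infinite'' and the monoid machinery of Section~6 is unavoidable --- and the combinatorics of $\mathrm{(vii)}\Rightarrow\mathrm{(v)}$, where the bookkeeping with maximal tails and descendant trees is delicate.
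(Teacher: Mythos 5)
Most of your outline tracks the paper's proof: (i)/(ii)$\Rightarrow$(iii) via Proposition~\ref{IdealInfIdemImpliesPi}, (iii)$\Rightarrow$(iv) via Lemma~\ref{EveryPropInfImpliesPi}(i), and (iv)$\Rightarrow$(v) by showing $\overline{[v]}$ is never irreducible in a quotient of $V(L(E))$ and then invoking Theorem~\ref{ideminpinfwithref}; your (v)$\Rightarrow$(i) (Condition (K) from (v), hence graded ideals, then the reduction theorem producing an idempotent equivalent to a properly infinite vertex inside the right ideal) is a sound variant of the paper's (vii)$\Rightarrow$(i), and the anti-automorphism $v\mapsto v$, $e\mapsto e^*$ is a legitimate substitute for the paper's ``(vii)$\Rightarrow$(ii) analogously''. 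The genuine gap is your direct argument for (vii)$\Rightarrow$(v). The claim that ``along every branch from $v$ one reaches a properly infinite vertex'' (i.e.\ a vertex with $|CSP|\ge 2$, the only vertices you know to be properly infinite at that stage) is false. Take $E^0=\{u_0,u_1,\dots\}\cup\{c_0,c_1,\dots\}$ with edges $u_i\to u_{i+1}$, $u_i\to c_i$, two loops at each $c_i$, and $c_i\to u_{i+1}$. One checks that the hereditary saturated sets are $\emptyset$ and the tails $\{u_j,c_j: j\ge a\}$, that every maximal tail contains the relevant $c_j$'s, and that (K) holds; so this graph satisfies (vii). Yet no $u_i$ lies on a closed path, and the branch $u_0\to u_1\to u_2\to\cdots$ never meets a vertex on a closed simple path, so no iteration of the relation $[v]=\sum_{e\in s^{-1}(v)}[r(e)]$ terminates in properly infinite vertices along every branch, and no K\"onig-type uniform bound exists. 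Moreover, even the weaker statement you fall back on --- $[v]\le [w_1]+\cdots+[w_k]$ with each $w_i$ properly infinite and $w_i\lesssim v$ --- does not yield $2[v]\le[v]$: it gives $2[v]\le\sum_i[w_i]$, but you still need $\sum_i[w_i]\le[v]$, which requires the $w_i$ to occur as an orthogonal sub-decomposition of $v$, not merely to be individually subequivalent to $v$. (In the example one does have $2[u_i]=[u_i]$, but only because $[c_i]\ge[u_{i+1}]$ lets the leftover spine term be absorbed; your sketch supplies no such absorption in general.)

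The paper avoids this entirely: it never proves (vii)$\Rightarrow$(v) combinatorially, but closes the cycle through (vii)$\Rightarrow$(i), using semiprimitivity of $L(F)$ and the correspondence between primitive ideals and maximal tails to show every vertex of the quotient graph connects to a cycle, then conjugating by a path to land on a cycle vertex, where Lemma~\ref{vInfiniteIdempotent} applies; statement (v) is only recovered afterwards via (i)$\Rightarrow$(iii)$\Rightarrow$(iv)$\Rightarrow$(v). So either adopt that route or find a genuinely new monoid argument for (vii)$\Rightarrow$(v); as written, this step fails. A smaller point: in your (iv)$\Rightarrow$(v) analysis of an irreducible $\overline{[v]}$, the out-degree-one descendant chain need not be ``a line ending in a sink or an infinite line'' --- it can close up into a cycle without exits, in which case the corner is (a corner of) $\MM_m(K[x,x^{-1}])$ rather than a regular algebra of an acyclic graph; this case is also not purely infinite, so the argument is repairable, but the case and its justification should be added (and the appeal to von Neumann regularity of acyclic Leavitt path algebras replaced by the direct computation that the corner is $K$ in the line case).
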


\begin{proof} Set $R=L(E)$, and observe that $R$ has local units.

(i) or (ii) $\Rightarrow$ (iii) $\Rightarrow$ (iv). These are Proposition
\ref{IdealInfIdemImpliesPi} and Lemma \ref{EveryPropInfImpliesPi}(i).

\smallskip

(iv) $\Rightarrow$ (v). By \cite[Proposition 4.4]{AMP}, $V(R)$ is a refinement monoid. Hence, by
Theorem \ref{ideminpinfwithref}, it suffices to show that $\overline{[v]}$ is not irreducible in
any quotient of $V(R)$. Now any ideal $I$ of $V(R)$ is of the form $V(I(H))$, where $I(H)$ is the
ideal of $R$ corresponding to some saturated hereditary subset $H\subseteq E^0$ \cite[Theorem
5.3]{AMP}. Moreover, we know that in this situation, $V(R)/I\cong V(R/I(H))\cong V(L(E/H))$. Since
there is nothing to do if $[v]\in I$, we may assume that $v\notin H$. By Lemma
\ref{pinftoidealquo}(i), $L(E/H)\cong R/I(H)$ is purely infinite, and so for this part of the
proof we may replace $R$ by $L(E/H)$. Thus, we need only show that $[v]$ is not irreducible in
$V(R)$, or equivalently, that $v$ is not a primitive idempotent.

Since $vRv$ is purely infinite (Proposition \ref{PiCorners}), it cannot be isomorphic to $K$ or to
a Laurent polynomial ring $K[x,x^{-1}]$. Hence, $v$ lies on at least one closed simple path, and
$CSP(v)$ cannot consist only of a single loop based at $v$. If $|CSP(v)| \ge 2$, then $v$ is
properly infinite by Lemma \ref{vInfiniteIdempotent}. In this case, $v$ is obviously not
primitive. If $|CSP(v)| =1$, then the unique closed simple path based at $v$ must pass through a
vertex $w\ne v$. Now $v\gtrsim w\gtrsim v$, whence $v\oplus v \lesssim v+w$ and so $\MM_2(vRv)$ is
isomorphic to a corner of $(v+w)R(v+w)$. In this case, Proposition \ref{PiCorners} and Lemma
\ref{EveryPropInfImpliesPi}(ii) imply that $vRv$ is properly purely infinite. Again, $v$ is
properly infinite and thus not primitive.

\smallskip

(v) $\Rightarrow$ (vi). Suppose that $M$ is a maximal tail in $E$, and that $\alpha$ is a cycle in
$M$ without exits in $M$. Pick $v\in \alpha^0$. The subset $H=E^0\setminus M$ is hereditary and
saturated, and $L(E)/I(H)\cong L(E/H)$ where $(E/H)^0=M$. Since being properly infinite is
preserved in quotients, $v$ is a properly infinite idempotent of $L(E/H)$.

On the other hand, because $M$ is a maximal tail and $\alpha$ does not have exits in $M$, the only
paths from $v$ to $v$ in $M$ are the powers of $\alpha$. It follows that
 $$vL(E/H)v \cong L(\alpha) \cong \MM_n(K[x,x^{-1}]),$$
 where $n= |\alpha^0|$. However, this ring does not contain properly infinite idempotents,
 contradicting the choice of $v$. Therefore every cycle in $M$ has exits in $M$.

Suppose now that there exists a vertex $v\in M$ not connecting to any cycle in $M$. The set
$H=\{w\in M \mid v\geq w\}$ is clearly hereditary and acyclic. In particular, $H$ contains no
paths from $v$ to $v$, from which we see that $vL(E/H)v \cong K$. This gives a contradiction as
before, and therefore every vertex in $M$ connects to a cycle in $M$.

\smallskip

(vi) $\Rightarrow$ (vii) is proved in \cite[Lemma 2.2]{HS}.

\smallskip

(vii) $\Rightarrow$ (i) Suppose that $J$ is a proper ideal of $R$. Because we have Condition (K),
$J=I(H)$ for some $H\in {\mathcal H}_E$ by \cite[Theorem 4.5]{APS}, so that $R/J=L(E)/I(H)\cong
L(F)$, where $F=E/H$. We must show that every nonzero right ideal $I$ of $L(F)$ contains an
infinite idempotent. First, apply \cite[Lemma 3.2]{APS} to get that $F$ satisfies Condition (K).

We will prove that every vertex $v$ in $F$ connects to a cycle in $F$. From \cite[Proposition
6.3]{AA3}, we know that Leavitt path algebras are semiprimitive, so there exists a (left)
primitive ideal $P$ of $L(F)$ such that $v\not\in P$. This ideal is, in particular, prime in
$L(F)$, and so corresponds by \cite[Proposition 5.6]{APS} to a maximal tail $M$ in $F$ in the
sense that $P=I_F(F^0\setminus M)$. Clearly then, $v\in M$. Moreover, $M$ is also a maximal tail
in $E$ as stated in \cite[Proof of Theorem 2.3]{HS}, so that $v$ connects to a cycle in $M$ (and
therefore in $F$) by the hypotheses of (vii).

Consider a nonzero element $x\in I$. Since $F$ has Condition (K), every cycle in $F$ has an exit
in $F$. An application of \cite[Proposition 6]{AA2} yields that there exist elements
$\alpha,\beta\in L(F)$ such that $\alpha x\beta=w\in F^0$. Because $w$ connects to a cycle, we can
find a (possibly trivial) path $\mu\in F^*$ such that $\mu^*w\mu=v$ where $v$ lies in a cycle.
Therefore $v=axb$ for certain $a,b\in L(E)$, where we can assume that $va=a$ and $bv=b$.

Write $f=xba$, which is an idempotent element of $I$. Moreover, $v=afxb$, and so $v\lesssim f$.
Since $F$ has Condition (K), we get that $|CSP(v)|\geq 2$. By Lemma \ref{vInfiniteIdempotent}, $v$
is an infinite idempotent, and therefore so is $f$.

\smallskip

(vii) $\Rightarrow$ (ii) is proved analogously.
\end{proof}

In \cite{AA2} and \cite{AGP}, the authors take as the definition of ``purely infinite'' for simple
rings the left-right symmetric condition ``every nonzero left ideal contains an infinite
idempotent''. From Theorem \ref{LeavittPi}, we see that our more general definition of purely
infinite ring agrees with that given in the simple case for Leavitt path algebras. Consequently,
we can immediately deduce the main result of \cite{AA2} as a corollary.

\begin{corollary} \label{pisLPAs} {\rm \cite[Theorem 11]{AA2}} Let $E$ be a row-finite graph. Then
$L(E)$ is purely infinite simple if and only if $E$ has the following properties:
\begin{enumerate}[{\rm (i)}]
\item The only hereditary and saturated subsets of $E^0$ are $\emptyset$ and $E^0$.
\item Every cycle in $E$ has an exit.
\item Every vertex in $E$ connects to a cycle.
\end{enumerate}
\end{corollary}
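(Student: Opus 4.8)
The plan is to deduce this from Theorem~\ref{LeavittPi} together with the standard simplicity criterion for Leavitt path algebras. Since $E$ is row-finite, $L(E)$ is a ring with local units, so by the discussion following Proposition~\ref{propositionaa2} the statement ``$L(E)$ is purely infinite simple'' is equivalent to ``$L(E)$ is simple and purely infinite in the sense of Definition~\ref{defpi}''. I would treat simplicity and pure infiniteness in turn.

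For simplicity I would invoke the criterion of \cite{AA1}: for a row-finite graph $E$, the algebra $L(E)$ is simple if and only if conditions (i) and (ii) hold, i.e.\ $E^0$ has no hereditary and saturated subsets other than $\emptyset$ and $E^0$, and every cycle in $E$ has an exit. I would also record the standard fact that condition (i) forces $E$ to be cofinal, and hence that $E^0$ is downward directed: given $v,w\in E^0$, if $w$ is a sink then $v\ge w$, and otherwise cofinality applied to an infinite path emitted by $w$ produces a vertex $y$ on that path with $v\ge y$, so $v\ge y$ and $w\ge y$. Consequently $E^0$ is itself a maximal tail, and by (i) it is the only one: for any maximal tail $M$ the complement $E^0\setminus M$ is hereditary and saturated, hence empty (as $M\neq\emptyset$), hence $M=E^0$.

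For pure infiniteness I would use the equivalence (iv)$\Leftrightarrow$(vi) of Theorem~\ref{LeavittPi}: $L(E)$ is purely infinite if and only if every cycle in every maximal tail $M$ has an exit in $M$ and every vertex of $M$ connects to a cycle in $M$. When (i) holds, the previous paragraph shows that the only maximal tail (if any) is $E^0$, so condition (vi) reads: every cycle in $E$ has an exit and every vertex of $E$ connects to a cycle, the qualifier ``in $M=E^0$'' being vacuous. Thus, in the presence of (i), condition (vi) is equivalent to ``(ii) and (iii)''.

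Assembling the pieces: if $L(E)$ is purely infinite simple, then it is simple, so (i) and (ii) hold and $E^0$ is the unique maximal tail, and it is purely infinite, so (vi) holds for $M=E^0$, which yields (iii). Conversely, if (i), (ii), (iii) all hold, then $L(E)$ is simple by (i)+(ii), and every maximal tail equals $E^0$ by (i), so (vi) holds (it reduces to (ii)+(iii)) and Theorem~\ref{LeavittPi} gives that $L(E)$ is purely infinite; hence $L(E)$ is purely infinite simple. The only delicate point is the passage, under (i), to the conclusion that $E^0$ is the relevant (indeed only) maximal tail; everything else is routine translation through Theorem~\ref{LeavittPi}.
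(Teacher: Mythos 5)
Your overall route is the same as the paper's: both directions pass through the simplicity criterion \cite[Theorem 3.11]{AA1} and Theorem~\ref{LeavittPi}, with the key combinatorial point being that, under (i), $E^0$ is itself a maximal tail and is the only one. Your bridge via condition (iv) of Theorem~\ref{LeavittPi} together with Proposition~\ref{propositionaa2} (simple ring with local units), rather than via condition (i) of that theorem as the paper does, is an immaterial variation, since the theorem makes all those conditions equivalent. The converse direction as you present it is fine: there one only needs that every maximal tail equals $E^0$, which follows from (i) alone.

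The one place where you differ in substance, and where there is a small genuine hole, is the proof that (i) makes $E^0$ downward directed (hence a maximal tail), which is needed in the forward direction to extract (iii) from condition (vi) of Theorem~\ref{LeavittPi}. The paper proves directedness directly: the hereditary set $H=\{x\in E^0 \mid v\ge x\}$ has saturated closure equal to $E^0$ by (i), and an induction along the saturation process shows that $w$ connects into $H$. You instead invoke, without proof, the standard fact that (i) forces cofinality (note you need the version of cofinality that also requires every vertex to connect to every sink), and then argue by the dichotomy ``$w$ is a sink'' versus ``$w$ emits an infinite path''. That dichotomy is false: a non-sink vertex of a row-finite graph need not emit an infinite path; it may only connect to sinks (e.g.\ a single edge $w\to u$ with $u$ a sink, a graph satisfying (i) and (ii)). The repair is easy --- by row-finiteness and K\"onig's lemma, every vertex either emits an infinite path or connects to a sink, and in the latter case cofinality applied to that sink yields the common lower bound --- but as written the step fails, and it fails precisely where you cannot yet appeal to (iii) to manufacture infinite paths, since (iii) is what this maximal-tail argument is being used to prove.
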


\begin{proof}
Suppose first that $L(E)$ is purely infinite simple. From the characterization of simple Leavitt
path algebras in \cite[Theorem 3.11]{AA1}, we obtain that (i) and (ii) hold. We next claim that
$E^0$ is a maximal tail in $E$. Trivially, the complement of $E^0$ is hereditary and saturated.
Now consider any two vertices $v,w\in E^0$. The set $H= \{x\in E^0 \mid v\ge x\}$ is clearly
hereditary, and so by (i), the saturated closure of $H$ must equal $E^0$. Consequently, there
exist hereditary subsets $H_1=H, H_2,\dots, H_n \subseteq E^0$ such that $w\in H_n$ and, for
$i=2,\dots,n$, we have
 \begin{enumerate}
 \item[] $H_i= H_{i-1}\cup \{w_i\}$ for some vertex $w_i$ which feeds into $H_{i-1}$ and only into
 $H_{i-1}$.
 \end{enumerate}
It follows that each $w_i$ feeds into $H$, and so there exists $y\in H$ such that $w\ge y$. By
definition of $H$, we also have $v\ge y$, proving that $E^0$ is indeed a maximal tail. Now Theorem
\ref{LeavittPi}(vi) implies that (iii) holds.

Conversely, suppose that (i), (ii) and (iii) hold. Use \cite[Theorem 3.11]{AA1} to see that $L(E)$
is simple. Since the complement of a maximal tail is hereditary and saturated, (i) implies that
the only possible nonempty maximal tail in $E$ is $E^0$. Hence, our current hypotheses imply
condition (vi) of Theorem \ref{LeavittPi}, and so condition (i) of that theorem says that every
nonzero right ideal of $L(E)$ contains an infinite idempotent. Therefore $L(E)$ is purely infinite
simple.
\end{proof}

\begin{remarks} \label{LEfacts} {\rm
We record a few useful facts about the elements of a Leavitt path algebra $L= L_K(E)$. Recall that
the term ``path'' is used to refer only to paths consisting of real edges.

(a) Distinct paths in $E$ are linearly independent elements of $L$ \cite[Lemma 1.1]{Sil}.

(b) If $p$ and $q$ are paths in $E$, then $q^*pq$ is either zero or a path of the same length as
$p$. For if $q^*pq\ne 0$, then either $p=qr$ for some path $r$, in which case $q^*pq= rq$, or else
$q=ps=st$ for some paths $s$, $t$, in which case $q^*pq= s^*q= t$.

(c) If $p_1,\dots,p_n$ are distinct paths, and $q$ is a path with $\deg(q) \le \deg(p_i)$ for all
$i$, then $q^*p_iq \ne q^*p_jq$ whenever $i\ne j$ and $q^*p_iq \ne 0$. To see this, arrange the
indexing so that $q^*p_iq \ne 0$ for $i=1,\dots,m$ and $q^*p_iq=0$ for $i=m+1,\dots,n$. For $i\le
m$, we must have $p_i= qr_i$ for a path $r_i$, and the $r_i$ must be distinct, so the paths
$q^*p_iq= r_iq$ are distinct.
 }\end{remarks}

\begin{lemma} \label{atensorv}
Let $E$ be a row-finite graph in which every cycle has an exit, and let $A$ be an s-unital
$K$-algebra. Given any nonzero element $x\in A\otimes_K L_K(E)$, there exist a nonzero element
$a\in A$ and a vertex $v\in E^0$ such that $a\otimes v \precsim x$.  \end{lemma}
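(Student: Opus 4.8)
The plan is to follow the template of the proof of Theorem~\ref{AtensorL}, with the Reduction Theorem for Leavitt path algebras whose cycles all have exits --- namely \cite[Proposition~6]{AA2}, which applies here by hypothesis --- taking over the role played there by the manipulations with the generators $x_k,y_k$ of $L_K(1,\infty)$. Write $L=L_K(E)$ and fix the standard $K$-basis of $L$ consisting of the monomials $pq^{*}$ for $p,q$ paths in $E$ with $r(p)=r(q)$ (see Remarks~\ref{LEfacts}), $v_0$ itself being a basis element for each $v_0\in E^0$. Expand $x=\sum_{i=1}^{n}a_i\otimes c_i$ with $a_1,\dots,a_n\in A$ chosen $K$-linearly independent and $c_i\in L$; since $x\ne 0$ we may assume $c_1\ne 0$. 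By Lemma~\ref{sunital} pick $u\in A$ with $ua_i=a_iu=a_i$ for all $i$. Then for any $\alpha,\beta\in L$ we have $(u\otimes\alpha)\,x\,(u\otimes\beta)=\sum_{i}a_i\otimes(\alpha c_i\beta)$, and, by the $K$-linear independence of the $a_i$, this element is nonzero as soon as some $\alpha c_i\beta$ is nonzero. Hence it suffices to produce $\alpha,\beta\in L$ and $v\in E^{0}$ with $\alpha c_1\beta\ne 0$ and $\alpha c_i\beta\in Kv$ for all $i$: then $(u\otimes\alpha)\,x\,(u\otimes\beta)=a\otimes v$ for some nonzero $a\in A$, and $a\otimes v\precsim x$.

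\emph{Step 1 (put the leading coefficient in a corner).} By \cite[Proposition~6]{AA2} there exist $\alpha_0,\beta_0\in L$ with $\alpha_0c_1\beta_0=v_0\in E^{0}$, and, replacing $\alpha_0,\beta_0$ by $v_0\alpha_0,\beta_0v_0$, we may assume $\alpha_0=v_0\alpha_0$ and $\beta_0=\beta_0v_0$, so that $\alpha_0c_i\beta_0\in v_0Lv_0$ for every $i$. Replacing $x$ by $(u\otimes\alpha_0)\,x\,(u\otimes\beta_0)$ and renaming, we may thus assume $x\in A\otimes_K v_0Lv_0$, that $x\ne 0$, and that $c_1=v_0$, the identity of the corner $v_0Lv_0$. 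Extracting from each $c_i$ its coefficient $\lambda_i$ on the basis element $v_0$ (so $\lambda_1=1$), write $c_i=\lambda_iv_0+c_i'$ where $c_i'$ is a combination of monomials $pq^{*}\ne v_0$, necessarily with $s(p)=s(q)=v_0$; put $a:=\sum_i\lambda_ia_i$, which is nonzero since $\lambda_1=1$. Then $x=a\otimes v_0+\sum_{i}a_i\otimes c_i'$, and it remains to clear the tail $\sum_i a_i\otimes c_i'$.

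\emph{Step 2 (clear the tail).} If every $c_i'=0$ --- in particular if $v_0$ is a sink, since then $v_0Lv_0=Kv_0$ --- we are done with $v=v_0$. Otherwise let $P$ be the finite set of nontrivial paths $p$ with $s(p)=v_0$ occurring, as the left or the right factor of a monomial $pq^{*}$, in some $c_i'$, and set $N=\max_{p\in P}|p|$. Assume first that $v_0$ connects to a cycle. Using the hypothesis that \emph{every} cycle of $E$ has an exit, I would construct a path $\gamma$ with $s(\gamma)=v_0$, $|\gamma|>N$, and no member of $P$ as an initial segment: one runs from $v_0$ into a cycle and keeps going round it, and whenever the initial segment built so far would agree with a path of $P$ one diverts along an exit of the cycle and continues recursively from its range; since $P$ is finite, this succeeds. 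Given such a $\gamma$, the calculus of products of paths and ghost paths (Remarks~\ref{LEfacts}) gives $\gamma^{*}v_0\gamma=r(\gamma)$, and $\gamma^{*}pq^{*}\gamma=0$ for every monomial $pq^{*}\ne v_0$ occurring in some $c_i'$ --- indeed $|\gamma|>N\geq|p|,|q|$ and neither $p$ nor $q$ is an initial segment of $\gamma$, so at least one of $\gamma^{*}p$, $q^{*}\gamma$ is $0$. Hence $\gamma^{*}c_i'\gamma=0$ for all $i$, so $(u\otimes\gamma^{*})\,x\,(u\otimes\gamma)=a\otimes r(\gamma)$, and $a\otimes r(\gamma)\precsim x$ with $a\ne 0$ and $r(\gamma)\in E^{0}$, as required. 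If instead $v_0$ connects to no cycle, then $v_0$ lies in an acyclic hereditary subset of $E^0$, the corner $v_0Lv_0$ is von Neumann regular, and a short separate argument --- reducing $x$ to an idempotent coefficient and then, using that $V$ of a Leavitt path algebra is generated by the vertices, down to a vertex coefficient --- finishes the proof.

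The main obstacle is Step~2: the construction of the ``escape path'' $\gamma$, which has to be mined from the exit structure of the cycles of $E$ (this is what replaces the device, available in $L_K(1,\infty)$, of choosing an edge with arbitrarily large index), together with the separate treatment of the acyclic case; once $\gamma$ is in hand, what remains is routine bookkeeping with the $pq^{*}$-calculus of Remarks~\ref{LEfacts}.
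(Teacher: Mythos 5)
Your Step 2 contains a genuine gap, and it is the heart of the argument. You claim that, since $P$ is finite, one can build a path $\gamma$ with $s(\gamma)=v_0$, $|\gamma|>N$, having no member of $P$ as an initial segment, by running into a cycle and diverting along exits when necessary. Finiteness of $P$ does not give this: if every edge leaving $v_0$ already occurs as the first edge of some path in $P$ (for instance, if the tail contains a term $a_2\otimes ee^*$ where $e$ is the unique edge emitted by $v_0$), then \emph{every} nontrivial path from $v_0$ has a member of $P$ as an initial segment, and no such $\gamma$ exists — the ``diversion'' cannot help because the obstruction occurs at the very first edge, before any cycle is reached. In such cases one cannot annihilate the tail; at best terms of the form $pp^*$ with $p$ an initial segment of $\gamma$ get sent to $r(\gamma)$ (and one would then have to invoke linear independence of the $a_i$ to keep the coefficient nonzero), while terms $pq^*$ with $p\neq q$ both comparable to $\gamma$ produce elements like $q_1^*s^*q_1$ that are neither zero nor vertices; your proposal does not address either situation. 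A second, lesser, gap is the case where $v_0$ does not connect to a cycle: there $v_0L(E)v_0$ is not just $Kv_0$ (distinct parallel paths give nontrivial $pq^*$ in the corner), and the ``short separate argument'' you allude to is not supplied.

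For comparison, the paper avoids any global ``escape path''. It first reduces to an element with no ghost edges by an induction on the number of ghost edges, multiplying on the right by single real edges $u\otimes e_j$ and using the CK relation $v=\sum_{e\in s^{-1}(v)}ee^*$ to see that some such product is nonzero with strictly fewer ghost edges. Then, writing $x=\sum_i a_i\otimes p_i$ with distinct real paths $p_i$ and the number of terms $n$ minimal among nonzero elements $\precsim x$, it multiplies by $u\otimes p_1^*$ and conjugates by $u\otimes v$ to make all $p_i$ closed paths based at $v$; if $n>1$, the closed simple path factor of $p_2$ has an exit (by hypothesis if it is a cycle, automatically otherwise), and conjugating by the corresponding $u\otimes f^*q^*$, $u\otimes qf$ kills $p_2$ while keeping the surviving paths distinct (Remarks~\ref{LEfacts}), contradicting minimality of $n$. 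This local use of one exit, combined with the two minimality counters, is exactly what replaces the impossible requirement of avoiding all of $P$ at once, and it also disposes of the acyclic case automatically (no nontrivial closed paths at $v$ forces $n=1$). Your Step 1 and the linear-independence framing are fine, but without a correct substitute for the tail-clearing step the proof does not go through.
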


\begin{proof} Set $L= L_K(E)$ and $R= A\otimes_K L$, write $x= \sum_j a_j \otimes b_j$ for some $a_j\in A$
and $b_j\in L$, and choose $u\in A$ such that $ua_j= a_ju= a_j$ for all $j$. There is at least one
vertex $v\in E^0$ such that $x(u\otimes v) \ne 0$, and we may replace $x$ by $x(u\otimes v)$, that
is, there is no loss of generality in assuming that $x= x(u\otimes v)$.

Let $\P$ denote the set of paths in $E$. This is a $K$-linearly independent subset of $L$ by
Remark \ref{LEfacts}(a), and so if $K\P$ denotes the $K$-span of $\P$ in $L$, then $A\otimes_K
K\P= \bigoplus_{p\in \P} A\otimes p$. Let us denote this subalgebra of $R$ by $A\P$.

We first claim that there is a path $\mu$ in $E$ such that $0\ne x\mu \in A\P$. This follows the
argument of \cite[Proposition 3.1]{AMMS}, which gives the claim in the case $A=K$, as observed in
\cite[proof of Proposition 2.2]{Sil}. We may of course assume that $x\notin A\P$. Write
$$x= \beta+ \sum_{i=1}^m \beta_i(u\otimes e_i^*)$$
 where  $\beta\in A\P$, the $e_i$ are distinct edges in $E^1$ with $s(e_i)=v$, and the $\beta_i$ are nonzero
elements of $R$. Assume also that the number $t$ of ghost edges needed to describe $x$ (including
 the $e_i^*$) is minimal for nonzero elements $x'\in R$ with $x'\precsim x$. Since $x\notin A\P$,
there must be at least one term in the displayed sum. Now $e_1^*= e_1^*v$ and so $v=s(e_1)$,
showing that $v$ is not a sink.

If $x(u\otimes e_j)\ne 0$ for some $j$, then $x(u\otimes e_j)$ is a nonzero element of $R$ with
$x(u\otimes e_j)\precsim x$ and $x(u\otimes e_j)= \beta (u\otimes e_j)+ \beta_j$. The number of
ghost edges needed to describe $x(u\otimes e_j)$ is the number needed to describe $\beta_j$, which
is less than the number $t$. This contradicts the minimality of $t$ unless $\beta_j=0$, in which
case $x(u\otimes e_j)= \beta (u\otimes e_j) \in A\P$ and our claim is proved.

Now suppose that $x(u\otimes e_i)= 0$ for all $i$. Then $\beta (u\otimes e_i)+ \beta_i=0$ for all
$i$, whence
$$\beta \bigl( u\otimes \bigl( v- \sum_{i=1}^m e_ie_i^* \bigr) \bigr)= \beta- \sum_{i=1}^m
\beta (u\otimes e_i)(u\otimes e_i^*) =x \ne 0.$$
 Consequently, $v- \sum_{i=1}^m e_ie_i^* \ne 0$, which means that $e_1,\dots,e_m$ are not the only
 edges emitted by $v$. If the others are $e_{m+1}, \dots,e_n$, then $v- \sum_{i=1}^m e_ie_i^* =
 \sum_{i=m+1} ^n e_ie_i^*$ and
$$x= x \bigl( u\otimes \sum_{i=m+1} ^n e_ie_i^*\bigr).$$
 It follows that $x(u\otimes e_j)\ne 0$ for some $j>m$. But $x(u\otimes e_j)= \beta (u\otimes e_j)
 \in A\P$, and again the claim is proved.

In view of the claim, we may now assume that $x\in A\P$, and so $x= \sum_{i=1}^n a_i\otimes p_i$
for some $a_i\in A$ and some distinct paths $p_i$ in $E$. We may also assume that the number of
terms, $n$, is minimal for such expressions of nonzero elements $x'\in A\P$ with $x'\precsim x$ in
$R$. In particular, all the $a_i\ne 0$. Arrange the indexing so that $\deg(p_1) \le \cdots\le
\deg(p_n)$.

Now $(u\otimes p_1^*)x = \sum_{i=1}^n a_i\otimes p_1^*p_i$ where each $p_1^*p_i$ is either zero or
a path in $E$. Moreover, $p_1^*p_1= v$ (recall that $x= x(u\otimes v)$), and those $p_1^*p_i$
which are nonzero are distinct. It follows that $(u\otimes p_1^*)x \ne 0$, and so we may replace
$x$ by $(u\otimes p_1^*)x$. Thus, there is no loss of generality in assuming that $p_1=v$. This
means that we are done if $n=1$, and so we may also assume that $n>1$. Note that for $i>1$, the
path $p_i\ne p_1=v$, so $\deg(p_i)>0$.

Next, note that $(u\otimes v)x (u\otimes v)= \sum_{i=1}^n a_i\otimes vp_iv$ where those $vp_iv$
which are nonzero are distinct. Since $vp_1v= v$, it follows that $(u\otimes v)x (u\otimes v)\ne
0$, and we replace $x$ by $(u\otimes v)x (u\otimes v)$. Thus, we may now assume that all the $p_i$
are closed paths based at $v$.

At this point, we have a closed path $p_2$ of positive length based at $v$, and so $p_2=p'_2p''_2$
where $p'_2$ is a closed simple path at $v$ and $p''_2$ is a closed path (possibly trivial) at
$v$. If $p'_2$ is a cycle, then it has an exit by hypothesis, while if it is not a cycle, it
automatically has an exit. Hence, $p'_2= qer$ for paths $q$ and $r$ and an edge $e$ such that
$s(e)$ emits an edge $f\ne e$. Then $f^*q^*p'_2= f^*er=0$, and so $f^*q^*p_2=0$. Consequently,
$$(u\otimes f^*q^*) x (u\otimes qf)= a_1\otimes r(f)+ \sum_{i=3}^n a_i\otimes f^*q^*p_iqf.$$
 Further, since $\deg(p_2) \le \deg(p_i)$ for $i>1$, those $f^*q^*p_iqf$ for $i>1$ which are
 nonzero are distinct paths of positive length. Hence, $(u\otimes f^*q^*) x (u\otimes qf) \ne 0$.
However, this contradicts the minimality of $n$.

Therefore we must have $n=1$, and the proof is complete.
\end{proof}

\begin{corollary} \label{idealsAtensorLE}
Let $A$ be an s-unital $K$-algebra, and let $E$ be a row-finite graph such that
\begin{enumerate}[{\rm (i)}]
\item The only hereditary and saturated subsets of $E^0$ are $\emptyset$ and $E^0$.
\item Every cycle in $E$ has an exit.
\end{enumerate}
Then every ideal of $A\otimes_K L_K(E)$ has the form $I\otimes_K L_K(E)$ for some ideal $I$ of
$A$.  \end{corollary}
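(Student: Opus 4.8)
The plan is to run the whole argument off Lemma~\ref{atensorv}, using only one external fact: under hypotheses (i) and (ii), $L := L_K(E)$ is simple (\cite[Theorem 3.11]{AA1}). Recall also that $L$ has local units — the finite sums of distinct vertices — and that $A/I$ inherits s-unitality from $A$ for any ideal $I$. Write $R = A \otimes_K L$, fix an ideal $J$ of $R$ (the case $J = 0$ being trivial), and set
$$I = \{\, a \in A \mid a \otimes v \in J \text{ for every } v \in E^0 \,\}.$$

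The first step is a \emph{propagation} principle: if $a \otimes v_0 \in J$ for a single vertex $v_0$, then $a \otimes w \in J$ for all $w \in E^0$. Indeed $v_0 = v_0^3 \in Lv_0L$, so $Lv_0L$ is a nonzero ideal of $L$ and hence equals $L$ by simplicity; writing $w = \sum_i s_i v_0 t_i$ with $s_i, t_i \in L$ and choosing $u \in A$ with $ua = au = a$, we get $a \otimes w = \sum_i (u \otimes s_i)(a \otimes v_0)(u \otimes t_i) \in J$. Thus $I = \{a : a \otimes v \in J \text{ for some } v\}$, and since $v^2 = v$ one checks at once that $I$ is a two-sided ideal of $A$ (for $c \in A$, $(ca)\otimes v = (c\otimes v)(a\otimes v) \in J$, and symmetrically $(ac)\otimes v \in J$). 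The inclusion $I \otimes_K L \subseteq J$ follows: given $a \in I$ and $\ell \in L$, pick an idempotent $f \in L$ with $f\ell = \ell$ (a finite sum of vertices) and $u \in A$ with $au = a$; then $a \otimes f \in J$ because each $a \otimes v_i \in J$, so $a \otimes \ell = (a \otimes f)(u \otimes \ell) \in J$.

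The reverse inclusion $J \subseteq I \otimes_K L$ is the heart of the matter, and I would obtain it by a quotient argument. Set $\overline R = R/(I \otimes_K L)$ and identify it with $(A/I) \otimes_K L$; since $I \otimes_K L \subseteq J$, the image $\overline J = J/(I \otimes_K L)$ is an ideal of $\overline R$, and it suffices to show $\overline J = 0$. If not, choose a nonzero $\bar x \in \overline J$. Since $A/I$ is s-unital and $E$ still satisfies the hypotheses of Lemma~\ref{atensorv}, that lemma produces a nonzero $\bar a \in A/I$ and a vertex $v$ with $\bar a \otimes v \precsim \bar x$; as $\bar a \otimes v$ and $\bar x$ are $1 \times 1$ over $\overline R$ and $\overline J$ is an ideal, this gives $\bar a \otimes v \in \overline J$. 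Lifting a representative, there is $a \in A$ with $a \notin I$ and $a \otimes v \in J + (I \otimes_K L) = J$, which by the very definition of $I$ forces $a \in I$, a contradiction. Hence $\overline J = 0$, so $J \subseteq I \otimes_K L$, and combining the two inclusions yields $J = I \otimes_K L$.

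The main obstacle is simply keeping the nonunital bookkeeping honest: each ``multiply by a local unit'' step must be checked to land back inside the ideal in question, and the reduction to $(A/I) \otimes_K L$ must be recognized as of the form covered by Lemma~\ref{atensorv} (it is — $A/I$ is s-unital and the graph is unchanged). Hypothesis (i) enters only through the simplicity of $L_K(E)$, used in the propagation step, while hypothesis (ii) is exactly what Lemma~\ref{atensorv} requires.
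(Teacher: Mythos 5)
Your proof is correct and follows essentially the same route as the paper: you define the same ideal $I$ (your vertex-wise definition coincides with the paper's $\{a \mid a\otimes L\subseteq J\}$ via your propagation step, which is exactly the paper's use of simplicity of $L_K(E)$), reduce modulo $I\otimes_K L$ to the algebra $(A/I)\otimes_K L$, and apply Lemma~\ref{atensorv} to derive a contradiction from a nonzero element of the quotient ideal. The extra care with local units and s-unital bookkeeping is welcome but does not change the argument.
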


\begin{remarknonum} {\rm This would follow from standard results when $E$ is finite, once we showed
that the center of $L_K(E)$ is $K$. The use of Lemma \ref{atensorv} saves that step, not to
mention extra techniques needed to investigate centers of corners when $E$ is infinite.
}\end{remarknonum}

\begin{proof} Set $L= L_K(E)$ and $R= A\otimes_K L$, and recall from \cite[Theorem 3.11]{AA1} that $L$
is a simple algebra. Given an ideal $J$ of $R$, define
$$I= \{ a\in A \mid a\otimes L \subseteq J \},$$
 and observe that $I$ is an ideal of $A$. Since $I\otimes_K L \subseteq J$, we may factor out
$I\otimes_K L$ and work in $(A/I)\otimes_K L$. Hence, there is no loss of generality in assuming
that $I=0$.

If there is a nonzero element $x\in J$, then by Lemma \ref{atensorv} there exist $a\in A$ and
$v\in E^0$ such that $0\ne a\otimes v \precsim x$. In particular, $a\otimes v \in J$. It now
follows that $a\otimes L= a\otimes LvL \subseteq J$ and $a\in I$, contradicting the assumption
that $I=0$. Therefore $J=0$, and the corollary is proved.  \end{proof}

\begin{theorem} \label{AtensorLEpinf}
Let $A$ be an s-unital $K$-algebra and $E$ a row-finite graph such that
\begin{enumerate}[{\rm (i)}]
\item Every nonzero right ideal in every quotient of $A$ contains a nonzero idempotent.
\item The only hereditary and saturated subsets of $E^0$ are $\emptyset$ and $E^0$.
\item Every cycle in $E$ has an exit.
\item Every vertex in $E$ connects to a cycle.
\end{enumerate}
Then the algebra $R= A\otimes_K L_K(E)$ is properly purely infinite. \end{theorem}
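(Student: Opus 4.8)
The plan is to apply Proposition~\ref{IdealInfIdemImpliesPi}, so the whole task reduces to showing that every nonzero right ideal in every nonzero quotient of $R$ contains an infinite idempotent. The argument will run parallel to the proof of Theorem~\ref{AtensorL}, with the technical Lemma~\ref{atensorv} supplying the key input (the role played there by explicit computations inside $L_K(1,\infty)$).

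First I would record that $R=A\otimes_K L_K(E)$ is s-unital: any finite subset of $R$ involves only finitely many simple tensors $a_i\otimes b_i$, and picking $u\in A$ with $ua_i=a_iu=a_i$ (Lemma~\ref{sunital}) together with an idempotent $p$ in $L=L_K(E)$ (which has local units) with $pb_i=b_ip=b_i$ gives a local unit $u\otimes p$. Next, conditions (ii) and (iii) are exactly the hypotheses of Corollary~\ref{idealsAtensorLE}, so every ideal of $R$ has the form $I\otimes_K L$ for an ideal $I$ of $A$, and hence every nonzero quotient of $R$ is isomorphic to $(A/I)\otimes_K L$. Since $A/I$ is again s-unital and every quotient of $A/I$ is a quotient of $A$, the pair $(A/I,E)$ still satisfies hypotheses (i)--(iv). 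So it suffices to prove the single assertion that every nonzero right ideal $H$ of $R$ contains an infinite idempotent.

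To prove that assertion, note that conditions (ii)--(iv) are precisely the requirements in Corollary~\ref{pisLPAs}, so $L$ is purely infinite simple, and then Theorem~\ref{LeavittPi} (implication (iv)$\Rightarrow$(v)) shows every vertex $v\in E^0$ is a properly infinite idempotent of $L$. Given $0\ne x\in H$, Lemma~\ref{atensorv} produces a nonzero $a\in A$ and a vertex $v\in E^0$ with $a\otimes v\precsim x$. Applying hypothesis (i) to the quotient $A$ itself, the nonzero right ideal $aA$ contains a nonzero idempotent $e$; writing $e=ab$ yields $e=e^2=eab$, so $e\precsim a$ in $A$, whence $f:=e\otimes v$ is a nonzero idempotent of $R$ with $f\precsim a\otimes v\precsim x$ by transitivity of $\precsim$ (Lemma~\ref{precsim}(i)). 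By Lemma~\ref{elementtensorpropinf}, $f$ is a properly infinite element of $R$, and since it is an idempotent this is proper infiniteness in the usual sense (Remarks~\ref{generalize}); in particular $f$ is infinite. Finally I would transport $f$ into $H$: from $f\precsim x$ write $f=\alpha x\beta$ and replace $\alpha,\beta$ by $f\alpha,\beta f$ to get $f=sxt$ with $s\in fR$, $t\in Rf$; then $g:=xts$ is an idempotent lying in $xR\subseteq H$, and $g\sim f$ via $f=s\cdot xt$ and $g=xt\cdot s$. Proper infiniteness is preserved under Murray--von Neumann equivalence, so $g$ is a (properly) infinite idempotent in $H$, as needed, and Proposition~\ref{IdealInfIdemImpliesPi} then gives the conclusion. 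I do not expect a serious obstacle here: the genuinely hard step---extracting a ``vertex-shaped'' element $a\otimes v$ dominated by an arbitrary nonzero $x$---is already isolated in Lemma~\ref{atensorv}, and the only point needing a little care is checking that hypotheses (i)--(iv) descend to the quotients $(A/I)\otimes_K L$, which follows cleanly from Corollary~\ref{idealsAtensorLE}.
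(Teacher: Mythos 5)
Your proposal is correct and follows essentially the same route as the paper's own proof: reduce via Proposition~\ref{IdealInfIdemImpliesPi} and Corollary~\ref{idealsAtensorLE} to showing each nonzero right ideal of $(A/I)\otimes_K L_K(E)$ contains an infinite idempotent, use Lemma~\ref{atensorv} to find $a\otimes v\precsim x$, hypothesis (i) to get an idempotent $e\in aA$, Lemma~\ref{elementtensorpropinf} for proper infiniteness of $e\otimes v$, and the $f=sxt$, $g=xts$ trick from the proof of Theorem~\ref{AtensorL} to land the idempotent inside the right ideal. Your extra checks (s-unitality of $R$ and descent of the hypotheses to $A/I$) are points the paper leaves implicit, but they introduce no divergence in method.
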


\begin{proof} Set $L= L_K(E)$. By Proposition \ref{IdealInfIdemImpliesPi}, it suffices to show
that every nonzero right
ideal in every nonzero quotient of $R$ contains an infinite idempotent. In view of Corollary
\ref{idealsAtensorLE}, every quotient of $R$ is isomorphic to an algebra of the form
$(A/I)\otimes_K L$ where $I$ is an ideal of $A$. Hence, after replacing $A$ by $A/I$, we just need
to show that every nonzero right ideal $J$ of $R$ contains an infinite idempotent.

Choose a nonzero element $x\in J$. By Lemma \ref{atensorv},  there exist $a\in A$ and $v\in E^0$
such that $0\ne a\otimes v \precsim x$. By hypothesis, there is a nonzero idempotent $e\in aA$,
whence $f=e\otimes v$ is a nonzero idempotent in $R$ such that $f\precsim a\otimes v\precsim x$.
Since $L$ is purely infinite simple by \cite[Theorem 11]{AA2}, the idempotent $v\in L$ is properly
infinite. Hence, Lemma \ref{elementtensorpropinf} implies that $f$ is properly infinite. But $f$
is equivalent to an idempotent in $J$ (recall the proof of Theorem \ref{AtensorL}), and therefore
$J$ contains a (properly) infinite idempotent. \end{proof}

\section*{Acknowledgements}
This work was partly carried out at the Centre de Recerca
Matem\`atica (Bellaterra), during the Programme ``Discrete and
Continous Methods in Ring Theory'' in 2006-07. We gratefully
acknowledge the support and hospitality extended to us.

\bibliographystyle{amsplain}

\end{document}